\numberwithin{equation}{section}
\newcommand{\C}{{\mathbb{C}}}
\newcommand{\complex}{{\mathbb{C}}}
\newcommand{\R}{{\mathbb{R}}}
\newcommand{\N}{{\mathbb{N}}}
\newcommand{\frakB}{\mathfrak{B}}
\newcommand{\frakD}{\mathfrak{D}}
\newcommand{\fraks}{\mathfrak{s}}
\newcommand{\calA}{\mathcal{A}}
\newcommand{\calB}{\mathcal{B}}
\newcommand{\calC}{\mathcal{C}}
\newcommand{\calD}{\mathcal{D}}
\newcommand{\calF}{\mathcal{F}}
\newcommand{\calH}{\mathcal{H}}
\newcommand{\calR}{\mathcal{R}}
\newcommand{\calU}{\mathcal{U}}
\newcommand{\calW}{\mathcal{W}}
\newcommand{\tr}{\operatorname{tr}}
\newcommand{\GL}{\operatorname{GL}}
\newcommand{\op}{\mathrm{op}}
\newcommand{\Hom}{\operatorname{Hom}}
\newcommand{\Ext}{\operatorname{Ext}}
\newcommand{\Tor}{\operatorname{Tor}}
\newcommand{\Barcpl}{\operatorname{Bar}}
\newcommand{\lrBarcpl}{\operatorname{Bar}^\text{\tiny\rm l-red}}
\newcommand{\rrBarcpl}{\operatorname{Bar}^\text{\tiny\rm r-red}}
\newcommand{\Mod}{\operatorname{Mod}}
\newcommand{\im}{\operatorname{im}}
\newcommand{\Sp}{\operatorname{Sp}}
\newcommand{\id}{\operatorname{id}}
\newcommand{\eu}{\operatorname{eu}}
\newcommand{\unA}{{A^\text{\rm \tiny u}}}
\newcommand{\envA}{{A^\text{\rm \tiny e}}}
\newcommand{\hatotimes}{\hat{\otimes}}
\newcommand{\grp}{\mathsf{G}}
\newcommand{\hgrp}{\mathsf{H}}
\newcommand{\supp}{\operatorname{supp}}
\newcommand{\tot}{\operatorname{Tot}^\bullet}
\newcommand{\Stab}{\operatorname{Stab}}
\newcommand{\pr}{\operatorname{pr}}
\newcommand{\gr}{\operatorname{gr}}
\theoremstyle{plain}
        \newtheorem{theorem}{Theorem}[section]
        \newtheorem{lemma}[theorem]{Lemma}
        \newtheorem{proposition}[theorem]{Proposition}
        \newtheorem{corollary}[theorem]{Corollary}
        \theoremstyle{definition}
        \newtheorem{definition}[theorem]{Definition}
        \newtheorem{remark}[theorem]{Remark}
        \newtheorem{example}[theorem]{Example}
\title[Orbifold cup products]{Orbifold cup products and ring structures on Hochschild cohomologies}
\date{May 29th, 2007}
\author{M.J.~Pflaum, H.B.~Posthuma, X.~Tang, \textrm{and} H.-H.~Tseng}
\begin{document}
\begin{abstract}
In this paper we study the Hochschild cohomology ring of
convolution algebras associated to orbifolds, as well as their
deformation quantizations. In the first case the ring structure is given
in terms of a wedge product on twisted polyvectorfields on the inertia
orbifold. After deformation quantization, the ring structure defines a
product on the cohomology of the inertia orbifold. We study the relation
between this product and an $S^1$-equivariant version of the Chen--Ruan
product. In particular, we give a de Rham model for this equivariant
orbifold cohomology.
\end{abstract}
\address{\newline
   Markus J. Pflaum, {\tt pflaum@math.uni-frankfurt.de}\newline
   \indent {\rm Fachbereich Mathematik, Goethe-Universit\"at Frankfurt/Main,
           Germany } \newline
   Hessel Posthuma, {\tt posthuma@math.uu.nl}\newline
   \indent {\rm Mathematical Institute, Utrecht University, Utrecht, the Netherlands} \newline
   Xiang Tang, {\tt xtang@math.wustl.edu}   \newline
   \indent {\rm  Department of Mathematics, Washington University, St.~Louis,
           USA }\newline
   Hsian-Hua Tseng, {\tt hhtseng@math.ubc.ca}\newline
   \indent {\rm Department of Mathematics, University of British
Columbia, Vancouver, Canada}
}
\maketitle
\tableofcontents
\section{Introduction}
In this paper, we study orbifolds within the language of noncommutative
geometry. According to \cite{MoeOG},  an orbifold $X$ can be represented 
by a proper \'etale Lie groupoid $\grp$, and different representations 
of the same orbifold $X$ are Morita equivalent.  
A paradigm from noncommutative geometry tells that one should view the 
groupoid algebra $\calA\rtimes\grp$ of a proper \'etale groupoid $\grp$ 
representing the orbifold $X$ as the ``algebra of functions'' on $X$, 
where $\calA$ is the sheaf of functions on the unit space $\grp_0$ of $\grp$.
Though it is noncommutative, the algebra $\calA\rtimes \grp$
contains much important information of $X$.

We provide in this paper a complete description of the ring structures 
on the Hochschild cohomology of the groupoid algebra $\calA\rtimes
\grp$ and its deformation quantization $\calA^\hbar\rtimes \grp$ when 
$X$ is a symplectic orbifold.  We thus complete projects initiated 
in \cite{TanDQPPG} and \cite{NeuPflPosTanHFDPELG}.
By our results one obtains a cup product on the space of multivector
fields on the inertia orbifold $\widetilde{X}$ associated to $X$,  and
a Frobenius structure on the de Rham cohomology of the inertia
orbifold $\widetilde{X}$. This Frobenius structure is closely
related to the Chen-Ruan orbifold cohomology
\cite{cr-orbifoldCoh}, and inspires us to introduce a de Rham
model for some $S^1$-equivariant Chen-Ruan orbifold cohomology. We
prove that the algebra of the Hochschild cohomology of the
deformation quantization $\calA^\hbar\rtimes \grp$ is
isomorphic to the graded algebra of the Chen-Ruan orbifold
cohomology with respect to a natural filtration.

In this paper, we view the algebra $\calA\rtimes \grp$ as a
bornological algebra with the canonical bornology inherited from
the Frechet topology and  compute its Hochschild
cohomology respect to this bornology.  In
\cite{NeuPflPosTanHFDPELG}, we constructed a vector space isomorphism
\[
 H^\bullet(\calA\rtimes \grp, \calA\rtimes \grp)=
 \Gamma^\infty\Big( \wedge^{\bullet-\ell}T\mathsf{B}_0\otimes
 \wedge^{\ell}N\big)^{\grp},
\]
where $\mathsf{B}_0$ is the space of loops of $\grp$ defined as 
$\{g\in\grp \mid s(g)=t(g) \}$, $\ell$ is a locally constant function on
$\mathsf{B}_0$, namely the codimension of the germ of $\mathsf{B}_0$ inside 
$\grp_1$, and where $N$ is the normal bundle of $\mathsf{B}_0$ in $\grp_1$.  
In this article, we determine the cup product on the Hochschild cochain
$H^\bullet(\calA\rtimes \grp, \calA\rtimes \grp)$.
To do so, we need to understand the maps realizing the above isomorphisms
of vector spaces. In \cite{NeuPflPosTanHFDPELG}, the ring structures got lost
at the end of the final equality, since there we were dealing with a clumsy
chain of quasi-isomorphisms. The first goal of this work is to present a 
sequence of explicit quasi-isomorphisms of differential graded algebras 
preserving cup products.  Some parts of these quasi-isomorphisms 
have already appeared in \cite{NeuPflPosTanHFDPELG}  and \cite{HalTan},  
but in this work we succeeded to put all ingrediants together in the right way
and thus determine the cup products we were looking for. 
The new input consists in the following. Firstly, we introduce a 
complex of fine presheaves $\calH^\bullet$ on $X$ which has a natural
cup product and the global sections of which form a complex quasi-isomorphic
to the Hochschild cohomology complex. Secondly, we use \v{C}ech cohomology
methods  to localize the computation of the cohomology ring of 
$\calH^\bullet (X)$. Thirdly, we use the twisted cocycle construction and 
the local quasi-inverse map $T$ from \cite{HalTan} to compute the cup product
locally. By gluing together the local cup products to a global one we finally
arrive at a transparent computation of the cup product on
$H^\bullet(\calA\rtimes \grp, \calA\rtimes \grp)$. We would like to mention 
that in \cite{rino},  some similar but incomplete results in the local 
situation were obtained.

The above sequence of explicit quasi-isomorphisms opens a way to
compute the Hochschild cohomology of the deformation quantization
$\calA^\hbar\rtimes \grp$, which originally has been constructed in
\cite{TanDQPPG}. In the case of a global quotient, the Hochschild cohomology 
of this algebra has been computed by Dolgushev and Etingof \cite{dolget}
as a vector space using van den Berg duality.  Our method is completely 
different from \cite{dolget} and allows even to determine the ring structure 
on $H^\bullet (\calA^\hbar\rtimes \grp, \calA^\hbar\rtimes \grp)$ in full 
generality. The crucial step in our approach is that we generalize the 
above complex of presheves $\calH^\bullet$ on $X$ and the asssociated
\v{C}ech double complex to the deformed case. With the quasi-isomorphisms for
the undeformed algebra, one can check that there are natural morphisms of 
differential graded algebras from the Hochschild cochain complex of 
$\calA^\hbar\rtimes \grp$ to the presheaf complex 
$\calH_{\text{\tiny \rm loc}, \hbar}^\bullet$ and the associated \v{C}ech 
double complex. We prove these maps to be quasi-isomorphisms by looking at 
the $E_1$ terms of the spectral sequence associated to the $\hbar$-filtration, 
which agrees with the undeformed complexes.  
To perform the local computations, we generalize the Fedosov--Weinstein--Xu 
resolution in \cite{DolHCRCFT} for the computation of the Hochschild 
cohomology of a deformation quantization to the
$\grp$-twisted situation using ideas of Fedosov \cite{FedTIDQ}. We
use essentially an explicit map from the Koszul resolution of the
Weyl algebra to the corresponding Bar resolution by Pinczon
\cite{pinczon}. Our main theorem is that we have a natural
isomorphism of algebras over $\complex((\hbar))$
\[
  H^\bullet\left(\calA^\hbar\rtimes \grp, \calA^\hbar\rtimes
  \grp\right)\cong
  H^{\bullet-\ell}\left(\tilde{X},\C((\hbar))\right),
\]
where the product structure on the right hand side is defined
(cf.~Section \ref{Sec:defhoch}) by
\begin{equation}
 \label{eq:product} 
   [\alpha]\cup[\beta]= \int_{m_\ell} \pr_1^*\alpha \wedge \pr_2^* \beta.
\end{equation}
This generalizes Alvarez's result \cite{alvarez} on the Hochschild cohomology 
ring of the crossed product algebra of a finite group with the Weyl algebra.

The cup product (\ref{eq:product}) together with integration with
respect to the symplectic volume form defines a Frobenius
structure on the de Rham cohomology of the inertia orbifold
$\widetilde{X}$. One notices that there is similarity
between (\ref{eq:product}) and the de Rham model defined by Chen
and Hu \cite{chen-hu:de-rham}. However, Chen and Hu's model was
only defined for abelian orbifolds and works in a formal level. To
connect the Hochschild cohomology of $\calA^\hbar\rtimes \grp$ to
the Chen-Ruan orbifold cohomology, we extend the de Rham model to
an arbitrary almost complex orbifold using methods from equivariant
cohomology theory and \cite{jkk:stringy-k}'s result on obstruction bundles.
More precisely, we prove that the algebra $(H^\bullet(\calA^\hbar\rtimes
\grp,\calA^\hbar\rtimes \grp),\cup)$ is isomorphic to the graded
algebra of the $S^1$-equivariant Chen-Ruan orbifold cohomology
with respect to a natural filtration. In general, the Hochschild
cohomology and the Chen-Ruan orbifold cohomology are not
isomorphic as algebras. By construction, the Chen-Ruan orbifold
cohomology depends on the choice of an almost complex structure,
but the Hochschild cohomology is independent of the choice of
an almost complex or symplectic structure. Therefore, one naturally expects 
that information on the almost complex structure should be contained in the 
filtration on the de Rham model. It is a very interesting question whether one can
detect different almost complex structures through the filtration on the de
Rham model. Our de Rham model and the computation of the Hochschild
cohomology ring of the deformed convolution algebra give more insight to the 
Ginzburg-Kaledin conjecture \cite{GinKal:conj} for hyper-K\"ahler orbifolds. 
Our computations within the differential category suggest that it
is crucial to work in the holomorphic category of deformation
quantization, otherwise conjecture from \cite{GinKal:conj} that there is
an isomorphism between the Hochschild cohomology ring of a deformation quantization 
and the Chen-Ruan orbifold cohomology will in general not be true. 
Concerning the de Rham model for orbifold cohomology let us also mention that 
recently, a similar model has been obtained independantly by R.~Kaufmann 
\cite{KauNTASFO}.

Our paper is organized as follows. In Section \ref{Sec:outline},
we outline the strategy and the main results of this paper, in
Section \ref{Sec:clhoch}, we provide a detailed computation of the
Hochschild cohomology and its ring structure of the algebra
$\calA\rtimes \grp$. Next, in Section \ref{Sec:defhoch}, we
compute the Hochschild cohomology and its ring structure of the
deformed algebra $\calA^\hbar\rtimes \grp$. Then we switch in
Section 5 to orbifold cohomology theory. We introduce a de Rham
model for some $S^1$-equivariant Chen-Ruan orbifold cohomology and
connect this model to the ring structure of the Hochschild
cohomology of the deformed convolution algebra. In the Appendix, we provide 
a full introduction to bornological algebras, their modules and their Morita theory. 
We want to emphasize that the Appendix contains some original results
on Morita equivalence of bornological algebras, which to our knowledge has not
been covered in the literature before. We have chosen to keep these results in the
Appendix to avoid too technical arguments in the main part of our paper.
\vspace{3mm}

\noindent{\bf Acknowledgements:} H.P.~and X.T.~would like to thank
Goethe-Universit\"at Frankfurt/Main for hosting their visits.
M.P., X.T., and H.-H.T.~thank the organizers of the trimester 
``Groupoids and Stacks in Physics and Geometry" for 
hosting their visits of the Institut de Henri Poincar\'e, Paris.
X.T.~would like to thank G.~Pinczon and G.~Halbout for helpful
discussions, M.P. and H.P. thank M.~Crainic for fruitful discussions.
M.P.~acknowledges support by the DFG. H.P. is supported by NWO.
X.T.~acknowledges support by the NSF.


%
%
\section{Outline}
\label{Sec:outline}
As is mentioned above, the main goal of this article is to
determine the ring structure  of the Hochschild cohomology  of a
deformation quantization on a proper \'etale Lie groupoid. In this
section, we outline the strategy to achieve that goal and begin
with a brief overview over the basic notation and results needed
from the theory of groupoids. For further details on the latter we
refer the interested reader to the monograph \cite{MoeMrcIFLG} and
also to our previous article \cite{NeuPflPosTanHFDPELG}.

Recall that a \emph{groupoid} is a small category $\mathsf G$ with set
of objects denoted by $\mathsf G_0$ and set of morphisms by $\mathsf G_1$
such that all morphisms are invertible. The structure maps of a groupoid are
depicted in the diagram
\begin{displaymath}
   \grp_1 \times_{\mathsf {G}_0} \grp_1
   \overset{m}{\rightarrow} \grp_1 \overset{i}{\rightarrow}
   \grp_1 \overset{s}{\underset{t}{\rightrightarrows}}
   \grp_0 \overset{u}{\rightarrow} \grp_1 ,
\end{displaymath}
 where $s$ and $t$ are the source and target map, $m$ is the
 multiplication resp.~composition, $i$ denotes the inverse and finally
 $u$ is the  inclusion of objects by identity morphisms.
 In the most interesting cases, the groupoid carries an additional structure,
 like a topological or differentiable structure. If
 $\grp_1$ and $\grp_0$ are both topological spaces and if
 all structure maps are continuous, then $\grp$ is called a topological
 groupoid. Such a topological groupoid is called \emph{proper},
 if the map $(s,t): \grp_1 \rightarrow \grp_0\times \grp_0$ is a
 proper map, and \emph{\'etale}, if  $s$ and $t$ are both local homeomorphisms.
 In case $\grp_1$ and $\grp_0$ carry the structure of a
 $\calC^\infty$-manifold such that $s,t,m,i$ and $u$ are smooth and
 $s,t$ submersions, then $\grp$ is said to be a \emph{Lie groupoid}.
 \vspace{2mm}

 The situation studied in this article consists of an orbifold represented
 by a proper \'etale Lie groupoid  $\grp$. As a topological space,
 the orbifold coincides with the orbit space $X= \grp_0/\grp$.
 In the following we introduce several sheaves on $\grp$ and $X$.
 By $\calA$ we always denote the sheaf of smooth functions
 on $\grp_0$, and by $\calA \rtimes \grp$ the convolution algebra, i.e.~the
 space $\calC^\infty_\text{\tiny\rm cpt}  (\grp_1)$ together with the
 convolution product $*$ which is defined by the formula
 \begin{equation}
 \label{Eq:ConProd}
   a_1 * a_2 \, (g) = \sum_{g_1 \cdot g_2 =g}
   a_1 (g_1) \, a_2 (g_2) \quad \text{for all
   $a_1,a_2 \in \calC^\infty_\text{\tiny\rm cpt}  (\grp_1)$
   and $g\in \grp_1$}.
 \end{equation}
 Next, let $\omega$ be a $\grp$-invariant symplectic form on $\grp_0$
 and choose a $\grp$-invariant (local) star product $\star$ on $\grp_0$.
 The resulting sheaf of
 deformed algebras of smooth functions will be denoted by $\calA^\hbar$.
 The crossed product algebra  $\calA^\hbar \rtimes \grp$ has the underlying vector
 space $\calC^\infty_\text{\tiny\rm cpt} (\grp_1)[[\hbar]] =
 \Gamma^\infty_\text{\tiny\rm cpt} (\grp_1 , s^* \calA^\hbar ) $ and carries the
 product $\star_\text{\tiny\rm c}$ given by
 \begin{equation}
 \label{Eq:DefConProd}
   [ a_1 \star_\text{\tiny\rm c} a_2 ]_g = \sum_{g_1 \cdot g_2 =g}
   [a_1]_{g_1} g_2\star [a_2]_{g_2}\quad \text{for all
   $a_1,a_2 \in \calC^\infty_\text{\tiny\rm cpt}  (\grp_1)$
   and $g\in \grp_1$}.
 \end{equation}
 Hereby, $[a]_g$ denotes an element of the stalk
 $(s^* \calA^\hbar)_g\cong \calA^\hbar_{s(g)}$, and it has been used that
 $\grp$ acts from the right on the sheaf $\calA^\hbar$
 (see \cite[Sec.~2]{NeuPflPosTanHFDPELG} for details).

 For every  open subset $U\subset X$ define the space $\widetilde{\calA} (U)$
 by
 \begin{displaymath}
   \widetilde{\calA} (U) := (\pi s)_* s^*\calA (U) = \calC^\infty ((\pi s)^{-1}
   (U)) ,
 \end{displaymath}
 where $\pi : \grp_0 \rightarrow X$ is the canonical projection. Denote by
 $\widetilde{\calA}_\text{\tiny\rm fc} (U)$ the subspace
 \begin{displaymath}
   \big\{ f \in  \calC^\infty ((\pi s)^{-1} (U))\mid \supp f
   \cap (\pi s)^{-1} (K) \text{ is compact for all compact $K \subset U$}
   \big\}
 \end{displaymath}
 of all smooth functions on $(\pi s )^{-1}(U)$ with fiberwise compact
 support. Observe that the convolution product $*$ can be extended
 naturally  by Eq.~(\ref{Eq:ConProd}) to each of the spaces
 $\widetilde{\calA}_\text{\tiny\rm fc} (U)$.
 Indeed, since for $K_i := \supp a_i $ with
 $a_i \in  \widetilde{\calA}_\text{\tiny\rm fc} (U)$, $i=1,2$
 the set
 \begin{displaymath}
 \begin{split}
   & m  \big(( K_1 \times K_2 ) \cap (G_1 \times_{G_0} G_1 )
   \cap (\pi s)^{-1} (K) \big) \\
   & = m  \left( \big( ( K_1 \cap (\pi s)^{-1} (K) )
   \times ( K_2  \cap (\pi s)^{-1} (K)) \big)
   \cap (G_1 \times_{G_0} G_1) \right)
 \end{split}
 \end{displaymath}
 is compact by assumption on the $a_i$, the product $a_1 * a_2$ is
 well-defined and lies again in $\widetilde{\calA}_\text{\tiny\rm fc} (U)$.
 Hence, the spaces $\widetilde{\calA}_\text{\tiny\rm fc} (U)$ all carry the
 structure of an algebra and form the sectional spaces of a sheaf
 $\widetilde{\calA}_\text{\tiny\rm fc}$ on $X$. Likewise, one constructs the
 sheaf $\widetilde{\calA}_\text{\tiny\rm fc}^\hbar$.
 Finally note that the natural maps
 $\calA \rtimes\grp\hookrightarrow\widetilde{\calA}_\text{\tiny\rm fc} (X)$
 and
 $\calA^\hbar \rtimes\grp\hookrightarrow
 \widetilde{\calA}_\text{\tiny\rm fc}^\hbar (X)$
 are both algebra homomorphisms.

 From Appendix \ref{Sec:bcalgebras} one can derive the following result. \\[2mm]
 {\bf Theorem O}. \emph{The algebras $\calA \rtimes \grp$ and
 $\calA^\hbar \rtimes \grp$  carry in a natural way the structure of  a
 bornological algebra and are both quasi-unital. Likewise, the sheaves
 $\widetilde{\calA}_\text{\tiny\rm fc}$ and
 $\widetilde{\calA}_\text{\tiny\rm fc}^\hbar $
 are sheaves of quasi-unital bornological algebras.
 Moreover, the natural homomorphisms
 $\calA \rtimes \grp \hookrightarrow \widetilde{\calA}_\text{\tiny\rm fc} (X)$
 and
 $\calA^\hbar \rtimes \grp \hookrightarrow
 \widetilde{\calA}_\text{\tiny\rm fc}^\hbar (X)$ are bounded.}
 \begin{proof}
  Prop.~\ref{Prop:ConBorAlg} and Prop.~\ref{Prop:DefConBorAlg}
  in the appendix show that $\calA \rtimes \grp$ and $\calA^\hbar \rtimes \grp$
  are quasi-unital bornological algebras. By exactly the same methods
  as in there one shows that $\widetilde{\calA}_\text{\tiny\rm fc}$ and
  $\widetilde{\calA}_\text{\tiny\rm fc}^\hbar $
  are sheaves of quasi-unital bornological algebras.
  That the homomorphisms in Theorem O.~are bounded is straightforward.
 \end{proof}

 According to Appendix \ref{Sec:HHBR}, Theorem O implies in particular that
 each one of the algebras in the claim is H-unital and that the Bar
 complex provides a projective resolution.
 This will be the starting point for proving that several of the
 chain maps constructed in the following steps are indeed quasi-isomorphisms.
 \vspace{2mm}

 To formulate the next step, consider the Hochschild cochain complex
 (see \ref{Sec:HHBR})
 \begin{displaymath}
  C^\bullet (\calA \rtimes \grp , \calA \rtimes \grp ) :=
  \Hom_{(\calA \rtimes \grp)^\text{\tiny\rm e}} (\Barcpl_\bullet
  ( \calA \rtimes \grp ),\calA \rtimes \grp ),
 \end{displaymath}
 where $(\calA \rtimes \grp)^\text{\tiny\rm e}$
 is the enveloping algebra (see Sec.~\ref{Sec:HHBR}),
 and define for each open $U \subset X$ the bornological space
 $\calH_\grp^k (U)$ by
 \begin{displaymath}
   \calH_\grp^k (U)  :=
   \Hom \big( (\calA_{|U} \rtimes \grp_{|U})^{\hatotimes k} ,
   \widetilde{\calA}_\text{\tiny\rm fc} (U) \big),
 \end{displaymath}
 where $\grp_{|U}$ is the groupoid with object set
 ${\grp_{|U}}_0 = \pi^{-1} (U)$ and morphism set
 ${\grp_{|U}}_1 = (\pi s)^{-1} (U)$ and where $\calA_{|U}$ is the sheaf of
 smooth functions on $\pi^{-1} (U)$.
 Obviously, the spaces $\calH_\grp^k (U)$ form the sectional spaces of a
 presheaf $\calH_\grp^k$ on $X$ which we denote by  $\calH^k$ if no confusion
 can arise.
 The Hochschild coboundary map $\beta := b^*$ on
 $C^\bullet (\calA_{|U} \rtimes \grp_{|U} , \calA_{|U} \rtimes \grp_{|U} )$
 extends to a coboundary map $\beta$
 on $\calH^\bullet (U)$ by the following definition:
 \begin{displaymath}
 \begin{split}
   \beta F \, & (a_1 \otimes \ldots \otimes a_{k} ) :=
   a_1 \, F( a_2 \otimes \ldots \otimes a_k ) + \\
   & + \sum_{i=2}^{k-1} \, (-1)^{i+1} \,
   F (a_1 \otimes \ldots \otimes a_i \, a_{i+1}\otimes \ldots \otimes a_k ) \\
   & + F( a_1 \otimes \ldots \otimes a_{k-1} ) a_k ,
   \text{ for $F\in \calH^k (U)$ and
   $a_1, \ldots , a_k \in \calC^\infty_\text{\tiny\rm cpt} ((\pi s)^{-1}(U))$.}
 \end{split}
 \end{displaymath}
 Moreover, there is a product
 $\cup : \calH^\bullet (U) \times \calH^\bullet (U) \rightarrow
 \calH^\bullet (U) $, which is called the \emph{cup product} on
 $\calH^\bullet (U)$ and which is given as follows:
 \begin{displaymath}
 \begin{split}
   \cup : \: & \calH^k (U) \times \calH^l (U) \rightarrow \calH^{k+l}(U),\quad
   (F,G) \mapsto F\cup G , \\
   & F\cup G (a_1 \otimes \cdots \otimes a_{k+l} ) :=
   F(a_1 \otimes\cdots \otimes a_k)\, G(a_{k+1}\otimes\cdots\otimes a_{k+l})\\
   & \text{for $a_1, \cdots , a_{k+l} \in\calC^\infty_\text{\tiny\rm cpt}
   ((\pi s)^{-1} (U))$}.
 \end{split}
 \end{displaymath}
 It is straightforward to check that the cup product is associative and
 passes down to the cohomology of $\calH^\bullet (U)$.

 The compatibility between the Hochschild cohomology ring of
 $\calA \rtimes \grp$ and the ring structure on the cohomology of
 $\calH^\bullet$ is expressed by the following step and will be proved in
 Section \ref{Sec:locmeth}.
 \\[2mm]
 {\bf Theorem I}. \emph{The canonical embedding
 \begin{displaymath}
   \iota: \:  C^\bullet (\calA \rtimes \grp , \calA \rtimes \grp )
   \rightarrow \calH^\bullet (X)
 \end{displaymath}
 is a quasi-isomorphism which preserves cup products.}
 \vspace{2mm}

 Since $\calH^\bullet$ is a complex of presheaves on $X$, one can use
 localization techniques for the computation of its cohomology ring. Thus,
 methods from \v{C}ech cohomology theory come into play. To make these ideas
 precise let
 $\calU$ be an open cover of the orbit space $X$, and denote by
 $\check{\calH}^{\bullet,\bullet}_\calU :=
 \check{\calH}_{\grp,\calU}^{\bullet,\bullet} :=
 \check{C}^\bullet_\calU (\calH_\grp^\bullet)$
 the \v{C}ech double complex associated to the presheaf complex
 $\calH^\bullet_\grp$. This means that
 \begin{displaymath}
    \check{\calH}^{p,q}_\calU := \check{C}^q_\calU (\calH^p ) :=
    \prod_{(U_0,\cdots,U_q)\in\calU^{q+1}}\calH^p (U_0\cap \cdots \cap U_q ) .
 \end{displaymath}
 The coboundaries on $\check{\calH}^{\bullet,\bullet}_\calU$ are given  in
 $p$-direction by the Hochschild coboundary
 \begin{displaymath}
  \beta : \check{\calH}^{p,q}_\calU \rightarrow \check{\calH}^{p+1,q}_\calU
 \end{displaymath}
 and, in $q$-direction, by the \v{C}ech coboundary
 \begin{displaymath}
 \begin{split}
   \delta : \check{\calH}^{p,q}_\calU\rightarrow\check{\calH}^{p,q+1}_\calU ,\:
   & \big( H_{(U_0,\cdots,U_q)} \big)_{(U_0,\cdots,U_q) \in \calU^{q+1}}
   \mapsto \\
   & \Big( \sum_{0 \leq i \leq q+1} (-1)^i
   {H_{(U_0,\cdots,\widehat{U_i}, \cdots , U_{q+1})}}_{|U_0 \cap \cdots \cap
   U_{q+1}}
   \Big)_{(U_0,\cdots,U_{q+1}) \in \calU^{q+2}}.
 \end{split}
 \end{displaymath}
 The cohomology of the double complex $\check{\calH}^{\bullet,\bullet}_\calU$,
 i.e.~the cohomology of the total complex
 $\tot_\oplus (\check{\calH}^{\bullet,\bullet}_\calU)$,
 will be denoted by $\check{H}^\bullet_\calU (\calH^\bullet)$.
 The inductive limit
 \begin{displaymath}
   \check{H}^\bullet (\calH^\bullet) := \lim_{\underset{\calU}{\longleftarrow}}
   \check{H}^\bullet_\calU (\calH^\bullet) ,
 \end{displaymath}
 where $\calU$ runs through the set of open covers of $X$,
 then is the \emph{\v{C}ech cohomology} of  the presheaf complex
 $\calH^\bullet$.
 The crucial claim, which will be proved in Section \ref{Sec:locmeth} as well,
 now is the following.
 \\[2mm]
 {\bf Theorem II}.
 \emph{The presheaves $\calH^p$ are all fine, hence the \v{C}ech cohomology
 of  the presheaf complex $\calH^\bullet$ is concentrated in degree $q=0$,
 i.e.~$\check{H}^\bullet_\calU (\calH^\bullet)$ is canonically isomorphic
 to the cohomology of the cochain complex $\calH^{\bullet,0}_\calU$.
 Moreover, the \v{C}ech cohomology $\check{H}^\bullet (\calH^\bullet)$ is given
 by the global sections of a cohomology sheaf on $X$.
 Finally, for each sufficiently fine and locally finite open covering
 $\calU$ of $X$ the canonical chain map
 \begin{displaymath}
   \calH^p (X) \rightarrow \check{Z}^{p,0}_\calU (\calH^\bullet),
   \quad H \mapsto (H_{|U})_{U\in \calU}
 \end{displaymath}
 is a quasi-isomorphism,
 where $\check{Z}^{p,0}_\calU (\calH^\bullet) := \big\{
 H= (H_U)_{U\in \calU} \in \check{\calH}^{p,0}_\calU \mid \delta H =0 \big\}$.
 }
 \vspace{2mm}

By Theorem II one only needs to compute the cohomology of the complexes
$\calH^\bullet (U)$ for all elements $U$ of a sufficiently fine open covering
of $X$. This is the purpose of the following steps.

Let us consider now a weak equivalence of proper \'etale groupoids
$\varphi: \hgrp \hookrightarrow \grp$. Assume further that $\varphi$
is an open embedding and denote by $\calH_\hgrp^\bullet$ and
$\calH_\grp^\bullet$ the  complexes of presheaves as defined above.
Then $\varphi$  induces  a bounded linear map
$\varphi_* : \calC^\infty_\text{\tiny\rm cpt} (\hgrp_1)\rightarrow
\calC^\infty_\text{\tiny\rm cpt} (\grp_1)$ by putting
for $a \in  \calC^\infty_\text{\tiny\rm cpt} (\hgrp_1)$, $g \in \grp_1 $
\begin{displaymath}
\begin{split}
  \varphi_* (a) (g) =
  \begin{cases}
    a \circ \varphi^{-1} (g), & \text{if $g\in \im \varphi$}, \\
    0, & \text{else}.
  \end{cases}
\end{split}
\end{displaymath}
Moreover, one obtains bounded chain maps
\begin{displaymath}
\begin{split}
  \varphi^* : & \: C^\bullet (\calA \rtimes \grp ,\calA \rtimes \grp)
  \rightarrow  C^\bullet (\calA \rtimes \hgrp ,\calA \rtimes \hgrp),
  \quad F \mapsto \varphi^* (F) \quad \text{ and }\\
  \varphi^* : & \: \calH_\grp^\bullet \rightarrow \calH_\hgrp^\bullet ,
  \quad F \mapsto \varphi^* (F),
\end{split}
\end{displaymath}
where in both cases $\varphi^* (F)$ is defined by
\begin{displaymath}
  \varphi^* (F)(a_1 \otimes \cdots \otimes a_k) =
  F \big( \varphi_* a_1 \otimes \cdots\varphi_* a_k \big) \circ \varphi
  \quad \text{ for
  $a_1, \cdots , a_k \in \calC^\infty_\text{\tiny\rm cpt} (\hgrp_1)$}.
\end{displaymath}

By Theorem I, the chain map
$\iota : C^\bullet (\calA \rtimes \grp ,\calA \rtimes \grp)
\rightarrow \calH_\grp^\bullet (X) $ is a quasi-isomorphism.
Hence by  Theorems \ref{Thm:WeEqMor} and \ref{Thm:ConvAlgMorMor}
from the Appendix the following result holds true.
\\[2mm]
{\bf Theorem III}.
Under the assumptions on $\grp$, $\hgrp$ and $\varphi$ from above,
the convolution algebras $\calA \rtimes \grp$ and
$\calA \rtimes \hgrp $ are Morita equivalent as bornological algebras.
Moreover, there is a commutative  diagram consisting of quasi-isomorphisms
\begin{displaymath}
 \xymatrix{
  C^\bullet (\calA \rtimes \grp ,\calA \rtimes \grp)
  \ar[r]^{\varphi^*} \ar[d]_{\iota} &
  C^\bullet (\calA \rtimes \hgrp ,\calA \rtimes \hgrp)
  \ar[d]^{\iota} \\
  \calH_\grp^\bullet (X)
  \ar[r]^{\varphi^*} & \calH_\hgrp^\bullet (X)
 }
\end{displaymath}
such that the upper horizontal chain map coincides with the
natural isomorphism between the Hochschild cohomologies $H^\bullet
(\calA \rtimes \grp ,\calA \rtimes \grp ) \rightarrow
 H^\bullet (\calA \rtimes \hgrp ,\calA \rtimes \hgrp )$ induced by the
Morita context between $\calA \rtimes \grp$ and $\calA \rtimes
\hgrp $. \vspace{2mm}

As an application of this result, we consider a point $x\in
\grp_0$ in the object set of a proper \'etale Lie groupoid $\grp$,
and denote by $\grp_x$ the isotropy group of $x$ that means the
group of all arrows starting and ending at $x$. Choose for each $g
\in \grp_x$ an open connected neighborhood $W_g \subset \grp_1$
(which can be chosen to be sufficiently small) such that both
$s_{|W_g}: W_g \rightarrow \grp_0$ and $t_{|W_g}: W_g \rightarrow
\grp_0$ are diffeomorphisms onto their images. Let $M_x$ be the
connected component of $x$ in $\bigcap_{g \in \grp_x} s (W_g)
\cap t(W_g) $ and put $M_g := W_g \cap s^{-1} (M_x)$ for all $g\in
\grp_x$. Define an action of $\grp_x$ on $M_x$ by
\begin{displaymath}
  \grp_x \times M_x \rightarrow M_x, \quad (g,y) \mapsto
  t \big( s_{|W_g}^{-1} (y)\big).
\end{displaymath}
It is now straightforward to check that the canonical embedding
\begin{displaymath}
  \grp_x \ltimes M_x \hookrightarrow \grp_{|\pi (M_x)} ,  \quad
  (y,g) \mapsto s_{|W_g}^{-1} (y),
\end{displaymath}
is open and a weak equivalence of Lie groupoids. In this article, we will
call a manifold $M_x$ together with a $\grp_x$-action on $M_x$ and a
$\grp_x$-equivariant embedding $\iota_x : M_x \hookrightarrow \grp_0$
a \emph{slice} around $x$, if the induced embedding
$\grp_x \ltimes M_x \hookrightarrow \grp_{|\pi \iota (M_x)}$ is open and
a weak equivalence of Lie groupoids.
The argument above shows that for every point $x\in \grp_0$ there exists a
slice. As a corollary to the above one obtains
\\[2mm]
{\bf Theorem IIIb}.
\emph{ Let $x\in \grp_0$ be a point and
$\iota_x : M_x \hookrightarrow \grp_0$  a slice around $x$.
Let $\varphi_x: \grp_x \ltimes M_x \hookrightarrow \grp_{|U_x}$ with
$U_x := \pi\iota_x (M_x)$ be the corresponding weak equivalence.
Then the convolution algebras $\calC^\infty (M_x) \rtimes \grp_x$ and
$\calA_{|U_x} \rtimes \grp_{|U_x}$ are Morita equivalent.
Moreover, the canonical chain map
\begin{displaymath}
  \varphi_x^* : \calH^\bullet (U_x) \rightarrow C^\bullet
  \big(
  \calC^\infty (M_x) \rtimes \grp_x , \calC^\infty (M_x) \rtimes \grp_x \big)
\end{displaymath}
is a quasi-isomorphism which implements the quasi-isomorphism
induced in Hoch\-schild cohomology by the Morita context
between $\calC^\infty (M_x) \rtimes \grp_x$ and
$\calA_{|U_x} \rtimes \grp_{|U_x}$.}
\vspace{0mm}

Theorems II and III enable us to localize the computation of the
Hochschild cohomology rings.
Locally, we have the following result, also shown in Section \ref{Sec:clhoch}.
\\[2mm]
{\bf Theorem IV}.
\emph{Let $M$ be a smooth manifold, and $\Gamma$ a finite group acting on $M$.
Then the Hochschild cohomology ring
$H^\bullet ( \calC^\infty_\text{\tiny\rm cpt}(M)\rtimes \Gamma,
 \calC^\infty_\text{\tiny\rm cpt} (M)\rtimes \Gamma)$ is given as follows.
As a vector space, one has
\begin{displaymath}
 H^\bullet( \calC^\infty_\text{\tiny\rm cpt}(M)\rtimes \Gamma,
 \calC^\infty_\text{\tiny\rm cpt} (M)\rtimes \Gamma)
 =\bigoplus_{\gamma \in \Gamma}
 \Gamma^\infty \Big(
 \Lambda^{\bullet-\ell (\gamma)}TM^\gamma \otimes \Lambda^{\ell (\gamma)}
 N^\gamma \Big)^\Gamma,
\end{displaymath}
where $\ell (\gamma)$ is the
codimension of $M^\gamma$ in $M$, and $N^\gamma$ is the normal bundle to
$M^\gamma$ in $M$. For elements
\begin{displaymath}
  \xi= \big( \xi_\alpha \big)_{\alpha \in \Gamma}\, ,
  \:
  \eta = \big( \eta_\beta \big)_{\beta \in \Gamma}
  \in
  \bigoplus_{\gamma \in \Gamma }
  \Gamma^\infty \Big(
  \Lambda^{\bullet-\ell (\gamma)}TM^\gamma \otimes \Lambda^{\ell (\gamma)}
  N^\gamma \Big)^\Gamma
\end{displaymath}
their cup product is given by
\begin{displaymath}
  ( \xi \cup \eta )_\gamma = \sum_{
   \alpha \cdot \beta =\gamma, \atop
   \ell(\alpha)+\ell(\beta)=\ell(\gamma)}
  \xi_\alpha \wedge \eta_\beta .
\end{displaymath}}
\mbox{ } \\
By globalization of Theorem IV, one obtains the following result.
\\[2mm]
 {\bf Theorem V}.
 \emph{Let $\grp$ be a proper \'etale groupoid. Denote by
 \[ \mathsf{B}_0:= \{ g \in \grp_1\mid s(g)=t(g)\}\] its space of loops,
 and by $N \rightarrow \mathsf{B}_0$ the normal bundle to $T\mathsf{B}_0$ in $T_{|\mathsf{B}_0}\grp_1$.
 Then the Hochschild cohomology ring of the convolution algebra is given by
 \begin{displaymath}
  H^\bullet( \calA \rtimes \grp , \calA \rtimes \grp )
  = \Gamma^\infty \Big( \Lambda^{\bullet-\ell} T\mathsf{B}_0 \otimes \Lambda^{\ell} N
  \big)^\grp,
 \end{displaymath}
 where $\ell$ is the locally constant function on $\mathsf{B}_0$ the value of which 
 at $g\in \mathsf{B}_0$ coincides with the codimension of the germ of $\mathsf{B}_0$ at $g$
 within $\grp_1$. The cup-product is given by the formula
 \[
  \xi\cup\eta=\int_m pr_1^*\xi\wedge pr_2^*\eta,
 \]
 for ``multivectorfields'' 
 $\xi,\eta\in \Gamma^\infty 
 \Big( \Lambda^{\bullet-\ell} T\mathsf{B}_0 \otimes \Lambda^{\ell} 
 N \big)^\grp$. 
 In the formula above, the maps 
 $m,pr_1,pr_2:\mathsf{S}\rightarrow\mathsf{B}_0$ 
 are the multiplication and the projection onto the first and second 
 component, where 
 \[
   \mathsf{S}:=\{(g_1,g_2)\in\mathsf{B}_0\times\mathsf{B}_0 \mid 
   s(g_1)= t(g_2)\}.
 \]
 Finally, the integral over $m$ simply means summation over the discrete 
 fiber of $m$.}

\begin{remark}
We remark that one can also compute the Gerstenhaber bracket on
$H^\bullet(\calA\rtimes \grp, \calA\rtimes \grp)$ by tracing down
the quasi-isomorphisms constructed in Theorem I-III. Though, there
is no natural Gerstenhaber bracket defined on the complex
$\calH^\bullet$, the bracket is well defined on the subcomplex of
local cochains which take values in compactly supported functions.
And this subcomplex is quasi-isomorphic to the whole complex by
Teleman's localization as is explained in Section 3. Therefore,
the Gerstenhaber bracket is well defined on the Hochschild
cohomology. Using the presheaf $\calH^\bullet$ and the local
computation in \cite{HalTan}, one can generalize the computation of 
the Gerstenhaber bracket from \cite{HalTan} to general orbifolds.
\end{remark}

Let us now consider the deformed case. The strategy in computing
the Hochschild cohomology of the deformed algebra
$\calA^\hbar\rtimes \grp$ is basically the same as in the undeformed algebra.
We define the deformed analogue of the complex of presheaves
$\calH_{\grp}^\bullet$ in the obvious way and denote it by
$\calH_{\grp,\hbar}^\bullet$. The associated \v{C}ech complex is denoted by
$\check{\calH}^{\bullet, \bullet}_{\calU, \hbar}$.
With this, the deformed versions of Theorems I--III are straightforward 
to prove:
the maps in these theorems generalize trivially to the sheaf $\calA^\hbar$.
Using the $\hbar$-adic filtrations on the complexes, Theorems I--III imply that
in the zero'th order approximation these maps are quasi-isomorphisms. By an 
easy spectral sequence argument, cf.~Section \ref{Sec:defhoch} one then 
shows this must be quasi-isomorphisms in general.

Again, this enables us to localize the computation of the
Hochschild cohomology rings. For a global quotient orbifold, we have
the following result, proved in Section \ref{loc-comp}.
\\[2mm]
{\bf Theorem VI}. \emph{Let $M$ be a smooth symplectic manifold,
and $\Gamma$ a finite group acting on $M$ preserving the
symplectic structure. Then the Hochschild cohomology ring
$H^\bullet \Big(\calA^{((\hbar))}_\text{\tiny\rm cpt}(M)\rtimes\Gamma,
 \calA^{((\hbar))}_\text{\tiny\rm cpt}\rtimes\Gamma \Big)$
is given as follows. As a vector space, one has
\begin{displaymath}
  H^\bullet \Big(\calA^{((\hbar))}_\text{\tiny \rm cpt}(M)\rtimes\Gamma,
  \calA^{((\hbar))}_\text{\tiny\rm cpt}\rtimes\Gamma \Big)\cong
  \bigoplus_{(\gamma)\subseteq\Gamma}
  H^{\bullet-\ell(\gamma)}_{Z(\gamma)}\big( M^\gamma,\C((\hbar)) \big),
\end{displaymath}
where $Z(\gamma)$ is the centralizer of $\gamma$ in $\Gamma$, and
$(\gamma)$ stands for the conjugacy class of $\gamma$ inside
$\Gamma$. For elements
\begin{displaymath}
  \alpha= \big( \xi_\gamma \big)_{\gamma \in \Gamma}\, ,
  \:
  \beta = \big( \beta_\gamma \big)_{\gamma \in \Gamma}
  \in
  \bigoplus_{\gamma \in \Gamma }
   \Big(H^{\bullet-l(\gamma)}\big(M^\gamma, \complex((\hbar))\big)\Big)^\Gamma
\end{displaymath}
their cup product is given by
\begin{displaymath}
\label{eq:cup-loc} 
  \alpha\cup\beta=\sum_{\gamma_1\gamma_2=\gamma,\atop
  \ell(\gamma_1)+\ell(\gamma_2)=\ell(\gamma_1\gamma_2)}
  \iota_{\gamma_1}^*\alpha_{\gamma_1}\wedge
  \iota_{\gamma_2}^*\alpha_{\gamma_2}.
\end{displaymath}}
\mbox{ } \\
Given this result, one might hope for a quasi-isomorphism 
$\calH_{\grp,\hbar}^\bullet\rightarrow\Omega^{\bullet-\ell}_{\tilde{X}}$ to exist,
which implements the isomorphism of the theorem above. The situation however 
is more complicated than that, and this is where the deformed case notably 
differs from the undeformed case.

First of all, it turns out one has to consider a sub-complex of presheaves
$\calH_{\grp,\text{\tiny \rm loc},\hbar}^\bullet\subset
\calH_{\grp,\hbar}^\bullet$, of cochains that are
local in a sense explained in the beginning of Section \ref{Sec:defhoch}.
Second, instead of one quasi-isomorphism, there is a chain
\[
  \calH_{\grp,\text{\tiny \rm loc},\hbar}^\bullet\hookrightarrow
  \calC^{\bullet,\bullet}_{\tilde{X}}\hookleftarrow 
  \Omega^{\bullet-\ell}_{\tilde{X}},
\]
where the intermediate double complex of sheaves 
$\calC^{\bullet,\bullet}_{\tilde{X}}$
is a twisted version of the Fedosov--Weinstein--Xu resolution used in
\cite{DolHCRCFT}. With this, we finally obtain the following result:
\\[2mm]
 {\bf Theorem VII}.
 \emph{Let $\mathsf{G}$ be a proper \'etale groupoid with an invariant 
 symplectic structure, modeling a symplectic orbifold $X$. For any invariant 
 deformation quantization $\calA^\hbar$ of $\mathsf{G}$, we have a natural 
 isomorphism 
 \[ 
   H^\bullet\left(\calA^{((\hbar))}\rtimes \mathsf{G}, 
   \calA^{((\hbar))}\rtimes \mathsf{G}\right)\cong 
   H^{\bullet-\ell}\big(\tilde{X},\C((\hbar))\big).
 \]
 With this isomorphism, the cup product is given by
 \[
  \alpha\cup\beta=\int_{m_\ell}pr_1^*\alpha\wedge pr_2^*\beta,
 \]
 for $\alpha,\beta\in H^{\bullet-\ell}\big(\tilde{X},\C((\hbar))\big)$ and 
 where $m_\ell$ is the restriction of $m$, cf.~Theorem V, to those connected 
 components of $\mathsf{S}$ that satisfy 
 \[
  \ell(g_1g_2)=\ell(g_1)+\ell(g_2), \quad (g_1,g_2) \in\mathsf{S}.
 \]
 Moreover, this cup product and symplectic volume form together define a 
 Frobenius algebra structure on $H^{\bullet-\ell}(\tilde{X}, \complex((\hbar)))$.
 }
\vspace{2mm}

On the other hand, on $H^{\bullet-\ell}(\tilde{X}, \complex((t)))$, there is the
famous Chen-Ruan orbifold product \cite{cr-orbifoldCoh}.
In Section \ref{Sec:equiderham}, we study the connection between
the cup product defined in Theorem VI and the Chen-Ruan orbifold product.
We introduce a de Rham model for some particular $S^1$-equivariant
Chen-Ruan orbifold cohomology and relate this de Rham model to the
above computation of Hochschild cohomology of $\calA^\hbar\rtimes \grp$.  \\

Given an arbitrary almost complex orbifold $X$, we introduce a
trivial $S^1$-action on $X$, but a nontrivial $S^1$-action on the
bundle $TX\to X$ by rotating each fiber. This $S^1$-action is
compatible with all the orbifold structures on $X$ and the inertia
orbifold $\tilde{X}$. Therefore, we have the $S^1$-equivariant
Chen-Ruan orbifold cohomology $(H^\bullet_{CR}(X)((t)), \star_t)$
as introduced in Section 5.1 with $\star_t$ the equivariant
Chen-Ruan orbifold product.

The de Rham model $(HT^\bullet(X)((t)), \wedge_t)$ for the above
$S^1$-equivariant Chen-Ruan orbifold cohomology is defined as a
vector space equal to $H^\bullet(\tilde{X})((t))[\ell]$ with the
product defined by putting
\[
(\xi\wedge_t \eta)_{\gamma}:=\sum_{\gamma=\gamma_1\gamma_2}
\iota^*_{\gamma}\big(\iota_{\gamma_1*}(\xi_{\gamma_1})\wedge
{\iota_{\gamma_2*}}(\eta_{\gamma_2})\big),\quad \xi,\eta\in
H^\bullet(\tilde{X})((t)),
\]
where $\iota_{\gamma_i}$ is the embedding of $X^{\gamma_i}$ into
$X$. The following theorem is proved in Section \ref{Sec:equiderham}.
\\[2mm]
{\bf Theorem VIII}. \emph{The two algebras $(H^\bullet_{CR}(X)((t)), \star_t)$ 
and $(HT^\bullet(X)((t)),\wedge_t)$ are isomorphic. }
\vspace{2mm}

To connect  $(HT^\bullet(X)((t)), \wedge_t)$ to the above
Hochschild cohomology ring, we define a decreasing
filtration $\calF^*$ on $HT^\bullet(X)((t))$ by
\[
  \calF^\ast=\{ \alpha\in H^\bullet(X^\gamma)((t)) \mid 
  \deg(\alpha)- \ell (\gamma)\geq \ast\}.
\]
We prove in Section \ref{Sec:equiderham} the following result and thus finish
our article.\\[2mm]
{\bf Theorem IX} \emph{The graded algebra
$gr(HT^\bullet(X)((t)))$ associated to $(HT^\bullet(X)((t)), \wedge_t)$ with
respect to the filtration $\calF^\ast$ is isomorphic to the
Hochschild cohomology algebra 
$(H^\bullet(\calA^\hbar\rtimes G , \calA^\hbar\rtimes G), \cup)$ 
by identifying $t$ with $\hbar$. }


%
%
\section{Cup product on the Hochschild cohomology of the convolution algebra}
\label{Sec:clhoch}
\subsection{Localization methods}
\label{Sec:locmeth}
We start with the proof of Theorem I by using a localization method going back
to Teleman \cite{TelMHH}. Recall that the orbifold $X = \grp_0 /\grp$
represented by a proper \'etale Lie groupoid $\grp$ carries in a natural way a
sheaf $\calC^\infty_X$ of smooth functions. More precisely, for every open
$U\subset X$ the algebra $\calC^\infty (U)$ coincides naturally with the
algebra $\big( \calC^\infty (\pi^{-1} (U))\big)^\grp$ of smooth functions on
$\grp_0$ invariant under the action of $\grp$. Clearly,
$C^k (\calA \rtimes \grp,\calA \rtimes \grp )$ is a module over
$\calC^\infty (X)$, and $\calH^k$ is a module presheaf over the
$\calC^\infty_X$ for every $k \in \N$. Since $\calC^\infty_X$ is a fine sheaf,
this implies in particular that $\calH^k$ has to be a fine presheaf.

Next recall from \cite[Sec.~3, Step I]{NeuPflPosTanHFDPELG} that there is
a canonical isomorphism
\begin{equation}
\label{Eq:HochCohId}
  \hat{\hspace{2mm}} : C^k(\calA\rtimes \grp,\calA\rtimes \grp ) \rightarrow
  \Hom ( \calA\rtimes \grp^{\hatotimes k} , \calC^\infty (\grp_1) )
  =  C^k_\text{\tiny \rm red}
  (\calA\rtimes \grp, \calC^\infty (\grp_1) ), \:\: F \mapsto \hat{F} .
\end{equation}
Hereby, the map
$\hat{F} :\calA\rtimes \grp^{\hatotimes k}\rightarrow \calC^\infty (\grp_1)$
is uniquely determined by the requirement that for every compact
$K\subset \grp_1$ and all $a_1, \ldots ,a_k \in \calA\rtimes \grp$
the relation
\[
  \hat{F} (a_1 \otimes \ldots \otimes a_k)_{|K} =
  F (\varphi_K \delta_u \otimes a_1 \otimes \ldots \otimes
  a_k \otimes\varphi_K \delta_u )
\]
holds true, where $\varphi_K: \grp_0\rightarrow [0,1]$ is a smooth function
with compact support such that $\varphi_K (x)= 1$ for all $x$ in a
neighborhood of $s(K)\cup t(K)$, and where $\delta_u: \grp_1 \rightarrow \R$
is the locally constant function which coincides with $1$ on $\grp_0$ and
which vanishes elsewhere.

Now let us fix a smooth function $\varrho : \R \rightarrow [0,1]$ which has
support in $(-\infty , \frac 34]$ and which satisfies $\varrho (r) =1$ for
$r\leq \frac 12$. For $\varepsilon >0$ we denote by $\varrho_\varepsilon$
the rescaled function $r \mapsto \varrho (\frac s \varepsilon )$.
Next choose a $\grp$-invariant complete riemannian metric on $\grp_0$, and
denote by $d$ the corresponding geodesic distance on $\grp_0$ (where we put
$d(x,y) =\infty$, if $x$ and $y$ are not in the same connected component of
$\grp_0$). Then $d^2$ is smooth on the set of pairs of points of $\grp_0$
having finite distance. Put for every
$k\in \N \cup \{ -1 \}$, $i=1,\cdots, k +1 $ and $\varepsilon >0$:
\begin{displaymath}
  \Psi_{k,i,\varepsilon} (g_0,g_1,\cdots, g_k) =  \prod_{j=0}^{i-1}
  \varrho_\varepsilon \big( d^2( s( g_j ) , t (g_{j+1})) \big),
  \quad \text{where $g_j \in \grp_1$ and $g_{k+1}:= g_0$}.
\end{displaymath}
Moreover, put $\Psi_{k,\varepsilon} := \Psi_{k,k+1,\varepsilon}$.
Using the above identification (\ref{Eq:HochCohId}) we then define
for $F \in C^k := C^k (\calA \rtimes G , \calA \rtimes G)$ a
Hochschild cochain $\Psi^{k,\varepsilon} F$ as follows:
\begin{displaymath}
\begin{split}
 & \Psi^{k,\varepsilon} F (a_1 \otimes \cdots \otimes a_k ) \, (g_0) :=
  F \big( \Psi_{k,\varepsilon} (g_0^{-1}, -, \cdots,- ) \cdot
 (a_1 \otimes \cdots \otimes a_k) \big) \, (g_0),\\
 & \hspace{15em} \text{ for $g_0 \in \grp_1$ and
 $a_1, \cdots, a_k \in \mathcal C^\infty_{\text{\tiny \rm cpt}} ( \grp_1)$}.
\end{split}
\end{displaymath}
One immediately checks that $\Psi^{\bullet,\varepsilon}$
forms a chain map on the Hochschild cochain complex.
Likewise, one defines a chain map  $\Psi^{\bullet,\varepsilon}$ acting on the
sheaf of cochain complexes $\calH^\bullet$.
In \cite[Sec.~3, Step 2]{NeuPflPosTanHFDPELG} it has been shown that
there exist  homotopy operators $H^{k,\varepsilon} : C^k \rightarrow C^{k-1}$
such that
\begin{equation}
\label{Eq:HomLocDiag}
    (\beta H^{k,\varepsilon} + H^{k+1,\varepsilon} \beta ) F  =
    F - \Psi^{k ,\varepsilon} F
\end{equation}
for all $F\in C^{k}$. By a similar argument like in \cite{NeuPflPosTanHFDPELG}
one shows that this algebraic homotopy holds also for $F\in \calH^k (X)$.
By completeness of the metric $d$, the cochain
$\Psi^{\bullet ,\varepsilon} F $ is an element of
$C^k_\text{\tiny\rm red}(\calA \rtimes G , \calA \rtimes G)$ for
$F\in C^k$ or $F \in \calH^k (X)$. Hence $\Psi^{\bullet ,\varepsilon}$
is a quasi-inverse to the canonical embedding
$C^\bullet_\text{\tiny \rm red} (\calA \rtimes \grp , \calA \rtimes \grp )
 \hookrightarrow C^\bullet (\calA \rtimes \grp , \calA \rtimes \grp )$
resp.~to $\iota: \:  C^\bullet_\text{\tiny \rm red}
(\calA \rtimes \grp , \calA \rtimes \grp ) \rightarrow \calH^\bullet (X)$.
This proves Theorem I.

Next, we study the properties of the \v{C}ech double complex
$\check{\calH}^{\bullet,\bullet}_\calU$ associated to an open covering $\calU$
of $X$ and prove Theorem II. We already have shown above that each presheaf
$\calH^k$ is fine. Denote by $\hat{\calH}^k$ the sheaf associated to the
presheaf $\calH^k$. Then the \v{C}ech cohomology of $\calH^\bullet$
coincides with the  \v{C}ech cohomology of $\hat{\calH}^\bullet$, and the
latter is given by the global sections of the cohomology sheaf of
 $\hat{\calH}^\bullet$ (see for example \cite[Sec.~6.8]{SpaAT}).
To prove the last part of Theorem II choose a locally finite open covering
$\calU$ of $X$ such that each element $U\in \calU$ is relatively compact and
let $(\varphi_U )_{U\in \calU}$ be a subordinate partition of unity by
smooth functions on $X$. Then the \v{C}ech double complex
$\check{C}^\bullet_\calU(\calH^\bullet)$ collapses at the $E_1$ term, hence
its cohomology can be computed by the cohomology of
$\check{Z}^{\bullet,0}_\calU (\calH^\bullet)$.
By assumption on $\calU$ there exists for every $U\in \calU$ a
$\varepsilon_U >0$ such that for every $H_U\in \calH^p (U)$ the cochain
$\varphi_U \Psi^{p,\varepsilon_U} H_U\in \calH^p (U)$ can be extended by zero
to an element of $\calH^p (X)$ which we also denote by
$\varphi_U \Psi^{p,\varepsilon_U} H_U$.
Then it is easily checked that the restriction map
\[
    \calH^p (X)\rightarrow \check{Z}^{p,0}_\calU (\calH^\bullet )
    \subset \prod_{U\in \calU} \calH^p (U), \quad
    H\mapsto (H_{|_U})_{U\in \calU}
\]
is a quasi-isomorphism with quasi-inverse given by
\begin{equation}
  \check{Z}^{p,0}_\calU (\calH^\bullet ) \ni (H_U)_{U\in\calU}
  \mapsto
  \sum_{U\in \calU} \varphi_U \Psi^{p,\varepsilon_U} H_U \in \calH^p (X) .
\end{equation}
This finishes the proof of Theorem II.
\subsection{The global quotient case}
\label{Sec:clglobquot}
In this part, we provide a complete proof of Theorem IV.
Let $\Gamma$ be a finite group acting on a smooth orientable manifold $M$.
This defines a transformation groupoid
$\grp := (\Gamma \ltimes M \rightrightarrows M)$.
In this case, the groupoid algebra of $\grp$ is equal to the crossed
product algebra $\calC_{\text{\tiny \rm cpt}}^\infty(M)\rtimes \Gamma$.

In \cite[Thm.~3]{NeuPflPosTanHFDPELG}, we proved that as a vector
space the Hochschild cohomology of the algebra
$\calC^\infty_{\text{\tiny \rm cpt}}(M)\rtimes \Gamma$ is equal to
\[
 H^\bullet( \calC^\infty_\text{\rm\tiny cpt}(M)\rtimes \Gamma,
 \calC^\infty_\text{\rm\tiny cpt} (M)\rtimes \Gamma)
 =\Big(\bigoplus_{\gamma \in \Gamma}
 \Gamma^\infty (
 \Lambda^{\bullet-\ell (\gamma)}TM^\gamma \otimes \Lambda^{\ell (\gamma)}
 N^\gamma) \Big)^\Gamma,
\]
where $\ell (\gamma)$ is the codimension of $M^\gamma$ in $M$, and
$N^\gamma$ is the normal bundle to $M^\gamma$ in $M$. The main
goal of this section is to compute the cup product between
multivector fields on the inertia orbifold, that means between
elements of the right hand side of the  preceding equation, from
the cup product on the Hochschild cohomology of the left hand side
of that equation. We hereby restrict our considerations to the
particular case, where $M$ carries a $\Gamma$-invariant riemannian
metric such that $M$ is geodesically convex. This condition is in
particular satisfied for a linear $\Gamma$-representation space
carrying a $\Gamma$-invariant scaler product. Theorem IV can
therefore be immediately reduced to the case considered in the
following by Theorems II and III and the slice theorem.

In the first part of our construction, we outline how to determine the
Hochschild  cohomology as a vector space. To this end we construct
two  cochain maps $L$ and $T$ between the Hochschild cochain complex and the
space of sections of multivector fields on the inertia orbifold.
These two cochain maps are actually quasi-isomorphisms. The map
$L$ has already been constructed in \cite{NeuPflPosTanHFDPELG}, the map
$T$ in \cite{HalTan}.  In the second part of our construction,
we will use the cochain maps $T$ and $L$ to compute the cup product.

\subsubsection{The cochain map $L$}
Following \cite[Theorem 3.1]{NeuPflPosTanHFDPELG} we construct
\[
  L: C^\bullet (\calC_{\text{\rm \tiny cpt}}^\infty(M)\rtimes \Gamma,
  \calC^\infty_{\text{\rm \tiny cpt}}(M)\rtimes
  \Gamma)\longrightarrow \Big(\bigoplus_{\gamma \in \Gamma}
  \Gamma^\infty (
  \Lambda^{\bullet-\ell (\gamma)}TM^\gamma \otimes \Lambda^{\ell (\gamma)}
  N^\gamma) \Big)^\Gamma.
\]
The map $L$ is the composition of three cochain maps $L_1$, $L_2$ and $L_3$
defined in the following.

To define the first map $L_1$ recall that $\Gamma$ acts on
$F \in C^k (\calC^\infty_\text{\tiny \rm cpt}(M),
 \calC^\infty_\text{\tiny \rm cpt}(M)\rtimes \Gamma)$ by
\[
\gamma F : = \Big( {\calC^\infty_\text{\tiny \rm cpt}(M)}^{\hatotimes k}
\ni f_1 \otimes \cdots \otimes f_k  \mapsto \delta_{\gamma}\cdot
F \big(\gamma^{-1}(f_1) \otimes  \cdots \otimes \gamma^{-1}(f_k) \big)\cdot
\delta_{\gamma^{-1}} \Big).
\]
Given $f\in \calC^\infty_\text{\tiny \rm cpt}(M) $ and $\gamma \in \Gamma$
we hereby (and in the following) use the notation $f \delta_\gamma$
for the function in $\calC^\infty_\text{\tiny \rm cpt} (M) \rtimes \Gamma$
which maps $(\sigma,p) \in \Gamma \times M$ to  $f (\gamma p)$, if
$\sigma = \gamma$, and to $0$ else. Now we put
\[
\begin{split}
   L_1: \: & C^k(\calC_{\text{\rm \tiny cpt}}^\infty(M)\rtimes \Gamma,
   \calC_{\text{\rm \tiny cpt}}^\infty(M)\rtimes \Gamma) \longrightarrow
   C^k (\calC_{\text{\rm \tiny cpt}}^\infty(M),
   \calC_{\text{\rm \tiny cpt}}^\infty(M)\rtimes \Gamma)^\Gamma, \\
   & F \mapsto L_1 F :=
   \Big( {\calC^\infty_\text{\tiny \rm cpt}(M)}^{\hatotimes k}
   \ni f_1 \otimes \cdots \otimes f_k  \mapsto
   F (f_1 \delta_e \otimes \cdots \otimes f_k \delta_e) \Big).
\end{split}
\]
Next we explain how the map $L_2$ is constructed.  It has the following form:
\[
L_2: C^\bullet(\calC_{\text{\rm \tiny cpt}}^\infty(M),
     \calC_{\text{\rm \tiny cpt}}^\infty(M)\rtimes\Gamma)^\Gamma
     \longrightarrow \Big(\bigoplus_{\gamma\in \Gamma}
     \Gamma^\infty(\Lambda^\bullet T_{|M^\gamma}M)\Big)^\Gamma ,
\]
where $T_{|M^\gamma}M$ is the restriction of the vector bundle $TM$
to $M^\gamma$, and where the differential on the complex
$\bigoplus_{\gamma\in \Gamma}\Gamma^\infty(\Lambda^\bullet T_{|M^\gamma}M)$
is given by the $\wedge$-product with a nowhere vanishing vector field
$\kappa$ on $M$ which we define later.
Actually, we will define a $\Gamma$-equivariant chain map $L_2$ slight more
general than what is stated above, namely a map
\[
   L_2: C^\bullet(\calC_{\text{\rm \tiny cpt}}^\infty(M),
   \calC_{\text{\rm \tiny cpt}}^\infty(M)\rtimes\Gamma)
   \longrightarrow \bigoplus_{\gamma\in \Gamma} \Gamma^\infty
   (\Lambda^\bullet T_{|M^\gamma}M) .
\]
As a $\calC_{\text{\rm \tiny cpt}}^\infty(M)$-$\calC_{\text{\rm \tiny cpt}}^\infty(M)$ bimodule, $\calC_{\text{\rm \tiny cpt}}^\infty(M)\rtimes \Gamma$
has a natural splitting into a direct sum of submodules
$\bigoplus_{\gamma\in \Gamma}\calC_{\text{\rm \tiny cpt}}^\infty(M)_\gamma$.
Accordingly, the Hochschild cochain complex
$C^\bullet \big(\calC_{\text{\rm \tiny cpt}}^\infty(M),
\calC_{\text{\rm \tiny cpt}}^\infty(M)\rtimes \Gamma \big)$ naturally splits
as a direct sum
\[
  \bigoplus_{\gamma\in\Gamma}
  C^\bullet \big( \calC_{\text{\rm \tiny cpt}}^\infty(M),
  \calC_{\text{\rm \tiny cpt}}^\infty(M_\gamma) \big).
\]
Therefore, to define the map $L_2$, it is enough to consider each single map
\[
  L_2^\gamma: C^\bullet \big(\calC_{\text{\rm \tiny cpt}}^\infty(M),
  \calC_{\text{\rm \tiny cpt}}^\infty(M_\gamma ) \big)\longrightarrow
  \Gamma^\infty(\Lambda^\bullet T_{|M^\gamma}M ).
\]
In the following we use ideas from the paper \cite{ConNDG} to construct
$L_2^\gamma$. To this end let $\pr_2: M\times M\to M$ be the projection
onto the second factor of $M\times M$, and $\xi$ the vector field on
$M\times M$ which maps $(x_1, x_2)$ to $\exp_{x_2}^{-1}(x_1)$.
By our assumptions on the riemannian metric on $M$ the vector field $\xi$ is
well-defined and $\Gamma$-invariant.
According to \cite[Lemma 44]{ConNDG}, the complex
$K^\bullet=\big( \Gamma_\text{\tiny \rm cpt}^\infty (
\operatorname{pr}_2^*(\Lambda^\bullet T^*M)),\xi\llcorner \big)$
defines a projective resolution of
$\calC^\infty_\text{\tiny \rm cpt} (M)$. Essentially, it is a Koszul
resolution for $\calC^\infty_\text{\tiny \rm cpt} (M)$.
Following Appendix \ref{Sec:HHBR}, we use the resolution $K^\bullet$ to
determine the Hochschild cohomology
$H^\bullet \big( \calC_{\text{\rm \tiny cpt}}^\infty(M),
  \calC_{\text{\rm \tiny cpt}}^\infty( M_\gamma )\big) $ as the cohomology of
the cochain complex $\Hom_{\calC_\text{\rm \tiny cpt}^\infty (M^2)}
 \big( K^\bullet , \calC_{\text{\rm \tiny cpt}}^\infty(M_\gamma ) \big)$.
By \cite{ConNDG} the following chain map is a quasi-isomorphism between
the resolution $K^\bullet$ and the Bar resolution
$\Barcpl_\bullet (\calC^\infty_{\text{\rm \tiny cpt}} (M) \big)$:
\[
\begin{split}
  \Phi :\: & K^k \rightarrow
  \Barcpl_k ( \calC^\infty_{\text{\rm \tiny cpt}} (M) )
  = \calC^\infty_\text{\tiny \rm cpt} (M^{k+2} ),\\
  &\omega \mapsto \Big( M^{k+2} \ni ( a,b, x_1, \cdots, x_k) \mapsto
  \langle \xi (x_1, b)\wedge\cdots \wedge \xi (x_k, b), \omega(a,b) \rangle
  \Big).
\end{split}
\]
Hence the dual of the chain map $\Psi$ defines a quasi-isomorphism
\[
 \Phi^*: \: \Hom_{\calC^\infty_{\text{\rm \tiny cpt}} (M^2)}
 (\calC^\infty_{\text{\rm \tiny cpt}} (M^{k+2}),
 \calC^\infty_{\text{\rm \tiny cpt}}) \rightarrow
 \Hom_{\calC^\infty_{\text{\rm \tiny cpt}} (M^2)}
 (K^k, \calC^\infty_{\text{\rm \tiny cpt}} (M)).
\]
Now consider the embedding $\Delta_\gamma: M\to M\times M$ given by
$\Delta_\gamma(x)=(\gamma(x), x)$.  According to
\cite[Sec.~3, Step 4]{NeuPflPosTanHFDPELG}, the map
\[
  \eta : \: \Gamma^\infty (\Lambda^k TM) \rightarrow
  \Hom_{\calC^\infty_{\text{\rm \tiny cpt}} (M^2)}
  (K^k, \calC^\infty_{\text{\rm \tiny cpt}} (M)), \quad
  \tau \mapsto \eta (\tau),
\]
defined by $\eta (\tau) (\omega) = \langle \Delta^*_\gamma \omega , \tau \rangle$
for $\omega \in \Gamma^\infty_\text{\tiny\rm cpt}\pr_2^*(\Lambda^\bullet T^*M))$
is an isomorphism.
So finally we can define for $F \in C^k(\calC_{\text{\rm \tiny cpt}}^\infty(M),
\calC_{\text{\rm \tiny cpt}}^\infty(M)_\gamma)$ an element
$L^\gamma_2(F)\in \Gamma^\infty(\Lambda^k TM)$ by
\[
  L^\gamma_2(F )=\eta^{-1} \Phi^* \Psi^{k,\varepsilon} (F),
\]
where we have used the cut-off cochain map $\Psi^{k,\varepsilon}$ defined above.
Thus we obtain an isomorphism of complexes
\[
  L^\gamma_2 : \: \big( C^k (\calC^\infty_\text{\tiny \rm cpt} (M),
  \calC^\infty_\text{\tiny \rm cpt} (M) ), b\big) \rightarrow
  (\Lambda^\bullet TM, \kappa_\gamma\wedge ),
\]
where $\kappa_\gamma$ is the restriction of the vector field $\xi$ on
$M\times M$ to the $\gamma$-diagonal $\Delta_\gamma$.

In the case, where $M$ is a (finite dimensional) vector space $V$
with a linear $\Gamma$-action, we can write down $L_2^\gamma$
explicitly. Choose coordinates $x^i$, $i=1,\ldots , \dim V$ on
$V$. Then the vector field $\xi$ on $V\times V$ can be written as
\begin{equation}
\label{eulervf}
 \xi (x_1, x_2)=\sum_{i}(x_1-x_2)^{i}\frac{\partial}{\partial x_2^i}.
\end{equation}
Moreover, $L^\gamma_2(F)$ is given as follows:
\begin{equation}
\label{eq:L-2}
\begin{split}
 L^\gamma_2 (F)&\,(x) = \sum\limits_{i_1, \cdots, i_k}\!\!
 F\big( \Psi_{k,\varepsilon} ( -, x_1, \cdots ,x_k) \cdot \\
 & \cdot \langle \xi (x_1,x)\wedge\cdots \wedge \xi (x_k, x), \text{pr}_2^*(dx^{i_1}
 \wedge\cdots \wedge dx^{i_k})\rangle \big)(x)
 \frac{\partial}{\partial x^{i_1}}\wedge \cdots \wedge
 \frac{\partial}{\partial x^{i_k}}\\
 = \: & \sum\limits_{i_1, \cdots i_k}\Big(\sum_{\sigma\in S_k}
 (-1)^{\sigma} F \big((x_{\sigma(1)}-x)^{i_1}\cdots
 (x_{\sigma(k)}-x)^{i_k}\big)\Big) (x) \frac{\partial}{\partial
 x^{i_1}}\wedge \cdots \wedge \frac{\partial}{\partial x^{i_k}},
\end{split}
\end{equation}
where $x\in V$, and where we have identified $F$ with a bounded linear map
from $\calC^\infty_\text{\tiny \rm cpt} (V^k)$ to
$\calC^\infty (V)$.

To define $L_3$ we construct for each $\gamma \in \Gamma$ a localization map
to the fixed point sub manifold $M^\gamma$.
Recall that we have chosen a $\Gamma$-invariant complete riemannian metric on $M$,
and consider the normal bundle $N^\gamma$ to the embedding
$\iota_\gamma:M^\gamma\hookrightarrow M$. The riemannian metric allows us to
regard $N^\gamma$ as a subbundle of the restricted tangent bundle
$T_{|M^\gamma}M$.  Now we denote by $\pr^\gamma$ the orthogonal projection from
$\Lambda^\bullet T_{|M^\gamma} M$ to
$\Lambda^{\bullet-\ell(\gamma)}TM^\gamma\otimes \Lambda^{\ell(\gamma)}N^\gamma$.
The chain map
\[
 L_3: \Big(\bigoplus_{\gamma\in \Gamma} \Gamma^\infty(\Lambda^\bullet TM), 
 \kappa\wedge - \Big)\longrightarrow \bigoplus_{\gamma\in \Gamma}
 \big(\Gamma^\infty(\Lambda^{\bullet-\ell (\gamma)} 
 TM^\gamma \otimes \Lambda^{\ell (\gamma)}N^\gamma), 0\ \big).
\]
is then constructed as the sum of the maps $L_3^\gamma$ defined by
\[
  L_3^\gamma (X) =\text{pr}^\gamma(X|_{M^\gamma}) \quad
  \text{for $X\in \Gamma^\infty (\Lambda^\bullet TM)$}.
\]

In \cite[Sec.~3]{NeuPflPosTanHFDPELG} we proved that
$L=L_3\circ L_2\circ L_1$ is a quasi-isomorphism of cochain complexes.
\subsubsection{The chain map $T$}
Under the assumption that $M$ is a linear $\Gamma$-representation space $V$
we construct in this section a quasi-inverse
\[
  T:  \Big(\bigoplus_{\gamma\in \Gamma}
  \Gamma^\infty(\Lambda^{\bullet-\ell(\gamma)}
  TM^\gamma\otimes \Lambda^{\ell(\gamma)}N^\gamma)\Big)^{\Gamma}\longrightarrow
  C^\bullet(\calC_{\text{\rm \tiny cpt}}^\infty(M)\rtimes \Gamma,
  \calC_{\text{\rm \tiny cpt}}^\infty(M)\rtimes \Gamma).
\]
to the above cochain map $L$.
To this end we first recall the construction of the
\textit{normal twisted cocycle}
\[
  \Omega_\gamma \in C^{l(\gamma)}(\calC_{\text{\rm \tiny cpt}}^\infty(V),
  \calC_{\text{\rm \tiny cpt}}^\infty(V)_\gamma)
\]
from \cite{HalTan}. Since $\Gamma$ acts linearly on $V$, $V^\gamma$ is a
linear subspace of $U$ and has a normal space $V^\perp$.
Let $x_i$, $i=1, \cdots, n-\ell(\gamma)$, be coordinates on
$V^\gamma$, and $y_j$, $j=1, \cdots, \ell(\gamma)$, coordinates on $V^\perp$.
We write $\tilde{y}=\gamma y$, and for every $\sigma \in S_{\ell(\gamma)}$ we
introduce the following vectors in $V^\perp$:
\[
\begin{array}{rclrcl}
 z^0 \!\!&=&\!\!(y_1, \cdots, y_{\ell(\gamma)}) , &
 z^1 \!\!&=&\!\!(y_1, \cdots, \tilde{y}_{\sigma(1)}, \cdots, y_{\ell(\gamma)}) , \\
 z^2 \!\!&=&\!\!(y_1, \cdots, \tilde{y}_{\sigma(1)}, \cdots, \tilde{y}_{\sigma(2)},
 \cdots, y_{\ell(\gamma)}), & \cdots \\
 z^{\ell(\gamma)-1}\!\!&=&\!\!(\tilde{y}_1, \cdots, y_{\sigma(\ell(\gamma))}, \cdots ,
 \tilde{y}_{\ell(\gamma)}), &
 z^{\ell (\gamma)}\!\!&=&\!\!(\tilde{y}_1, \cdots, \tilde{y}_{\ell(\gamma)}).
\end{array}
\]
Then we define a cochain $\Omega_{\gamma} \in C^{\ell
(\gamma)}(\calC^\infty_\text{\tiny\rm cpt}(V),
\calC^\infty_\text{\tiny\rm cpt}(V)_\gamma)$ as follows:
\[
\begin{split}
  \Omega_{\gamma} & \, (f_1, \cdots, f_{\ell(\gamma)})(x,y):=
  \frac{1}{\ell(\gamma)!} \cdot \\
  &\!\!\!\cdot \! \!\sum\limits_{\sigma\in S_{\ell(\gamma)}}\!\!\!
  \frac{(f_1(x,z^0)-f_1(x, z^1))(f_2(x, z^1)-f_2(x,
  z^2))\cdots (f_{\ell(\gamma)}(x,z^{n-1})-f_{\ell(\gamma)}(x,
  z^n))}{(y_1-\tilde{y}_1)\cdot\ldots\cdot
  (y_{\ell(\gamma)}-\tilde{y}_{\ell(\gamma)})},
\end{split}
\]
where $f_1, \cdots, f_{\ell(\gamma)} \in \calC^\infty_\text{\tiny\rm cpt} (V)$,
$x\in V^\gamma$ and $y\in V^\perp$.
It is straightforward to check that $\Omega_\gamma$ is a cocycle in
$C^{\ell(\gamma)}(\calC_{\text{\rm \tiny cpt}}^\infty(V),
\calC_{\text{\rm \tiny cpt}}^\infty(V)_\gamma)$ indeed.
Now define the cochain map
\[
  T_1: \Big(\bigoplus_{\gamma \in \Gamma}
  \Gamma^\infty(\Lambda^{\bullet-\ell(\gamma)}TV^\gamma\otimes
  \Lambda^{\ell(\gamma)}N^\gamma)\Big)^{\Gamma}\longrightarrow
  C^\bullet(\calC_{\text{\rm \tiny cpt}}^\infty(V),
  \calC_{\text{\rm \tiny cpt}}^\infty(V)\rtimes \Gamma)^\Gamma
\]
as the sum of maps
\[
  T_1^\gamma:
  \Gamma^\infty(\Lambda^{k-\ell(\gamma)} TV^\gamma\otimes
  \Lambda^{\ell(\gamma)}N^\gamma)\longrightarrow
  C^k (\calC_{\text{\rm \tiny cpt}}^\infty(V),
  \calC_{\text{\rm \tiny cpt}}^\infty(V)_\gamma),
\]
defined by
\[
  T_1^\gamma(X\otimes Y_\gamma)=
  Y_\gamma(y_1, \cdots, y_{\ell(\gamma)}) \,
  X\sharp \Omega_\gamma,
\]
where $ X\in \Gamma^\infty(\Lambda^{k-\ell(\gamma)}TV^\gamma)$,
$Y_\gamma \in  \Gamma^\infty(\Lambda^{\ell(\gamma)}N^\gamma)$,
and where $X\sharp \Omega_\gamma(f_1, \cdots, f_k)$ is equal to
\[
  X (f_1, \cdots, f_{k-\ell(\gamma)}) \,
  \Omega_\gamma(f_{k-\ell(\gamma )+1}, \cdots, f_{\ell(\gamma)}).
\]
Observe hereby that $X$ to act on $f_1, \cdots,
f_{k-\ell(\gamma)}$, we need to use a $\Gamma$-invariant
connection, i.e.~the Levi-Civita connection of the invariant
metric, on the normal bundle of $V^\gamma$ in $V$ to lift $X$ to a
vector field on $V$.

The map
\[
  T:\Big(\bigoplus_{\gamma\in \Gamma}\Gamma^\infty
  (\Lambda^{\bullet-\ell(\gamma)}TV^\gamma\otimes
  \Lambda^{\ell(\gamma)}N^\gamma)\Big)^{\Gamma}\longrightarrow
  C^\bullet(\calC_{\text{\rm \tiny cpt}}^\infty(V)\rtimes \Gamma,
  \calC_{\text{\rm \tiny cpt}}^\infty(V)\rtimes \Gamma)
\]
is now written as the composition of $T_1$ and $T_2$, where $T_2$ is the
standard cochain map from the Eilenberg-Zilber theorem:
\[
  T_2:C^\bullet (\calC_{\text{\rm \tiny cpt}}^\infty(V),
  \calC_{\text{\rm \tiny cpt}}^\infty(V)\rtimes \Gamma)^\Gamma
  \longrightarrow C^\bullet (\calC_{\text{\rm \tiny cpt}}^\infty(V)\rtimes
  \Gamma, \calC_{\text{\rm \tiny cpt}}^\infty(V)\rtimes \Gamma).
\]
More precisely, for $F\in C^k(\calC_{\text{\rm \tiny cpt}}^\infty(V),
\calC_{\text{\rm \tiny cpt}}^\infty(V)\rtimes \Gamma)$ one has
\[
  T_2(F)( f_1 \delta_{\gamma_1}, \cdots, f_k \delta_{\gamma_k})=
  F (f_1 , \gamma_1(f_2), \cdots, \gamma_1\cdots\gamma_{k-1}(f_k))
  \delta_{\gamma_1\cdots \gamma_k}.
\]
The following result then holds for the composition $T=T_2\circ T_1$.
Its proof is performed by a straightforward check
(cf.~\cite[Sec.~2]{HalTan} for some more details).
\begin{theorem}
\label{thm:quasi-inverse}
  Let $V$ be a finite dimensional real linear $\Gamma$-representation
  space. Then the cochain maps $L$ and $T$ defined above satisfy
  $L\circ T=\id$. In particular, $T$ is a quasi-inverse to $L$.
\end{theorem}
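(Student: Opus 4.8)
The plan is to verify the identity $L\circ T=\id$ by direct computation on a homogeneous element $X\otimes Y_\gamma\in \Gamma^\infty(\Lambda^{k-\ell(\gamma)}TV^\gamma\otimes\Lambda^{\ell(\gamma)}N^\gamma)$, tracing it through the composition $T=T_2\circ T_1$ and then through $L=L_3\circ L_2\circ L_1$. Since both $L$ and $T$ respect the direct-sum decomposition over $\gamma\in\Gamma$ (and the $\Gamma$-invariant subspaces), it suffices to work within a fixed twisted sector. First I would apply $T_1^\gamma$, obtaining the cochain $Y_\gamma(y)\,X\sharp\Omega_\gamma\in C^k(\calC^\infty_{\text{\tiny\rm cpt}}(V),\calC^\infty_{\text{\tiny\rm cpt}}(V)_\gamma)$, then compose with $T_2$ to land in $C^k(\calC^\infty_{\text{\tiny\rm cpt}}(V)\rtimes\Gamma,\calC^\infty_{\text{\tiny\rm cpt}}(V)\rtimes\Gamma)$; the key point here is that $T_2$ simply repackages the data with the $\delta_\gamma$'s, so after applying $L_1$ (which restricts to arguments of the form $f_i\delta_e$) one recovers essentially $T_1^\gamma(X\otimes Y_\gamma)$ again, landing back in the $\gamma$-component $C^k(\calC^\infty_{\text{\tiny\rm cpt}}(V),\calC^\infty_{\text{\tiny\rm cpt}}(V)_\gamma)$.

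The substance of the argument is then the computation of $L_3^\gamma\circ L_2^\gamma$ applied to $T_1^\gamma(X\otimes Y_\gamma)$. Using the explicit formula~(\ref{eq:L-2}) for $L_2^\gamma$ on a linear representation space, the claim reduces to showing that the polynomial antisymmetrization
\[
 \sum_{\sigma\in S_k}(-1)^\sigma\, \big(X\sharp\Omega_\gamma\big)\big((x_{\sigma(1)}-x)^{i_1}\cdots(x_{\sigma(k)}-x)^{i_k}\big)
\]
evaluated along the $\gamma$-diagonal and then orthogonally projected via $\pr^\gamma$ recovers exactly $X\otimes Y_\gamma$. The part coming from $X$ (acting on the first $k-\ell(\gamma)$ slots via the Levi–Civita lift) contributes the $\Lambda^{k-\ell(\gamma)}TV^\gamma$ factor after restriction to $V^\gamma$, essentially because $L_2$ and $T_1$ are built from dual pairings against the same Euler-type vector field $\xi$, so they are mutually inverse on the tangential directions. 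The part coming from $\Omega_\gamma$ (on the last $\ell(\gamma)$ slots) must produce the $\Lambda^{\ell(\gamma)}N^\gamma$ factor: here one plugs the linear coordinate functions $(y_j-\tilde y_j)$-type differences into the explicit formula for $\Omega_\gamma$, and the denominators $(y_1-\tilde y_1)\cdots(y_{\ell(\gamma)}-\tilde y_{\ell(\gamma)})$ cancel against the numerators, leaving exactly the top exterior power on the normal directions weighted by $Y_\gamma$.

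The main obstacle will be the combinatorics of the antisymmetrization over $S_k$ matching the factor $1/\ell(\gamma)!$ and the sum over $S_{\ell(\gamma)}$ built into $\Omega_\gamma$: one must check that the $\sharp$-product correctly interleaves the $S_{k-\ell(\gamma)}$-antisymmetrization of the $X$-part with the $S_{\ell(\gamma)}$-symmetrization inside $\Omega_\gamma$, and that cross terms (where a "tangential" argument index lands in an $\Omega_\gamma$ slot or vice versa) vanish — the latter because such terms involve differences of coordinates that restrict to zero on $V^\gamma$ or get killed by $\pr^\gamma$. I would organize this by first treating the case $X=1$ (so $k=\ell(\gamma)$), reducing the claim to $\pr^\gamma\circ L_2^\gamma(\Omega_\gamma)=Y_\gamma$-normalization, verify this purely normal-direction identity by the denominator-cancellation above, and then handle general $X$ by the observation that $L_2^\gamma$ is multiplicative with respect to the $\sharp$-decomposition $V=V^\gamma\oplus V^\perp$ up to terms annihilated by $\pr^\gamma$. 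The statement that $T$ is then a genuine quasi-inverse to $L$ follows formally: since $L$ is already known to be a quasi-isomorphism by~\cite{NeuPflPosTanHFDPELG} and $L\circ T=\id$, the map $T$ induces a right inverse on cohomology which, against an isomorphism, is a two-sided inverse, so $T$ is a quasi-isomorphism.
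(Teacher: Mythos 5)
Your proposal is correct and follows essentially the same route as the paper: the paper's own proof is exactly this componentwise direct check of $L\circ T=\id$ on each twisted sector (with the denominator cancellations in $\Omega_\gamma$ and the vanishing of cross terms, details delegated to \cite[Sec.~2]{HalTan}), and the ``quasi-inverse'' conclusion is drawn, as you do, from the fact that $L$ is already known to be a quasi-isomorphism by \cite{NeuPflPosTanHFDPELG}.
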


By the above considerations one concludes that there is an isomorphism of
vector spaces between the Hochschild cohomology of
$\calC_{\text{\rm \tiny cpt}}^\infty(M)\rtimes \Gamma$ and the space of
smooth sections of alternating multi-vector fields on the corresponding
inertia orbifold.
\subsubsection{The cup product}
\label{Sec:LocClCupProd}
In this part, we use the above constructed maps to compute the cup
product on the Hochschild cohomology of the algebra
$\calC_{\text{\rm \tiny cpt}}^\infty(M)\rtimes \Gamma$. By proving
the following proposition, we will complete proof of Theorem IV.

\begin{proposition}
\label{prop:cup} For every smooth $\Gamma$-manifold $M$ the cup product on
the Hochschild cohomology
$H^\bullet (\calC_{\text{\rm \tiny cpt}}^\infty(M)\rtimes \Gamma ,
\calC_{\text{\rm \tiny cpt}}^\infty(M)\rtimes \Gamma ) \cong
\Big(\bigoplus_{\gamma\in\Gamma}\Gamma^\infty(\Lambda^{\bullet-\ell(\gamma)}
TM^\gamma\otimes \Lambda^{\ell(\gamma)}N^\gamma)\Big)^{\Gamma}$
is given for two cochains
\[
 \xi=(\xi_\alpha)_{\alpha\in \Gamma}, \:
 \eta=(\eta_\beta)_{\beta\in \Gamma}\in
 \Big(\bigoplus_{\gamma\in\Gamma}\Gamma^\infty(\Lambda^{\bullet-\ell(\gamma)}
 TM^\gamma\otimes \Lambda^{\ell(\gamma)}N^\gamma)\Big)^{\Gamma}
\]
as the cochain $\xi \cup \eta$ with components
\[
 (\xi \cup \eta)_\gamma=\sum_{\alpha\beta=\gamma, \ell(\alpha)+\ell(\beta)=\ell(\gamma)}\xi_\alpha\wedge \eta_\gamma.
\]
\end{proposition}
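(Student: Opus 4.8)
The plan is to reduce to a local model and then push the cup product through the explicit quasi-isomorphisms $L$ and $T$. Since the cup product on $\calH^\bullet$ is defined presheaf-wise, Theorems I and II allow one to compute it on a sufficiently fine cover, and the slice theorem together with Theorem IIIb and the Morita invariance of the Hochschild ring (Theorem III) reduce the proposition to the case where $M=V$ is a finite dimensional linear $\Gamma$-representation space carrying a $\Gamma$-invariant scalar product — in particular geodesically convex — which is exactly the setting in which $L$ and $T$ were built. There one has $L\circ T=\id$ (Theorem \ref{thm:quasi-inverse}), and since the target complex $\big(\bigoplus_\gamma\Gamma^\infty(\Lambda^{\bullet-\ell(\gamma)}TV^\gamma\otimes\Lambda^{\ell(\gamma)}N^\gamma)\big)^\Gamma$ carries the zero differential, the cup product transported along $L$ is literally $(\xi,\eta)\mapsto L\big(T(\xi)\cup T(\eta)\big)$. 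Because $L$ sends coboundaries to $0$ in this target and $L\circ T=\id$, it suffices to show that $T$ is multiplicative up to homotopy: $T(\xi)\cup T(\eta)$ is cohomologous to $T(\zeta)$, where $\zeta$ has components $\zeta_\gamma=\sum_{\alpha\beta=\gamma,\ \ell(\alpha)+\ell(\beta)=\ell(\gamma)}\xi_\alpha\wedge\eta_\beta$ (the tangential factors being restricted to $V^\gamma=V^\alpha\cap V^\beta$ and the normal factors combined via the transverse splitting $N^\gamma=N^\alpha\oplus N^\beta$, which is only defined under that codimension condition).

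Next I would unwind the left-hand side using the $\Gamma$-grading $A=R\rtimes\Gamma=\bigoplus_\gamma R_\gamma$, where $R:=\calC_{\text{\rm \tiny cpt}}^\infty(V)$. The Eilenberg--Zilber comparison $T_2$ intertwines, up to chain homotopy, the cup product on $C^\bullet(A,A)$ with the natural twisted cup product on $\bigoplus_\gamma C^\bullet(R,R_\gamma)$ (the twist coming from the bimodule isomorphisms $R_\alpha\otimes_R R_\beta\cong R_{\alpha\beta}$). Hence it is enough to compute, for each factorization $\alpha\beta=\gamma$, the class of $T_1^\alpha(\xi_\alpha)\cup T_1^\beta(\eta_\beta)\in C^\bullet(R,R_\gamma)$. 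Writing $\xi_\alpha=X_\alpha\otimes Y_\alpha$, $\eta_\beta=X_\beta\otimes Y_\beta$ and recalling $T_1^\gamma(X\otimes Y)=Y\,(X\sharp\Omega_\gamma)$, the tangential factors $X_\alpha$, $X_\beta$ (lifted off $V^\alpha$, $V^\beta$ by the invariant connection) merely contribute a wedge, so the whole matter comes down to the product $\Omega_\alpha\cup\Omega_\beta\in C^{\ell(\alpha)+\ell(\beta)}(R,R_{\alpha\beta})$ of the two normal twisted cocycles.

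The main obstacle is the following product formula for twisted cocycles. If $\ell(\alpha)+\ell(\beta)>\ell(\alpha\beta)$ — i.e.\ $V^\alpha$ and $V^\beta$ meet in excess codimension — then $\Omega_\alpha\cup\Omega_\beta$ is a Hochschild coboundary; if $\ell(\alpha)+\ell(\beta)=\ell(\alpha\beta)$, so that along $V^{\alpha\beta}=V^\alpha\cap V^\beta$ one has $(V^\perp)_{\alpha\beta}=(V^\perp)_\alpha\oplus(V^\perp)_\beta$, then $\Omega_\alpha\cup\Omega_\beta$ is cohomologous to $\Omega_{\alpha\beta}$. For the vanishing case I would note that $\Omega_\alpha\cup\Omega_\beta$ only depends on, and takes values depending only on, the coordinates transverse to $V^\alpha\cap V^\beta$; feeding it through the Koszul resolution $K^\bullet$ of \cite{ConNDG} and its explicit contracting homotopy exhibits it as a coboundary (equivalently, after $L_2^{\alpha\beta}$ it yields a multivector with strictly more than $\ell(\alpha\beta)$ legs transverse to $V^{\alpha\beta}$, which $L_3^{\alpha\beta}=\pr^{\alpha\beta}(\,\cdot\,|_{V^{\alpha\beta}})$ annihilates). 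For the transverse case the splitting $(V^\perp)_{\alpha\beta}=(V^\perp)_\alpha\oplus(V^\perp)_\beta$ identifies the two transverse determinant generators, and a direct calculation — essentially the local finite-group computation of \cite[Sec.~2]{HalTan} — gives $\Omega_\alpha\cup\Omega_\beta\simeq\Omega_{\alpha\beta}$. Pinning down the combinatorial normalization factor in this last step is the delicate point.

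Granting this, I would assemble the result: when $\ell(\alpha)+\ell(\beta)=\ell(\gamma)$ the cochain $T_1^\alpha(\xi_\alpha)\cup T_1^\beta(\eta_\beta)$ is cohomologous to $T_1^\gamma\big((X_\alpha\wedge X_\beta)\otimes(Y_\alpha\wedge Y_\beta)\big)=T_1^\gamma(\xi_\alpha\wedge\eta_\beta)$, and for all other factorizations it is a coboundary. Applying $L=L_3\circ L_2\circ L_1$, using $L\circ T=\id$, and noting once more that coboundaries die in the target, one obtains $L\big(T(\xi)\cup T(\eta)\big)=\zeta$, i.e.\ $(\xi\cup\eta)_\gamma=\sum_{\alpha\beta=\gamma,\ \ell(\alpha)+\ell(\beta)=\ell(\gamma)}\xi_\alpha\wedge\eta_\beta$. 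Together with the reduction of the first paragraph this proves the proposition and thereby completes the proof of Theorem IV.
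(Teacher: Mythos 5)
Your outer strategy coincides with the paper's: reduce to a linear $\Gamma$-representation space $V$ via Theorems II, III, IIIb and the slice argument, then transport the cup product through $L$ and $T$ using $L\circ T=\id$ and the splitting of $C^\bullet(\calC^\infty_{\text{\rm\tiny cpt}}(V),\calC^\infty_{\text{\rm\tiny cpt}}(V)\rtimes\Gamma)$ into twisted complexes. Where you diverge is in the middle: you interpose a cochain-level multiplicativity statement for $T$, namely that $T(\xi)\cup T(\eta)$ is cohomologous to $T(\xi\cup\eta)$, and you reduce this to the product formula $\Omega_\alpha\cup_{\mathrm{tw}}\Omega_\beta\simeq\Omega_{\alpha\beta}$ when $\ell(\alpha)+\ell(\beta)=\ell(\alpha\beta)$, resp.\ exact otherwise. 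The transverse case of that formula, including its normalization, is exactly what you leave open (``pinning down the combinatorial normalization factor \dots is the delicate point''), and this is not a detail one may defer: the content of the proposition is precisely that the coefficient is $1$, the identification of the two transverse determinant generators via $N^{\alpha\beta}=N^\alpha\oplus N^\beta$ does not by itself produce it, and an appeal to the computations in \cite{HalTan} does not pin it down in the normalization of $\Omega_\gamma$ used here. So, as written, your argument has a genuine gap at its only truly computational step.

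The gap can be closed --- and the detour through multiplicativity of $T$ avoided altogether --- by the paper's device: do not compare $T(\xi)\cup T(\eta)$ with $T(\xi\cup\eta)$ at cochain level, but compute $L\big(T(\xi)\cup T(\eta)\big)$ directly. Indeed $L_1$ turns the cup product into the twisted product $\sum_{\alpha\beta=\gamma}T_1^\alpha(\xi_\alpha)\cdot\alpha\big(T_1^\beta(\eta_\beta)\big)$, and $L_2$, being the antisymmetrization of the linear terms as in Eq.~(\ref{eq:L-2}), converts this into the wedge $L_2^\alpha(T_1^\alpha\xi_\alpha)\wedge\alpha\big(L_2^\beta(T_1^\beta\eta_\beta)\big)$. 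The excess-intersection contributions then die immediately: if $N^\alpha$ and $N^\beta$ meet nontrivially along $V^\alpha\cap V^\beta$, a common normal direction occurs in both factors and the wedge vanishes, while Lemmas \ref{lem:comp-inter} and \ref{lem:codim} translate the surviving condition $N^\alpha\cap N^\beta=\{0\}$ into $\ell(\alpha)+\ell(\beta)=\ell(\gamma)$ and $V^{\alpha\beta}=V^\alpha\cap V^\beta$. Applying $L_3$ and invoking $L\circ T=\id$ (Theorem \ref{thm:quasi-inverse}) then yields the stated formula with the correct coefficient for free, since $L$ is a chain map into a complex with zero differential and hence annihilates coboundaries. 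Your parenthetical treatment of the excess case is essentially this argument; running the same $L$-based computation in the transverse case, instead of deferring to an unspecified direct calculation, is what is needed to complete your proof.
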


\begin{proof}
It suffices to prove the claim under the assumption that $M$ is a
linear $\Gamma$-representation space $V$. Then we have the above
defined quasi-inverse $T$ to the cochain map $L$ at our disposal.
To compute $\xi\cup \eta$ we thus have to determine the
multivector field $L(T(\xi)\cup T(\eta))$. Since $L$ is the
composition of $L_1$, $L_2$, and $L_3$, we compute $L_1(T(\xi)\cup
T(\eta))$ first. Recall that the cochain $L_1(T(\xi)\cup
T(\eta))\in C^{p+q}(\calC_{\text{\rm \tiny cpt}}^\infty(V),
\calC_{\text{\rm \tiny cpt}}^\infty(V)_{\gamma})$ is defined by
\begin{equation}
\label{eq:cup}
  L_1(T(\xi)\cup T(\eta))(f_1, \cdots, f_{p+q})=
  \sum_{\alpha\beta=\gamma}T_1^\alpha(\xi_\alpha)(f_1, \cdots,f_p)
  \, \alpha(T_1^\beta(\eta_\beta)(f_{p+1},\cdots, f_{p+q})),
\end{equation}
where $f_1, \cdots , f_{p+q}\in\calC_{\text{\rm \tiny cpt}}^\infty(V)$,
Recall also that the cochain map
\[
  L_2: C^k (\calC_{\text{\rm \tiny cpt}}^\infty(V),
  \calC_{\text{\rm \tiny cpt}}^\infty(V)\rtimes \Gamma)\rightarrow
  \bigoplus_{\gamma\in \Gamma} \Gamma^\infty \big(\Lambda^k T_{V^\gamma}V \big)
\]
essentially is the anti-symmetrization of the linear terms of a cochain.
Hence  $L_2(L_1(T(\xi)\cup T(\eta)))$ is equal to
\[
  \sum_{\alpha\beta=\gamma}L_2^\alpha(T_1^\alpha(\xi_\alpha))\wedge
  \alpha(L_2^\beta(T_2^\beta(\eta_\beta))).
\]
To compute $L_2(L_1(T(\xi)\cup T(\eta)))$, it thus suffices to determine
\[
  L_2^\alpha(T_1^\alpha(\xi_\alpha))\wedge
  \alpha(L_2^\beta(T_2^\beta(\eta_\beta))),
\]
which defines a $(p+q)$-multivector field $Z$ supported in a
neighborhood of $V^\alpha\cap V^\beta$ in $V$.  By Equation
(\ref{eq:L-2}), one observes that when the restrictions of the
normal bundles $N^\alpha$ and $N^\beta$ to $V^\alpha\cap V^\beta$
have a nontrivial intersection, for instance along a coordinate
$x^0$,  then in Equation (\ref{eq:cup}), the derivative
$\frac{\partial}{\partial x^0}$ shows up in both
$L_2^\alpha(T_1^\alpha(\xi_\alpha))$ and
$\alpha(L_2^\beta(T_x^\beta(\eta_\beta)))$. Therefore  their wedge
product then has to vanish.  This argument shows that the
nontrivial contribution of the cup product $\xi\cup \eta$ comes
from those components, where $N^\alpha$ and $N^\beta$ do not have
a nontrivial intersection. By the following Lemma
(\ref{lem:comp-inter}) this implies that at a point $x\in
V^\alpha\cap V^\beta$ with $N^\alpha_x \cap N^\beta_x = \{ 0\}$
one has $T_xV^\alpha+T_xV^\beta=T_xV$ and therefore
$V^{\alpha\beta}=V^\alpha\cap V^\beta$.  This last condition by
Lemma \ref{lem:codim} is equivalent to
$\ell(\alpha)+\ell(\beta)=\ell(\gamma)$. When $V^\alpha\cap
V^\beta=V^{\alpha\beta}$ and $N^\alpha \cap N^\beta=\{0\}$, one
computes $L_3(Z)$ using the definition of $L_3$ and obtains
\[
\begin{split}
  L(T(\xi)\circ T(\eta))&=\sum_{\alpha\beta=\gamma \atop
  \ell(\alpha)+\ell(\beta)=\ell(\beta)}
  L_3\Big(L_2^\alpha(T_1^\alpha(\xi_\alpha))\wedge
  \alpha(L_2^\beta(T_2^\beta(\eta_\beta)))\Big)\\
  &=\sum_{\alpha\beta=\gamma \atop \ell(\alpha)+\ell(\beta)=\ell(\beta)}
  \xi_\alpha\cup \alpha(\eta_\beta).
\end{split}
\]
Note that on $V^\alpha\cap V^\beta=V^{\alpha\beta}$ one has
$\alpha(\eta_\beta)=\eta_\beta$. This finishes the proof of the claim.
\end{proof}
To end this section we finally show a lemma which already has been
used in the proof of the preceding result.
\begin{lemma}
\label{lem:comp-inter}
  Let $\alpha, \beta$ be two linear automorphisms on the real vector space
  $V$. Let $\langle - , -\rangle$ be a scalar product preserved by $\alpha$
  and $\beta$ and let $V^\alpha$, $V^\beta$ be the corresponding fixed point
  subspaces. If $V^\alpha+V^\beta=V$, then
  $V^\alpha\cap V^\beta=V^{\alpha\beta}$.
\end{lemma}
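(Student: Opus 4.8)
The plan is to establish the two inclusions $V^\alpha\cap V^\beta\subseteq V^{\alpha\beta}$ and $V^{\alpha\beta}\subseteq V^\alpha\cap V^\beta$ separately. The first is trivial and uses no hypothesis: if $\alpha v=v$ and $\beta v=v$, then $\alpha\beta(v)=\alpha(v)=v$. So all the content is in the reverse inclusion, and this is where the hypothesis $V^\alpha+V^\beta=V$ and the existence of the invariant scalar product come in. (As in the application, $V$ is finite dimensional; I use this throughout.)

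The key elementary fact I would use is that for an orthogonal automorphism $g$ of $V$ the subspace $\im(g-\id)$ is exactly the orthogonal complement of the fixed space $V^g$: indeed $(\im(g-\id))^\perp=\ker((g-\id)^\ast)=\ker(g^{-1}-\id)=V^{g^{-1}}=V^g$, using $g^\ast=g^{-1}$ and $V^{g^{-1}}=V^g$; hence $\im(g-\id)=(V^g)^\perp$, and in particular $\im(g-\id)=\im(g^{-1}-\id)$. Applying this to $g=\alpha$ and $g=\beta$ and taking orthogonal complements of the hypothesis, $(V^\alpha+V^\beta)^\perp=(V^\alpha)^\perp\cap(V^\beta)^\perp$, the condition $V^\alpha+V^\beta=V$ becomes equivalent to
\[
  \im(\alpha-\id)\cap\im(\beta-\id)=\{0\}.
\]

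With this reformulation the inclusion $V^{\alpha\beta}\subseteq V^\alpha\cap V^\beta$ is immediate. Let $v\in V^{\alpha\beta}$, i.e.\ $\alpha\beta(v)=v$, equivalently $\beta(v)=\alpha^{-1}(v)$. Then
\[
  \beta(v)-v=\alpha^{-1}(v)-v=(\alpha^{-1}-\id)(v)\in\im(\alpha^{-1}-\id)=\im(\alpha-\id),
\]
while clearly $\beta(v)-v\in\im(\beta-\id)$. Hence $\beta(v)-v$ lies in the intersection above, so $\beta(v)=v$, and then $\alpha(v)=\alpha\beta(v)=v$ as well; thus $v\in V^\alpha\cap V^\beta$, which completes the argument.

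The only real ingredient — and the one place to be slightly careful — is the identification $\im(g-\id)=(V^g)^\perp$ for an orthogonal $g$, equivalently the orthogonal direct sum $V=V^g\oplus\im(g-\id)$; its orthogonality can also be seen directly from $\langle u,\,gw-w\rangle=\langle g^{-1}u,w\rangle-\langle u,w\rangle=0$ for $u\in V^g$, after which the rank–nullity count gives the decomposition. Everything else is purely formal, so I do not expect a genuine obstacle here.
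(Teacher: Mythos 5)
Your proof is correct and is essentially the paper's argument in repackaged form: both reduce to showing that $w=\beta(v)-v=\alpha^{-1}(v)-v$ vanishes because it is orthogonal to $V^\alpha$ and $V^\beta$ (hence to $V^\alpha+V^\beta=V$), and your lemma $\im(g-\id)=(V^g)^\perp$ is exactly the direct computation $\langle u,\,gw-w\rangle=0$ for $u\in V^g$ that the paper carries out explicitly.
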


\begin{proof}
 Obviously, $V^\alpha\cap V^\beta\subset V^{\alpha\beta}$. It is enough to
 show that if $v\in V^{\alpha\beta}$, then $v\in V^\alpha\cap V^\beta$.

 Since $v\in V^{\alpha\beta}$, one has $\alpha\beta(v)=v$, hence
 $\beta(v)=\alpha^{-1}(v)$. Define $w=\beta(v)-v=\alpha^{-1}(v)-v$. We prove
 that $w$ is orthogonal to both $V^\alpha$ and $V^\beta$. For every
 $u\in V^\alpha$ one has
 \[
 \begin{split}
  \langle w,u \rangle &=\langle\alpha^{-1}(v)-v,u\rangle=
  \langle\alpha^{-1}(v), u\rangle-\langle v,u\rangle\\
  &=\langle v, \alpha(u)\rangle-\langle v,u\rangle=
  \langle v,u\rangle-\langle v,u\rangle=0,
 \end{split}
 \]
where in the first equality of the second line we have used the fact that
$\alpha$ preserves the metric $\langle - , - \rangle$, and in the second
equality of the second line we have used that $u$ is $\alpha$-invariant.
Therefore one concludes that $w$ is orthogonal to $V^\alpha$. Likewise
one shows that $w$ is orthogonal to $V^\beta$.
Therefore, $w$ is orthogonal to $V^\alpha +V^\beta=V$, hence $w$ has to be
$0$. This implies that $v$ is invariant under both $\alpha$ and $\beta$.
\end{proof}


%
%


%
%
%
%
\section{Cup product on the Hochschild cohomology of the deformed
         convolution algebra}
\label{Sec:defhoch}
In this section we compute the Hochschild cohomology  together with the cup
product of a formal deformation of the convolution algebra of a proper
\'etale groupoid $\grp$. For this we assume that the orbifold $X$ is
symplectic or in other words that $\grp_0$ carries a $\grp$-invariant 
symplectic form $\omega$, i.e., satisfying $s^*\omega=t^*\omega$. 
We let $\calA^\hbar$ be a $\grp$-invariant formal deformation quantization of
$\calA=\calC^{\infty}_{\grp_0}$, where the deformation parameter is
denoted by $\hbar$. This means that $\calA^\hbar$ is a $\grp$-sheaf over
$\grp_0$ and the associated crossed product $\calA^\hbar\rtimes\grp$
is a formal deformation of the convolution algebra, cf.~\cite{TanDQPPG}.

As a formal deformation the algebra $\calA^\hbar\rtimes\grp$
is filtered by powers of $\hbar$, i.e., 
$F_k(\calA^\hbar\rtimes\grp):=\hbar^k(\calA^\hbar\rtimes\grp)$ and we have
\begin{equation}
\label{def-quotient}
  \left. F_k\left(\calA^\hbar\rtimes\grp\right)\right\slash
  F_{k-1}\left(\calA^\hbar\rtimes\grp\right)\cong \calA\rtimes\grp.
\end{equation}
As usual, the Hochschild cochain complex is defined by
\[
  C^\bullet\big(\calA^\hbar\rtimes\grp,\calA^\hbar\rtimes\grp\big)
  :=\Hom_{\C[[t]]}\big( (\calA^\hbar\rtimes\grp )^{\hatotimes\bullet},
  \calA^\hbar\rtimes\grp\big),
\]
with differential $\beta$ defined with respect to the deformed convolution 
algebra.
The justification for this definition comes from Proposition 
\ref{Prop:DefConBorAlg}, which also shows that the cup-product, defined by 
\eqref{Eq:DefCup}, extends this complex to a differential graded algebra 
(DGA). The $\hbar$-adic filtration of $\calA^\hbar\rtimes\grp$ above induces a
complete and exhaustive filtration of the Hochschild complex. Since the 
product in $\calA^\hbar\rtimes\grp$ is a formal deformation of the 
convolution product, cf.~equation \eqref{def-quotient},
the associated spectral sequence has $E_0$-term just the undeformed Hochschild
complex of the convolution algebra.

This has the following useful consequence that we will use several times in 
the course of the argument: Suppose that $A_1^\hbar$ and $A_2^\hbar$ are 
formal deformations of the algebras $A_1$ and $A_2$, and
\[
  f: C^\bullet \big(A_1^\hbar,A_1^\hbar\big)\rightarrow 
  C^\bullet\big(A_2^\hbar,A_2^\hbar\big)
\]
is a morphism of filtered complexes. Then $f$ is a quasi-isomorphism, 
if it induces an isomorphism at level $E_1$.
The proof of this statement is a direct application of the 
Eilenberg--Moore spectral sequence comparison theorem, 
cf.~\cite[Thm. 5.5.11]{WeiIHA}.

Let us apply this to the following situation: consider the following 
subspace of the space of Hochschild cochains on 
$\calA^\hbar\rtimes\grp$:
\begin{displaymath}
\begin{split}
  C^k_\text{\tiny\rm loc}&\,\big(\calA^\hbar\rtimes\grp,
  \calA^\hbar\rtimes\grp\big):= \\
  & \Big\{ \Psi\in C^k (\calA^\hbar\rtimes\grp,\calA^\hbar\rtimes\grp) \mid
  \pi s \big( \supp \Psi(a_1,\ldots,a_k ) \big) \subset \bigcap_{i=1}^k
  \pi s ( \supp a_i) \Big\}.
\end{split}
\end{displaymath}
Here ${\rm supp}(a)$ denotes the support of a function. These are
the \textit{local} cochains with respect to the underlying
orbifold $X$. Notice that because of the convolution nature of the
algebra $\calA^\hbar\rtimes\grp$, which involves the action
of $\grp$, it is unreasonable to require locality with
respect to $\grp_0$ or $\grp_1$. The important point
now is:
\begin{proposition}
 The complex of local Hochschild cochains 
 $C^\bullet_\text{\tiny\rm loc}\big(\calA^\hbar\rtimes\grp,
 \calA^\hbar\rtimes\grp\big)$ is a subcomplex of
 $C^\bullet\left(\calA^\hbar\rtimes\grp,
 \calA^\hbar\rtimes\grp\right)$,
 and the canonical inclusion map is a quasi-isomorphism preserving
 cup-products.
\end{proposition}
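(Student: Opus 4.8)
The plan is to show first that $C^\bullet_{\text{\tiny\rm loc}}$ is a differential graded subalgebra of the full Hochschild complex, so that the inclusion is automatically a morphism of differential graded algebras and in particular preserves cup products; the quasi-isomorphism statement is then obtained from the $\hbar$-adic filtration argument recalled just above the proposition, which reduces it to its undeformed counterpart.

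For the first point the essential input is the support estimate
\[
  \pi s\bigl(\supp(a\star_{\text{\tiny\rm c}}b)\bigr)\subseteq
  \pi s(\supp a)\cap\pi s(\supp b)\qquad\text{for }a,b\in\calA^\hbar\rtimes\grp .
\]
Indeed, if $[a\star_{\text{\tiny\rm c}}b]_g\neq0$ then some summand $[a]_{g_1}\,\bigl(g_2\star[b]_{g_2}\bigr)$ with $g_1g_2=g$ is non-zero, which forces $[a]_{g_1}\neq0$ and $[b]_{g_2}\neq0$, i.e.\ $g_1\in\supp a$ and $g_2\in\supp b$; then $\pi s(g)=\pi s(g_2)\in\pi s(\supp b)$, while, since $s(g_1)=t(g_2)$ and $\pi$ identifies the source and the target of any arrow, $\pi s(g)=\pi s(g_2)=\pi t(g_2)=\pi s(g_1)\in\pi s(\supp a)$. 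Substituting this into the three kinds of terms occurring in the Hochschild coboundary of a cochain $F$ --- namely $a_1\star_{\text{\tiny\rm c}}F(a_2,\dots,a_k)$, the terms $F(a_1,\dots,a_i\star_{\text{\tiny\rm c}}a_{i+1},\dots,a_k)$, and $F(a_1,\dots,a_{k-1})\star_{\text{\tiny\rm c}}a_k$ --- and into the cup product $(F\cup G)(a_1,\dots,a_{k+l})=F(a_1,\dots,a_k)\star_{\text{\tiny\rm c}}G(a_{k+1},\dots,a_{k+l})$, one checks at once that each again satisfies the defining support condition of $C^\bullet_{\text{\tiny\rm loc}}$. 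Hence $C^\bullet_{\text{\tiny\rm loc}}$ is stable under $\beta$ and $\cup$, and the inclusion is a morphism of differential graded algebras.

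It remains to see the inclusion is a quasi-isomorphism. The $\hbar$-adic filtration of $C^\bullet(\calA^\hbar\rtimes\grp,\calA^\hbar\rtimes\grp)$ restricts to a complete and exhaustive filtration of $C^\bullet_{\text{\tiny\rm loc}}$, and the inclusion is a morphism of filtered complexes. Since the locality condition refers only to supports, which are unaffected by reduction modulo $\hbar$ (cf.~\eqref{def-quotient}), the induced map on $E_0$-terms is, in each $\hbar$-degree, the \emph{undeformed} inclusion
\[
  C^\bullet_{\text{\tiny\rm loc}}\bigl(\calA\rtimes\grp,\calA\rtimes\grp\bigr)
  \hookrightarrow C^\bullet\bigl(\calA\rtimes\grp,\calA\rtimes\grp\bigr),
\]
equipped with the undeformed Hochschild differential. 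By the Eilenberg--Moore comparison theorem \cite[Thm.~5.5.11]{WeiIHA} quoted above it therefore suffices to show that this undeformed inclusion is a quasi-isomorphism; this is exactly the statement alluded to in the Remark following Theorem~V, and it can be established by the Teleman localization method of Section~\ref{Sec:locmeth}.

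The input for that undeformed statement is the family of cut-off operators $\Psi^{\bullet,\varepsilon}$ of Section~\ref{Sec:locmeth}: they are chain maps, chain-homotopic to the identity through the $H^{\bullet,\varepsilon}$ of \eqref{Eq:HomLocDiag}, and, by completeness of the chosen $\grp$-invariant metric on $\grp_0$, $\Psi^{\bullet,\varepsilon}F$ has the property that, in a fixed metric on $X$, $\pi s\bigl(\supp\Psi^{\bullet,\varepsilon}F(a_1,\dots,a_k)\bigr)$ lies in an $O(\sqrt{\varepsilon})$-neighbourhood of each $\pi s(\supp a_i)$. Thus the cut-offs retract the complex, degree by degree, onto cochains that are local up to an error of size $O(\sqrt{\varepsilon})$, and these subcomplexes shrink, as $\varepsilon\to0$, onto $C^\bullet_{\text{\tiny\rm loc}}(\calA\rtimes\grp,\calA\rtimes\grp)$. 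The main obstacle, as I see it, is precisely this last step --- upgrading ``local up to an arbitrarily small error'' to genuine locality. I expect it to require a limiting (Mittag--Leffler type) argument over the parameter $\varepsilon$ to rule out a $\varprojlim^{1}$ contribution when one intersects the approximately-local subcomplexes, together with --- in order to control the supports honestly --- the device used in the proof of Theorem~II: a locally finite open cover of $X$ with a subordinate partition of unity and a cut-off scale $\varepsilon_U$ adapted to each cover element, using that the $\pi s$-images of compactly supported sections are either equal or at positive distance. Granting this undeformed quasi-isomorphism, the three steps together prove the proposition.
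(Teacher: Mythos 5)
Your first two steps are fine and are in fact how the paper argues: the support estimate $\pi s\bigl(\supp(a\star_{\text{\rm c}}b)\bigr)\subseteq \pi s(\supp a)\cap\pi s(\supp b)$ is exactly the (unwritten) content behind the paper's ``easy to check'' that locality is compatible with $\beta$ and $\cup$, and the reduction via the $\hbar$-adic filtration and the Eilenberg--Moore comparison theorem to the undeformed inclusion $C^\bullet_{\text{\rm loc}}(\calA\rtimes\grp,\calA\rtimes\grp)\hookrightarrow C^\bullet(\calA\rtimes\grp,\calA\rtimes\grp)$ is precisely the paper's second step. The problem is the last step, which you yourself flag as ``the main obstacle'': you never actually prove the undeformed statement, and this is where the real content of the proposition sits. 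The route you sketch --- retract onto $\varepsilon$-approximately local cochains via the Teleman cut-offs $\Psi^{\bullet,\varepsilon}$ and then pass to the limit $\varepsilon\to 0$ --- does not go through as stated. For any fixed $\varepsilon>0$ the cochain $\Psi^{k,\varepsilon}F$ is only supported near the diagonal, never genuinely local, so no single cut-off lands in $C^\bullet_{\text{\rm loc}}$; the ``approximately local'' subcomplexes and the homotopies $H^{k,\varepsilon}$ depend on $\varepsilon$ and do not form an inverse system of complexes with compatible quasi-isomorphisms, so there is no $\varprojlim$/$\varprojlim^1$ machinery to invoke; and the auxiliary claim that $\pi s$-images of supports are ``either equal or at positive distance'' is false for general compact subsets of $X$. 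As written, the proof is therefore incomplete at exactly the point that needs an argument.

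The paper closes this gap quite differently and much more cheaply: it invokes Theorem IV (equivalently, the explicit quasi-isomorphisms $L$ and $T$ of Section \ref{Sec:clglobquot}). The quasi-inverse $T$, built from the normal twisted cocycles $\Omega_\gamma$ and the multivector fields on the inertia orbifold, produces cochains whose value at a point depends only on the germs of the arguments along the corresponding orbit; these representatives manifestly satisfy the locality condition, so every Hochschild class of the undeformed convolution algebra is represented in $C^\bullet_{\text{\rm loc}}$, and since $L$ is also defined on local cochains with $L\circ T=\id$, the inclusion of the local subcomplex is already a quasi-isomorphism at the undeformed level. Feeding that into your (correct) $E_0$-comparison finishes the proof. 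So the fix for your argument is not a limiting refinement of the cut-off construction, but an appeal to the locality of the explicit cocycle representatives from the undeformed computation.
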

\begin{proof}
The orbifold $X$ can be identified with the quotient space 
$\grp_0\slash\grp_1$. The deformed convolution product on 
$\calA^\hbar\rtimes\grp$ involves the local product on the 
$\grp$-sheaf $\calA^\hbar$ on $\grp_0$ and the groupoid action, and 
the product in turn defines the Hochschild complex as well as the 
cup-product. With this, it is easy to check that the
locality condition on $X$ is compatible with both the differential and 
the product.

To show that the canonical inclusion is a quasi-isomorphism, first observe 
that the map clearly respects the $\hbar$-adic filtration. It follows from 
Theorem IV that for the undeformed convolution algebra the local Hochschild 
cochain complex computes the same cohomology, since the vector fields clearly 
satisfy the locality condition. Therefore the inclusion map is a 
quasi-isomorphism at the $E_0$-level, and by the above, a quasi-isomorphism 
in general.
\end{proof}

\begin{remark}
In the following we will often consider the ring extension
\[
  \calA^{((\hbar))} :=
  \calA^\hbar \hatotimes_{\C[[\hbar]]} \C((\hbar),
\]
where $\C((\hbar))$ denotes the field of formal Laurent series in $\hbar$,
and will then regard $\calA^{((\hbar))}$ as an algebra over the ground field 
$\C((\hbar))$.
By standard results from Hochschild (co)homology theory one knows that
\begin{equation}
  H^\bullet\big(\calA^{((\hbar))}\rtimes\grp,
  \calA^{((\hbar))}\rtimes\grp\big) = 
  H^\bullet\big(\calA^{\hbar}\rtimes\grp,\calA^{\hbar}\rtimes\grp\big)
  \hatotimes_{\C[[\hbar]]} \C((\hbar) .
\end{equation}
In the remainder of this article we will tacitly make use of this fact.
\end{remark}
\subsection{Reduction to the \v{C}ech complex}
As in the undeformed case, the idea is to use a \v{C}ech complex
to compute the cohomology. For $U\subset X$, introduce
\[
  \mathcal{H}^k_{\grp,\hbar}(U):= \Hom_{\C[[\hbar]]}
  \big( \Gamma^\infty_\text{\tiny\rm cpt} 
  (U,\tilde{\calA}^\hbar_\text{\tiny\rm fc})^{\hatotimes k},
  \tilde{\calA}^\hbar_\text{\tiny \rm fc}(U)\big). 
\]
This is clearly a deformation of the sheaf 
$\calH^\bullet_\grp$. The sheaf $\calH^k_{\grp,\text{\tiny\rm loc},\hbar}$ 
is similarly defined. As in the undeformed case, we now have an obvious map
\[
  I^\hbar_\text{\tiny\rm loc}:
  C^\bullet_\text{\tiny\rm loc}\big( \calA^\hbar\rtimes\grp,
  \calA^\hbar\rtimes\grp\big)\rightarrow
  \calH_{\grp,\text{\tiny\rm loc},\hbar}^\bullet(X).
\]
\begin{proposition}
The map $I^\hbar_\text{\tiny\rm loc}$ is a quasi-isomorphism of DGA's.
\end{proposition}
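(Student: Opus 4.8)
The plan is to mimic the proof of the preceding proposition and reduce the assertion to its undeformed counterpart by means of the $\hbar$-adic filtration. First I would dispose of the algebraic part. Both $C^\bullet_\text{\tiny\rm loc}\big(\calA^\hbar\rtimes\grp,\calA^\hbar\rtimes\grp\big)$ and $\calH_{\grp,\text{\tiny\rm loc},\hbar}^\bullet(X)$ are differential graded algebras, with differential the Hochschild coboundary $\beta$ and product the cup product defined by concatenation of arguments, just as for $\calH^\bullet_\grp$ in the undeformed case; and $I^\hbar_\text{\tiny\rm loc}$ --- which sends a local cochain $\Psi$ on $\calA^\hbar\rtimes\grp$ to the family obtained by restricting to each open $U\subset X$ and composing with the inclusion $\calA^\hbar\rtimes\grp_{|U}\hookrightarrow\tilde{\calA}^\hbar_\text{\tiny\rm fc}(U)$ --- visibly intertwines both structures. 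Hence $I^\hbar_\text{\tiny\rm loc}$ is a morphism of DGA's, and it remains only to show that it is a quasi-isomorphism of complexes.

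For that I would invoke the comparison principle stated above. The $\hbar$-adic filtration is complete and exhaustive on $\calA^\hbar\rtimes\grp$ and on the sheaves $\tilde{\calA}^\hbar_\text{\tiny\rm fc}$, it induces such a filtration on each of the two complexes, and $I^\hbar_\text{\tiny\rm loc}$ is filtered. By \eqref{def-quotient} and its evident analogue for $\tilde{\calA}^\hbar_\text{\tiny\rm fc}$, the associated graded complexes are the undeformed complexes $C^\bullet_\text{\tiny\rm loc}\big(\calA\rtimes\grp,\calA\rtimes\grp\big)$ and $\calH_{\grp,\text{\tiny\rm loc}}^\bullet(X)$, and the associated graded of $I^\hbar_\text{\tiny\rm loc}$ is the undeformed inclusion $I_\text{\tiny\rm loc}$. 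So it is enough to prove that $I_\text{\tiny\rm loc}$ is a quasi-isomorphism.

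This I would obtain by fitting $I_\text{\tiny\rm loc}$ into the commuting square whose bottom edge is the quasi-isomorphism $\iota$ of Theorem~I, whose left edge is the inclusion $C^\bullet_\text{\tiny\rm loc}\big(\calA\rtimes\grp,\calA\rtimes\grp\big)\hookrightarrow C^\bullet\big(\calA\rtimes\grp,\calA\rtimes\grp\big)$ already shown to be a quasi-isomorphism in the proof of the preceding proposition, and whose right edge is the inclusion $\calH_{\grp,\text{\tiny\rm loc}}^\bullet(X)\hookrightarrow\calH_\grp^\bullet(X)$. By the two-out-of-three property it then suffices to see that this last inclusion is a quasi-isomorphism, and here I would reuse the Teleman-type localization of Section~\ref{Sec:locmeth}: the cut-off operator $\Psi^{\bullet,\varepsilon}$ pushes a presheaf cochain on $X$ to one supported arbitrarily close to the diagonal, which, combined with the slice theorem (Theorem~IIIb) and the explicit local model of Theorem~IV --- in which the representing multivectorfields plainly satisfy the locality condition on $X$ --- provides a quasi-inverse to $\calH_{\grp,\text{\tiny\rm loc}}^\bullet(X)\hookrightarrow\calH_\grp^\bullet(X)$. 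This finishes the undeformed statement, and the comparison principle upgrades it to the claim.

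The step I expect to require real care is the identification of the $E_0$-page: one must check that the $\hbar$-adic associated graded of $\calH_{\grp,\text{\tiny\rm loc},\hbar}^\bullet(X)$ genuinely equals $\calH_{\grp,\text{\tiny\rm loc}}^\bullet(X)$ and that $\mathrm{gr}(I^\hbar_\text{\tiny\rm loc})=I_\text{\tiny\rm loc}$ under this identification; this requires the bornological completed tensor powers and the fiberwise-compactly-supported section sheaves to be compatible with the $\hbar$-filtration, the relevant continuity being supplied by Theorem~O and the appendix. Everything else is bookkeeping.
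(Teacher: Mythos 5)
Your argument is essentially the paper's: the published proof is the one-liner ``By the Eilenberg--Moore spectral sequence, this follows from Theorem I,'' which is exactly the filtered comparison you carry out, with the undeformed statement of Theorem I as the input at the $E_1$-level. The additional bookkeeping you supply --- the DGA compatibility, the square with the two locality inclusions (using the preceding proposition and Theorem IV), and the identification of the $E_0$-page of the $\hbar$-adic filtration --- is precisely what the paper leaves implicit, so your route is the same, just spelled out.
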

\begin{proof}
By the Eilenberg-Moore spectral sequence, this follows from Theorem I.
\end{proof}

\subsection{Twisted cocycles on the formal Weyl algebra}
Our aim is to reduce the computation of Hochschild to sheaf
cohomology. The present section can be viewed as a stalkwise
computation. Let $V=\R^{2n}$ equipped with the standard symplectic
form $\omega$, and suppose that $\Gamma\subset Sp(V,\omega)$ is a
finite group acting on $V$ by linear symplectic transformations.
The action of an element $\gamma\in\Gamma$ induces a decomposition
$V=V^\gamma\oplus V^\perp$ into symplectic subspaces. Put
\[
  \ell(\gamma):=\dim(V^\perp)=\dim(V)-\dim(V^\gamma).
\] 
Let $\mathbb{W}_{2n}$ be the formal Weyl algebra, i.e.,
$\mathbb{W}_{2n}=\C[[y_1,\ldots,y_n]][[\hbar]]$ equipped with the
Moyal product 
\[
 f\star g=
 \sum_{k=0}^\infty\sum_{1\leq i_1,\ldots,i_k \leq n \atop 
 1 \leq j_1,\dots,j_k \leq n}
 \Pi^{i_1j_1}\cdots\Pi^{i_kj_k}\frac{\hbar^k}{k!}
 \frac{\partial^k f}{\partial y_1\ldots \partial y_k}
 \frac{\partial^k g}{\partial y_1\ldots \partial y_k},
\] 
where $\Pi:=\omega^{-1}$ is the Poisson tensor associated to $\omega$. 
With this product, the formal Weyl algebra $\mathbb{W}_{2n}$ is a unital 
algebra over $\C[[\hbar]]$. It is a formal deformation of the commutative 
algebra $\C[[y_1,\ldots,y_{2n}]]$. With an automorphism $\gamma\in\Gamma$, 
we can consider the Weyl algebra bimodule $\mathbb{W}_{2n,\gamma}$ which 
equals $\mathbb{W}_{2n}$ except for the fact that the right action of 
$\mathbb{W}_{2n}$ is twisted by $\gamma$. With this we have:
\begin{proposition}[cf.~\cite{pinczon}]
\label{inf-comp} 
 The twisted Hochschild cohomology is given by
 \[
   H^k\left(\mathbb{W}_{2n},\mathbb{W}_{2n,\gamma}\right)=
   \begin{cases}
     \C[[\hbar]],& \text{for $k=\ell(\gamma)$,}\\
     0,&\text{else.}
   \end{cases}
 \] 
 There exists a generator $\Psi_\gamma$ in the reduced Hochschild complex 
 satisfying 
 \[
   \left.\Psi_\gamma\right|_{\Lambda V^*}=
   \left(\Pi^\perp_\gamma\right)^{\ell(\gamma)/2}.
 \]
 In fact, $\Psi$ is, up to a coboundary, uniquely determined by this property.
\end{proposition}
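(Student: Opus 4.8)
The plan is to reduce the whole computation to a finite--dimensional Koszul--type complex and then to exploit the single crucial fact that $\id-\gamma$ is invertible on the non--fixed part $V^\perp$. \emph{Step 1 (symplectic reduction).} Since $\gamma$ preserves $\omega$, the splitting $V=V^\gamma\oplus V^\perp$ is $\omega$--orthogonal and both summands are symplectic, so $\W_{2n}\cong\W(V^\gamma)\,\hatotimes\,\W(V^\perp)$ as algebras over $\C[[\hbar]]$ and $\W_{2n,\gamma}\cong\W(V^\gamma)\,\hatotimes\,\W(V^\perp)_\gamma$ as bimodules, with $\gamma$ acting trivially on the first factor. Because the bimodule Koszul resolutions used below are finitely generated and free, their completed tensor product is again a resolution, and a Künneth isomorphism holds:
\[
 H^\bullet\big(\W_{2n},\W_{2n,\gamma}\big)\cong
 H^\bullet\big(\W(V^\gamma),\W(V^\gamma)\big)\,\hatotimes\,
 H^\bullet\big(\W(V^\perp),\W(V^\perp)_\gamma\big).
\]
By the classical rigidity of the Weyl algebra the first factor is $\C[[\hbar]]$ concentrated in degree $0$, so everything reduces to the model case $V=V^\perp$, $\dim V=\ell(\gamma)$, with $\gamma$ a fixed--point--free linear symplectic automorphism.

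\emph{Step 2 (the Koszul computation).} For this model case I would use the bimodule Koszul resolution $K_\bullet$ of $\W:=\W(V^\perp)$, with $K_p=\W\,\hatotimes\,\Lambda^p V\,\hatotimes\,\W$ and the deformed Koszul differential, together with Pinczon's explicit quasi--isomorphism $K_\bullet\to\Barcpl_\bullet(\W)$ from \cite{pinczon} --- the latter being what eventually produces representatives in the \emph{reduced} Bar complex. Applying $\Hom_{\W^{\mathrm e}}(-,\W_\gamma)$ gives a finite complex $\big(\W_\gamma\,\hatotimes\,\Lambda^\bullet V^*,\,\delta_\gamma\big)$, and the key identity, for linear $y\in V\subset\W$ and $m\in\W_\gamma$,
\[
 y\star m-m\star\gamma(y)=[y,m]_\star+m\star(\id-\gamma)(y),
\]
shows that $\delta_\gamma$ is the untwisted Koszul differential (which would vanish on symbols, giving polyvector fields) plus a term encoding the endomorphism $\id-\gamma$. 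Filtering the complex $\hbar$--adically, the $E_0$--page is the classical $\gamma$--twisted Koszul complex of $\C[[V^\perp]]$; since $\id-\gamma\in\GL(V^\perp)$, the associated linear functions form a regular sequence generating the maximal ideal, so this $E_0$--complex resolves $\C$ and, read as a cochain complex, has cohomology $\C$ concentrated in degree $\ell(\gamma)$ and zero elsewhere. The spectral sequence therefore degenerates, giving $H^{\ell(\gamma)}\big(\W(V^\perp),\W(V^\perp)_\gamma\big)=\C[[\hbar]]$ and vanishing in all other degrees; together with Step 1 this proves the first assertion for arbitrary $\gamma$. (As a cross--check, the same answer follows from Van den Bergh duality, the formal Weyl algebra being Calabi--Yau of dimension $2n$, plus a twisted Hochschild homology computation; but only the Koszul route delivers the explicit cochain.)

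\emph{Step 3 (generator, uniqueness, and the hard part).} Transporting the Koszul generator of $H^{\ell(\gamma)}$ through Pinczon's quasi--isomorphism produces an explicit cocycle $\Psi_\gamma$ lying in the reduced Hochschild complex whose value on linear arguments --- its principal symbol in $\Lambda^{\ell(\gamma)}V^*$ --- is by construction the top power $\big(\Pi^\perp_\gamma\big)^{\ell(\gamma)/2}$ of the $\gamma$--twisted Poisson bivector on $V^\perp$, and this is nonzero exactly because $\gamma$ is fixed--point--free on $V^\perp$, so $\Psi_\gamma$ generates. For the uniqueness clause: since $H^{\ell(\gamma)}$ is free of rank one over $\C[[\hbar]]$, any reduced cocycle $\Psi'$ with the same restriction to $\Lambda V^*$ has $[\Psi']=(1+\hbar c)[\Psi_\gamma]$ for some $c\in\C[[\hbar]]$, and one then removes coboundaries order by order in $\hbar$, at each stage using that the symbol is already pinned down, to conclude $\Psi'-\Psi_\gamma$ is a coboundary. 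The main obstacle is precisely the bookkeeping internal to \cite{pinczon}: one must make the deformed Koszul differential and Pinczon's chain map explicit enough to see that, on symbols, $\delta_\gamma$ is the \emph{nondegenerate} Koszul differential attached to $\id-\gamma$ (not the trivial one), handle the $\hbar$--adic completions carefully, and verify that ``restriction to $\Lambda V^*$'' genuinely detects the cohomology class so that the order--by--order argument closes.
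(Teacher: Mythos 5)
Your proposal is correct in substance and, at its core, follows the same route as the paper: the bimodule Koszul resolution of the Weyl algebra, the twisted complex $\Lambda^\bullet V\otimes\W_{2n}$ obtained by applying $\Hom$, the spectral sequence of the $\hbar$-adic filtration, and Pinczon's comparison between the Koszul and reduced Bar resolutions to produce the generator $\Psi_\gamma$ and pin it down by its restriction to $\Lambda V^*$. The one genuine reorganization is your Step 1: you split off $\W(V^\gamma)$ by a completed K\"unneth argument and invoke rigidity of the Weyl algebra, so that on the remaining factor $\gamma$ is fixed-point free and the $E_0$-page already collapses, since the linear forms $(\id-\gamma)y_j$ form a regular sequence; the paper instead runs the spectral sequence on all of $V$ at once, where the order-zero page leaves the $V^\gamma$-directions untouched and a second (formal Poisson/Koszul) differential at $E_1$ kills them, degenerating at $E_2$. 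Your version buys a one-page degeneration at the cost of justifying the completed K\"unneth isomorphism (harmless here, since the Koszul terms are finitely generated free and the untwisted factor's cohomology is $\C[[\hbar]]$ concentrated in degree $0$), while the paper's version avoids any tensor-factorization bookkeeping. On the uniqueness clause, note that your intermediate claim $[\Psi']=(1+\hbar c)[\Psi_\gamma]$ does not by itself yield that $\Psi'-\Psi_\gamma$ is a coboundary unless $c=0$; the clean way to close this—exactly the detection point you flag—is the paper's observation that the inclusion of the Koszul complex into the reduced Bar complex is a quasi-isomorphism of resolutions, so the restriction map $R_V$ to $\Lambda V^*$ is injective on cohomology, and equal restrictions force equal classes with no order-by-order correction needed.
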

\begin{proof}
 The first part of the Proposition is essentially well-known, 
 cf.~\cite{AleFarLamSolHIAWAGF,alvarez}. It is conveniently proved by the 
 Koszul resolution of the Weyl algebra
 \[
   0\longleftarrow\mathbb{W}_{2n}\stackrel{\star}{\longleftarrow}
   \mathbb{W}_{2n}\otimes \mathbb{W}_{2n}^\text{\tiny \rm op}
   \stackrel{\partial}{\longleftarrow} K_1
   \stackrel{\partial}{\longleftarrow} K_2
   \stackrel{\partial}{\longleftarrow}\ldots
 \]
 where 
 \[
  K_p:=\mathbb{W}_{2n}\otimes \Lambda^pV^*\otimes 
  \mathbb{W}^\text{\tiny \rm op}_{2n},
 \] 
 and where the differential $\partial:K_p\rightarrow K_{p-1}$ is defined by
\begin{displaymath}
\begin{split}
  \partial & (a_1\otimes a_2\otimes dy_{i_1}\wedge\ldots\wedge dy_{i_p}):=\\
  &\sum_{j=1}^p(-1)^j 
  \left((y_{i_j}\star a_1)\otimes a_2-a_1\otimes (a_2\star y_{i_j})\right) 
  dy_{i_1}\wedge\ldots\wedge\widehat{dy}_{i_j}\wedge\ldots\wedge dy_{i_p},
\end{split}
\end{displaymath}
with respect to a Darboux basis of $V$, i.e., 
$\omega(y_i,y_{i+n})=1$ and zero otherwise.

To compute the Hochschild cohomology, we take 
$\Hom_{\mathbb{W}_{2n}}\left(-,\mathbb{W}_{2n,\gamma}\right)$ to obtain 
the complex 
\begin{equation}
\label{koszul}
  K^p_\gamma:=\Lambda^p V\otimes\mathbb{W}_{2n},
\end{equation}
with differential $d_\gamma:K^p\rightarrow K^{p+1}$ given by 
\[
  d_\gamma\left (a\otimes y_{i_1}\wedge\ldots\wedge y_{i_{p+1}}\right)=
  \sum_{j=1}^{2n}(-1)^j\left(y_j\star a-a\star_\gamma y_j\right)y_j\wedge 
  y_{i_1}\wedge\ldots\wedge y_{i_p}.
\]
The cohomology of this complex can easily be computed using the spectral 
sequence of the $\hbar$-adic filtration. In degree zero one finds the 
ordinary, i.e.~commutative Koszul complex and therefore we find 
\[
  E_1^{p,q}=\Lambda^{p+q}V^\perp.
\] 
The differential $d_1:E_1^{p,q}\rightarrow E_1^{p+1,q}$ is given by the 
Poisson cohomology differential, which has trivial cohomology except in 
maximal degree and therefore 
\[ 
  E_2^{p,q}=
  \begin{cases} 
    \Lambda^{\ell(\gamma)}V^\perp,& \text{for $p+q=\ell(\gamma)$,}\\ 
    0,&\text{else}.
  \end{cases}
\]
The spectral sequence degenerates at this point and the first part of the 
Proposition is proved. The second part is as in \cite{pinczon}: the Koszul 
complex is naturally a subcomplex of the reduced Bar complex 
$(K_\bullet,\partial)\subset (B_\bullet^\text{\tiny \rm red},b)$, where
\[
  B_k^\text{\tiny \rm red}=
  \mathbb{W}_{2n}\otimes\left( \mathbb{W}_{2n}
  \slash\C[[\hbar]]\right)^{\otimes k}\otimes\mathbb{W}_{2n},
\]
and the embedding is induced by the natural inclusion 
$V^*\hookrightarrow\mathbb{W}_{2n}$ as degree one homogeneous polynomials. 
This leads to a natural projection 
\[
  R_V:\left(C^\bullet_\text{\tiny \rm red}
  \left(\mathbb{W}_{2n},\mathbb{W}_{2n}\right),
  \beta_\gamma\right)\rightarrow \left(K^\bullet,d_\gamma\right)
\] 
given by restricting cochains to $\Lambda V^*$. It is easily checked that 
$\left(\Pi^\perp_\gamma\right)^{\ell(\gamma)/2}$ defines a cocycle of 
degree $\ell(\gamma)$ in the complex $\left(K^\bullet,d_\gamma\right)$, 
and the statement follows.
\end{proof}
Let $\mu_\gamma:K^\bullet_\gamma\rightarrow\C[[\hbar]]$ be the morphism 
defined by
\[
  \mu_\gamma(a\otimes v_{i_1}\wedge\ldots\wedge v_{i_k}):=
  a(0)\left(\omega^\perp_\gamma\right)^{\ell(\gamma/2)}
  \left(v_{i_1},\ldots,v_{i_k}\right).
\]
Clearly, this map is only nontrivial in degree $\ell(\gamma)$ and maps the 
differential $d_\gamma$ on $K^\bullet_\gamma$ to zero. Define
\[
  P_\gamma:C^\bullet_\text{\tiny \rm red}
  \left(\mathbb{W},\mathbb{W}_\gamma\right)\rightarrow \C[[\hbar]]
\] 
to be $P_\gamma:=\mu\circ R_V$. On the other hand, choosing 
$\Psi_\gamma$ as in the proposition defines a morphism 
$I_{\Psi_\gamma}:\C[[\hbar]][\ell(\gamma)]\rightarrow 
C^\bullet_\text{\tiny \rm red}\left(\mathbb{W},\mathbb{W}_\gamma\right)$. 
The argument in the proof of the proposition then shows:
\begin{corollary}
  The inclusion $I_{\Psi_\gamma}$ and the projection $P_\gamma$ are 
  quasi-isomorphisms satisfying $P_\gamma\circ I_{\Psi_\gamma}= \id$.
\end{corollary}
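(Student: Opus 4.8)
The plan is to deduce the statement from the ingredients already assembled in the proof of Proposition~\ref{inf-comp}, together with one explicit pairing computation.

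First I would check that $R_V$ is a quasi-isomorphism. By the proof of Proposition~\ref{inf-comp}, the Koszul complex $(K_\bullet,\partial)$ is a subcomplex of the reduced Bar resolution $(B_\bullet^\text{\tiny \rm red},b)$ of $\mathbb{W}_{2n}$ and is itself a resolution of $\mathbb{W}_{2n}$, the inclusion being induced by $V^*\hookrightarrow\mathbb{W}_{2n}$; as a map of resolutions lifting $\id_{\mathbb{W}_{2n}}$ it is a quasi-isomorphism. Applying $\Hom(-,\mathbb{W}_{2n,\gamma})$ in the category of $\mathbb{W}_{2n}$-bimodules therefore yields a quasi-isomorphism of cochain complexes, and under the identification of $\Hom(K_\bullet,\mathbb{W}_{2n,\gamma})$ with the complex $(K^\bullet_\gamma,d_\gamma)$ of \eqref{koszul} this map is precisely the restriction $R_V$.

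Next I would check that $\mu_\gamma$ is a quasi-isomorphism. Since $\mu_\gamma$ annihilates $d_\gamma$ and is concentrated in degree $\ell(\gamma)$, it descends to cohomology, and by the spectral-sequence argument in the proof of Proposition~\ref{inf-comp} the complex $(K^\bullet_\gamma,d_\gamma)$ has cohomology a free $\C[[\hbar]]$-module of rank one, concentrated in degree $\ell(\gamma)$ and generated by the cocycle $(\Pi^\perp_\gamma)^{\ell(\gamma)/2}\otimes 1$, i.e.~by the volume element of $V^\perp$ surviving to the $E_2$-page. Evaluating $\mu_\gamma$ on this generator produces the natural pairing $(\omega^\perp_\gamma)^{\ell(\gamma)/2}\big((\Pi^\perp_\gamma)^{\ell(\gamma)/2}\big)$, which a short computation in a Darboux basis of $V^\perp$ identifies with a nonzero scalar. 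Hence $\mu_\gamma$, and therefore also $P_\gamma=\mu_\gamma\circ R_V$, is a quasi-isomorphism.

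Finally I would establish $P_\gamma\circ I_{\Psi_\gamma}=\id$; the quasi-isomorphism property of $I_{\Psi_\gamma}$ then follows either by the two-out-of-three property (since $P_\gamma$ is a quasi-isomorphism) or directly from the fact that $\Psi_\gamma$ represents a generator of $H^{\ell(\gamma)}$, cf.~Proposition~\ref{inf-comp}. Here one simply computes
\[
  P_\gamma(\Psi_\gamma)=\mu_\gamma\big(R_V(\Psi_\gamma)\big)=\mu_\gamma\big(\Psi_\gamma|_{\Lambda V^*}\big)=\mu_\gamma\big((\Pi^\perp_\gamma)^{\ell(\gamma)/2}\otimes 1\big),
\]
using the normalization $\Psi_\gamma|_{\Lambda V^*}=(\Pi^\perp_\gamma)^{\ell(\gamma)/2}$ from Proposition~\ref{inf-comp}; by the choice of factorial normalizations in the definitions of $\mu_\gamma$ and of $\Psi_\gamma$ the resulting pairing equals $1$, so that indeed $P_\gamma\circ I_{\Psi_\gamma}=\id$. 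The only genuinely delicate point of the argument is the bookkeeping of these normalization constants, together with the identification of the $E_2$-generator with the volume form of $V^\perp$; everything else is a formal manipulation with the comparison of resolutions and the two-out-of-three property for quasi-isomorphisms.
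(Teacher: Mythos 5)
Your proof is correct and takes essentially the same route as the paper, which states the corollary as an immediate consequence of the argument in the proof of Proposition~\ref{inf-comp}; your reconstruction (the inclusion of the Koszul complex into the reduced Bar resolution makes $R_V$ a quasi-isomorphism by the comparison theorem, the $\hbar$-adic spectral sequence shows the cohomology of $(K^\bullet_\gamma,d_\gamma)$ is freely generated by $(\Pi^\perp_\gamma)^{\ell(\gamma)/2}\otimes 1$ on which $\mu_\gamma$ is nonzero, and $P_\gamma(\Psi_\gamma)=\mu_\gamma\bigl(\Psi_\gamma|_{\Lambda V^*}\bigr)$ via the normalization of $\Psi_\gamma$) is exactly that argument made explicit. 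The only point of care, which you correctly flag, is the normalization constant in the pairing of $(\omega^\perp_\gamma)^{\ell(\gamma)/2}$ with $(\Pi^\perp_\gamma)^{\ell(\gamma)/2}$, a convention the paper also treats implicitly.
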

Finally, we come to the full crossed product 
$\mathbb{W}_{2n}\rtimes\Gamma$. As usual, we have
\[
  H^\bullet\left(\mathbb{W}_{2n}\rtimes\Gamma,
  \mathbb{W}_{2n}\rtimes\Gamma\right)\cong
  \left(\bigoplus_{\gamma\in\Gamma}
  H^\bullet(\mathbb{W}_{2n},\mathbb{W}_{2n,\gamma})\right)^\Gamma,
\]
where the $\Gamma$-action is as explained in Section \ref{Sec:clglobquot}.
\begin{corollary}
\label{q-i-crs}
  The generators $\Psi_\gamma,~\gamma\in\Gamma$ satisfy
\[
  \gamma_1\cdot\Psi_{\gamma_2}-\Psi_{\gamma_1\gamma_2\gamma_1^{-1}}=
  \text{\rm exact}.
\]
Therefore they define a canonical isomorphism
\[
  H^\bullet\left(\mathbb{W}_{2n}\rtimes\Gamma,
  \mathbb{W}_{2n}\rtimes\Gamma\right)=
  \bigoplus_{\left<\gamma\right>\subset\Gamma}
  \C[[\hbar]][\ell(\left<\gamma\right>)].
\]
\end{corollary}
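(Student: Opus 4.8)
The plan is to derive the corollary from the uniqueness clause of Proposition~\ref{inf-comp}, reducing everything to a short computation with the distinguished cocycles. First I would record the elementary remark that $\ell$ is a class function: since each $\gamma_1\in\Gamma$ is a linear symplectomorphism, one has $V^{\gamma_1\gamma_2\gamma_1^{-1}}=\gamma_1(V^{\gamma_2})$ and, taking symplectic orthogonals, $V^\perp_{\gamma_1\gamma_2\gamma_1^{-1}}=\gamma_1(V^\perp_{\gamma_2})$, so $\ell(\gamma_1\gamma_2\gamma_1^{-1})=\ell(\gamma_2)$ and $\ell(\langle\gamma\rangle)$ is well defined. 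Now fix $\gamma_1,\gamma_2\in\Gamma$ and put $\gamma:=\gamma_1\gamma_2\gamma_1^{-1}$. As explained in Section~\ref{Sec:clglobquot}, the $\Gamma$-action on Hochschild cochains carries a cocycle in the reduced complex $C^\bullet_\text{\tiny \rm red}(\mathbb{W}_{2n},\mathbb{W}_{2n,\gamma_2})$ to a cocycle in $C^\bullet_\text{\tiny \rm red}(\mathbb{W}_{2n},\mathbb{W}_{2n,\gamma})$, so $\gamma_1\cdot\Psi_{\gamma_2}$ is a cocycle of degree $\ell(\gamma_2)=\ell(\gamma)$ in the latter complex. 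I would then compute its restriction to $\Lambda V^*$: because $\gamma_1$ preserves $\omega$, it preserves the Poisson tensor $\Pi=\omega^{-1}$, and $\gamma_1$ restricts to a symplectic isomorphism $V^\perp_{\gamma_2}\to V^\perp_{\gamma}$ sending $\Pi^\perp_{\gamma_2}$ to $\Pi^\perp_{\gamma}$; hence $\left.(\gamma_1\cdot\Psi_{\gamma_2})\right|_{\Lambda V^*}=(\Pi^\perp_{\gamma})^{\ell(\gamma)/2}$. By the uniqueness statement in Proposition~\ref{inf-comp}, this forces $\gamma_1\cdot\Psi_{\gamma_2}-\Psi_{\gamma}$ to be a coboundary, which is the first assertion.

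For the isomorphism I would start from the standard identification recorded just before the corollary,
\[
  H^\bullet\big(\mathbb{W}_{2n}\rtimes\Gamma,\mathbb{W}_{2n}\rtimes\Gamma\big)
  \cong\Big(\bigoplus_{\gamma\in\Gamma}
  H^\bullet(\mathbb{W}_{2n},\mathbb{W}_{2n,\gamma})\Big)^\Gamma .
\]
By Proposition~\ref{inf-comp} each summand is a free $\C[[\hbar]]$-module of rank one, concentrated in degree $\ell(\gamma)$ and generated by the class $[\Psi_\gamma]$. The relation just proved says that the $\Gamma$-action permutes these generators exactly according to conjugation, $\gamma_1\cdot[\Psi_{\gamma_2}]=[\Psi_{\gamma_1\gamma_2\gamma_1^{-1}}]$; that is, $\bigoplus_\gamma H^\bullet(\mathbb{W}_{2n},\mathbb{W}_{2n,\gamma})$ is, as a $\Gamma$-module, a permutation module on the generators indexed by $\Gamma$ acting by conjugation. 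Taking invariants of such a module groups the summands by conjugacy classes: for each class $\langle\gamma\rangle\subset\Gamma$ one gets a free rank-one $\C[[\hbar]]$-module spanned by $\sum_{\gamma'\in\langle\gamma\rangle}[\Psi_{\gamma'}]$, sitting in degree $\ell(\langle\gamma\rangle)$. This yields precisely $\bigoplus_{\langle\gamma\rangle\subset\Gamma}\C[[\hbar]][\ell(\langle\gamma\rangle)]$, and the isomorphism is canonical because the $\Psi_\gamma$ are themselves canonical up to coboundary.

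The only genuinely delicate point is the middle step: verifying that the $\Gamma$-action on cochains of Section~\ref{Sec:clglobquot}, once restricted to $\Lambda V^*$, corresponds to transporting the Poisson bivector by $\gamma_1$ with no spurious scalar or sign. This comes down to unwinding the conventions — $\gamma_1$ acts on linear coordinates $V^*\subset\mathbb{W}_{2n}$ by $(\gamma_1^{-1})^*$ and on the output of a cochain by conjugation $a\,\delta_{\gamma_2}\mapsto\gamma_1(a)\,\delta_{\gamma_1\gamma_2\gamma_1^{-1}}$ in $\mathbb{W}_{2n}\rtimes\Gamma$ — after which the fact that $\gamma_1$ preserves $\omega$ makes the identification $\Pi^\perp_{\gamma_2}\mapsto\Pi^\perp_{\gamma_1\gamma_2\gamma_1^{-1}}$ immediate, and Proposition~\ref{inf-comp} does the rest. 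Everything else (conjugation preserving $\ell$, and the behaviour of $\Gamma$-invariants on a permutation module) is routine bookkeeping.
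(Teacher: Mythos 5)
Your argument is correct and follows essentially the same route as the paper's proof: restrict to $\Lambda^\bullet V^*$, use that $\gamma_1$ is a linear symplectomorphism carrying $V^\perp_{\gamma_2}$ (and hence $\Pi^\perp_{\gamma_2}$) onto $V^\perp_{\gamma_1\gamma_2\gamma_1^{-1}}$, and then invoke the uniqueness-up-to-coboundary clause of Proposition~\ref{inf-comp}. The closing bookkeeping — that the $\Gamma$-action permutes the rank-one generators $[\Psi_\gamma]$ by conjugation, so that invariants yield one copy of $\C[[\hbar]]$ in degree $\ell(\left<\gamma\right>)$ per conjugacy class — is exactly the step the paper leaves implicit, and you have filled it in correctly.
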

\begin{proof}
Restricting to $\Lambda^\bullet V^*$, we find
\begin{displaymath}
\begin{split}
 \left(\gamma_1\cdot\Psi_{\gamma_2}- \Psi_{\gamma_1\gamma_2\gamma_1^{-1}}
 \right)_{|\Lambda^\bullet V^*}
 &=\gamma_1\cdot\left(\Pi^\perp_{\gamma_2}\right)^{\ell(\gamma_2)/2}-
 \left(\Pi^\perp_{\gamma_1\gamma_2\gamma_1^{-1}}
 \right)^{\ell(\gamma_1\gamma_2\gamma_1^{-1})/2}\\
 &=0
\end{split}
\end{displaymath}
in $K^{\ell(\gamma_2)}_{\gamma_1\gamma_2\gamma_1^{-1}}$. By the
argument above, the cocycles $\gamma_1\cdot\Psi_{\gamma_2}$ and
$\Psi_{\gamma_1\gamma_2\gamma_1^{-1}}$ therefore differ by a
coboundary, and the result follows.
\end{proof}

\subsection{The Fedosov--Weinstein--Xu resolution over $\mathsf{B}_0$} 
Let $\mathsf{B}_0$ be the space of loops in $\grp$: 
\[
  \mathsf{B}_0:=\{g\in\grp_1 \mid s(g)=t(g)\}.
\] 
We recall from \cite{NeuPflPosTanHFDPELG} that the canonical inclusion 
$\iota:\mathsf{B}_0\hookrightarrow\grp_1$ gives $\mathsf{B}_0$ a 
symplectic form by pull-back. Denote by 
$\calA^\hbar_{\mathsf{B}_0}:=\iota^{-1}\calA^\hbar$ the pull-back of the 
deformation quantization of $\grp$; this is not quite a deformation 
quantization of $(\mathsf{B}_0,\iota^*\omega)$, because it involves the 
germ of $\mathsf{B}_0$ inside $\grp_1$. Recall from \cite{ppt} that the 
sheaf $\calA^\hbar_{\mathsf{B}_0}$ has a canonical local automorphism, 
denoted $\theta$, coming from the fact that $\mathsf{B}_0$ has a cyclic 
structure \cite{CraCCEGGC}. This enables us to define the following 
complex of sheaves on $\mathsf{B}_0$:
\[
  \calC^0\stackrel{\beta_{\theta}}{\longrightarrow}\calC^1 
  \stackrel{\beta_{\theta}}{\longrightarrow}\ldots
\]
where
\[
  \calC^k:=
  \underline{\Hom}_{\C[[\hbar]]}\left(\left(\calA^\hbar_{\mathsf{B}_0}
  \right)^{\hatotimes k},\calA^\hbar_{\mathsf{B}_0,\theta}\right)
\]
is the sheaf of Hochschild $k$-cochains, and 
$\beta_\theta:\calC^k\rightarrow\calC^{k+1}$ is the twisted Hochschild 
coboundary. 
We will now write down a resolution of this complex of sheaves. For this, 
let $\calW_\grp$ be the bundle of Weyl algebras over $\grp_0$.
This is just the bundle $F_{\Sp,\grp_0}\times_{\Sp}\mathbb{W}$ associated to 
the symplectic frame bundle over $\grp_0$ with typical fiber $\mathbb{W}$. This 
construction shows that $\calW_\grp$ carries a canonical action of the groupoid 
$\grp$. In the following we will denote its sheaf of sections by the same 
symbol $\calW_\grp$.
\begin{proposition}[cf.\ \cite{FedTIDQ}]
\label{fedosov-resolution}
  On $\mathsf{B}_0$ there exists a resolution
\begin{equation}
\label{resolution}
  0\longrightarrow \calA^\hbar_{\mathsf{B}_0}\longrightarrow
  \Omega^0_{\mathsf{B}_0}\otimes\calW_\grp\stackrel{D}{\longrightarrow}
  \Omega^1_{\mathsf{B}_0}\otimes\calW_\grp\stackrel{D}{\longrightarrow}\ldots,
\end{equation}
where $D$ is a Fedosov connection on $\calW_\grp$.
\end{proposition}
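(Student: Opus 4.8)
The plan is to run Fedosov's construction \cite{FedTIDQ} $\grp$-equivariantly on the symplectic manifold $(\grp_0,\omega)$ and then pull the resulting acyclic complex back to $\mathsf{B}_0$ along $s|_{\mathsf{B}_0}=t|_{\mathsf{B}_0}\colon\mathsf{B}_0\to\grp_0$.

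First I would fix a $\grp$-invariant symplectic connection $\nabla$ on $(\grp_0,\omega)$, which exists since $\grp$ is proper \'etale (average any local symplectic connection over the finite isotropy). Together with the fiberwise Moyal product on the Weyl bundle $\calW_\grp$, Fedosov's recursion then yields a one-form $r\in\Omega^1_{\grp_0}\otimes\calW_\grp$ of positive fiber degree with $\delta^{-1}r=0$ and prescribed abelian curvature, such that $D_{\grp_0}:=-\delta+\nabla+\tfrac{1}{\hbar}[r,\,\cdot\,]$ satisfies $D_{\grp_0}^2=0$; here $\delta=\sum_i dx^i\wedge\partial/\partial y^i$ is the fiberwise Koszul (``vertical de Rham'') differential in a symplectic frame and $\delta^{-1}$ its standard homotopy. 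Since all the input data are $\grp$-invariant, the uniqueness built into Fedosov's recursion forces $r$, hence $D_{\grp_0}$, to be $\grp$-invariant. The Fedosov--Taylor map $\tau$ identifies the $D_{\grp_0}$-flat sections in form-degree $0$ with a $\grp$-invariant star product on $\grp_0$; invoking the equivariant classification of star products (again an averaging argument for proper \'etale $\grp$), I may adjust the central curvature series so that this star product becomes $\grp$-equivariantly equivalent to the given $\calA^\hbar$ and transport $D_{\grp_0}$ through that equivalence. Then the standard Fedosov argument — filter $\Omega^\bullet_{\grp_0}\otimes\calW_\grp$ by total degree ($\hbar$ of degree $2$, fiber variables of degree $1$), note that the associated graded differential is $-\delta$, and apply the fiberwise Poincar\'e lemma $H^\bullet(\Omega^\bullet_{\grp_0}\otimes\calW_\grp,-\delta)=\calC^\infty_{\grp_0}[[\hbar]]$ concentrated in degree $0$ — shows that $(\Omega^\bullet_{\grp_0}\otimes\calW_\grp,D_{\grp_0})$ is a resolution of $\calA^\hbar$.

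Next I would pull this back along $\pi:=s|_{\mathsf{B}_0}\colon\mathsf{B}_0\to\grp_0$ and set $D:=\pi^*D_{\grp_0}$ on $\Omega^\bullet_{\mathsf{B}_0}\otimes\calW_\grp$. The pullback of a flat connection is flat, so $D^2=0$, and $\tau$ pulls back to $\tau\colon\calA^\hbar_{\mathsf{B}_0}\to\ker\big(D\colon\Omega^0_{\mathsf{B}_0}\otimes\calW_\grp\to\Omega^1_{\mathsf{B}_0}\otimes\calW_\grp\big)$. For exactness I would, over a chart of $\mathsf{B}_0$, split the symplectic frame into directions tangent and normal to $\mathsf{B}_0\hookrightarrow\grp_1$ using the Levi-Civita connection of a $\grp$-invariant compatible metric, so that the fiber coordinates of $\calW_\grp$ decompose into a tangential and a normal block. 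Filtering $(\Omega^\bullet_{\mathsf{B}_0}\otimes\calW_\grp,D)$ again by total degree, the associated graded differential is once more $-\delta$, but now $\delta$ only contracts the \emph{tangential} fiber directions. Hence its cohomology sits in form-degree $0$ and equals the formal power series in $\hbar$ and in the \emph{normal} fiber coordinates over $\mathsf{B}_0$ — which is precisely the jet-description of the germ sheaf $\calA^\hbar_{\mathsf{B}_0}$, and this is exactly the content of the earlier remark that $\calA^\hbar_{\mathsf{B}_0}$ ``involves the germ of $\mathsf{B}_0$ inside $\grp_1$''. The usual convergence argument for Fedosov's geometric series (the total-degree filtration is exhaustive and $\hbar$- and fiber-adically complete) then upgrades this $E_0$-acyclicity to a genuine contracting homotopy $D^{-1}$ built from $\delta^{-1}$ together with $\pi^*\nabla$ and $\pi^*r$, so that $0\to\calA^\hbar_{\mathsf{B}_0}\xrightarrow{\tau}\Omega^0_{\mathsf{B}_0}\otimes\calW_\grp\xrightarrow{D}\Omega^1_{\mathsf{B}_0}\otimes\calW_\grp\xrightarrow{D}\cdots$ is exact, as claimed. (By construction the whole resolution is $\grp$-equivariant, which is what makes it compatible with the twisting by the loop holonomy used later.)

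The routine ingredients — Fedosov's recursion, the vertical Poincar\'e lemma, and the geometric-series homotopy — I would simply quote from \cite{FedTIDQ} and \cite{DolHCRCFT}. Two points need genuine care. First, matching the \emph{given} invariant deformation $\calA^\hbar$ with a Fedosov connection $\grp$-equivariantly: this rests on the equivariant Fedosov classification, which for a proper \'etale groupoid reduces to averaging but should be spelled out. Second, and more essentially, the base of the resolution is $\mathsf{B}_0$ rather than a symplectic manifold of full dimension, so $D$ is only a ``partial'' Fedosov connection and one must check that flatness along $\mathsf{B}_0$ leaves the normal-direction jets of the Weyl bundle unconstrained, so that $\ker D$ in degree $0$ reproduces exactly $\calA^\hbar_{\mathsf{B}_0}$. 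I expect this tangential/normal bookkeeping of the fiber coordinates, and the resulting identification of the degree-$0$ cohomology, to be the main obstacle.
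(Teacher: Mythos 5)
Your argument reaches the same conclusion by a genuinely different construction. The paper does not pull back a Fedosov connection from $\grp_0$: it builds one intrinsically on $\mathsf{B}_0$, taking $\delta=\mathrm{ad}(A_0)$ with $A_0$ the tangential soldering form $T\mathsf{B}_0\to T^*\mathsf{B}_0\to T^*\grp\to\calW_\grp$, choosing symplectic connections $\nabla_{\mathsf{B}_0}$ and $\nabla_N$ separately, and running a fresh Fedosov recursion on $\mathsf{B}_0$ to make the Weyl curvature of $D=\delta+\nabla+\mathrm{ad}(A)$ central; acyclicity then follows, exactly as in your argument, because $D$ deforms the tangential Koszul differential, whose cohomology is $\Omega^0_{\mathsf{B}_0}\otimes\calW_N$ concentrated in degree zero, and the link with the given quantization, $\Omega^0_{\mathsf{B}_0}\otimes\calW_N\cong\iota^{-1}\calA^\hbar$, is outsourced to minimal coupling in \cite{FedTIDQ}. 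Your route --- build a $\grp$-invariant Fedosov connection on all of $\grp_0$ matching $\calA^\hbar$ and restrict it to $\mathsf{B}_0$ --- is legitimate: since $N=(T\mathsf{B}_0)^\omega$, the restriction of the soldering form to $T\mathsf{B}_0$ involves only the tangential fibre variables, so the leading term of $\pi^*D_{\grp_0}$ is again the tangential Koszul differential and your filtration argument agrees with the paper's at the $E_0$-level. What your version buys is that $\grp$-equivariance and the comparison with the \emph{given} $\calA^\hbar$ are settled once and for all on $\grp_0$ (at the price of the equivariant classification/averaging step you flag, cf.~\cite{TanDQPPG}), and the augmentation is simply the restricted Fedosov--Taylor map; what the paper's intrinsic version buys is a connection adapted to the splitting $T\mathsf{B}_0\oplus N$, which is convenient later for the twist by $\theta$ and for exhibiting the generator $\left(\Pi^\perp_\theta\right)^{\ell/2}$.

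One caveat, which you partly anticipate: the degree-zero cohomology is really the formal (normal-jet) completion along $\mathsf{B}_0$. Your restricted Fedosov--Taylor map annihilates germs that are flat along $\mathsf{B}_0$, so it is not injective on $\iota^{-1}\calA^\hbar$ taken literally as germs, and ``jet-description of the germ sheaf'' conflates the two. The paper makes the same elision when it identifies $\Omega^0_{\mathsf{B}_0}\otimes\calW_N$ with $\iota^{-1}\calA^\hbar_\grp$ via minimal coupling, so this does not put you behind the paper's own level of rigor; but for exactness at the first spot as literally stated you must either read $\calA^\hbar_{\mathsf{B}_0}$ formally or insert an argument at that point.
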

\begin{proof}
Let us first construct the Fedosov differential $D$. The sequence of maps 
\[
  T\mathsf{B}_0\stackrel{\tilde{\omega}_0}{\longrightarrow}T^*\mathsf{B}_0
  \rightarrow T^*\grp\rightarrow\calW_\grp
\]
determines an element $A_0\in\Omega^1(\mathsf{B}_0,\calW_\grp)$. 
On easily verifies that 
\[
  [A_0,A_0]=\omega_0\in\Omega^2_{\mathsf{B}_0}\otimes\C [[\hbar]],
\] 
which is central in $\calW_\grp$. Therefore, $\delta={\rm ad}(A_0)$ defines a 
differential on $\Omega^\bullet (\mathsf{B}_0,\calW_\grp)$, and we have 
\[
  H^k\left(\Omega^\bullet _{\mathsf{B}_0}\otimes \calW_\grp,{\rm ad}(A_0)
  \right)=
 \begin{cases}
   \Omega^0_{\mathsf{B}_0}\otimes\calW_N,& \text{for $k=0$},\\
   0, & \text{for $k\neq 0$},
 \end{cases}
\]
because ${\rm ad}(A_0)$ is simply a Koszul differential in the tangential 
directions along $\mathsf{B}_0$.

Choose a symplectic connection $\nabla_{\mathsf{B}_0}$ on $\mathsf{B}_0$ and a 
symplectic connection $\nabla_N$ on the normal bundle 
$N\rightarrow\mathsf{B}_0$. We will consider connections on $\calW_\grp$ of 
the form 
\[
  D=\delta+\nabla+{\rm ad}(A_{\bullet}),
\] 
where $\nabla=(\nabla_{\mathsf{B}_0}\otimes 1+1\otimes \nabla_N)$ and 
$A\in\Omega^1(\mathsf{B}_0,\calW_\grp)$ has $\deg(A)\geq 2$. Notice that 
$\deg(\delta)=0$ and $\deg(\nabla)=1$. Such a connection has Weyl curvature 
given by 
\[
  \Omega=\omega_0+\tilde{R}+\nabla A+\frac{1}{2}[A,A].
\]
By the usual Fedosov method, we can find $A$ such that $\Omega$ is central, 
i.e., $\Omega\in\Omega^2_{\mathsf{B}_0}\otimes\C[[\hbar]]$. Since the 
differential $D$ is a deformation of the Koszul complex above, acyclicity 
of the sequence \eqref{resolution} follows. This shows the existence of the 
resolution 
\[
  0\longrightarrow\Omega^0_{\mathsf{B}_0}\otimes\calW_N\longrightarrow 
  \Omega^0_{\mathsf{B}_0}\otimes\calW_\grp\stackrel{D}{\longrightarrow}
  \Omega^1_{\mathsf{B}_0}\otimes\calW_\grp\stackrel{D}{\longrightarrow}\ldots,
\]
so it remains to construct an isomorphism 
$\Omega^0_{\mathsf{B}_0}\otimes\calW_N\cong\iota^{-1}\calA^\hbar_\grp$. 
This is done by minimal coupling in \cite{FedTIDQ}.
\end{proof}
Consider now the following double complex 
$(\calC^{\bullet,\bullet},\beta_\theta,D)$ of sheaves:
\begin{displaymath}
\xymatrix{
 \vdots & \vdots & \vdots \\
 \Omega^0_{\mathsf{B}_0}\otimes\calC^1\ar[u]^{\beta_\theta}\ar[r]^{D} &
 \Omega^1_{\mathsf{B}_0}\otimes\calC^1\ar[u]^{\beta_\theta}\ar[r]^{D} &
 \Omega^2_{\mathsf{B}_0}\otimes\calC^1\ar[u]^{\beta_\theta}\ar[r]^{D} &
 \ldots \\
 \Omega^0_{\mathsf{B}_0}\otimes\calC^0\ar[u]^{\beta_\theta}\ar[r]^{D} &
 \Omega^1_{\mathsf{B}_0}\otimes\calC^0\ar[u]^{\beta_\theta}\ar[r]^{D} &
 \Omega^2_{\mathsf{B}_0}\otimes\calC^0\ar[u]^{\beta_\theta}\ar[r]^{D} &
 \ldots
}
\end{displaymath}
where $\calC^\bullet$ is the sheaf of formal power series of polydifferential 
operators on $\calW_{\grp}$. This complex is a twisted version of 
the Fedosov--Weinstein--Xu resolution considered in \cite{DolHCRCFT}.
\begin{proposition}
\label{coh-fwx}
 The total cohomology of this complex is given by 
 \[
   H^k({\rm Tot}\left(\calC^{\bullet,\bullet}\right),\beta_\theta+D)=
  \mathbb{H}^k(\calC^\bullet,\beta_\theta).
\]
\end{proposition}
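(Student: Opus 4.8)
The plan is to compute $H^\bullet(\mathrm{Tot}(\calC^{\bullet,\bullet}),\beta_\theta+D)$ via the spectral sequence of the filtration of the total complex by the Hochschild-cochain degree, and to show that it collapses after one page. Write the $(p,q)$-entry of the double complex as $\Omega^p_{\mathsf{B}_0}\otimes\calD^q$, where $\calD^q$ denotes the sheaf of fibrewise (formal, $\theta$-twisted) polydifferential $q$-cochains on $\calW_\grp$; thus $D$ raises $p$, $\beta_\theta$ raises $q$, and $D\beta_\theta=\beta_\theta D$ since $D$ is built from a connection whose fibrewise component acts by automorphisms of $\calW_\grp$ and hence commutes with the fibrewise Hochschild coboundary. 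Because $p$ is bounded above by $\dim\mathsf{B}_0$, the filtration is finite in each total degree, so the spectral sequence converges to $H^\bullet(\mathrm{Tot}(\calC^{\bullet,\bullet}))$; its $E_1$-page is the $D$-cohomology of the columns $(\Omega^\bullet_{\mathsf{B}_0}\otimes\calD^q,D)$, carrying the differential induced by $\beta_\theta$.

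The heart of the argument is the identification of these column cohomologies, which amounts to the polydifferential analogue of Proposition~\ref{fedosov-resolution}. The Fedosov connection $D$ on $\calW_\grp$ induces a flat connection on the bundle of fibrewise polydifferential $q$-cochains; the complex $(\Omega^\bullet_{\mathsf{B}_0}\otimes\calD^q,D)$ is a deformation of the tangential Koszul complex along $\mathsf{B}_0$, hence acyclic in positive $\Omega$-degree, and in $\Omega$-degree $0$ its cohomology is the sheaf of $D$-flat fibrewise cochains. Flat sections of $\calD^q$ are exactly the $q$-cochains assembled from $D$-flat sections of $\calW_\grp$; feeding in the minimal-coupling isomorphism $\Omega^0_{\mathsf{B}_0}\otimes\calW_N\cong\calA^\hbar_{\mathsf{B}_0}$ of Proposition~\ref{fedosov-resolution}---and checking that it intertwines the $D$-flat lift of the twisting automorphism with $\theta$ itself---this sheaf is canonically identified with $\calC^q=\underline{\Hom}_{\C[[\hbar]]}\big((\calA^\hbar_{\mathsf{B}_0})^{\hatotimes q},\calA^\hbar_{\mathsf{B}_0,\theta}\big)$, compatibly with the Hochschild coboundaries. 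Thus $E_1$ is concentrated in the row of $\Omega$-degree $0$, where it coincides with the complex of sheaves $(\calC^\bullet,\beta_\theta)$.

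Since the $E_1$-page lives in a single row, all higher differentials vanish, $E_2=E_\infty$, and we obtain $H^k(\mathrm{Tot}(\calC^{\bullet,\bullet}),\beta_\theta+D)\cong\mathbb{H}^k(\calC^\bullet,\beta_\theta)$. Equivalently, the column-wise resolutions assemble into a quasi-isomorphism of complexes of sheaves between $(\calC^\bullet,\beta_\theta)$ and $\mathrm{Tot}(\calC^{\bullet,\bullet})$, which is the assertion of the proposition (and also yields the comparison on hypercohomology used afterwards).

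I expect the only genuinely non-formal point to be the identification of the $D$-cohomology in $\Omega$-degree $0$: the twisted, polydifferential upgrade of the Fedosov resolution. One must check that Fedosov's iterative construction of $D$ and the minimal-coupling isomorphism are compatible not only with the product on $\calW_\grp$ but with the whole fibrewise Hochschild structure, and---the new ingredient compared to \cite{DolHCRCFT}---with the cyclic automorphism $\theta$ of $\calA^\hbar_{\mathsf{B}_0}$; only then does the flat-section description reproduce $(\calC^\bullet,\beta_\theta)$ with precisely the $\theta$-twisted differential. Granting this, the convergence of the spectral sequence, the commutation $D\beta_\theta=\beta_\theta D$, and the final collapse are all routine.
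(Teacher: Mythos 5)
Your proposal follows essentially the same route as the paper: filter the double complex so that the Fedosov differential $D$ is computed first, use the acyclicity of the Fedosov resolution to concentrate the $E_1$-page in $\Omega$-degree $0$, identify that row with $(\calC^\bullet,\beta_\theta)$, and conclude by collapse of the spectral sequence. If anything, you are more explicit than the paper about the one point it leaves implicit, namely that the resolution of Proposition~\ref{fedosov-resolution} must be upgraded to the bundle of fibrewise polydifferential cochains compatibly with the twisting automorphism $\theta$, which is exactly the right thing to flag.
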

\begin{proof}
Consider the spectral sequence by filtering the total complex by rows. This 
yields 
\[ 
  E_1^{p,q}=H_v^p(\calC^{\bullet,q})=
  \begin{cases} 
    \Omega^0_{\mathsf{B}_0}\otimes\calC^q, & \text{for $p=0$},\\
    0, & \text{for $p\neq 0$},
  \end{cases}
\] 
since the resolution \eqref{resolution} is acyclic. At the second stage 
\[
  E_2^{0,q}=\mathbb{H}^q(\calC^\bullet,\beta_\theta)\Rightarrow H^q
  \left({\rm Tot}\left(\calC^{\bullet,\bullet}\right)\right).
\] 
Since the spectral sequence collapses at this point, this proves the statement.
\end{proof}
Of course the cohomology sheaf of the vertical complex is computed
by Proposition \ref{inf-comp}. If we replace the vertical complex
by its reduced counterpart, we get a natural projection
\[
  R:\Gamma\left(\mathsf{B}_0,\calC^\ell\right)\rightarrow
  \Gamma\left(\mathsf{B}_0,\Omega^\ell_{\grp}\right),
\]
where $\ell$ is the locally constant function on $\mathsf{B}_0$ defined in 
Section \ref{Sec:outline}. The transversal Poisson structure induced by 
$\omega\in\Omega^2(\grp_0)$ induces a section
\[
  \left(\Pi_\theta^\perp\right)^{\ell/2}\in
  \Gamma\left(\mathsf{B}_0,\Omega^\ell_{\grp}\right),
\]
and we choose $\Psi\in \Gamma\left(\mathsf{B}_0,\calC^\ell\right)$ which generates 
the fiberwise vertical cohomology and projects onto the section above.
\begin{proposition}
\label{embd-n}
The map $I_\Psi$ extends to a morphism 
\[ 
 I_\Psi:\left(\Omega^\bullet_{\mathsf{B}_0}\otimes\C [[\hbar]][\ell],d\right)
 \rightarrow \left({\rm Tot}^\bullet\left(\calC^{\bullet,\bullet}\right),
 D+\beta_\theta\right)
\]  
of cochain complexes of sheaves by the formula 
\[
  I_\Psi(\alpha):=\alpha\otimes \Psi.
\] 
In fact, this is a quasi-isomorphism.
\end{proposition}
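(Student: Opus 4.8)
The plan is to verify first that $I_\Psi$ really is a morphism of complexes of sheaves, and then to prove it is a quasi-isomorphism by identifying the two cohomology sheaves; all the necessary inputs are already available.

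For the first point, since $I_\Psi(\alpha)=\alpha\otimes\Psi$ raises the $\Omega^\bullet_{\mathsf{B}_0}$-degree by $|\alpha|$ and leaves the Hochschild degree equal to $\ell$, a comparison of bidegrees shows that the chain-map identity $(D+\beta_\theta)(\alpha\otimes\Psi)=d\alpha\otimes\Psi$ is equivalent to the pair of conditions $\beta_\theta\Psi=0$ and $D\Psi=0$. The first is built into the choice of $\Psi$: by Proposition \ref{inf-comp} the fibrewise vertical cohomology is concentrated in degree $\ell$ and is represented by Pinczon's cocycle, whose symbol is $(\Pi^\perp_\theta)^{\ell/2}$. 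For the second, $\Psi$ must be taken $D$-flat, which I would arrange by the standard Fedosov iteration: start from the section with degree-zero symbol $(\Pi^\perp_\theta)^{\ell/2}$, which is parallel for $\nabla=\nabla_{\mathsf{B}_0}\otimes 1+1\otimes\nabla_N$ since the symplectic connection $\nabla_N$ fixed in Proposition \ref{fedosov-resolution} preserves the transverse symplectic structure, and correct it order by order in Weyl degree using that $\delta={\rm ad}(A_0)$ is acyclic in positive degree and admits an explicit contracting homotopy, exactly as in \cite{FedTIDQ}; because $\beta_\theta$ commutes with $\delta$ and with $\nabla$, the corrections can be kept $\beta_\theta$-closed, so $\beta_\theta\Psi=0$ survives the recursion. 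That the flat-section assignment of a Fedosov connection is compatible with polydifferential operators on $\calW_\grp$ is part of the Fedosov--Weinstein--Xu construction, cf.~\cite{DolHCRCFT}. Granted both identities, $I_\Psi$ is a morphism of complexes of sheaves, manifestly compatible with the shift $[\ell]$.

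For the second point, it suffices to show that $I_\Psi$ is an isomorphism on cohomology sheaves. I would run the spectral sequence of the total complex $({\rm Tot}^\bullet(\calC^{\bullet,\bullet}),D+\beta_\theta)$ filtered by Hochschild degree, exactly as in the proof of Proposition \ref{coh-fwx} but at the level of cohomology sheaves: by the acyclicity of the resolution \eqref{resolution} in Proposition \ref{fedosov-resolution}, the $E_1$-page is concentrated in $\Omega^\bullet_{\mathsf{B}_0}$-degree $0$, where it is the complex $(\calC^\bullet,\beta_\theta)$ of twisted Hochschild cochain sheaves of $\calA^\hbar_{\mathsf{B}_0}$, so that $E_2$ is its cohomology sheaf. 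By Proposition \ref{inf-comp}, applied stalkwise to the transverse formal Weyl algebra, this vanishes for $k\neq\ell$ and is stalkwise free of rank one over $\C[[\hbar]]$ in degree $\ell$, generated by the class of $\Psi$; the spectral sequence degenerates there, so the cohomology sheaf of ${\rm Tot}^\bullet(\calC^{\bullet,\bullet})$ vanishes for $k\neq\ell$ and is stalkwise free of rank one for $k=\ell$ with generator $[\Psi]$, which is nonzero because $R(\Psi)=(\Pi^\perp_\theta)^{\ell/2}$ vanishes nowhere ($\ell$ being even, this power of the transverse symplectic form is a unit). On the other side, the Poincar\'e lemma gives that the cohomology sheaf of $\Omega^\bullet_{\mathsf{B}_0}\otimes\C[[\hbar]][\ell]$ vanishes for $k\neq\ell$ and equals the constant sheaf $\underline{\C[[\hbar]]}$ for $k=\ell$. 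Since $I_\Psi$ carries the constant generator $1$ to $\Psi$, it sends a generator to a generator in degree $\ell$; hence it is an isomorphism on cohomology sheaves, i.e.\ a quasi-isomorphism.

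I expect the main obstacle to be the first point — producing a genuinely $D$-flat representative $\Psi$ that simultaneously maps to $(\Pi^\perp_\theta)^{\ell/2}$ under $R$, which requires fusing Pinczon's explicit cocycle from Proposition \ref{inf-comp} with Fedosov's iterative construction over $\mathsf{B}_0$ from \cite{FedTIDQ}, checking convergence of the recursion in the Weyl-degree (equivalently $\hbar$-adic) topology and the preservation of $\beta_\theta$-closedness along the way. Granted such a $\Psi$, the spectral-sequence bookkeeping of the second point is routine given Propositions \ref{fedosov-resolution}, \ref{inf-comp} and \ref{coh-fwx}.
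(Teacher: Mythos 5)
Your reduction of the chain--map identity to the two conditions $\beta_\theta\Psi=0$ and $D\Psi=0$ is correct, but it commits you to producing a representative $\Psi$ that is \emph{strictly} $D$-flat, and this is exactly the point your sketch does not establish. In the Fedosov-type recursion the corrections are produced by the contracting homotopy $\delta^{-1}$ (contraction in the Weyl variables, with division by the total degree), not by $\delta$ and $\nabla$ themselves; so the assertion that the corrections ``can be kept $\beta_\theta$-closed because $\beta_\theta$ commutes with $\delta$ and with $\nabla$'' does not suffice --- you would need $\beta_\theta$ to commute with $\delta^{-1}$, which is unclear since $\beta_\theta$ involves the full Moyal product in the fibre variables. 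Note also that the paper takes a genuinely different and lighter route here: it never flattens $\Psi$. Writing locally $D=d+[\beta A,\,\cdot\,]$ and using the Gerstenhaber identity $[\beta A,\Psi]=\beta_\theta[A,\Psi]-[A,\beta_\theta\Psi]$ together with $\beta_\theta\Psi=0$, the failure of the chain--map identity is identified as a vertical coboundary, which is killed by the projection $P$ with $P\circ I_\Psi=\id$; the compatibility is thus checked at the level relevant for the symbol/cohomology, with no recursion on $\Psi$. If you insist on a literal chain map you must either carry out the flattening argument in detail (controlling $\beta_\theta$-closedness order by order) or weaken the statement to ``chain map up to vertical coboundaries'' as the paper effectively does.

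For the quasi-isomorphism, your filtration choice creates a second problem. You first use the acyclicity of the rows (Proposition \ref{fedosov-resolution}) and then propose to apply Proposition \ref{inf-comp} ``stalkwise'' to the cohomology of $(\calC^\bullet,\beta_\theta)$, the twisted Hochschild cochain sheaves of $\calA^\hbar_{\mathsf{B}_0}$. But the stalks of $\calA^\hbar_{\mathsf{B}_0}$ are germs of deformed smooth functions, not transverse formal Weyl algebras, and identifying their (twisted, local) Hochschild cohomology with the Weyl-algebra computation is precisely the nontrivial localization/jet argument that the paper carries out elsewhere (cf.\ Proposition \ref{gl-q}) and which, in the paper's logical order, sits downstream of the present proposition --- so as written this step is either unjustified or circular. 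The non-circular argument runs the other filtration: take the fibrewise $\beta_\theta$-cohomology of the Weyl-bundle cochains $\Omega^p_{\mathsf{B}_0}\otimes\underline{\Hom}(\calW_\grp^{\hatotimes q},\calW_{\grp,\theta})$ first, where Proposition \ref{inf-comp} applies verbatim fibre by fibre; the $E_1$-page is then $\Omega^\bullet_{\mathsf{B}_0}\otimes\C[[\hbar]]$ concentrated in vertical degree $\ell$ with induced differential $d$ (this is what the computation $P\circ D\circ I_\Psi=d$ buys), and $I_\Psi$ induces the identity there, whence the quasi-isomorphism via Proposition \ref{coh-fwx}. Your final ``generator to generator'' observation and the use of the Poincar\'e lemma on the de Rham side are fine once the comparison is set up this way.
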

\begin{proof}
  Let us first prove that $I_\Psi$ is a map of cochain complexes of sheaves. Consider 
  the projection map 
  $P:\Omega^\bullet_{\mathsf{B}_0}\otimes\calC^\bullet_{red}\rightarrow
  \Omega^\bullet_{\mathsf{B}_0}\otimes\C((t))$. It satisfies $P\circ I_\Psi=\id$. 
  We therefore have to show that 
  \[
   P\circ D\circ I_\Psi=d.
  \] 
  Since the statement of the proposition is a local statement we can write the 
  Fedosov connection as 
  \[
   D=d+{\rm ad}(A),
  \] 
  with $A\in\Omega^1_{\mathsf{B}_0}\otimes \calW_\grp=
  \Omega^1_{\mathsf{B}_0}\otimes\calC^0$. It then follows easily that 
  $D:\Omega^k_{\mathsf{B}_0}\otimes\calC^\bullet\rightarrow 
  \Omega^{k+1}_{\mathsf{B}_0}\otimes\calC^\bullet$ is given by 
  \[
   D=d+[\beta A,~],
  \] 
  where $[~,~]$ is the Gerstenhaber bracket on the Hochschild cochain complex. 
  We now compute
 \begin{displaymath}
 \begin{split}
   PDI_\Psi(\alpha)-d\alpha&= \alpha\otimes P\left([\beta A,\Psi]\right)\\
   &=\alpha\otimes P\left(\beta\left([A,\Psi]\right)-[A,\beta \Psi]\right)\\&=0,
 \end{split}
 \end{displaymath}
 where we have used that $\Psi$ is a cocycle, i.e., $\beta \Psi=0$, and the fact 
 that $P$ maps coboundaries to zero. This proves that $I_\Psi$ is a map of cochain 
 complexes of sheaves.
\end{proof}
Using Proposition \ref{coh-fwx} we now find:
\begin{corollary}
\label{fxf}
  There is a natural isomorphism
  \[
   \mathbb{H}^\bullet(\calC_{\mathsf{B}_0}^\bullet,\beta_\theta)\cong 
   H^{\bullet-\ell}\left(\mathsf{B}_0,\C[[\hbar]]\right).
  \]
\end{corollary}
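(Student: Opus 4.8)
The plan is to read the corollary off directly from Propositions~\ref{coh-fwx} and~\ref{embd-n}: essentially all of the substantive work has already been absorbed into those two statements (together with Propositions~\ref{fedosov-resolution} and~\ref{inf-comp}), so what is left is assembly and a little bookkeeping. First I would record that, by Proposition~\ref{embd-n}, the map
\[
  I_\Psi:\bigl(\Omega^\bullet_{\mathsf{B}_0}\otimes\C[[\hbar]][\ell],d\bigr)
  \stackrel{\sim}{\longrightarrow}
  \bigl({\rm Tot}^\bullet(\calC^{\bullet,\bullet}),D+\beta_\theta\bigr)
\]
is a quasi-isomorphism of complexes of sheaves on $\mathsf{B}_0$, and that both complexes are bounded below. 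The sheaves occurring here --- the $\Omega^i_{\mathsf{B}_0}$ on the left, and the bidegree pieces $\Omega^i_{\mathsf{B}_0}\otimes\calC^j$ on the right --- are all modules over the soft sheaf of smooth functions on $\mathsf{B}_0$, hence fine. Therefore, for each of these complexes of sheaves, the hypercohomology coincides with the cohomology of the complex of global sections, and $I_\Psi$ induces an isomorphism between those two global-section cohomologies.

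Next I would invoke Proposition~\ref{coh-fwx} to name the right-hand side: the cohomology of ${\rm Tot}^\bullet(\calC^{\bullet,\bullet})$ is $\mathbb{H}^\bullet(\calC^\bullet_{\mathsf{B}_0},\beta_\theta)$. For the left-hand side, $\bigl(\Omega^\bullet_{\mathsf{B}_0},d\bigr)$ is the ordinary de Rham complex of $\mathsf{B}_0$ --- which is a smooth manifold, though its connected components may have different dimensions --- so by the Poincar\'e lemma it is a fine resolution of the constant sheaf, and its global-section cohomology is $H^\bullet(\mathsf{B}_0,\C)$; tensoring with $\C[[\hbar]]$ commutes with passage to cohomology since the differential is $\C[[\hbar]]$-linear, giving $H^\bullet(\mathsf{B}_0,\C[[\hbar]])$. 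Accounting for the degree shift by the locally constant function $\ell$ --- constant on each component of $\mathsf{B}_0$, where it equals the codimension of that component in $\grp_1$ --- this becomes $H^{\bullet-\ell}(\mathsf{B}_0,\C[[\hbar]])$. Chaining these identifications through the isomorphism induced by $I_\Psi$ then yields
\[
  \mathbb{H}^\bullet(\calC^\bullet_{\mathsf{B}_0},\beta_\theta)\cong
  H^{\bullet-\ell}(\mathsf{B}_0,\C[[\hbar]]),
\]
which is the assertion.

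I do not expect a genuine obstacle at this stage, since the analytic and homological content sits in the earlier propositions. The only points that demand care are the two bookkeeping steps just used: tracking the shift $[\ell]$ when $\ell$ is merely locally constant on $\mathsf{B}_0$, and justifying the interchange between hypercohomology and the cohomology of global sections --- for which the fineness of the relevant $\calC^\infty_{\mathsf{B}_0}$-module sheaves, together with the boundedness below of both complexes, is exactly what is needed. (Should one also wish to re-verify that $I_\Psi$ is a quasi-isomorphism and not merely a chain map, the natural route is to check it stalkwise, where it reduces to the corollary following Proposition~\ref{inf-comp}; here we take that claim as given from Proposition~\ref{embd-n}.)
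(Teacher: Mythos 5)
Your proposal is correct and follows the same route as the paper, which derives the corollary directly by combining Proposition~\ref{coh-fwx} with the quasi-isomorphism $I_\Psi$ of Proposition~\ref{embd-n}; the paper leaves the fineness/hypercohomology bookkeeping and the shift by the locally constant function $\ell$ implicit, which you simply spell out. No gap.
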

Finally, notice that $\calC^{\bullet,\bullet}_{\mathsf{B}_0}$ is in a natural way a 
double complex of $\Lambda(\grp)$-sheaves on
$\mathsf{B}_0$, because the Fedosov resolution of 
Proposition \ref{fedosov-resolution} carries a natural $\grp$-action.
Therefore, we can define the following sheaf on $\tilde{X}$:
\[
 \calC^{\bullet,\bullet}_{\tilde{X}}(\tilde{U}):=
 \calC^{\bullet,\bullet}_{\mathsf{B}_0}
 (\tilde{\pi}^{-1}(\tilde{U}))^{\Lambda(\grp)_{|\tilde{U}}}.
\]
Because $\grp$, and therefore also $\Lambda(\grp)$, is proper, it follows from 
Corollary \ref{fxf} that its hypercohomology is given by
\begin{equation}
\label{hyp-fwx}
  \mathbb{H}^\bullet(\calC_{\tilde{X}}^{\bullet,\bullet},\beta_\theta)\cong 
  H^{\bullet-\ell}\left(\tilde{X},\C[[\hbar]]\right).
\end{equation}
\subsection{Local computations}
\label{loc-comp}
 In this section we will perform several explicit computations in some open 
 orbifold  charts. This suffices to prove the result in the case of a global 
 quotient orbifold. The general case is treated in the next section.

 Let $U\subset\R^{2n}$ be an open orbifold chart with a finite group $\Gamma_U$ 
 acting  by linear symplectic transformations, so that we have 
 $U\slash\Gamma_U\subset X$.
\begin{proposition}
\label{gl-q}
  There exist a natural quasi-isomorphism 
  \[
   \calH^\bullet_{M\rtimes\Gamma,\text{\tiny\rm loc},\hbar}(M/\Gamma)\rightarrow
   \calC^{\bullet,\bullet}_{\widetilde{M\slash\Gamma}}\left(
   \widetilde{M\slash\Gamma} \right).
  \]
\end{proposition}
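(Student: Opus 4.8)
The strategy is to build the quasi-isomorphism as a composition that mirrors, in the deformed setting, the chain of maps already established in the undeformed case (Theorems I--V) together with the Fedosov--Weinstein--Xu machinery developed in this section. Concretely, for a global quotient chart $M\slash\Gamma$ with $M=U\subset\R^{2n}$ a linear symplectic $\Gamma$-representation, I would first reduce $\calH^\bullet_{M\rtimes\Gamma,\text{\tiny\rm loc},\hbar}(M/\Gamma)$ to the Hochschild complex of the crossed product $\calA^\hbar_{\text{\tiny\rm cpt}}(M)\rtimes\Gamma$ using the localization/slice arguments (the deformed analogue of Theorems I--IIIb, which the paper has already noted generalize trivially to $\calA^\hbar$, combined with the Eilenberg--Moore spectral sequence comparison along the $\hbar$-adic filtration). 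Since $\Gamma$ is finite, the crossed product Hochschild complex splits as $\big(\bigoplus_{\gamma\in\Gamma} C^\bullet_{\text{\tiny\rm red}}(\mathbb{W}_{2n},\mathbb{W}_{2n,\gamma})\big)^\Gamma$, at which point Proposition~\ref{inf-comp}, its Corollary, and Corollary~\ref{q-i-crs} identify the cohomology and produce explicit generators $\Psi_\gamma$.

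Next I would match this up with the target $\calC^{\bullet,\bullet}_{\widetilde{M\slash\Gamma}}(\widetilde{M\slash\Gamma})$. For a global quotient, the loop space $\mathsf{B}_0$ is $\bigsqcup_{\gamma\in\Gamma} M^\gamma\times\{\gamma\}$ (with the germ inside $\grp_1$ recording the normal directions $V^\perp$), the inertia orbifold is $\widetilde{M\slash\Gamma}=\bigsqcup_{(\gamma)}M^\gamma\slash Z(\gamma)$, and the twisted automorphism $\theta$ restricts on the $\gamma$-component to the twist by $\gamma$. Under this identification the double complex $\calC^{\bullet,\bullet}_{\widetilde{M\slash\Gamma}}$ is exactly the $\Gamma$-invariants of the Fedosov--Weinstein--Xu double complex on the $M^\gamma$, whose vertical cohomology is computed stalkwise by Proposition~\ref{inf-comp}. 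So I would construct the map by sending a local Hochschild cochain first to the stalkwise Weyl-algebra cochain (Fedosov's "flattening", i.e.\ the identification of $\calA^\hbar_{\mathsf{B}_0}$-sections with $\Omega^0\otimes\calW_N$ from the proof of Proposition~\ref{fedosov-resolution}), then including via the Fedosov resolution into $\Omega^\bullet_{\mathsf{B}_0}\otimes\calC^\bullet$, and finally taking $\Gamma$-invariants. One checks this is a morphism of double complexes (compatibility with $\beta_\theta$ is stalkwise and follows from the definition of the twisted coboundary; compatibility with $D$ is the computation $P\circ D\circ I_\Psi=d$ from the proof of Proposition~\ref{embd-n}, applied cochain-wise rather than only to the generator).

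To see it is a quasi-isomorphism, I would run the spectral sequence of the $\hbar$-adic filtration on both sides. The map respects this filtration, and on the $E_0$-page the deformed local Hochschild complex degenerates to the \emph{undeformed} local Hochschild complex of $\calC^\infty_{\text{\tiny\rm cpt}}(M)\rtimes\Gamma$, while the right-hand side degenerates to the associated graded of the FWX complex, which by the $\hbar=0$ analysis in the proofs of Propositions~\ref{inf-comp} and~\ref{coh-fwx} computes the multivector-field complex on the inertia orbifold. By Theorem~IV (and the observation in the Proposition on local cochains that vector fields satisfy the locality condition) these two $E_1$-pages agree and the induced map is the identity. By the Eilenberg--Moore comparison theorem, cf.~\cite[Thm.~5.5.11]{WeiIHA}, quoted in Section~\ref{Sec:defhoch}, the map is then a quasi-isomorphism in general.

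\textbf{Main obstacle.} The genuinely delicate point is not the $E_1$-comparison but the honest \emph{construction} of the chain map at the cochain level: one must produce a map from local Hochschild cochains on the convolution algebra into $\Omega^\bullet_{\mathsf{B}_0}\otimes\calC^\bullet$ that is simultaneously compatible with the twisted Hochschild differential $\beta_\theta$ and with the Fedosov differential $D=\delta+\nabla+\mathrm{ad}(A)$, and that is $\grp$-equivariant so as to descend to the inertia orbifold. The subtlety is that $\calA^\hbar_{\mathsf{B}_0}$ is only a "deformation with germ data" along $\mathsf{B}_0$, so the flattening isomorphism $\Omega^0_{\mathsf{B}_0}\otimes\calW_N\cong\iota^{-1}\calA^\hbar_\grp$ and the minimal-coupling step from \cite{FedTIDQ} must be carried along carefully, and one needs Pinczon's explicit comparison \cite{pinczon} between the Koszul and Bar resolutions of $\mathbb{W}_{2n}$ to keep the generators $\Psi_\gamma$ and the projection $P$ under control. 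Once these identifications are set up coherently, the locality condition with respect to $X$ and the $\Gamma$-invariance are routine to verify, and the spectral-sequence argument closes the proof.
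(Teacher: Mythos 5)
Your overall strategy does follow the paper's: reduce $\calH^\bullet_{\grp,\mathrm{loc},\hbar}(U/\Gamma_U)$ to the local Hochschild complex of $\calA^\hbar(U)\rtimes\Gamma_U$ via the deformed Theorem IIIb and $L_1^\hbar$, decompose over $\gamma\in\Gamma_U$, and upgrade everything to a quasi-isomorphism by the $\hbar$-adic filtration together with the Eilenberg--Moore comparison, the $E_0$-page being the undeformed computation of Theorem IV. But there is a genuine gap precisely at the point you label your ``main obstacle'': you never actually construct the chain map into $\calC^{\bullet,\bullet}_{\widetilde{M/\Gamma}}$, and the sketch you offer for it does not work. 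First, the crossed-product complex does not split as $\bigl(\bigoplus_{\gamma} C^\bullet_{\mathrm{red}}(\W_{2n},\W_{2n,\gamma})\bigr)^\Gamma$; it splits as $\bigl(\bigoplus_{\gamma} C^\bullet_{\mathrm{loc}}(\calA^\hbar(U),\calA^\hbar(U)_\gamma)\bigr)^{\Gamma_U}$, and the formal Weyl algebra only appears fiberwise, through the Fedosov bundle and Proposition \ref{inf-comp}, not as a model for $\calA^\hbar(U)$ itself. Second, your proposed check of compatibility with the Fedosov differential $D$ by applying the identity $P\circ D\circ I_\Psi=d$ ``cochain-wise'' is not valid: that computation in Proposition \ref{embd-n} uses that the specific generator $\Psi$ is a $\beta$-cocycle and that $P$ annihilates coboundaries, so it says nothing about arbitrary cochains.

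The paper closes this gap by a different and simpler mechanism, which is the actual content of its proof. For each $\gamma$, the locality condition says that the value of a cochain in $C^\bullet_{\mathrm{loc}}(\calA^\hbar(U),\calA^\hbar(U)_\gamma)$ at a point $x$ depends only on germs at the $\gamma$-orbit of $x$; since points of $U^\gamma$ have trivial $\gamma$-orbit, such cochains preserve the jet subalgebra $\iota^{-1}\calA^\hbar(U^\gamma)\subset\calA^\hbar(U)$, so restriction along $\iota:U^\gamma\hookrightarrow U$ defines a morphism to $C^\bullet_{\mathrm{loc}}\bigl(\iota^{-1}\calA^\hbar(U^\gamma),\iota^{-1}\calA^\hbar(U^\gamma)_\gamma\bigr)$, and these truly local cochains of the jet algebra are canonically sections of the sheaf $\calC^\bullet$ over $U^\gamma$. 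Summing over $\gamma$ and taking $\Gamma_U$-invariants gives the map of the proposition; no Weyl-bundle flattening and no compatibility check against $D$ is required at this stage, since the Fedosov--Weinstein--Xu resolution and $I_\Psi$ enter only afterwards (Propositions \ref{coh-fwx} and \ref{embd-n}) to compute the hypercohomology. The restriction-to-jets map is a quasi-isomorphism because on $E_0$ of the $\hbar$-filtration it is exactly the undeformed localization of Section \ref{Sec:clglobquot}; this is the same comparison argument you invoke, so once your flattening construction is replaced by the restriction-to-jets step, the rest of your argument goes through.
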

\begin{proof}
We first use the natural map
\[
  \calH^\bullet_{\grp,\text{\tiny\rm loc},\hbar}(U\slash\Gamma_U)\rightarrow 
  C^\bullet_\text{\tiny\rm loc}\left(\calA^\hbar(U)\rtimes\Gamma_U,
  \calA^\hbar(U)\rtimes\Gamma_U\right)
\]
of the ``deformed version'' of Theorem IIIb. As in the undeformed case, there 
is a quasi-isomorphism 
\[
  L_1^\hbar:C^\bullet_\text{\tiny\rm loc}\left(\calA^\hbar(U)\rtimes\Gamma_U,  
  \calA^\hbar(U)\rtimes\Gamma_U\right)\rightarrow 
  C^\bullet_\text{\tiny\rm loc}\left(\calA^\hbar_\text{\tiny\rm cpt}(U), 
  \calA^\hbar_\text{\tiny\rm cpt}(U)\rtimes\Gamma_U\right)^{\Gamma_U},
\]
given by the same formula as for $L_1$. The right hand side is the space of
$\Gamma_U$-invariants of a complex which decomposes into
\[
  \bigoplus_{\gamma\in\Gamma_U}C^\bullet_\text{\tiny\rm loc}\left(\calA^\hbar(U),
  \calA^\hbar(U)_\gamma\right).
\]
There is a natural morphism
\[
  C^\bullet_\text{\tiny\rm loc}\left(\calA^\hbar(U),\calA^\hbar(U)_\gamma\right)
  \rightarrow
  C^\bullet_\text{\tiny\rm loc}\left(\iota^{-1}\calA^\hbar(U^\gamma),
  \iota^{-1}\calA^\hbar(U^\gamma)_\gamma\right),
\] 
where
$\iota^{-1}\calA^\hbar(U^\gamma):=\Gamma\left(U^\gamma,\iota^{-1}\calA^\hbar
\right)$
is by definition the algebra given by the jets along the embedding
$\iota:U^\gamma\hookrightarrow U$. Indeed the locality condition
for cochains $\Psi\in C^k_\text{\tiny\rm loc}$ states in this case that the
value $\Psi(f_1,\ldots,f_k)(x)$ can only depend on the germs of
$f_1,\ldots,f_k$ at the points of the $\gamma$-orbit of $x\in U$.
Restricted to $U^\gamma\subset U$, such cochains therefore
preserve the subalgebra
$\iota^{-1}\calA^\hbar(U^\gamma)\subset\calA^\hbar(U)$, and the
restriction map above is well-defined. Even stronger, it is a
quasi-isomorphism because on $E_0$-level of the spectral sequence
associated to the filtration by powers of $\hbar$ this map is
simply given by localization, which we already know to be a
quasi-isomorphism, cf.~Section \ref{Sec:clglobquot}.

As remarked above, local cochains, in the sense defined above, are
truly local with respect to $U^\gamma$, because points in
$U^\gamma$ by definition have a trivial $\gamma$-orbit. From this
we see that there is a canonical isomorphism
\[
  C^\bullet_\text{\tiny\rm loc}\left(\iota^{-1}
  \calA^\hbar_\text{\tiny\rm cpt}(U^\gamma), 
  \iota^{-1}\calA^\hbar_\text{\tiny\rm cpt}(U^\gamma)_\gamma\right)
  \cong\calC^\bullet(U^\gamma),
\]
compatible with differentials. Taking the sum over all $\gamma\in\Gamma_U$,
and taking $\Gamma_U$-invariants, defines the map of the proposition. As the 
argument shows, it is a quasi-isomorphism.
\end{proof}
Using the Fedosov--Weinstein--Xu resolution, this result suffices to compute the 
Hochschild cohomology for a global quotient symplectic orbifold:
\begin{corollary}
\label{gl-q-vs}
  For a global quotient $X=M\slash\Gamma$ of a finite group $\Gamma$ acting on a 
  symplectic manifold $M$, there is a natural isomorphism
  \[
    H^\bullet(\calA^{((\hbar))}_\text{\tiny\rm cpt}(M)\rtimes\Gamma,
    \calA^{((\hbar))}_\text{\tiny\rm cpt}\rtimes\Gamma)\cong
    \bigoplus_{(\gamma)\subset\Gamma}
    H^{\bullet-\ell_\gamma}_{Z(\gamma)}(M^\gamma,\C((\hbar))).
\]
\end{corollary}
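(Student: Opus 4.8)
The plan is to assemble the corollary from the chain of quasi-isomorphisms established in this section, together with the computation of the hypercohomology of the twisted Fedosov--Weinstein--Xu complex in \eqref{hyp-fwx}. For the transformation groupoid $\grp=\Gamma\ltimes M$ one has $\calA^\hbar\rtimes\grp=\calA^\hbar_\text{\tiny\rm cpt}(M)\rtimes\Gamma$, and stringing together the two propositions on local cochains (the inclusion $C^\bullet_\text{\tiny\rm loc}\hookrightarrow C^\bullet$ preserving cup products, and the quasi-isomorphism $I^\hbar_\text{\tiny\rm loc}$) with Proposition~\ref{gl-q} produces a zig-zag of quasi-isomorphisms
\[
  C^\bullet\big(\calA^\hbar_\text{\tiny\rm cpt}(M)\rtimes\Gamma,
  \calA^\hbar_\text{\tiny\rm cpt}(M)\rtimes\Gamma\big)
  \longleftarrow C^\bullet_\text{\tiny\rm loc}
  \stackrel{I^\hbar_\text{\tiny\rm loc}}{\longrightarrow}
  \calH^\bullet_{\grp,\text{\tiny\rm loc},\hbar}(M/\Gamma)
  \longrightarrow
  \calC^{\bullet,\bullet}_{\widetilde{M/\Gamma}}\big(\widetilde{M/\Gamma}\big).
\]
Passing to cohomology and invoking \eqref{hyp-fwx} gives
$H^\bullet\big(\calA^\hbar_\text{\tiny\rm cpt}(M)\rtimes\Gamma,\calA^\hbar_\text{\tiny\rm cpt}(M)\rtimes\Gamma\big)\cong\mathbb{H}^\bullet(\calC^{\bullet,\bullet}_{\tilde X},\beta_\theta)\cong H^{\bullet-\ell}\big(\tilde X,\C[[\hbar]]\big)$. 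Base changing along $\C[[\hbar]]\to\C((\hbar))$, which is flat and hence commutes with cohomology as recorded in the remark above, yields $H^\bullet\big(\calA^{((\hbar))}_\text{\tiny\rm cpt}(M)\rtimes\Gamma,\calA^{((\hbar))}_\text{\tiny\rm cpt}(M)\rtimes\Gamma\big)\cong H^{\bullet-\ell}\big(\tilde X,\C((\hbar))\big)$.

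It then remains to rewrite the right-hand side in terms of fixed-point data. For the global quotient $X=M/\Gamma$ the space of loops of $\Gamma\ltimes M$ is $\mathsf{B}_0=\bigsqcup_{\gamma\in\Gamma}\{\gamma\}\times M^\gamma$, with $\Gamma$ acting by conjugation on the labels and in the obvious way on the $M^\gamma$; hence the inertia orbifold decomposes as $\tilde X=\bigsqcup_{(\gamma)}M^\gamma/Z(\gamma)$, the disjoint union being over conjugacy classes, and the locally constant function $\ell$ restricts on the component indexed by $(\gamma)$ to the codimension $\ell(\gamma)$ of $M^\gamma$ in $M$. Since $Z(\gamma)$ is a finite group, $H^\bullet\big(M^\gamma/Z(\gamma),\C((\hbar))\big)\cong H^\bullet_{Z(\gamma)}\big(M^\gamma,\C((\hbar))\big)$, and therefore
\[
  H^{\bullet-\ell}\big(\tilde X,\C((\hbar))\big)\cong
  \bigoplus_{(\gamma)\subset\Gamma}
  H^{\bullet-\ell_\gamma}_{Z(\gamma)}\big(M^\gamma,\C((\hbar))\big),
\]
which is the asserted isomorphism.

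The substantive input --- already supplied in this section --- is the acyclicity of the twisted Fedosov resolution of Proposition~\ref{fedosov-resolution} and the consequent degeneration of the spectral sequence in Proposition~\ref{coh-fwx}; granting those, the argument above is purely formal. The points needing (routine) care are that Proposition~\ref{gl-q}, stated for $M\rtimes\Gamma$, indeed applies here verbatim, that the various compact-support conditions are preserved along the zig-zag, and that the degree shift by the locally constant function $\ell$ survives the base change to $\C((\hbar))$; I do not expect any of these to present real difficulty.
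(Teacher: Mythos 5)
Your proof is correct and follows essentially the same route as the paper: the paper's own (one-line) proof is precisely to combine the preceding chain of quasi-isomorphisms (local cochains, $I^\hbar_\text{\tiny\rm loc}$, Proposition \ref{gl-q}) with the hypercohomology identification \eqref{hyp-fwx}, and then use the standard decomposition of the inertia orbifold of a global quotient into $\bigsqcup_{(\gamma)}M^\gamma/Z(\gamma)$, which you spell out explicitly together with the base change to $\C((\hbar))$.
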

\begin{proof}
This follows from the isomorphism \eqref{hyp-fwx}.
\end{proof}
Next, we consider the cup-product. An easy computation shows that the map 
$L_1^\hbar$ induces the following product on the complex
$C^\bullet_\text{\tiny\rm loc}\left(\calA^\hbar_\text{\tiny\rm cpt}(U),
\calA^\hbar_\text{\tiny\rm cpt}(U)\rtimes\Gamma_U\right)^{\Gamma_U}$:
\[
  (\psi\bullet\phi)_\gamma:=
  \sum_{\gamma_1\gamma_2=\gamma}\psi_{\gamma_1}\cup_\text{\tiny\rm tw}
  \phi_{\gamma_2}
\]
where the map
\[
  \cup_\text{\tiny\rm tw}: C^k(\calA^\hbar(U),\calA^\hbar(U)_{\gamma_1})
  \times C^l(\calA^\hbar(U),\calA^\hbar(U)_{\gamma_2})\rightarrow 
  C^{k+l}(\calA^\hbar(U),\calA^\hbar(U)_{\gamma_1\gamma_2})
\]
is defined as
\[
  \left(\psi_{\gamma_1}\cup_\text{\tiny\rm tw}\phi_{\gamma_2}\right)
  (a_1,\ldots,a_{k+l}):=
  \psi_{\gamma_1}(a_1,\ldots,a_k)\gamma_1\phi_{\gamma_2}(a_{k+1},\ldots,a_{k+l}).
\]
Indeed one easily checks that
\[
  \beta_{\gamma_1\gamma_2}\left(\psi_{\gamma_1}\cup_\text{\tiny\rm tw}
  \phi_{\gamma_2}\right)=
  (\beta_{\gamma_1}\psi_{\gamma_1})\cup_\text{\tiny\rm tw}\phi_{\gamma_2}
  +(-1)^{deg(\psi)}\psi_{\gamma_1}\cup_\text{\tiny\rm tw}
  (\beta_{\gamma_2}\phi_{\gamma_2}).
\]
Restricting to $\tilde{U}=\bigsqcup_{\gamma}U^\gamma$, this induces the 
following product on the Fedosov--Weinstein--Xu resolution 
$\calC^{\bullet,\bullet}_{\tilde{U}\slash\Gamma_U}({\tilde{U}\slash\Gamma_U})$:
\[
  ((\alpha\otimes\psi)\bullet(\beta\otimes\phi))_\gamma:=
  \sum_{\gamma_1\gamma_2=\gamma}\left(\iota^*_{\gamma}(\alpha_{\gamma_1})\wedge
  \iota^*_\gamma(\beta_{\gamma_2})\right)\otimes
  \left(\iota_\gamma^*(\psi_{\gamma_1})\cup_\text{\tiny\rm tw}
  (\iota^*_\gamma(\phi_{\gamma_2})\right),
\]
where an element of 
$\calC^{\bullet,\bullet}_{\tilde{U}\slash\Gamma_U}({\tilde{U}\slash\Gamma_U})$ 
is written as
$\alpha\otimes\psi=\sum_\gamma\alpha_\gamma\otimes\psi_\gamma$, with 
$\alpha_\gamma\in\Omega^\bullet(U^\gamma)$ and $\psi_\gamma$ is a local section 
the sheaf of Hochschild cocycle on $\calW_{\grp}$ over $U^\gamma\subset U$. 
We therefore have:
\begin{proposition}
 The map of Proposition \ref{gl-q} is compatible with products that means 
 defines a quasi-isomorphism of sheaves of DGA's on $U\slash\Gamma_U$.
\end{proposition}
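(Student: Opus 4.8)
The plan is to verify that the quasi-isomorphism of Proposition~\ref{gl-q} is in fact a morphism of sheaves of DGA's, the codomain carrying the product displayed in the discussion above; since that proposition already establishes it to be a quasi-isomorphism of complexes of sheaves, this is all that remains. The map is built as a composite, so I would check each factor separately. The first factor, the ``deformed Theorem~IIIb'' map $\calH^\bullet_{\grp,\mathrm{loc},\hbar}(U/\Gamma_U)\to C^\bullet_{\mathrm{loc}}(\calA^\hbar(U)\rtimes\Gamma_U,\calA^\hbar(U)\rtimes\Gamma_U)$, is the pull-back $\varphi_U^*$ along the open weak equivalence $\Gamma_U\ltimes U\hookrightarrow\grp_{|U/\Gamma_U}$; such a pull-back is a morphism of DGA's exactly as in Theorem~I and Theorem~III, whose proofs generalize to $\calA^\hbar$. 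The second factor, $L_1^\hbar$, carries the Hochschild cup product to the product $(\psi\bullet\phi)_\gamma=\sum_{\gamma_1\gamma_2=\gamma}\psi_{\gamma_1}\cup_{\mathrm{tw}}\phi_{\gamma_2}$ by the computation recorded above, and the Leibniz identity stated there for $\beta_{\gamma_1\gamma_2}$ and $\cup_{\mathrm{tw}}$ shows that $\bullet$ makes the target a DGA. It then remains to handle the last factor: restriction of cochains along the fixed loci $\iota_\gamma:U^\gamma\hookrightarrow U$, the identification $C^\bullet_{\mathrm{loc}}(\iota^{-1}\calA^\hbar(U^\gamma),\iota^{-1}\calA^\hbar(U^\gamma)_\gamma)\cong\calC^\bullet(U^\gamma)$, and the passage to $\Gamma_U$-invariants of $\bigoplus_\gamma$.

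For that last factor I would argue as follows. On each summand the $\gamma$-twisted cup product on $\calC^\bullet(U^\gamma)$ induced by the automorphism $\theta$ restricts, along $U^\gamma\hookrightarrow U$, to $\cup_{\mathrm{tw}}$, so the restriction map is tautologically compatible with $\cup_{\mathrm{tw}}$ factor by factor. The genuine point is that for a factorization $\gamma=\gamma_1\gamma_2$ one needs
\[
  \mathrm{res}_{U^\gamma}\!\bigl(\psi_{\gamma_1}\cup_{\mathrm{tw}}\phi_{\gamma_2}\bigr)
  =\iota_\gamma^*(\psi_{\gamma_1})\cup_{\mathrm{tw}}\iota_\gamma^*(\phi_{\gamma_2}),
\]
which is not even obviously meaningful, since $U^{\gamma_1\gamma_2}$ need not be contained in $U^{\gamma_1}$. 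Here I would use the deformed analogue of the vanishing argument from the proof of Proposition~\ref{prop:cup}: after replacing cochains by their cut-offs, $\psi_{\gamma_1}\cup_{\mathrm{tw}}\phi_{\gamma_2}$ is supported near $U^{\gamma_1}\cap U^{\gamma_2}=U^{\langle\gamma_1,\gamma_2\rangle}\subset U^{\gamma_1\gamma_2}$, where both restrictions \emph{are} defined; and by \eqref{eq:L-2}, wherever $N^{\gamma_1}_x\cap N^{\gamma_2}_x\ne\{0\}$ on that locus a common normal derivative occurs in both factors under $\cup_{\mathrm{tw}}$, forcing the product to vanish. Hence only the factorizations with $N^{\gamma_1}\cap N^{\gamma_2}=\{0\}$ there survive, and by Lemmas~\ref{lem:comp-inter} and~\ref{lem:codim} these are exactly the ones with $\ell(\gamma_1)+\ell(\gamma_2)=\ell(\gamma)$ and $U^{\gamma_1}\cap U^{\gamma_2}=U^\gamma$; on their support the displayed identity holds strictly. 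Finally one checks that the image of the whole composite lies in form-degree zero of the Fedosov--Weinstein--Xu complex, so the wedge-of-forms factor of the product $((\alpha\otimes\psi)\bullet(\beta\otimes\phi))_\gamma$ reduces to pointwise multiplication of functions and carries no information; tracking the surviving terms through the identification then reproduces that product formula on the nose.

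Putting the three factors together, the composite is a morphism of sheaves of DGA's; being already a quasi-isomorphism of complexes of sheaves by Proposition~\ref{gl-q}, it is a quasi-isomorphism of sheaves of DGA's on $U/\Gamma_U$. I expect the main obstacle to be the bookkeeping in the middle step --- namely establishing that only the codimension-additive factorizations contribute, that on their common fixed locus all the restriction maps $\iota_\gamma^*$ entering the product formula on the Fedosov--Weinstein--Xu resolution are simultaneously legitimate, and that the resulting equality is strict rather than merely up to homotopy.
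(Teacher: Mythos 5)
Your decomposition into three factors, and your treatment of the first two (the deformed Theorem IIIb pull-back, and the computation that $L_1^\hbar$ carries the cup product to $(\psi\bullet\phi)_\gamma=\sum_{\gamma_1\gamma_2=\gamma}\psi_{\gamma_1}\cup_\text{\tiny\rm tw}\phi_{\gamma_2}$ together with the Leibniz identity for $\cup_\text{\tiny\rm tw}$), agree with what the paper does. The gap is in your handling of the last factor. The product on $\calC^{\bullet,\bullet}_{\widetilde{U\slash\Gamma_U}}$ is \emph{defined} with the full sum over all factorizations $\gamma=\gamma_1\gamma_2$; compatibility with products therefore means that the componentwise restriction to jets along $U^\gamma$ intertwines $\cup_\text{\tiny\rm tw}$ with $\cup_\text{\tiny\rm tw}$ for \emph{every} factorization, with no terms discarded at the cochain level, and this is all the paper's (brief) argument checks. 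You instead propose to kill the factorizations with $\ell(\gamma_1)+\ell(\gamma_2)>\ell(\gamma)$ strictly, by importing the vanishing argument of Proposition \ref{prop:cup}. That argument belongs to the undeformed chain of maps: it rests on the explicit formula \eqref{eq:L-2} for $L_2$, i.e.\ antisymmetrization into multivector fields, where a common normal direction forces a wedge product to vanish. There is no $L_2^\hbar$ in the deformed composite; after the identification with $\calC^\bullet(U^\gamma)$ one is dealing with Hochschild cochains on the Weyl algebra, and there the non-additive products $\Psi_{\gamma_1}\cup_\text{\tiny\rm tw}\Psi_{\gamma_2}$ do \emph{not} vanish --- by the Koszul computation in the proof of Corollary \ref{cpp-lc} they are nonzero cochains that are merely coboundaries. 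So your assertion that ``only the codimension-additive factorizations contribute'' and that the identity ``holds strictly'' on their support is false at cochain level; and had it been true, the map would fail to be a morphism onto $(\calC^{\bullet,\bullet},\bullet)$ as the product is actually defined, since the target product retains the non-additive terms. The $\ell$-additivity condition enters only after passing to cohomology, precisely through Corollary \ref{cpp-lc}, not in the proof of the present proposition.

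Two smaller points. The cut-off operators $\Psi^{k,\varepsilon}$ you invoke are part of the undeformed Teleman localization; in the deformed local picture locality is encoded by $C^\bullet_\text{\tiny\rm loc}$ and by passing to jets along $U^\gamma$, so ``replacing cochains by their cut-offs'' is not available as you use it. The legitimate notational worry that $U^{\gamma_1\gamma_2}\not\subset U^{\gamma_1}$ is resolved not by support considerations but by reading the restrictions in the product formula as restrictions of (germs of) sections of the relevant sheaves to the common locus $U^{\gamma_1}\cap U^{\gamma_2}\subset U^{\gamma_1\gamma_2}$, as made explicit in the remark following Theorem \ref{hc-product-gc}. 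Your observation that the image of the composite sits in form degree zero of the Fedosov--Weinstein--Xu complex is correct but is not where the difficulty lies.
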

For a global quotient orbifold, this leads immediately to:
\begin{corollary}
\label{cpp-lc}
Under the isomorphism of Corollary \ref{gl-q-vs}, the cup-product is given by
\[
  \alpha\bullet\beta=\sum_{{\tiny \begin{array}{c}\gamma_1\gamma_2=\gamma\\ 
  \ell(\gamma_1)+\ell(\gamma_2)=\ell(\gamma_1\gamma_2)\end{array}}}
  \iota_{\gamma}^*\alpha_{\gamma_1}\wedge\iota_{\gamma_2}^*\alpha_{\gamma_2}.
\]
\end{corollary}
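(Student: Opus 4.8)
The plan is to reduce the statement to the $\bullet$-product already written down above on the Fedosov--Weinstein--Xu resolution, and then to a stalkwise computation on the formal Weyl algebra. By the Proposition preceding the corollary, the quasi-isomorphism of Proposition~\ref{gl-q} is a map of sheaves of DGA's; hence it suffices to compute, on cohomology, the $\bullet$-product on $\calC^{\bullet,\bullet}_{\tilde U/\Gamma_U}(\tilde U/\Gamma_U)$ in the explicit form recorded just before that Proposition. There the $\bullet$-product factors as an exterior product $\iota^*_\gamma(\alpha_{\gamma_1})\wedge\iota^*_\gamma(\beta_{\gamma_2})$ of de Rham forms times a twisted cup product $\iota^*_\gamma(\psi_{\gamma_1})\cup_\text{\tiny\rm tw}\iota^*_\gamma(\phi_{\gamma_2})$ of local Hochschild cocycles on $\calW_\grp$. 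Composing with the identification of $\mathbb{H}^\bullet(\calC^\bullet_{\mathsf{B}_0},\beta_\theta)$ with $H^{\bullet-\ell}(\mathsf{B}_0,\C[[\hbar]])$ from Corollary~\ref{fxf} (which feeds into Corollary~\ref{gl-q-vs} via \eqref{hyp-fwx}), the de Rham factor is already exactly the wedge product on the right-hand side of the corollary, so everything comes down to controlling the twisted-cup factor on the chosen generators $\psi_{\gamma_i}=\Psi_{\gamma_i}$.

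The core is therefore the following assertion about the formal Weyl algebra $\mathbb{W}_{2n}$: for $\gamma_1,\gamma_2\in\Gamma_U\subset\Sp(V,\omega)$, the cocycle $\Psi_{\gamma_1}\cup_\text{\tiny\rm tw}\Psi_{\gamma_2}$ represents the generator $\Psi_{\gamma_1\gamma_2}$ of $H^\bullet(\mathbb{W}_{2n},\mathbb{W}_{2n,\gamma_1\gamma_2})$ when $\ell(\gamma_1)+\ell(\gamma_2)=\ell(\gamma_1\gamma_2)$, and represents $0$ otherwise. That $\Psi_{\gamma_1}\cup_\text{\tiny\rm tw}\Psi_{\gamma_2}$ is a $\beta_{\gamma_1\gamma_2}$-cocycle of degree $\ell(\gamma_1)+\ell(\gamma_2)$ valued in $\mathbb{W}_{2n,\gamma_1\gamma_2}$ follows from the twisted Leibniz rule displayed before Proposition~\ref{gl-q}. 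Since, by Proposition~\ref{inf-comp}, that twisted Hochschild cohomology is concentrated in degree $\ell(\gamma_1\gamma_2)$, the class vanishes automatically whenever $\ell(\gamma_1)+\ell(\gamma_2)\neq\ell(\gamma_1\gamma_2)$ --- this is the source of the summation constraint. In the remaining case, Lemma~\ref{lem:comp-inter} together with the codimension count of Lemma~\ref{lem:codim} gives $V^{\gamma_1}+V^{\gamma_2}=V$, hence $V^\perp_{\gamma_1}\cap V^\perp_{\gamma_2}=\{0\}$, and since these are symplectic subspaces, $V^\perp_{\gamma_1}\oplus V^\perp_{\gamma_2}=V^\perp_{\gamma_1\gamma_2}$. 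Applying the restriction map $R_V$ of Proposition~\ref{inf-comp} (the twisted cup product restricts on $\Lambda V^*$ to the exterior product, the cross terms dropping out precisely because $V^\perp_{\gamma_1}\cap V^\perp_{\gamma_2}=\{0\}$, exactly as in the vanishing-of-wedge argument of Proposition~\ref{prop:cup}) one gets $(\Psi_{\gamma_1}\cup_\text{\tiny\rm tw}\Psi_{\gamma_2})|_{\Lambda V^*}=(\Pi^\perp_{\gamma_1})^{\ell(\gamma_1)/2}\wedge(\Pi^\perp_{\gamma_2})^{\ell(\gamma_2)/2}=(\Pi^\perp_{\gamma_1\gamma_2})^{\ell(\gamma_1\gamma_2)/2}$, the last step being the bivector identity for an orthogonal symplectic direct sum. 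The uniqueness clause of Proposition~\ref{inf-comp} then identifies the class with $\Psi_{\gamma_1\gamma_2}$, with structure constant $1$.

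Granting this, the rest is assembly. Sheafifying the fiberwise identity over $\mathsf{B}_0$, equivalently over $\bigsqcup_\gamma U^\gamma$, and passing to $\Lambda(\grp)$-invariants, the twisted-cup factor of the $\bullet$-product on $\calC^{\bullet,\bullet}_{\tilde U/\Gamma_U}$ becomes, on cohomology, the identity where $\ell$ is additive and zero elsewhere; combined with the de Rham factor and Corollaries~\ref{fxf} and~\ref{gl-q-vs} (and Corollary~\ref{q-i-crs} for the passage to the conjugacy-class and centralizer indexing), this yields exactly the formula of the corollary. As a final simplification, on the overlap $U^{\gamma_1}\cap U^{\gamma_2}=U^{\gamma_1\gamma_2}$ one has $\gamma_1(\alpha_{\gamma_2})=\alpha_{\gamma_2}$, so the $\gamma_1$-twist in $\cup_\text{\tiny\rm tw}$ is invisible and the wedge product appears untwisted, just as in the last line of the proof of Proposition~\ref{prop:cup}.

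The main obstacle I anticipate is the fiberwise Weyl-algebra computation of the middle paragraph, and within it the normalization: one must check that the combinatorial factor $1/\ell(\gamma)!$ built into $\Omega_\gamma$, and hence into $\Psi_\gamma$, is compatible with the bivector identity $(\Pi^\perp_{\gamma_1})^{\ell(\gamma_1)/2}\wedge(\Pi^\perp_{\gamma_2})^{\ell(\gamma_2)/2}=(\Pi^\perp_{\gamma_1\gamma_2})^{\ell(\gamma_1\gamma_2)/2}$, so that the structure constant is genuinely $1$ and not some binomial coefficient. The vanishing half of the assertion, by contrast, is immediate from Proposition~\ref{inf-comp}, and the descent to the $\Gamma_U$-invariant, conjugacy-class-indexed description is routine given Corollaries~\ref{fxf},~\ref{gl-q-vs} and~\ref{q-i-crs}.
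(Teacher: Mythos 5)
Your proposal is correct and follows essentially the same route as the paper: reduce via the DGA quasi-isomorphisms onto the Fedosov--Weinstein--Xu resolution, compute $\Psi_{\gamma_1}\cup_\text{\tiny\rm tw}\Psi_{\gamma_2}$ fiberwise in the (reduced/Koszul) complex of the formal Weyl algebra to get $\Psi_{\gamma_1\gamma_2}$ up to coboundary exactly when $\ell(\gamma_1)+\ell(\gamma_2)=\ell(\gamma_1\gamma_2)$, and then assemble. The only differences are cosmetic: you obtain the vanishing half from the degree concentration of Proposition~\ref{inf-comp} instead of the paper's explicit formula for $\cup_\text{\tiny\rm tw}$ on the Koszul complex, and the normalization of the top transverse Poisson power that you flag is treated no more explicitly in the paper's own proof.
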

\begin{proof}
The isomorphism of Corollary \ref{gl-q-vs} is induced by the quasi-isomorphism
\[
  I_\Psi:\left(\Omega^\bullet_{\tilde{X}}\otimes\C[[\hbar]],d\right)\rightarrow
  \left(\calC_{\tilde{X}}^{\bullet,\bullet},D+\beta_\text{\tiny\rm tw}\right)
\]
of Proposition \ref{embd-n}. We therefore find
\[
  I_\Psi(\alpha)\bullet I_\Psi(\beta)=
  \sum_{\gamma_1\gamma_2=\gamma}\left(\iota^*_{\gamma}(\alpha_{\gamma_1})\wedge
  \iota^*_\gamma(\beta_{\gamma_2})\right)\otimes\left(
  \iota_\gamma^*(\Psi_{\gamma_1})\cup_\text{\tiny\rm tw}
  (\iota^*_\gamma(\Psi_{\gamma_2})\right).
\]
We will now consider the second component 
$\iota_\gamma^*(\Psi_{\gamma_1})\cup_\text{\tiny\rm tw}
(\iota^*_\gamma(\Psi_{\gamma_2})$ for a moment.
An easy calculation shows that in the Koszul complex \eqref{koszul}, the product
\[
  \cup_\text{\tiny\rm tw}:K^p_{\gamma_1}\times K^q_\gamma\rightarrow 
  K^{p+q}_{\gamma_1\gamma_2}
\] 
is given by
\[
  \left( a_1\otimes y_{I_q}\right)\cup_\text{\tiny\rm tw}\left(
  a_2\otimes y_{I_q}\right)=
  (a_1\gamma_1 a_2)\otimes y_{I_p}\wedge\gamma_1 y_{I_q},
\]
where $I_p$ and $I_q$ are multi-indices of length $p$ resp.~$q$.
Therefore,
\[
  \left(\Pi^\perp_{\gamma_1}\right)^{\ell(\gamma_1)/2}\cup_\text{\tiny\rm tw}
  \left(\Pi^\perp_{\gamma_2}\right)^{\ell(\gamma_2)/2}
  =
  \begin{cases} 
    \left(\Pi_{\gamma_1\gamma_2}^\perp\right)^{\ell(\gamma_1\gamma_2)/2},&
    \text{if $\ell(\gamma_1)+\ell(\gamma_2)=\ell(\gamma_1\gamma_2)$},\\
    0,&\text{else.}
  \end{cases}
\]
In the reduced Hochschild complex, this gives
\[
  \Psi_{\gamma_1}\cup_\text{\tiny\rm tw}\Psi_{\gamma_2}=
  \begin{cases}
    \Psi_{\gamma_1\gamma_2}+\text{exact},&
    \text{if $\ell(\gamma_1)+\ell(\gamma_2)=\ell(\gamma_1\gamma_2)$},\\ 
    \text{exact},&\text{else.}
  \end{cases}
\]
Taking cohomology we find the product as stated above.
\end{proof}
\subsection{The general case}\label{defhoch}
Recall that we have spaces and morphisms as in the following
diagram:
\[
  \xymatrix{\mathsf{B}_0\ar[r]\ar[d]^{\tilde{\pi}}&\mathsf{G_0}\ar[d]^\pi\\
  \tilde{X}\ar[r]^\psi&X}
\]
As in \cite{chen-hu:de-rham}, define the space 
\[
  \mathsf{S}^1_{\grp}:=\{(g_1,g_2)\in 
  \grp_1\times\grp_1 \mid s(g_1)=t(g_1)=s(g_2)=t(g_2)\}.
\] 
It comes equipped with three maps 
$\pr_1,m,\pr_2:\mathsf{S}^1_{\grp}\rightarrow\mathsf{B}_0$, where 
$\pr_1(g_1,g_2)=g_1$, $m(g_1,g_2)=g_1g_2$ and $\pr_2(g_1,g_2)=g_2$. For 
differential forms $\alpha,\beta\in\Omega^\bullet(\mathsf{B}_0)$, define the 
following product:
\begin{equation}
\label{orb-prod}
  \alpha\bullet\beta=\int_{m_\ell}\pr_1^*\alpha\wedge \pr_2^*\beta,
\end{equation}
where
$m_\ell:\mathsf{S}^1_{\grp,\ell}\rightarrow\mathsf{B}_0$ is
the restriction of the multiplication map and
\[
  \mathsf{S}^1_{\grp,\ell}:=
  \{(g_1,g_2)\in\mathsf{S}^1_\grp \mid \ell(g_1)+\ell(g_2)=\ell(g_1g_2)\}.
\]
Both $\mathsf{S}^1_{\grp,\ell}$ and $\mathsf{B}_0$ carry a natural action of 
$\grp$ by conjugating loops and the quotient
$\mathsf{B}_0\slash\grp=\tilde{X}$. We have 
\[
  \Omega^\bullet(\tilde{X})=\Omega^\bullet(\mathsf{B}_0)^\grp,
\] 
and the product \eqref{orb-prod} defines an associative graded product on  
$\Omega^\bullet(\tilde{X})$. Together with the de Rham differential, it turns
$\Omega^\bullet(\tilde{X})$ into a differential graded algebra. Of course, 
$\Omega^\bullet(\tilde{X})$ is the global sections of a sheaf on $\tilde{X}$,
but it is important to notice that the product \eqref{orb-prod} is not local. 
However, if we consider $\psi_*\Omega^\bullet_{\tilde{X}}$, the
push-forward to $X$, we have 
$\Gamma(X,\psi_*\Omega^\bullet_{\tilde{X}})=\Omega^\bullet(\tilde{X})$ and now 
the product is local, i.e.,
$(\psi_*\Omega^\bullet_{\tilde{X}},d,\bullet)$ does define a sheaf of DGA's on 
$X$.

The same can be done for the FWX-resolution on $\tilde{X}$. This time we 
introduce the product
\begin{equation}
\label{orb-prod-fwx}
  (\alpha\otimes\psi)\bullet(\beta\otimes\phi):=\int_{m_\ell}
  \left(pr^*_1\alpha\wedge \pr_2^*\beta\right)\otimes\left( 
  \pr^*_1\psi\cup_\text{\tiny\rm tw} \pr_2^*\phi\right).
\end{equation}
Again, because of the integration over the fiber, this defines a local product 
on $\psi^*\calC^{\bullet,\bullet}_{\tilde{X}}$, so that the total sheaf complex 
is a sheaf of DGA's.

\begin{lemma}
\label{qi1}
The embedding of Proposition \ref{embd-n} defines a quasi-isomorphism
\[
  I_\Psi:\psi_*\Omega^{\bullet-\ell}_{\tilde{X}}\otimes\C[[\hbar]]
  \rightarrow\psi_*\calC^{\bullet,\bullet}_{\tilde{X}}
\]
compatible with products up to homotopy.
\end{lemma}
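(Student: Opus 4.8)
The plan is to obtain the lemma as the globalization of two facts already available: the quasi-isomorphism of Proposition~\ref{embd-n}, which is essentially a stalkwise statement on $\mathsf{B}_0$, and the local cup-product computation carried out in the proof of Corollary~\ref{cpp-lc}. I would prove the two assertions of the lemma -- that $I_\Psi$ is a quasi-isomorphism, and that it respects products up to homotopy -- one after the other, the first being formal and the second being where the work lies.

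For the quasi-isomorphism, I would first arrange that the generator $\Psi\in\Gamma(\mathsf{B}_0,\calC^\ell)$ is $\grp$-invariant. Since $\grp$ is proper one can average over the $\grp$-action; the leading term $R(\Psi)=(\Pi_\theta^\perp)^{\ell/2}$ is genuinely $\grp$-invariant (it is built from the invariant form $\omega$), so by Corollary~\ref{q-i-crs} the average is still a generator of the fibrewise vertical cohomology with the same leading term, now strictly invariant. With such a $\Psi$ the morphism $I_\Psi(\alpha)=\alpha\otimes\Psi$ of Proposition~\ref{embd-n} is $\grp$-equivariant, so it descends to a morphism of complexes of sheaves $\Omega^{\bullet-\ell}_{\tilde{X}}\otimes\C[[\hbar]]\to\calC^{\bullet,\bullet}_{\tilde{X}}$ on $\tilde{X}$, and it is still a quasi-isomorphism because $\grp$ is proper, so that passing to $\Lambda(\grp)$-invariants is exact. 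Finally $\psi\colon\tilde{X}\to X$ is a finite map -- it is induced by the proper \'etale map $\mathsf{B}_0=(s,t)^{-1}(\Delta)\to\grp_0$ -- hence $\psi_*$ is exact on sheaves and sends this to the asserted quasi-isomorphism $\psi_*\Omega^{\bullet-\ell}_{\tilde{X}}\otimes\C[[\hbar]]\to\psi_*\calC^{\bullet,\bullet}_{\tilde{X}}$. The products play no role here.

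For compatibility with products I would argue locally on $X$: since the map is one of sheaves of DGA's it suffices to check the homotopy on orbifold charts $U/\Gamma_U$, where the statement reduces to the computation in the proof of Corollary~\ref{cpp-lc}, now run with the fibre-integrated products~\eqref{orb-prod-fwx}. Inserting $I_\Psi(\alpha)=\alpha\otimes\Psi$ and $I_\Psi(\beta)=\beta\otimes\Psi$ into~\eqref{orb-prod-fwx} gives
\[
  I_\Psi(\alpha)\bullet I_\Psi(\beta)=\int_{m_\ell}\bigl(\pr_1^*\alpha\wedge\pr_2^*\beta\bigr)\otimes\bigl(\pr_1^*\Psi\cup_\text{\tiny\rm tw}\pr_2^*\Psi\bigr),
\]
and on every component of $\mathsf{S}^1_{\grp,\ell}$ one has $\ell(g_1)+\ell(g_2)=\ell(g_1g_2)$ by definition. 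The Koszul-complex computation in the proof of Corollary~\ref{cpp-lc} -- unwinding $\cup_\text{\tiny\rm tw}$ on~\eqref{koszul} and using $(\Pi^\perp_{\gamma_1})^{\ell(\gamma_1)/2}\cup_\text{\tiny\rm tw}(\Pi^\perp_{\gamma_2})^{\ell(\gamma_2)/2}=(\Pi^\perp_{\gamma_1\gamma_2})^{\ell(\gamma_1\gamma_2)/2}$ under codimension additivity -- shows that $\pr_1^*\Psi\cup_\text{\tiny\rm tw}\pr_2^*\Psi$ equals $m_\ell^*\Psi$ modulo a $\beta_\theta$-coboundary, the coboundary being produced by the explicit homotopy implicit in Proposition~\ref{inf-comp} (the comparison of the Koszul and reduced Bar resolutions of the Weyl algebra), which can be chosen $\grp$-equivariantly and therefore descends. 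Because $\Psi$ is pulled back along $m_\ell$, the projection formula for the fibrewise sum gives $\int_{m_\ell}(\pr_1^*\alpha\wedge\pr_2^*\beta)\otimes m_\ell^*\Psi=(\alpha\bullet\beta)\otimes\Psi=I_\Psi(\alpha\bullet\beta)$, while the leftover term integrates, by the same projection formula, to a $(D+\beta_\theta)$-coboundary bilinear in $(\alpha,\beta)$; this provides the required chain homotopy between $\alpha\otimes\beta\mapsto I_\Psi(\alpha\bullet\beta)$ and $\alpha\otimes\beta\mapsto I_\Psi(\alpha)\bullet I_\Psi(\beta)$. Gluing the local homotopies over $X$ with a partition of unity subordinate to a cover by orbifold charts finishes the proof.

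The first half is formal, resting only on Proposition~\ref{embd-n}, properness of $\grp$, and finiteness of $\psi$. The main obstacle is the second half: one must keep careful track of the twists under $\pr_1$, $\pr_2$ and $m_\ell$ so that $\pr_1^*\Psi\cup_\text{\tiny\rm tw}\pr_2^*\Psi$ is a twisted Hochschild cochain of the correct type on $\mathsf{S}^1_{\grp,\ell}$, and then verify that the homotopy witnessing $\pr_1^*\Psi\cup_\text{\tiny\rm tw}\pr_2^*\Psi\sim m_\ell^*\Psi$ may be taken $\grp$-equivariant and is compatible both with the fibrewise integration and with the patching over $X$, so that ``up to homotopy'' indeed holds at the level of sheaves of differential graded algebras on $X$ and not merely on cohomology.
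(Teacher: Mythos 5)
Your proposal follows essentially the same route as the paper: choose $\Psi$ to be $\grp$-invariant (so that $I_\Psi$ descends to $\tilde{X}$), use properness of $\grp$ to conclude that being a quasi-isomorphism on $\mathsf{B}_0$ suffices, and reduce the compatibility with products up to a coboundary to the local twisted Koszul computation already carried out in the proof of Corollary~\ref{cpp-lc}. The extra details you supply (averaging to get an invariant $\Psi$, exactness of $\psi_*$ for the finite map $\psi$, the projection formula over the discrete fibers of $m_\ell$) are correct elaborations of steps the paper leaves implicit.
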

\begin{proof}
By Corollary \ref{q-i-crs}, if we choose
$\Psi\in\Gamma\left(\mathsf{B}_0,\calC^\ell\right)$ to
be $\grp$-invariant, it descends to a morphism
\[
 I_\Psi:\psi_*\Omega^{\bullet-\ell}_{\tilde{X}}\otimes\C[[\hbar]]\rightarrow
 \psi_*\calC^{\bullet,\bullet}_{\tilde{X}}
\]
By assumption, the groupoid $\grp$ is proper, so we have
\[
  \mathbb{H}\left(\tilde{X},(\Omega^\bullet_{\tilde{X}},d)\right)\cong 
  \mathbb{H}\left(\mathsf{B}_0,(\Omega^\bullet_{\mathsf{B}_0},d)\right)^\grp,
\]
and similarly for $\calC^{\bullet,\bullet}_{\tilde{X}}$. Therefore
the morphism $I_\Psi$ is a quasi-isomorphism because it is a
quasi-isomorphism on $\mathsf{B}_0$. The fact that it preserves
products up to a coboundary, is a simple calculation as in the
proof of Corollary \ref{cpp-lc}.
\end{proof}
\begin{proposition}
\label{qi2}
There is a quasi-isomorphism 
\[
  \calH^\bullet_{\grp,\text{\tiny\rm loc},\hbar}\rightarrow
  \psi_*\calC^{\bullet,\bullet}_{\tilde{X}}
\] 
which maps the cup-product to the product \eqref{orb-prod-fwx}
\end{proposition}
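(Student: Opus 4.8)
The plan is to prove this exactly as in the global quotient case, but sheafified over $X$. Both $\calH^\bullet_{\grp,\text{\tiny\rm loc},\hbar}$ and $\psi_*\calC^{\bullet,\bullet}_{\tilde X}$ are complexes of presheaves on $X$ which are modules over $\calC^\infty_X$, hence fine, so by the mechanism of Theorem II (in its deformed form) their hypercohomologies are computed by the global sections of their cohomology sheaves, and a morphism between them is a quasi-isomorphism as soon as it is one on a sufficiently fine open cover. Since $X$ admits a basis of charts of global-quotient type $U=V/\Gamma_U$ with $V\subset\R^{2n}$ open and $\Gamma_U$ finite acting by linear symplectic transformations, the whole statement reduces to producing a natural morphism of sheaves of DGA's on $X$ and verifying it chart by chart against Proposition \ref{gl-q}.

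First I would fix, once and for all, $\grp$-invariant auxiliary data on $\grp_0$ and on $\mathsf B_0$: a complete $\grp$-invariant riemannian metric on $\grp_0$ (giving the Teleman cut-off cochains and the maps $L_1,L_2,L_3$ of Section \ref{Sec:clglobquot}), $\grp$-invariant symplectic connections on $\grp_0$, $T\mathsf B_0$ and $N\to\mathsf B_0$ (giving the Fedosov connection $D$ of Proposition \ref{fedosov-resolution}), and the $\grp$-invariant Hochschild cocycle $\Psi\in\Gamma(\mathsf B_0,\calC^\ell)$ used in Lemma \ref{qi1}. Restricted to a global-quotient chart, these are precisely the data entering the proof of Proposition \ref{gl-q}. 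Then I would define the desired map $\Phi\colon\calH^\bullet_{\grp,\text{\tiny\rm loc},\hbar}\to\psi_*\calC^{\bullet,\bullet}_{\tilde X}$ on such a chart as the composite of the deformed version of the quasi-isomorphism of Theorem IIIb, $\calH^\bullet_{\grp,\text{\tiny\rm loc},\hbar}(U)\to C^\bullet_\text{\tiny\rm loc}(\calA^\hbar(V)\rtimes\Gamma_U,\calA^\hbar(V)\rtimes\Gamma_U)$, followed by the deformed $L_1^\hbar$ and the localization-to-the-fixed-loci maps from the proof of Proposition \ref{gl-q}, followed by the Fedosov embedding $I_\Psi$ of Proposition \ref{embd-n}. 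Because each of these steps is natural with respect to restriction of charts --- for Theorem IIIb this is the compatibility of the Morita context with weak equivalences (Theorem III), and for the remaining steps it is forced by the global choice of invariant data --- the chart-wise maps agree on overlaps of a basis of global-quotient charts and glue to a morphism of complexes of presheaves on $X$. Since both the cup product on $\calH^\bullet_{\grp,\text{\tiny\rm loc},\hbar}$ and the product \eqref{orb-prod-fwx} are \emph{local} products on $X$ (precisely the point emphasized just before Lemma \ref{qi1}), they are determined by their restrictions to charts, where the compatibility of $\Phi$ with products is the content of the unlabeled proposition immediately preceding Corollary \ref{cpp-lc}; hence $\Phi$ is a morphism of sheaves of DGA's.

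It then remains to see that $\Phi$ is a quasi-isomorphism, and by fineness and the Theorem II mechanism it suffices to check this on a sufficiently fine locally finite cover of $X$ by global-quotient charts, where it is precisely Proposition \ref{gl-q}. Throughout one works with the $\hbar$-adic filtration on all complexes in sight and upgrades $E_0$-level statements to genuine quasi-isomorphisms via the Eilenberg--Moore comparison theorem, exactly as in the reductions at the beginning of Section \ref{Sec:defhoch}: on $E_0$ the double complex $\calC^{\bullet,\bullet}_{\tilde X}$ collapses (Proposition \ref{coh-fwx}) to the undeformed complex $\calH^\bullet_\grp$ and $\Phi$ degenerates to the undeformed localization map, which is a quasi-isomorphism by Theorems II and IV.

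The hard part will be the gluing step. A priori Proposition \ref{gl-q}, being proved through the Teleman cut-offs, the maps $L_i$ and the choice-dependent Fedosov data and generator $\Psi$, only yields a zig-zag of quasi-isomorphisms chart by chart, not a single well-defined morphism of sheaves; the remedy is precisely to make every choice $\grp$-equivariantly on $\grp_0$ and $\mathsf B_0$ (possible since $\grp$ is proper) so that the chart-wise constructions become literal restrictions of one global construction, and then to invoke the functoriality of the slice theorem (the deformed Theorem IIIb / Theorem III) for compatibility under refinement. A secondary bookkeeping issue is that $\calH^\bullet_{\grp,\text{\tiny\rm loc},\hbar}$ is a single complex while $\calC^{\bullet,\bullet}_{\tilde X}$ is a double complex, so $\Phi$ must be understood as landing in the associated total complex; this is handled, as above, by working with total complexes and the $\hbar$-filtration throughout, and is already implicit in Lemma \ref{qi1}.
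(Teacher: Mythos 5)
Your proposal follows essentially the same route as the paper: localize via slices and the deformed Theorem IIIb, compose with the map of Proposition \ref{gl-q}, upgrade from the $E_0$-level of the $\hbar$-adic filtration via the spectral sequence comparison, take product compatibility from the proposition preceding Corollary \ref{cpp-lc}, and glue using fineness of the sheaves (your discussion of making all auxiliary choices $\grp$-invariantly is a reasonable fleshing-out of the paper's terse gluing step). One correction: the final step of your composite, ``followed by the Fedosov embedding $I_\Psi$ of Proposition \ref{embd-n},'' should be dropped --- the localization maps from the proof of Proposition \ref{gl-q} already land in the sheaf $\calC^\bullet$ of twisted Hochschild cochains, i.e.~in the $\Omega^0$-column of the double complex $\calC^{\bullet,\bullet}_{\tilde X}$, whereas $I_\Psi$ has the de Rham complex as its domain, so the composition with $I_\Psi$ does not typecheck; $I_\Psi$ is the \emph{other} leg of the zig-zag (Lemma \ref{qi1}), not part of this morphism. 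With that step removed, the argument is the paper's own.
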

\begin{proof}
For any $x\in X$, choose a local slice to obtain a morphism
\[
 \calH^\bullet_{\grp,\text{\tiny\rm loc},\hbar}(U_x)\rightarrow 
 C^\bullet\left(\calA^\hbar(M_x)\rtimes\grp_x,\calA^\hbar(M_x)\rtimes\grp_x
 \right),
\]
as in Theorem IIIb. By this very same Theorem IIIb, one knows that
on $E_0$ of the spectral sequences associated to the
$\hbar$-filtration, the above chain morphism is a
quasi-isomorphism, and therefore it is a quasi-isomorphism on the
original complexes. We now compose with the morphism of
Proposition \ref{gl-q} to get a map
\[
  \calH^\bullet_{\grp,\text{\tiny\rm loc},\hbar}\rightarrow
  \psi_*\calC^{\bullet,\bullet}_{\tilde{M_x\slash\grp_x}}(U_x)\cong
  \psi_*\calC^{\bullet,\bullet}_{\tilde{X}}(U_x).
\]
Because the sheaves are fine, this in fact defines a global quasi-isomorphism 
over the orbifold $X$.
\end{proof}

Finally, combining Lemma \ref{qi1} with Proposition \ref{qi2}, we have arrived 
at the main conclusion:
\begin{theorem}
\label{hc-product-gc} 
Let $\grp$ be a proper \'etale groupoid with an invariant symplectic structure, 
modeling a symplectic orbifold $X$. For any invariant deformation quantization 
$\calA^\hbar$ of $\grp$, we have a natural isomorphism 
\[
  H^\bullet\left(\calA^{((\hbar))}\rtimes \grp, \calA^{((\hbar))}\rtimes 
  \grp\right)\cong H^{\bullet-\ell}\left(\tilde{X},\C((t))\right).
\] 
With this isomorphism, the cup product is given by \eqref{orb-prod}.
\end{theorem}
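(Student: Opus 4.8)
The strategy is to splice together, into a single zig-zag, the quasi-isomorphisms of differential graded algebras built in the preceding subsections, and then to read off both the additive isomorphism and the cup product on cohomology. Concretely, I would string together the maps
\[
 C^\bullet\big(\calA^\hbar\rtimes\grp,\calA^\hbar\rtimes\grp\big)
 \hookleftarrow C^\bullet_\text{\tiny\rm loc}\big(\calA^\hbar\rtimes\grp,\calA^\hbar\rtimes\grp\big)
 \xrightarrow{I^\hbar_\text{\tiny\rm loc}} \calH^\bullet_{\grp,\text{\tiny\rm loc},\hbar}
 \longrightarrow \psi_*\calC^{\bullet,\bullet}_{\tilde X}
 \xleftarrow{I_\Psi} \psi_*\Omega^{\bullet-\ell}_{\tilde X}\otimes\C[[\hbar]],
\]
where the first arrow is the inclusion of local Hochschild cochains, already shown to be a quasi-isomorphism preserving cup products; the second is $I^\hbar_\text{\tiny\rm loc}$, a quasi-isomorphism of DGA's; the third is the morphism of Proposition~\ref{qi2}, a quasi-isomorphism carrying the cup product to the product \eqref{orb-prod-fwx}; and the last is $I_\Psi$ from Lemma~\ref{qi1}, a quasi-isomorphism compatible with products up to homotopy. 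All the (pre)sheaves occurring here are fine (the presheaves $\calH^\bullet$ and their local and deformed variants by the module argument of Section~\ref{Sec:locmeth}, and $\calC^{\bullet,\bullet}_{\tilde X}$, $\Omega^\bullet_{\tilde X}$ by partitions of unity), so taking global sections over $X$ computes the relevant hypercohomology, and since push-forward along $\psi$ identifies global sections it computes hypercohomology over $\tilde X$ as well.

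Passing to cohomology along this zig-zag then gives a chain of isomorphisms
\[
 H^\bullet\big(\calA^\hbar\rtimes\grp,\calA^\hbar\rtimes\grp\big)\cong
 \mathbb{H}^\bullet\big(X,\psi_*\calC^{\bullet,\bullet}_{\tilde X}\big)\cong
 \mathbb{H}^\bullet\big(X,\psi_*\Omega^{\bullet-\ell}_{\tilde X}\otimes\C[[\hbar]]\big)\cong
 H^{\bullet-\ell}\big(\tilde X,\C[[\hbar]]\big),
\]
the last isomorphism being the Fedosov--Weinstein--Xu computation \eqref{hyp-fwx} together with the de Rham theorem. Extending scalars over $\C[[\hbar]]$ to the Laurent field $\C((\hbar))$, and invoking the identity $H^\bullet(\calA^{((\hbar))}\rtimes\grp,\calA^{((\hbar))}\rtimes\grp)=H^\bullet(\calA^\hbar\rtimes\grp,\calA^\hbar\rtimes\grp)\hatotimes_{\C[[\hbar]]}\C((\hbar))$ from the Remark above together with the analogous statement for $H^{\bullet-\ell}(\tilde X,\C[[\hbar]])$, produces the natural isomorphism claimed in the theorem over $\C((t))$.

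It remains to track the product. The first two maps preserve cup products strictly; Proposition~\ref{qi2} carries the cup product to the product \eqref{orb-prod-fwx} on the twisted Fedosov--Weinstein--Xu complex; and $I_\Psi$ preserves products up to coboundary, so on cohomology it is a ring isomorphism between $\Omega^{\bullet-\ell}(\tilde X)$ with the product \eqref{orb-prod} and the cohomology of $\psi_*\calC^{\bullet,\bullet}_{\tilde X}$ with \eqref{orb-prod-fwx}. Chasing the composite therefore sends the Hochschild cup product to the product \eqref{orb-prod} on $H^{\bullet-\ell}(\tilde X,\C((t)))$. The one computational input here is that, under $I_\Psi$, the twisted cup product of the fibrewise generators reproduces precisely the restriction of the fibre integration over $m$ to $m_\ell:\mathsf{S}^1_{\grp,\ell}\to\mathsf{B}_0$; this is the Koszul-complex calculation from the proof of Corollary~\ref{cpp-lc}, namely that $(\Pi^\perp_{\gamma_1})^{\ell(\gamma_1)/2}\cup_\text{\tiny\rm tw}(\Pi^\perp_{\gamma_2})^{\ell(\gamma_2)/2}$ equals $(\Pi^\perp_{\gamma_1\gamma_2})^{\ell(\gamma_1\gamma_2)/2}$ when $\ell(\gamma_1)+\ell(\gamma_2)=\ell(\gamma_1\gamma_2)$ and vanishes otherwise, passed from the global-quotient case to the étale groupoid via slices and gluing exactly as in Proposition~\ref{qi2}.

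Since all the analytic work has been carried out in Section~\ref{Sec:defhoch}, there is no single hard step; the main obstacle is bookkeeping. The points I would handle with care are: (i) the morphisms of Proposition~\ref{qi2} and Lemma~\ref{qi1} preserve products only up to homotopy, so one must note that this already yields an honest isomorphism of graded algebras after passing to cohomology; (ii) the product \eqref{orb-prod} on $\Omega^\bullet(\tilde X)$ is \emph{not} local, because of the integration over the fibres of $m_\ell$, so the whole argument must be carried out with the push-forwards $\psi_*\Omega^\bullet_{\tilde X}$ and $\psi_*\calC^{\bullet,\bullet}_{\tilde X}$ to $X$, where the sheaves are fine and the product becomes local, as arranged at the start of Section~\ref{defhoch}; and (iii) one should record that $\mathsf{S}^1_{\grp,\ell}$ is a union of connected components of $\mathsf{S}^1_\grp$ (since $\ell$ is locally constant and $\ell(g_1g_2)\le\ell(g_1)+\ell(g_2)$), so that $m_\ell$ is again étale and $\int_{m_\ell}$ is genuine summation over discrete fibres.
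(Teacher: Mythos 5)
Your proposal is correct and follows essentially the same route as the paper: the theorem is obtained precisely by splicing the quasi-isomorphism onto local cochains, the map $I^\hbar_\text{\tiny\rm loc}$, the morphism of Proposition~\ref{qi2} (carrying the cup product to \eqref{orb-prod-fwx}), and $I_\Psi$ of Lemma~\ref{qi1}, with the product identified through the Koszul computation of Corollary~\ref{cpp-lc} and the identification \eqref{hyp-fwx}. Your additional bookkeeping remarks (working with $\psi_*$ on $X$ where the sheaves are fine and the product is local, and noting that $m_\ell$ restricts to unions of connected components) are consistent with the paper's treatment.
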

\begin{remark}
We explain the product (\ref{orb-prod}) using the orbifold language.  Let $X$ be 
represented by the groupoid $\grp$ such that $X=\grp_0/\grp$, and $\tilde{X}$ be 
the corresponding inertia orbifold represented by $\mathsf{B}_0/\grp$.
Locally, an open chart of $X$  is like $U/\Gamma$ with $\Gamma$ a finite group
acting linearly on an open subset $U$ of $\mathbb{R}^n$.
Accordingly $\tilde{X}$ is locally represented by 
$\big(\coprod_{\gamma \in \Gamma}U^\gamma\big)/\Gamma \cong 
\coprod_{(\gamma)\subset \Gamma} U^\gamma/Z(\gamma)$, where $(\gamma)$ is the 
conjugacy class of $\gamma$ in $\Gamma$. We usually use $X^\gamma$ to stand for 
the component of $\tilde{X}$ containing $U^\gamma/Z(\gamma)$. 
With these notations, 
the above $S^1_{\grp}$ and $S^1_{\grp, \ell}$ are locally represented as
\[
 S^1_{\grp}=\coprod_{\gamma_1, \gamma_2}U^{\gamma_1}\cap U^{\gamma_2},
 \quad S^1_{\grp,\ell}=\coprod_{\tiny \begin{array}{c}\gamma_1, \gamma_2,\\
 \ell(\gamma_1)+\ell(\gamma_2)=\ell(\gamma_1\gamma_2)\end{array}}U^{\gamma_1}\cap
 U^{\gamma_2}.
\]

As we are considering $\grp$-invariant differential forms on $B_0$,
their pull-backs through projections $\pr_1, \pr_2$ to $S^1_{\grp}$
and $S^1_{\grp, \ell}$  are invariant under the following $\grp$-action on 
$S^1_{\grp}$ and $S^1_{\grp, \ell}$, which is defined as
\[
  (g_1, g_2)g=(g^{-1}g_1g,g^{-1}g_2g), \quad (g_1, g_2)\in S^1_{\grp,
  \ell},\ s(g)=t(g_2)=t(g_2).
\]
Locally this action can be written as a $\Gamma$-action on
$\coprod_{\gamma_1, \gamma_2}U^{\gamma_1}\cap U^{\gamma_2}$,
\[
(x, \gamma_1, \gamma_2)\gamma=(\gamma^{-1}(x), \gamma^{-1} \gamma_1\gamma,
\gamma^{-1}\gamma_2\gamma), \quad \gamma_1(x)=\gamma_2(x)=x, \gamma\in
\Gamma.
\]
The corresponding quotient space $S^1_{\grp}/\grp$ is usually denoted by
\begin{equation}
\label{tri-cyclic}
  X_3=\{ (x, (g_1, g_2, g_3))\mid g_1, g_2\in \Stab (x), \,
  g_1g_2g_3=\id, \, x\in X\}.
\end{equation}
One can see that locally $X_3=X^{g_1}\cap X^{g_2}$.  The pullbacks of
$\grp$-invariant differential forms on $\mathsf{B}_0$  are differential
forms on $X_3$.
Therefore, the formula (\ref{orb-prod}) can be interpreted as follows. For
$\alpha_1, \alpha_2\in \Omega^\bullet(\tilde{X})$,
\[
\alpha_1\bullet \alpha_2|_{\gamma}=\sum_{\tiny \begin{array}{c}\gamma_1,
\gamma_2,\\
\ell(\gamma_1)+\ell(\gamma_2)=\ell(\gamma_1\gamma_2)\end{array}}
\iota^{*}_{\gamma_1}(\alpha_1|_{\gamma_1})\wedge
\iota^*_{\gamma_2}(\alpha_2|_{\gamma_2}),
\]
where $\iota^*_{\gamma_i}$ is the embedding of $X_3$ in $X^{\gamma_i}$,
$i=1,2$.
\end{remark}

\subsection{Frobenius algebras from Hochschild cohomology} 
The product structure of Theorem \ref{hc-product-gc} is part of a natural graded 
Frobenius algebra associated to $\calA^{((\hbar))}\rtimes\grp$. Recall that a Frobenius
algebra is a commutative unital algebra equipped with an invariant trace. 
The construction of this Frobenius algebra on the Hochschild cohomology uses 
one additional piece of data, namely the trace on the algebra 
$\calA^\hbar\rtimes\grp$ constructed in \cite{ppt}.

Let $A$ be a unital algebra over a field $\Bbbk$ equipped with a trace 
$\tr:A\rightarrow \Bbbk$. As we have seen, the cup-product \eqref{Eq:DefCup} 
gives the Hochschild cohomology $H^\bullet(A,A)$ the structure of a graded 
algebra. The Hochschild homology $HH_\bullet(A)$ is a natural module over this 
algebra if we let a cochain $\psi\in C^k(A,A)$ act as 
$\iota_\psi:C_p(A)\rightarrow C_{p-k}(A)$ given by
\[
  \iota_\psi(a_0\otimes\ldots\otimes a_p)=(-1)^{\text{\tiny \rm deg}(\psi)}
  a_0\psi(a_1,\ldots,a_k)\otimes a_{k+1}\otimes\ldots\otimes a_p.
\]
With this module structure, the trace induces a pairing
\[
  \langle~,~\rangle:H^\bullet(A,A)\times HH_\bullet(A)\rightarrow \Bbbk
\]
which is given by
\[
 \langle \psi,a_0\otimes\ldots\otimes a_k\rangle=
 \tr\left(\iota_\psi(a_0\otimes\ldots\otimes a_k)\right).
\]
Let us assume, as in our case, that the Hochschild cohomology and homology are 
finite dimensional and that this pairing is perfect.
\begin{proposition}
Under these assumptions, the ring structure on $H^\bullet(A,A)$ is part of a 
natural  graded Frobenius algebra structure.
\end{proposition}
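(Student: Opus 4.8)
The plan is to build the Frobenius trace out of the perfect pairing $\langle-,-\rangle$ together with a distinguished ``volume'' class in Hochschild homology, and then read off non-degeneracy directly from perfectness. Write $R:=H^\bullet(A,A)$ with its graded-commutative cup product and $M:=HH_\bullet(A)$ with the action of $R$ by the contractions $\iota_\psi$ recalled above. First I would record two formal facts: the classical cap-product calculus on the bar complex gives $\iota_{\phi\cup\psi}=\pm\,\iota_\psi\iota_\phi$ and makes $\iota_\psi$ a chain map up to $\iota_{\beta\psi}$, so $M$ is a graded $R$-module; and since $\tr$ is a trace we have $\tr\circ\beta=0$ on $C_\bullet(A)$, so $\langle\phi,c\rangle=\tr(\iota_\phi c)$ descends to (co)homology and, combined with the module identity, satisfies the adjunction
\[
  \langle\phi\cup\psi,\,c\rangle=\pm\,\langle\psi,\,\iota_\phi c\rangle
\]
whenever the bidegrees match (by graded-commutativity of $\cup$ one likewise gets $\langle\phi\cup\psi,c\rangle=\pm\langle\phi,\iota_\psi c\rangle$). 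Thus $\langle-,-\rangle$ is an invariant pairing of the $R$-module $M$ against $R$.

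Now I would invoke perfectness. The adjunction says precisely that the linear isomorphism $M\to R^\vee=\Hom_\Bbbk(R,\Bbbk)$, $c\mapsto\langle-,c\rangle$, furnished by the perfect pairing is an isomorphism of graded $R$-modules, where $R^\vee$ carries the contragredient action $(f\cdot\psi)(\phi)=f(\phi\cup\psi)$. So $M\cong R^\vee$. The heart of the argument is then to produce a fundamental class: letting $d$ be the top degree supporting $R$ (in the application $d=\dim X$), I claim there is a canonical $\mu\in HH_d(A)$ for which cap product
\[
  H^k(A,A)\xrightarrow{\,\sim\,}HH_{d-k}(A),\qquad \phi\longmapsto\iota_\phi\mu ,
\]
is an isomorphism of graded $R$-modules (with the obvious regrading). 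This is exactly where the trace of \cite{ppt} does its work: it is constructed so as to realize van den Bergh/Poincar\'e duality for $A$, with $\mu$ the class of the symplectic volume form on the untwisted sector of the inertia orbifold, and $\langle-,\mu\rangle$ restricting in degree $0$ to $\tr$ itself; concretely this is the sectorwise statement behind Theorems VI--VII, where on each stratum $X^\gamma$ it is ordinary Poincar\'e duality on the compact suborbifold $M^\gamma$. Granting it, $M\cong R[d]$ as graded $R$-modules, and together with $M\cong R^\vee$ we obtain an isomorphism $R\cong R^\vee$ of graded $R$-modules.

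It remains to read off the structure. An isomorphism $R\cong R^\vee$ of graded $R$-modules is, by the standard dictionary, the same datum as a non-degenerate graded-symmetric invariant bilinear form on $R$, hence a graded Frobenius algebra structure on $(R,\cup)$. Explicitly, set $T:=\langle-,\mu\rangle\colon R\to\Bbbk$, a functional of pure degree $d$ (it annihilates $R^k$ for $k\neq d$, since $\mu\in HH_d(A)$). Then $B(\phi,\psi):=T(\phi\cup\psi)=\langle\phi\cup\psi,\mu\rangle=\pm\langle\phi,\iota_\psi\mu\rangle$ is non-degenerate, because $\psi\mapsto\iota_\psi\mu$ is bijective $R\to M$ and $\langle-,-\rangle\colon R\times M\to\Bbbk$ is perfect; graded-commutativity of $\cup$ gives $T(\phi\cup\psi)=(-1)^{|\phi||\psi|}T(\psi\cup\phi)$, so $T$ is an invariant trace, $R$ is unital, and since every ingredient ($\cup$, $\iota$, $\langle-,-\rangle$, $\mu$, $\tr$) is canonical, the Frobenius structure is natural.

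I expect the fundamental-class step to be the main obstacle, and it is genuine extra content: a finite-dimensional graded-commutative algebra can perfectly well carry a finite-dimensional graded module with a perfect compatible pairing (namely $R^\vee$ itself) without being self-injective, so perfectness of $\langle-,-\rangle$ alone does \emph{not} force $R$ to be Frobenius --- one really needs that the dualizing module $HH_\bullet(A)$ is free of rank one over $H^\bullet(A,A)$. In the paper's situation this is imported from \cite{ppt} together with the explicit computations of $H^\bullet(A,A)$ and $HH_\bullet(A)$, where it reduces sector by sector to Poincar\'e duality on the compact fixed-point suborbifolds. Beyond that, the only care needed is tracking Koszul signs so that $T$ comes out graded-symmetric, and checking that the cochain-level identities survive passage to cohomology --- both routine.
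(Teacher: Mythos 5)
The first half of your argument --- the $H^\bullet(A,A)$-module structure on $H_\bullet(A)$ via $\iota_\psi$, the adjunction $\langle\phi\cup\psi,c\rangle=\pm\langle\psi,\iota_\phi c\rangle$, and the resulting identification $H^\bullet(A,A)\cong H_\bullet(A)^*$ --- is fine and coincides with how the paper sets things up. The gap is exactly the step you single out as the heart: the existence of a fundamental class $\mu\in H_d(A)$ making $\phi\mapsto\iota_\phi\mu$ an isomorphism, i.e.\ freeness of $H_\bullet(A)$ of rank one over $H^\bullet(A,A)$. This is not supplied by \cite{ppt}: what the trace of \cite{ppt} yields is that the \emph{mixed} pairing $H^k(A,A)\times H_k(A)\to\Bbbk$ is Poincar\'e duality on $\tilde{X}$, which is precisely the perfectness hypothesis already assumed, and nothing more. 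In the situation the proposition is written for, your claim is false: for compact connected $X$ one has $H^k(A,A)\cong\bigoplus_\gamma H^{k-\ell(\gamma)}(X^\gamma,\C((\hbar)))$ and $H_j(A)\cong\bigoplus_\gamma H^{2n-j}(X^\gamma,\C((\hbar)))$, so $H^0(A,A)\cong\C((\hbar))$ is one-dimensional while $H_{2n}(A)\cong H^0(\tilde{X},\C((\hbar)))$ has a summand for every sector; hence no $\mu\in H_{2n}(A)$ can make $\phi\mapsto\iota_\phi\mu$ surjective in degree $0$ as soon as there is a nontrivial twisted sector. Worse, the conclusion you aim at (a non-degenerate trace) cannot hold: the graded dimensions of $H^\bullet(A,A)$ are not symmetric ($\dim H^0=1$ while $\dim H^{2n}$ equals the number of sectors), and, because of the constraint $\ell(\gamma_1)+\ell(\gamma_2)=\ell(\gamma_1\gamma_2)$ in the cup product, already for $X=T^4/\Z_2$ (where $\dim H^0=1$, $\dim H^4=17$) the sixteen twisted classes and the untwisted top class all lie in the socle of this local algebra, so it is not self-injective and no invariant bilinear form, graded or not, is non-degenerate.

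This is why the notion of Frobenius algebra fixed just before the proposition is only ``a commutative unital algebra equipped with an invariant trace'': non-degeneracy of the induced form on cohomology is neither claimed nor true. Accordingly the paper's proof consists only of the transport step you perform at the beginning: perfectness gives $H^\bullet(A,A)\cong H_\bullet(A)^*$, under which the unit of $H^\bullet(A,A)$ corresponds to $[\tr]\in H_0(A)^*$, and the class $[1]\in H_0(A)\cong H^0(A,A)^*$ defines the invariant trace $T(\phi)=\langle\phi,[1]\rangle$, a functional concentrated in degree $0$ rather than in top degree. So your second half should be discarded: the genuinely non-degenerate duality in this setting is the mixed pairing between $H^\bullet(A,A)$ and $H_\bullet(A)$ (Poincar\'e duality on $\tilde{X}$), not a self-pairing of the cohomology ring.
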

\begin{proof}
The pairing gives us a canonical isomorphism 
$H^\bullet(A,A)\cong H_\bullet(A)^*$. The trace defines a canonical element 
$[\tr]\in H_0(A)^*$ which defines the unit. 
Furthermore, the unit $1\in H_0(A)\cong H^0(A,A)^*$ defines an invariant trace.
\end{proof}
In the case at hand, the deformation of the convolution algebra on a symplectic 
orbifold, the Hochschild homology was computed in \cite{NeuPflPosTanHFDPELG} to 
be
\[
 H_\bullet\left(\calA^{((\hbar))}\rtimes\grp\right)\cong 
 H^{2n-\bullet}_\text{\tiny\rm cpt}\left(\tilde{X},\C((\hbar))\right).
\]
With the trace of \cite{ppt}, one checks that the pairing between Hochschild 
homology and cohomology above is nothing but Poincar\'e duality on $\tilde{X}$.


%
%
\section{Chen-Ruan orbifold cohomology}
\label{Sec:equiderham}
In this section, we study $S^1$-equivariant Chen-Ruan orbifold
cohomologies on an almost complex orbifold. In a special case, we
apply the idea from \cite{chen-hu:de-rham} to introduce a de Rham
model (topological Hochschild cohomology) to compute this
equivariant cohomology. In the last subsection, we compare this de
Rham model with the previous computation of Hochschild cohomology
of the quantized groupoid algebra. The Hochschild cohomology of
the quantized groupoid algebra is identified as a graded algebra
of the de Rham model with respect to some filtration.
\subsection{$S^1$-Equivariant Chen-Ruan orbifold
cohomology}\label{sec:equi-chen-ruan}In this subsection, we
briefly introduce the idea of $S^1$-equivariant Chen-Ruan orbifold
cohomology.
Let $X$ be an orbifold with an $S^1$ action. This means that there
is a morphism $f:S^1\times X\to X$ of orbifolds. One can think of
a morphism between two orbifolds as a collection of morphisms
between charts and group homomorphisms between local groups such
that the morphisms between charts are equivariant with respect to
local group actions, and are compatible with overlaps of charts.
(See \cite{cr-orbifoldCoh} and \cite{alr} for more details.) The
action is assumed to be associative, which is a somewhat delicate
property since the category of orbifolds is not a category but a
{\em 2-category}. This means that the standard associativity
diagram for a group action on an orbifold is only required to be
commutative up to 2-morphisms. Generalities on group actions on
categories can be found in \cite{romagny}. The $S^1$-equivariant
cohomology is defined via the standard Borel construction:
$$H^\bullet_{S^1}(X):=H^\bullet(X\times_{S^1}ES^1).$$
We identify $H^\bullet_{S^1}(\text{pt})=\complex[t]$.
With this, $H^\bullet_{S^1}(X)$ is considered as a $\complex[t]$-module. As
usual, we consider the fraction field $\complex((t))$ and put
\[
H^\bullet_{S^1}(X)((t)):=H^\bullet_{S^1}(X)\otimes_{\complex[t]}\complex((t)).
\]
In the following, we shall use the Cartan model for equivariant cohomology to
represent cohomology classes by equivariant differential forms.

As before, $\tilde{X}$ is the inertia orbifold, and $p: \tilde{X}\to
X$ the natural projection. It is easy to check that the
$S^1$-action lifts to $\tilde{X}$. Indeed, for any $s\in S^1$ the
action morphism $f_s: s\times X\to X$ defines for each $x\in X$ a
group homomorphism $\rho_s: \Stab(x)\to \Stab(f_s(x))$. This induces
the $S^1$-action on $\tilde{X}$, whose points are pairs $(x, (\gamma)),
x\in X, \gamma\in \Stab(x)$. More precisely, the action is given by
$$S^1\times \tilde{X}\to \tilde{X},\quad (s, (x,(\gamma)))\mapsto (f_s(x), (\rho_s(\gamma))).$$

As a $\complex[t]$-module, the $S^1$-Chen-Ruan orbifold cohomology
can be defined exactly in the same fashion as its non-equivariant
version \cite{cr-orbifoldCoh}, that is
$$H^\bullet_{S^1}(\tilde{X}):=H^\bullet(\tilde{X}\times_{S^1}ES^1).$$
There is a natural involution $I:\tilde{X}\to \tilde{X}$ which
maps a point $(x,(\gamma))$ to $(x,(\gamma^{-1}))$. The orbifold Poincar\'e
pairing $\left<\,\,\,,\,\,\,\right>$, which is defined by
$$\left<a,b\right>:=\int_{\tilde{X}} a\wedge I^*b,$$ naturally extends to a non-degenerate pairing on $H^\bullet_{S^1}(\tilde{X})$.

The additional structures one defines on Chen-Ruan orbifold
cohomology require a choice of an almost complex structure on the
tangent bundle $TX$, which we now make. We also assume that the
$S^1$-action on $TX$ is compatible with this almost complex
structure.

We will assume an $S^1$-action on the tangent bundle $TX$ which
commutes with the $S^1$-action on the base $X$. It should be noted
that we {\em do not} necessarily work with the canonical action on
$TX$ induced from that on $X$. This will be important in what
follows. Therefore the pull-back bundle $p^*TX$ admits an
$S^1$-action covering that on $\tilde{X}$. Let $X^\gamma$ be a
component of $\tilde{X}$. The bundle $p^*TX|_{X^\gamma}$ splits into a
direct sum of $\gamma$-eigenbundles. This allows one to define the age
function, denoted by $\iota(\gamma)$ (c.f. \cite{CheRuaOGWT}). This is
a locally constant function on $\tilde{X}$. We consider the
shifted $S^1$-equivariant cohomology of $\tilde{X}$,
\[
  H^\bullet_{S^1}(\tilde{X})((t))[-2\iota(\gamma)].
\] 
Here $t$ is assigned degree $2$.

The $S^1$-action on $p^*TX|_{X^\gamma}$ restricts to an $S^1$-action on each 
eigenbundle. Now consider the tri-cyclic sector \eqref{tri-cyclic}, i.e.,
the quotient $\mathsf{S}_{\mathsf{G}}\slash\mathsf{G}$.
There are three evaluation maps $e_i: X_3\to \tilde{X}$, 
$e_i((x,(\gamma_1,\gamma_2,\gamma_3))=(x,(\gamma_i))$. The $S^1$-action also lifts to 
tri-cyclic sector:
\[
  S^1\times X_3\to X_3,\quad (s,(x,(\gamma_1,\gamma_2,\gamma_3)))\mapsto 
  (f_s(x),(\rho_s(\gamma_1),\rho_s(\gamma_2),\rho_s(\gamma_3))).
\]
The evaluation maps are clearly $S^1$-equivariant. It follows from the 
above discussion that the obstruction bundle $\Theta$ over the tri-cyclic 
sector $X_3$ is an $S^1$-equivariant orbifold bundle on $\tilde{X}$. 
Therefore, we can define $S^1$-equivariant orbifold cup product $\star_t$ by
\[
  \left<\alpha_1\star_t \alpha_2, \alpha_3\right>=\int_{X_3}
  e_1^*(\alpha_1)\wedge e_2^*(\alpha_2)\wedge
  e_3^*(I^*(\alpha))\wedge \eu_{S^1}(\Theta),
\]
where $\eu_{S^1}(\Theta)$ is the equivariant Euler class of the
obstruction bundle. Many properties of the Chen-Ruan orbifold
cohomology algebra holds for the algebra
\[
(H^\bullet_{S^1}(\tilde{X})((t))[-2\iota(\gamma)],\star_t),
\]
with the same proofs. For example, the associativity of $\star_t$ is reduced to the rational equivalence between two points in the moduli space $\overline{M}_{0,4}$ of genus zero stable curves with four marked points. (Note that $\overline{M}_{0,4}\simeq \mathbb{CP}^1$.) See \cite{cr-orbifoldCoh} for more details.

\subsection{Equivariant de Rham model}
In this subsection, we define an equivariant de Rham model for a
special case of the above introduced $S^1$-equivariant Chen-Ruan
orbifold cohomology. We introduce our definition of topological Hochschild
cohomology algebra with the following steps.\\

\noindent{\bf Step I:} We start with an arbitrary almost complex
orbifold $X$ locally like $M/\Gamma$, and introduce a trivial
$S^1$ action on $X$ and therefore also on $\tilde{X}$ which is
locally like $(\coprod_{\gamma\in \Gamma} M^\gamma)/\Gamma$. Accordingly,
the $S^1$-equivariant cohomology of $\tilde{X}$ is equal to
$H^\bullet(\tilde{X})((t))$. \\

\noindent{\bf Step II:} We introduce a ``nontrivial'' $S^1$-action
on the tangent bundle $TX$ of $X$ which commutes with the trivial
$S^1$-action on $X$. Since $TX$ is an almost complex bundle,
$S^1$ identified with $U(1)$ acts on $TX$ as the center of the
principal group $\GL(\text{dim}_\complex(TX),\complex)$.
Geometrically, this action is simply rotation by an angle. We
remark that since $S^1$ is identified as the center of the
principal group, the above $S^1$-action commutes with all the
orbifold structure. And we have made
$TX$ into an $S^1$-equivariant orbifold bundle on $X$, and the 
same is for $p^*TX$ on $\tilde{X}$.\\

\noindent{\bf Step III:} We consider the normal bundle $N^\gamma$ of
the embedding of $X^\gamma$ into $X$. Since the $S^1$-action on $TX$
commutes with the $\gamma$-action, $N^\gamma$ inherits an $S^1$-action, and
becomes an $S^1$-equivariant vector bundle on $X^\gamma$. We decompose
$N^\gamma$ into a direct sum of $S^1$-equivariant line bundles
$\oplus_i N^\gamma _i$, with respect to the eigenvalue of $\gamma$-action,
i.e. $\exp(2\pi i\theta_i)$ and $0\leq \theta_i< 1$. Let $t_i$ be
the equivariant Thom form for $N^\gamma _i$, and the equivariant Thom
class $T_\gamma$ of $N^\gamma$ be defined by
\[
T_\gamma:=\prod_i t_i.
\]
For the following, it is important to remark that $T_\gamma$ is invertible in $\Omega^\bullet_{S^1}(N_\gamma)$.

\begin{definition}\label{def:top-hoch}
Define the topological Hochschild cohomology $HT^\bullet(X)((t))$
of an orbifold $X$ to be
\[
\bigoplus H^{\bullet}(X^\gamma)((t))[-\ell(\gamma)],
\]
where $\ell(\gamma)$ is, as before,  the codimension of $X^\gamma\in X$.

On $HT^\bullet(X)((t))$, we define a cup product $\wedge_t$ as
follows. First of all, the cup product is $\complex((t))$ linear. For
$\alpha_i\in \Omega^{\bullet-\ell(\gamma_i)}(X^{\gamma_i})((t))$, $i=1,2$,
$\alpha_1\wedge \alpha_2$ is defined by the following  integral,
\[
\left<\alpha_1\wedge_t \alpha_2, \alpha_3\right>=\int_{X^{\gamma_1\gamma_2}} \frac{\iota^*(\alpha_1\wedge T_{\gamma_1}\wedge \alpha_2\wedge T_{\gamma_2})}{\iota^*(T_{\gamma_1\gamma_2})}\wedge I^*(\alpha_3),
\]
for any $\alpha_3\in \Omega^{\bullet-\ell(\gamma_1\gamma_2)}(X^{\gamma_1\gamma_2})((t))$,
\end{definition}
\begin{remark}
More explicitly, if $\iota_*$ is the pushforward of
$\Omega^*(\tilde{X})$ into $\Omega^*(X)$ we have that
$\alpha_1\wedge_t\alpha_2=\iota^*(\iota_*(\alpha_1)\wedge
\iota_*(\alpha_2))$. A more global way to write the product, in
the style of Section \ref{defhoch}, is as follows:
\[
 \alpha_1\wedge_t\alpha_2=\int_{m}
 \frac{\pr_1^*(\alpha_1\wedge T)\wedge \pr_2^*(\alpha_2\wedge T)}{m^*T},
\]
where, as before $m:\mathsf{S}\rightarrow\mathsf{B}_0$ is the multiplication 
and $\int_m$ means integration over the discrete fiber.
\end{remark}
We remark that the associativity of $\wedge_t$ is an easy
corollary of the associativity of the wedge product on
differential forms of $X$. The following are a few simple
observations of $HT^\bullet(X)((t))$, which we state without
proofs. (They are corollaries of Theorem
\ref{thm:de-rham}.)\begin{enumerate}
\item the product $\wedge_t$ is $\complex((t))$-linear;
\item $(HT^\bullet(X)((t)),\wedge_t)$ is a graded algebra.
\end{enumerate}

In summary, with the $S^1$-action introduced in Step I and II, we
can introduce two cohomology algebra structures on
$\Omega^\bullet(\tilde{X})((t))$.
\begin{enumerate}
\item $S^1$-equivariant Chen-Ruan orbifold cohomology algebra as in Section \ref{sec:equi-chen-ruan};
\item topological Hochschild cohomology algebra as in Definition \ref{def:top-hoch}.
\end{enumerate}

In the rest of this subsection, we relate the topological
Hochschild cohomology $(HT^\bullet(\tilde{X})((t))[-l], \wedge_t)$
with the $S^1$-equivariant Chen-Ruan orbifold cohomology
$(H_{CR}^\bullet(\tilde{X})((t))[-2\iota], \star_t)$. The key
ingredient connecting these two algebra structures is a certain
equivariant Euler class naturally associated to the orbifold.

We consider a stringy K-group class \cite{jkk:stringy-k} $\fraks^\gamma$ 
associated to the normal bundle $N^\gamma$ of an orbifold $X$, i.e.,
\begin{equation}\label{eq:fraks}
\fraks^\gamma:=\bigoplus_{i}\theta_i N^\gamma_i.
\end{equation}
The equivariant Euler class $t_\gamma$ of $\fraks^\gamma$ is defined to be
\[
  t_\gamma:=
  \iota^*(\prod_i t_i^{\theta_i})\in H^{2\iota(\gamma)}(X^\gamma)((t)),
\]
where $t_i$ is the $S^1$-equivariant Thom class of $N_i^\gamma$, $\iota^*$ is the pullback of the Thom form to $X^\gamma$ which is embedded as the zero section. We remark that
$t_i^{\theta_i}$ and $t_\gamma$ are well defined in the $S^1$-equivariant cohomology $H^\bullet(X^\gamma)((t))$ by using the Taylor expansion of $t_i^{\theta}$ for the $\theta$-power.

We define the following isomorphism $J_\gamma: H^\bullet(X^\gamma)((t))[-2\iota(\gamma)]\to H^{\bullet-l(\gamma)}(X^\gamma)((t))$ of vector spaces,
\[
J_\gamma(\alpha)=\alpha/t_{\gamma^{-1}},\ \ \ \text{for all\ } \alpha\in
H^{\bullet-2\iota(\gamma)}(X^\gamma)((t)).
\]

\begin{remark}
We observe that $t_\gamma$ is invertible in $H^\bullet(X^\gamma)((t))$,
because it has a nonzero constant term. Accordingly, $J_\gamma$ is a
linear isomorphism of the vector spaces. The collection of all
$J_\gamma$ defines an isomorphism $J=\oplus_\gamma J_\gamma$ on
$H^\bullet(\tilde{X})$.

The map $J$ preserves grading. The degree of $t_{\gamma^{-1}}$ is equal
to $\ell(\gamma)-2\iota(\gamma)$. If $\alpha$ is an element in
$H^\bullet(X^\gamma)((t))[-2\iota(\gamma)]=H^{\bullet-2\iota(\gamma)}(X^\gamma)((t))$,
$J(\alpha)$ is of degree
\[
\bullet-2\iota(\gamma)-(\ell(\gamma)-2\iota(\gamma))=\bullet-\ell(\gamma).
\]
\end{remark}

The following theorem is a generalization of the result in 
\cite{chen-hu:de-rham}.\\

\noindent{\bf Theorem VI.}\emph{\label{thm:de-rham}
The map $J$ is an isomorphism of $\C((t))$-algebras from 
the algebra $(H^\bullet_{CR}(X)((t)),\star_t)$ to 
$(HT^\bullet(X)((t)), \wedge_t)$.}

\begin{proof}
As we have remarked,  $J$ is an isomorphism of vector spaces
preserving the degrees. It is sufficient to show that $J$ is
compatible with the algebra structures.

For $\alpha^i\in H^\bullet(X^{\gamma_i})((t))[-2\iota(\gamma_i)]$, $i=1,2$, and $\alpha_3\in H^\bullet(X^{\gamma_1\gamma_2})((t))[-2\iota(\gamma_1\gamma_2)]$ we have
\[
\begin{split}
&\left<J(\alpha_1)\wedge J(\alpha_2), \alpha_3\right> \\
=&\int_{X^{\gamma_1\gamma_2}}\frac{\iota^*(\frac{\alpha_1}{t_{{\gamma_1}^{-1}}}\wedge T_{\gamma_1}\wedge \frac{\alpha_2}{t_{{\gamma_2}^{-1}}}\wedge T_{\gamma_2})}{\iota^*(T_{\gamma_1\gamma_2})}\wedge I^*(\alpha_3)\\
=&\int_{X^{\gamma_1, \gamma_2}}
\frac{\iota^*(\frac{\alpha_1}{t_{{\gamma_1}^{-1}}}\wedge T_{\gamma_1}\wedge \frac{\alpha_2}{t_{{\gamma_2}^{-1}}}\wedge T_{\gamma_2})}{\iota^*(T_{\gamma_1\gamma_2})}\wedge I^*(\alpha_3)
\wedge \frac{\iota^*(T_{\gamma_1\gamma_2})}{\iota^*(T_{\gamma_1,\gamma_2})}\\
=&\int_{X^{\gamma_1, \gamma_2}}\iota^*(\alpha_1)\wedge \iota^*(\alpha_2)\wedge \iota^*(I^*(\alpha_3))\wedge \frac{\iota^*(T_{\gamma_1})\wedge\iota^*(T_{\gamma_2})}{\iota^*(t_{{\gamma_1}^{-1}})\wedge \iota^*(t_{{\gamma_2}^{-1}})\wedge\iota^*(T_{\gamma_1, \gamma_2})},
\end{split}
\]
where  $X^{\gamma_1,\gamma_2}:=X^{\gamma_1}\bigcap X^{\gamma_2}$, and $T_{\gamma_1, \gamma_2}$ is the equivariant Thom form for the normal bundle of $X^{\gamma_1, \gamma_2}$ in $X$, and $\iota^*$ is the pullback of the forms to $X^{\gamma_1, \gamma_2}$. And we can summarize the above computation in the following equation, for $\gamma_3=(\gamma_1\gamma_2)^{-1}$,
\begin{equation}\label{eq:step1}
\left<J(\alpha_1)\wedge J(\alpha_2), \alpha_3\right>
=\int_{X^{\gamma_1, \gamma_2}} \iota^*(\alpha_1)\wedge \iota^*(\alpha_2)\wedge \iota^*(I^*(\alpha_3))\wedge \calR_{\gamma_1, \gamma_2, \gamma_3},
\end{equation}
with
\[
\calR_{\gamma_1, \gamma_2, \gamma_3}=\frac{\iota^*(T_{\gamma_1})\wedge\iota^*(T_{\gamma_2})}{\iota^*(t_{{\gamma_1}^{-1}})\wedge \iota^*(t_{{\gamma_2}^{-1}})\wedge\iota^*(T_{\gamma_1, \gamma_2})}
\]

We now apply the result of \cite{jkk:stringy-k} to better understand the term $\calR_{\gamma_1, \gamma_2, \gamma_3}$. By \cite{jkk:stringy-k}[Thm. 1.2], for $\gamma_1\gamma_2\gamma_3=id$, when restricted to $X^{\gamma_1,
\gamma_2}:=X^{\gamma_1}\bigcap X^{\gamma_2}$, the obstruction bundle
$\Theta_{\gamma_1, \gamma_2}$ as a stringy $K$-group class has a natural splitting
\begin{equation}\label{eq:obstruction}
\Theta_{\gamma_1, \gamma_2}=T(X^{\gamma_1, \gamma_2})\ominus TX|_{X^{\gamma_1, \gamma_2}}\oplus
\fraks^{\gamma_1}|_{X^{\gamma_1, \gamma_2}}\oplus \fraks^{\gamma_2}|_{X^{\gamma_1,
\gamma_2}}\oplus \fraks^{\gamma_3}|_{X^{\gamma_1, \gamma_2}},
\end{equation}
where we remind that $\fraks^{\gamma_i}$ is  an element in the stringy
$K$-group \cite{jkk:stringy-k} as defined in Eq. (\ref{eq:fraks}).

We remark that the above isomorphism for $\Theta_{\gamma_1, \gamma_2}$ again holds as $S^1-$equivariant bundles because the $S^1$ actions on the respective bundles are defined by the almost complex structures and the equation (\ref{eq:obstruction}) preserves almost complex structures. Now taking the equivariant Euler classes of the bundles in Eq. (\ref{eq:obstruction}) on $X^{\gamma_1, \gamma_2}$, we have that on $X^{\gamma_1, \gamma_2}$
\[
\begin{split}
 \eu_{S^1}(\Theta_{\gamma_1, \gamma_2})&=
 \frac{\iota^*(t_{\gamma_1})\wedge \iota^*(t_{\gamma_2})\wedge 
 \iota^*(t_{\gamma_3})}{\iota^*(T_{\gamma_1,\gamma_2})}\\
 &=\frac{\iota^*(T_{\gamma_1})\wedge\iota^*(T_{\gamma_2})\wedge 
 \iota^*(T_{\gamma_3})}{\iota^*(t_{{\gamma_1}^{-1}})\wedge 
 \iota^*(t_{{\gamma_2}^{-1}})\wedge \iota^*(t_{{\gamma_3}^{-1}})\wedge
 \iota^*(T_{\gamma_1, \gamma_2})},
\end{split}
\]
where in the second equality, we have used the fact that on 
$X^{\gamma_1, \gamma_2}$,
\[
  \iota^*(t_{\gamma_i})=
  \frac{\iota^*(T_{\gamma_i})}{\iota^*(t_{\gamma_i^{-1}})} \quad 
  \text{for $i=1,2,3$}.
\]
We use the above expression for $\eu_{S^1}(\Theta_{\gamma_1, \gamma_2})$ 
to compute $\left<J(\alpha_1\star_t \alpha_2), \alpha_3\right>$.
\[
\begin{split}
\langle J(&\alpha_1\star_t\alpha_2), \alpha_3\rangle \\
=&\int_{X^{\gamma_1\gamma_2}}\frac{\alpha_1\star_t \alpha_2}{t_{(\gamma_1\gamma_2)^{-1}}}\wedge I^*(\alpha_3)\\
=&\int_{X^{\gamma_1,\gamma_2}}\alpha_1\wedge \alpha_2\wedge \frac{\iota^*(I^*(\alpha_3))}{\iota^*(t_{(\gamma_1\gamma_2)^{-1}})}\wedge \eu_{S^1}(\Theta_{\gamma_1, \gamma_2})\\
=&\int_{X^{\gamma_1,\gamma_2}}\!\!\!
 \alpha_1\wedge \alpha_2\wedge \frac{\iota^*(I^*(\alpha_3))}{\iota^*(t_{(\gamma_1\gamma_2)^{-1}})}\wedge \frac{\iota^*(T_{\gamma_1})\wedge\iota^*(T_{\gamma_2})\wedge \iota^*(T_{\gamma_3})}{\iota^*(t_{{\gamma_1}^{-1}})\wedge \iota^*(t_{{\gamma_2}^{-1}})\wedge \iota^*(t_{{\gamma_3}^{-1}})\wedge\iota^*(T_{\gamma_1, \gamma_2})}.
\end{split}
\]
Using the equality
\[
\iota^*(t_{(\gamma_1\gamma_2)^{-1}})=\iota^*(t_{\gamma_3})=\frac{\iota^*(T_{\gamma_3})}{\iota^*(t_{\gamma_3 ^{-1}})},
\]
we conclude that
\[
\begin{split}
\langle J(&\alpha_1\star_t\alpha_2), \alpha_3\rangle\\
=&\int_{X^{\gamma_1, \gamma_2}}\iota^*(\alpha_1)\wedge \iota^*(\alpha_2)\wedge \iota^*(I^*(\alpha_3))\wedge \frac{\iota^*(T_{\gamma_1})\wedge\iota^*(T_{\gamma_2})}{\iota^*(t_{{\gamma_1}^{-1}})\wedge \iota^*(t_{{\gamma_2}^{-1}})\wedge\iota^*(T_{\gamma_1, \gamma_2})}\\
=&\left<J(\alpha_1)\wedge_t J(\alpha_2), \alpha_3\right>.
\end{split}
\]
The last equation, combining with Poincar\'e duality, implies that
\[
  J^{-1}(J(\alpha_1)\wedge J(\alpha_2))=\alpha_1\star_t\alpha_2.
\]
This completes the proof.
\end{proof}

\begin{remark}
Note that when $t$ is equal to $0$, the map $J$ is not  invertible
generally. However, one can solve this problem by working in the formal
framework as in \cite{chen-hu:de-rham}. In this case our model extends 
Chen-Hu's model to an arbitrary almost complex orbifold.
\end{remark}

\subsection{Topological and algebraic Hochschild cohomology}
In the case of a symplectic orbifold $(X, \omega)$, we have two
cohomology algebra structures from different approaches. One is
the Hochschild cohomology algebra of the quantized groupoid
algebra computed in Theorem \ref{hc-product-gc}, the other is the
topological Hochschild cohomology $HT^\bullet(X)((t))$ defined in
Definition \ref{def:top-hoch} using essentially a unique (up to
homotopy) compatible almost complex structure to the symplectic
structure on $X$. We observe that the algebra structure on the
Hochschild cohomology of the quantized groupoid algebra is
completely topological, which does not depend on the symplectic
structures or the almost complex structures at all. On the other
hand, the topological Hochschild cohomology $HT^\bullet(X)((t))$
does depend on the choices of almost complex structures.
Therefore, it is natural to expect that these two algebras are not
isomorphic. In this subsection, we would like to study the
connections between these two algebra structures. We show in the
following that the graded algebra of the topological Hochschild
cohomology algebra is isomorphic to the Hochschild cohomology of
the corresponding quantized groupoid algebra.

We introduce a decreasing filtration on the topological Hochschild
cohomology $HT^\bullet(X)((t))$ as follows
\[
  \calF^\ast=\{ \alpha\in HT^\bullet(X^\gamma)((t))\mid
  \deg(\alpha)-\ell(\gamma)\geq \ast\}.
\]

\begin{lemma}\label{lem:filtration}
$(HT^\bullet(X)((t)), \wedge_t, \calF^\ast)$ is a filtered
algebra.
\end{lemma}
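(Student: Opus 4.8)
The plan is to reduce the lemma to the subadditivity $\ell(\gamma_1\gamma_2)\le\ell(\gamma_1)+\ell(\gamma_2)$ of the codimension function — which is contained in Lemma~\ref{lem:codim} (and already visible in Lemma~\ref{lem:comp-inter}) — together with elementary degree bookkeeping for the product of Definition~\ref{def:top-hoch}.

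First I would write the filtration out sector by sector. Since $HT^\bullet(X)((t))=\bigoplus_\gamma H^{\bullet-\ell(\gamma)}(X^\gamma)((t))$ and $t$ has degree $2$, the part of $\calF^\ast$ lying in the sector $\gamma$ is the $\C((t))$-span of those homogeneous classes $\alpha\in H^\bullet(X^\gamma)((t))$ with $\deg(\alpha)\ge\ast+\ell(\gamma)$. From this description it is immediate that $\calF^\bullet$ is a decreasing filtration, that it is exhaustive (each element is a finite sum of sector components, each a Laurent series bounded below in $t$) and separated (a nonzero class has a nonzero lowest-degree homogeneous part, which bounds the level to which it can belong), and that the unit $1\in H^0(X^e)((t))$, with $\ell(e)=0$, lies in $\calF^0$. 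None of this needs an argument; the only substantive point is multiplicativity.

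The core step is thus $\calF^a\wedge_t\calF^b\subseteq\calF^{a+b}$. Using $\C((t))$-linearity and the sector decomposition $(\xi\wedge_t\eta)_\gamma=\sum_{\gamma_1\gamma_2=\gamma}(\xi_{\gamma_1}\wedge_t\eta_{\gamma_2})$ of $\wedge_t$, it suffices to take homogeneous $\alpha_1\in H^\bullet(X^{\gamma_1})((t))$ and $\alpha_2\in H^\bullet(X^{\gamma_2})((t))$ with $\deg(\alpha_i)-\ell(\gamma_i)\ge a_i$, put $\gamma=\gamma_1\gamma_2$, and bound the filtration level of the component
\[
  (\alpha_1\wedge_t\alpha_2)_\gamma=
  \frac{\iota^*\big(\alpha_1\wedge T_{\gamma_1}\wedge\alpha_2\wedge T_{\gamma_2}\big)}
       {\iota^*\big(T_{\gamma_1\gamma_2}\big)}\in H^\bullet(X^\gamma)((t))
\]
coming from Definition~\ref{def:top-hoch}. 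Here $T_{\gamma_i}$ is the equivariant Thom form of $N^{\gamma_i}$, homogeneous of degree $\ell(\gamma_i)$, while $\iota^*(T_{\gamma_1\gamma_2})$ is homogeneous of degree $\ell(\gamma)$ and invertible in $H^\bullet(X^\gamma)((t))$. Consequently
\[
  \deg\big((\alpha_1\wedge_t\alpha_2)_\gamma\big)=
  \deg(\alpha_1)+\deg(\alpha_2)+\ell(\gamma_1)+\ell(\gamma_2)-\ell(\gamma),
\]
so the filtration level of $(\alpha_1\wedge_t\alpha_2)_\gamma$ equals
\[
  \big(\deg(\alpha_1)-\ell(\gamma_1)\big)+\big(\deg(\alpha_2)-\ell(\gamma_2)\big)
  +2\big(\ell(\gamma_1)+\ell(\gamma_2)-\ell(\gamma)\big)\;\ge\;a_1+a_2,
\]
the inequality being exactly $\ell(\gamma_1)+\ell(\gamma_2)\ge\ell(\gamma_1\gamma_2)$ of Lemma~\ref{lem:codim}. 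This yields $\calF^a\wedge_t\calF^b\subseteq\calF^{a+b}$ and finishes the proof.

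I do not expect any genuine obstacle: the single geometric ingredient is the codimension inequality, and the only thing to be slightly careful with is the grading convention ($t$ in degree $2$; $T_{\gamma_i}$ homogeneous of degree equal to the real rank $\ell(\gamma_i)$ of $N^{\gamma_i}$; and $\iota^*(T_{\gamma_1\gamma_2})$ invertible, so that the displayed quotient really is a homogeneous class). It is worth noting that the correction term $2(\ell(\gamma_1)+\ell(\gamma_2)-\ell(\gamma))$ above is strictly positive unless $\ell(\gamma_1)+\ell(\gamma_2)=\ell(\gamma_1\gamma_2)$; this is precisely what makes only the ``$\ell$-additive'' terms survive in $\gr\big(HT^\bullet(X)((t))\big)$, matching the Hochschild cup product of Theorem~\ref{hc-product-gc} and thereby setting up Theorem~IX.
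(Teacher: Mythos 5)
Your proof is correct and follows essentially the paper's own route: the only substantive point is the multiplicativity $\calF^a\wedge_t\calF^b\subseteq\calF^{a+b}$, which both you and the paper obtain by combining the degree bookkeeping for the product of Definition \ref{def:top-hoch} (the Thom classes contribute $\ell(\gamma_1)+\ell(\gamma_2)$, the division by $\iota^*(T_{\gamma_1\gamma_2})$ removes $\ell(\gamma_1\gamma_2)$) with the subadditivity $\ell(\gamma_1)+\ell(\gamma_2)\geq\ell(\gamma_1\gamma_2)$ contained in Lemma \ref{lem:codim}. The only (cosmetic) difference is the degree convention: the paper measures $\deg$ in the shifted total grading $\bullet$, for which $\wedge_t$ is additive, so its excess over $k+l$ is $\ell(\gamma_1)+\ell(\gamma_2)-\ell(\gamma_1\gamma_2)$ rather than your $2\bigl(\ell(\gamma_1)+\ell(\gamma_2)-\ell(\gamma_1\gamma_2)\bigr)$; in either convention the excess is strictly positive exactly off the $\ell$-additive locus, which is the feature Theorem IX exploits.
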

\begin{proof}
One needs to prove that $\calF^k\wedge_t \calF^l\subset
\calF^{k+l}$. To this end let $\alpha_1\in \calF^k$ and $\alpha_2\in \calF^l$
and consider $\alpha_1\wedge_t \alpha_2$. Without loss of generality, let us 
assume that $\alpha_1\in HT^{k_1}(X^{\gamma_1})((t))$ and 
$\alpha_2\in HT^{k_2}(X^{\gamma_2})((t))$ with 
$k_1\geq k+\ell(\gamma_1)$ and $k_2\geq l+\ell(\gamma_2)$. 
Since $(HT^\bullet(X)((t)), \wedge_t)$ is a graded
algebra respect to $\bullet$, we have that $\deg(\alpha_1\wedge
\alpha_2)=k_1+k_2\geq k+\ell(\gamma_1)+l+\ell(\gamma_2)$. Since
$\ell(\gamma_1)+\ell(\gamma_2)\geq \ell(\gamma_1\gamma_2)$, we have that
\[
 \deg(\alpha_1\wedge_t\alpha_2)-\ell(\gamma_1\gamma_2)\geq
 k+l+\ell(\gamma_1)+\ell(\gamma_2)-\ell(\gamma_1\gamma_2)\geq k+l.
\]
Therefore, $\alpha_1\wedge_t\alpha_2$ belongs to $\calF^{k+l}$.
\end{proof}

\begin{lemma}
\label{lem:codim} Let $\Gamma$ be a finite group acting a vector space
$V$. Then for every $\gamma_1, \gamma_2\in \Gamma$ one has
$\ell(\gamma_1)+\ell(\gamma_2)=\ell(\gamma_1\gamma_2)$ if and
only if $V^{\gamma_1}+V^{\gamma_2}=V$ and 
$V^{\gamma_1\gamma_2}=V^{\gamma_1}\cap V^{\gamma_2}$.
\end{lemma}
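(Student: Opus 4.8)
The plan is to reduce the statement to an elementary dimension count; notably, no scalar product is needed here (unlike in Lemma~\ref{lem:comp-inter}), since both conditions on the right-hand side are imposed simultaneously rather than one being deduced from the other. Write $n=\dim V$, so that $\ell(\gamma)=n-\dim V^\gamma$ for every $\gamma\in\Gamma$. First I would record the trivial inclusion $V^{\gamma_1}\cap V^{\gamma_2}\subseteq V^{\gamma_1\gamma_2}$: a vector fixed by both $\gamma_1$ and $\gamma_2$ is fixed by $\gamma_1\gamma_2$. Combining this with the dimension formula $\dim(V^{\gamma_1}+V^{\gamma_2})+\dim(V^{\gamma_1}\cap V^{\gamma_2})=\dim V^{\gamma_1}+\dim V^{\gamma_2}$ and the obvious bound $\dim(V^{\gamma_1}+V^{\gamma_2})\le n$ yields the chain of inequalities
\[
 n-\ell(\gamma_1)-\ell(\gamma_2)\ \le\ \dim\bigl(V^{\gamma_1}\cap V^{\gamma_2}\bigr)\ \le\ \dim V^{\gamma_1\gamma_2}\ =\ n-\ell(\gamma_1\gamma_2),
\]
so that $\ell(\gamma_1\gamma_2)\le\ell(\gamma_1)+\ell(\gamma_2)$ holds unconditionally, and the desired equality is equivalent to both inequalities in this chain being equalities.

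The second step is to match the two equality cases to the two conditions. Equality on the left, i.e.\ $\dim(V^{\gamma_1}\cap V^{\gamma_2})=(n-\ell(\gamma_1))+(n-\ell(\gamma_2))-n$, holds precisely when $\dim(V^{\gamma_1}+V^{\gamma_2})=n$, that is, when $V^{\gamma_1}+V^{\gamma_2}=V$. Equality on the right, i.e.\ $\dim(V^{\gamma_1}\cap V^{\gamma_2})=\dim V^{\gamma_1\gamma_2}$, together with the inclusion $V^{\gamma_1}\cap V^{\gamma_2}\subseteq V^{\gamma_1\gamma_2}$ noted above, is equivalent to $V^{\gamma_1}\cap V^{\gamma_2}=V^{\gamma_1\gamma_2}$. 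This settles the ``only if'' direction.

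For the converse I would simply run the count backwards: assuming $V^{\gamma_1}+V^{\gamma_2}=V$ and $V^{\gamma_1\gamma_2}=V^{\gamma_1}\cap V^{\gamma_2}$, the dimension formula gives $\dim V^{\gamma_1\gamma_2}=\dim V^{\gamma_1}+\dim V^{\gamma_2}-n=(n-\ell(\gamma_1))+(n-\ell(\gamma_2))-n$, whence $\ell(\gamma_1\gamma_2)=n-\dim V^{\gamma_1\gamma_2}=\ell(\gamma_1)+\ell(\gamma_2)$. There is essentially no obstacle in this argument; the only point requiring a little care is keeping the bookkeeping straight so that the two inequalities in the chain are paired with the correct one of the two separate conditions, and observing that finiteness of $\Gamma$ plays no role whatsoever — the statement holds for any two linear automorphisms of a finite-dimensional $V$.
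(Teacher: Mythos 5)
Your proof is correct and follows essentially the same route as the paper's: both rest on the dimension formula $\dim V^{\gamma_1}+\dim V^{\gamma_2}=\dim(V^{\gamma_1}+V^{\gamma_2})+\dim(V^{\gamma_1}\cap V^{\gamma_2})$ together with the inclusion $V^{\gamma_1}\cap V^{\gamma_2}\subseteq V^{\gamma_1\gamma_2}$, deducing $\ell(\gamma_1\gamma_2)\le\ell(\gamma_1)+\ell(\gamma_2)$ with equality exactly when the two estimates are equalities. Your remark that finiteness of $\Gamma$ (and any invariant scalar product) is irrelevant here is also consistent with the paper, which uses only these two linear-algebra facts.
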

\begin{proof}
By linear algebra one knows that
\[
  \dim(V^{\gamma_1})+\dim(V^{\gamma_2})=
  \dim(V^{\gamma_1}+V^{\gamma_2})+\dim(V^{\gamma_1}\cap V^{\gamma_2}).
\]
Moreover, one has
\[
\begin{split}
 \ell(\gamma_1)+\ell(\gamma_2) & =2\dim(V)-(\dim(V^{\gamma_1})+\dim(V^{\gamma_2}))\\
&=2\dim(V)-\dim(V^{\gamma_1}+V^{\gamma_2})-\dim(V^{\gamma_1}\cap V^{\gamma_2})\\
&=\dim(V)-\dim(V^{\gamma_1}+V^{\gamma_2})+\dim(V)-\dim(V^{\gamma_1}\cap V^{\gamma_2}).
\end{split}
\]
Since $V^{\gamma_1}+V^{\gamma_2}\subset V$ and 
$V^{\gamma_1}\cap V^{\gamma_2}\subset V^{\gamma_1\gamma_2}$, we have
\[
\dim(V)-\dim(V^{\gamma_1}+V^{\gamma_2})\geq0,\ \ \ \
\dim(V)-\dim(V^{\gamma_1}\cap V^{\gamma_2})\geq \dim(V)-\dim(V^{\gamma_1\gamma_2}).
\]
Therefore
\[
\ell(\gamma_1)+\ell(\gamma_2)\geq \ell(\gamma_1\gamma_2),
\]
and equality holds, if and only if $\dim(V)=\dim(V^{\gamma_1}+V^{\gamma_2})$
and $\dim(V^{\gamma_1}\cap V^{\gamma_2})=\dim(V^{\gamma_1\gamma_2})$.
\end{proof}

\noindent{\bf Theorem VII.}\label{thm:hoch-de-rham}
\emph{The graded algebra $\gr(HT^\bullet(X)((t)))$ of
$(HT^\bullet(X)((t)), \wedge_t)$ with respect to the filtration
$\calF^\ast$ is isomorphic to the Hochschild cohomology algebra
$(H^\bullet(\calA^{((\hbar))}\rtimes G; \calA^{((\hbar))}\rtimes G), \cup)$
by identifying $t$ with $\hbar$.}

\begin{proof} Obviously, the two vector spaces over
$\C((t))$ are isomorphic. It is sufficient to prove that the
two product structures agree.

According to the proof of Lemma \ref{lem:filtration}, we have that
for $\alpha_1\in \calF^k$ and $\alpha_2\in \calF^l$, the graded
product $gr(\alpha_1\wedge_t\alpha_2)$ is not equal to zero only
when $\ell(\gamma_1)+\ell(\gamma_2)=\ell(\gamma_1\gamma_2)$.

In the case of $\ell(\gamma_1)+\ell(\gamma_2)=\ell(\gamma_1\gamma_2)$, 
by Lemma \ref{lem:codim}, we have that $V^{\gamma_1}+V^{\gamma_2}=V$ and
$V^{\gamma_1\gamma_2}=V^{\gamma_1}\cap V^{\gamma_2}$. This implies that $N^{\gamma_1}\oplus
N^{\gamma_2}=N^{\gamma_1\gamma_2}$ on $X^{\gamma_1\gamma_2}$. Therefore, 
the following identity of equivariant Thom classes holds true:
\[
\iota^*(T_{\gamma_1}\wedge T_{\gamma_2})=\iota^*(T_{\gamma_1\gamma_2}).
\]
Hence, by Definition \ref{def:top-hoch}, one obtains
\[
 \left<\alpha_1\wedge_t\alpha_2, \alpha \right>=
 \int_{X^{\gamma_1,\gamma_2}}\iota^*_{\gamma_1}(\alpha_1|_{\gamma_1})\wedge
 \iota_{\gamma_2}^*(\alpha_2|_{\gamma_2})\wedge 
 I^*(\alpha)|_{\gamma_1\gamma_2}, 
\]
 where the $\wedge$ on the right hand side is the wedge product on
 differential forms.
 One concludes that $\gr(\alpha_1\wedge_t\alpha_2)$ agrees with the
 cup product on the Hochschild cohomology algebra.
\end{proof}

From Theorem VII, we can view that the
topological Hochschild cohomology $(HT^\bullet(X)((t)), \wedge_t)$
as a deformation of the algebraic Hochschild cohomology
\[
(H^\bullet(\calA^{((\hbar))}\rtimes G; \calA^{((\hbar))}\rtimes G), \cup).
\]
It is very interesting to study this deformation using the
Hochschild cohomology method again, which will illustrate the role
of the almost complex structure chosen to define $\wedge_t$. We
leave this topic for future research.

\appendix
\section{Homological algebra of bornological algebras and modules}
\subsection{Bornologies on vector spaces}
In this appendix we recollect the basic definitions and constructions
in the theory of bornological vector spaces. For further details on
this see \cite{BouTVS} and \cite{HogBFA}.

Let $\Bbbk$ be the ground field $\R$ or $\C$, and $V$ be a
vector space over $\Bbbk$. A set $\frakB$  of subsets of $V$ is
called a
(\emph{convex linear}) \emph{bornology} on $V$ and $(V,\frakB)$ a
(\emph{convex linear}) \emph{bornological vector space}, if the
following axioms hold true:
\begin{enumerate}[(BOR1)]
\item
  Every subset of an element of $\frakB$ belongs to $\frakB$.
\item
  Every finite union of elements of $\frakB$ belongs to $\frakB$.
\item
  The set $\frakB$ is \emph{covering} for $V$ that means every
  element of $V$ is contained in some set belonging to $\frakB$.
\item
  For every $B \in \frakB$, the \emph{absolutely convex hull}
  $$B^\Diamond := \{ \lambda_1 v_1 + \lambda_2 v_2 \mid
  v_1,v_2 \in V, \: \lambda_1,\lambda_2 \in \Bbbk, \:
  |\lambda_1|+|\lambda_2|\leq 1 \}$$  is again  $\frakB$.
\end{enumerate}
The elements of a bornology $\frakB$ are called its
\emph{bounded sets} or sometimes its \emph{small sets}.

Given an absolutely convex set $S \subset V$, we denote its
linear span by $V_S$ and by $\|\cdot \|_B$ the seminorm on
$V_S$ with unit ball
$\overline{S} := \bigcap_{\lambda > 1} \lambda S$. If $\|\cdot\|_S$
is a norm on $V_S$, then $S$ is said to be \emph{norming}, and
\emph{completant}, if  $(V_S, \|\cdot\|_S)$ is even a Banach space.
A bornological vector space $(V,\frakB)$ is called \emph{separated}
(resp.~\emph{complete}), if  every bounded absolutely convex set
$B \subset V$ is norming resp.~completant.

\begin{proposition}
  Let $V$ be a bornological vector space. Then there exists a
  complete bornological vector space $\hat{V}$ together with a bounded
  linear map $\iota : V \rightarrow \hat{V}$ such that the following
  universal property is fulfilled:
  \begin{itemize}
  \item
    For every complete bornological vector space $W$ and
    every bounded linear map $f: V \rightarrow W$ there exists
    a unique bounded linear map $\hat{f} : \hat{V} \rightarrow W$
    such that the diagram
    \begin{equation}
    \label{UnivBorComp}
    \xymatrix{
      V\ar[rr]^{f}\ar[d]^{\iota} && W \\
      \hat{V} \ar[urr]_{\hat{f}}
    }
    \end{equation}
    commutes.
  \end{itemize}
\end{proposition}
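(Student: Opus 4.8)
The plan is to realize $\hat V$ as the bornological inductive limit of the Banach completions of the seminormed spaces $V_B$, following the classical construction of Hogbe-Nlend. Write $\frakD$ for the set of all absolutely convex $B \in \frakB$, directed under inclusion: this is directed by (BOR2) and (BOR4), since the absolutely convex hull of $B_1 \cup B_2$ again lies in $\frakB$. For $B \in \frakD$ form the Banach space $\widehat{V_B}$ obtained from $(V_B, \|\cdot\|_B)$ by dividing out the null space of the seminorm and completing. When $B \subseteq B'$ the identity $V_B \to V_{B'}$ is $\|\cdot\|$-nonincreasing, hence induces a norm-nonincreasing map $\widehat{V_B} \to \widehat{V_{B'}}$, and one checks the obvious cocycle condition, so that $(\widehat{V_B})_{B\in\frakD}$ becomes a directed system of Banach spaces.

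I would then set $\hat V := \varinjlim_{B\in\frakD} \widehat{V_B}$ as a $\Bbbk$-vector space, let $\psi_B \colon \widehat{V_B} \to \hat V$ be the structure maps, and equip $\hat V$ with the bornology $\hat\frakB$ whose bounded sets are the subsets of the disks $D_B := \psi_B(\overline U_B)$, $B \in \frakD$, where $\overline U_B$ is the closed unit ball of $\widehat{V_B}$; directedness of $\frakD$ together with the norm-nonincreasing property guarantees that the $D_B$ are directed, so $\hat\frakB$ is a genuine bornology. Composing $V_B \to \widehat{V_B} \xrightarrow{\psi_B} \hat V$ over all $B \in \frakD$ defines a linear map $\iota \colon V \to \hat V$ with $\iota(B) \subseteq D_B$, so $\iota$ is bounded. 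It then remains to prove (a) that $(\hat V, \hat\frakB)$ is complete and (b) the universal property.

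Step (a) is, I expect, the main obstacle. The clean route is to invoke the general fact that a bornological inductive limit of complete bornological vector spaces — in particular of Banach spaces with their von Neumann bornology — is again complete; equivalently one shows directly that the $D_B$ form a basis of $\hat\frakB$ consisting of completant disks. The subtlety is that the span of $D_B$ in $\hat V$ equals $\psi_B(\widehat{V_B}) \cong \widehat{V_B}/\ker\psi_B$ with Minkowski functional the quotient seminorm, and $\ker\psi_B = \bigcup_{B' \supseteq B}\ker(\widehat{V_B}\to\widehat{V_{B'}})$ is a \emph{directed union} of closed subspaces, hence possibly not closed; one must argue that after passing to its closure — equivalently, to the separated quotient of the inductive limit — each $D_B$ becomes a genuine Banach disk. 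This is exactly the point where one uses that the transition maps arise from completions of seminormed spaces and where the usual regularity arguments for inductive limits of Banach spaces enter.

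For (b), let $W$ be complete and $f \colon V \to W$ bounded. For each $B \in \frakD$ the set $f(B)$ is bounded in $W$, hence contained in some completant disk $C$; thus $f$ restricts to a norm-nonincreasing map $V_B/\!\sim\; \to W_C$, which, $W_C$ being a Banach space, extends uniquely to a bounded $g_B \colon \widehat{V_B} \to W_C \hookrightarrow W$. Since $W$ is complete, hence separated, a bounded linear map out of a Banach space is determined by its restriction to a dense subspace; this forces the $g_B$ to be compatible with the transition maps, so they glue to a linear map $\hat f \colon \hat V \to W$ with $\hat f \circ \iota = f$, which is bounded because $\hat f(D_B) = g_B(\overline U_B) \subseteq C$. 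Applying the same density argument to $h \circ \psi_B$ for any bounded $h$ with $h \circ \iota = f$, together with $\hat V = \bigcup_B \psi_B(\widehat{V_B})$, yields uniqueness of $\hat f$, completing the proof.
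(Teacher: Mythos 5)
The paper itself gives no proof of this proposition (it is quoted from Meyer's thesis, cited as [Me99]), and your construction -- the inductive limit of the Banach completions $\widehat{V_B}$ over the directed set of bounded disks -- is exactly the standard one found there, so the method is not in question. Your part (b) is essentially complete: boundedness of $f$ yields norm-nonincreasing maps $V_B \rightarrow W_C$ into completant disks, the extensions $g_B$ to $\widehat{V_B}$ are compatible because two bounded maps out of a Banach space into the complete (hence separated) space $W$ which agree on a dense subspace coincide (after passing to a common completant disk containing both image disks), and uniqueness follows since $\hat{V}$ is covered by the images $\psi_B(\widehat{V_B})$.

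The genuine gap is step (a), which you yourself call the main obstacle and then do not carry out. The ``general fact'' you invoke -- that a bornological inductive limit of Banach spaces is again complete -- is false for the object you actually constructed: the vector-space colimit with the bornology generated by the disks $D_B=\psi_B(\overline{U}_B)$ need not even be separated, precisely because $\ker\psi_B$ is a directed union of closed subspaces and the gauge of $D_B$ may fail to be a norm; deferring the repair to unnamed ``regularity arguments'' leaves out the entire content of the proposition. To close the gap, work in $X:=\bigoplus_B \widehat{V_B}$, which is complete, let $R\subset X$ be the subspace spanned by the relations $x-\phi_{B'B}(x)$, and let $\overline{R}$ be the smallest subspace containing $R$ such that $\overline{R}\cap X_S$ is closed in the Banach space $X_S$ for every bounded disk $S$ (it exists as the intersection of all such subspaces). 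Two lemmas are then needed: (i) for complete $X$ and such a locally closed subspace $N$, the quotient $X/N$ is separated and complete, since the image of a completant disk $S$ spans $X_S/(N\cap X_S)$, a Banach space, and any vector whose image has gauge zero is an $X_T$-limit of elements of $N$ for a suitable bounded disk $T$, hence lies in $N$; (ii) every bounded linear map from $X$ into a complete (hence separated) space $W$ that kills $R$ kills $\overline{R}$, because its kernel is locally closed (bounded maps into separated spaces annihilate all vectors of gauge zero). Setting $\hat{V}:=X/\overline{R}$ -- which is the separated quotient of your colimit -- lemma (i) gives completeness, and lemma (ii) shows that your maps $g_B$ descend, so the universal property you verified in (b) survives the passage to the quotient; this last re-verification is also missing from your write-up. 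Note finally that with this construction $\iota$ need not be injective, which is consistent with the statement as formulated.
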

\begin{proof}
  For the proof of this see \cite{MeyACC}.
\end{proof}

For $(V,\frakB)$ and $(W,\mathscr{D})$ two bornological vector spaces,
a linear map $f:V\rightarrow W$ is called {\it bounded}, if
for every $S\in \frakB$ the image $f(S)$ is in $\mathscr{D}$.
The space of bounded linear maps $V\rightarrow W$ will be denoted by
$\Hom (V,W)$. It carries itself a canonical bornology, namely the
bornology of \emph{equibounded} sets of linear maps, i.e.~of subsets
$E \subset \Hom (V,W)$ such that for each $S\in \frakB$ the set
$E(S)$ is bounded in $(W,\frakD )$.  Obviously, the bornological
vector spaces together with the bounded linear maps then form a
category. Since the direct sum $V\oplus W$ of two bornological
vector spaces obviously inherits a canonical bornological structure
from its components, the category of bornological vector spaces is
even an additive category.
Moreover, it carries the structure of a tensor category, since the algebraic
tensor product $V\otimes W$ of two bornological vector spaces $(V,\frakB)$
and $(W,\frakD)$ carries a natural bornology which is generated by the sets
$S \otimes T$, where $S \in \frakB$, $T \in \frakD$.

In case $V$ and $W$ are both complete bornological vector spaces, the
direct sum
$V\oplus W$ is obviously a complete bornological vector space as well.
For the tensor product $V\otimes W$, though, with its canonical
bornological structure, completeness need not necessarily hold. Therefore,
one introduces the completed tensor product
$V \hatotimes W := (V \otimes W)\hat{\hspace{0.3em}}$ for any pair of
bornological vector spaces $V,W$.
Note that the category of complete bornological vector spaces
with $\oplus$ and $\hatotimes$ as direct sum resp.~tensor functor also
satisfies the axioms of an additive tensor category. We denote the category
of complete bornological vector spaces and bounded linear maps by
$\mathsf{Bor}$.

\begin{example}
\label{Ex:BorEx}
 Let $V$ be a locally convex topological vector space. Then
\begin{displaymath}
\begin{split}
   \mathfrak{Bnd} (V) & \, :=
   \{ S \subset V\mid p (S) < \infty
   \text{ for every seminorm $p$ on $V$}\}
   \quad  \text{and} \\
   \mathfrak{Cpt} (V)& \, :=
   \{ S \subset V\mid S \text{ is precompact in $V$} \}
\end{split}
\end{displaymath}
 are two, in general different, bornologies on $V$, which one calls,
 respectively, the {\it von Neumann} and the {\it precompact} bornology.
\end{example}

\subsection{Bornological algebras and modules}
By a \emph{bornological algebra} one understands a $\Bbbk$-algebra $A$
together with a complete convex bornology $\frakB$ such that the
product map $m: A \otimes A \rightarrow A$ is bounded.
By the universal property of the completed bornological tensor product
one knows that for such an $A$ the multiplication $m$ lifts uniquely to a
bounded map $A \hatotimes A \rightarrow A$.

For any (real or complex) algebra $A$ we denote by $A^+$ the
unital algebra $A\oplus \Bbbk$, and by $\unA$ the smallest unital
algebra containing $A$, which means that $\unA$ coincides with
$A$, if $A$ is unital, and with $A^+$ otherwise. Obviously, $A^+$
and $\unA$ are again bornological algebras, if that is the case
already for $A$. For every bornological algebra $A$ we denote by
$\envA$ its {\it enveloping algebra} which is defined as the
bornological tensor product algebra $\unA \hatotimes (\unA)^\op $.

By a ({\it left}) $A$-{\it module} over a bornological algebra $A$ one
understands a complete bornological vector space $M$ together with a bounded
linear map $\unA \hatotimes M \rightarrow M$ such that the following axioms
are satisfied:
\begin{enumerate}[(MOD1)]
\item
  One has $(a_1 \cdot a_2 ) \cdot m = a_1 \cdot (a_2\cdot m)$ for all
  $a_1,a_2 \in A$ and $m\in M$.
\item
  The relation $1 \cdot m = m$ holds for all $m\in M$.
\end{enumerate}
\begin{example}
 For every complete bornological vector space $V$ the tensor product
 $\unA \hatotimes V$ carries in a natural way the structure of a
 left $A$-module. Modules of this form are called \emph{free}
 left $A$-modules; likewise one defines free right $A$-modules.
\end{example}

Given left $A$-modules $M$ and $N$ we write $\Hom_A (M,N)$ for the space of
bounded $A$-module homomorphisms with the equibounded bornology. Obviously,
the left $A$-modules together with these morphisms form a category, which
we will denote by $\Mod (A)$. Note that every morphism $f:M\rightarrow N$
in $\Mod (A)$ has a kernel and a cokernel. The kernel simply coincides with
the vector space kernel equipped with the subspace bornology, where the
cokernel is the quotient $N / f(M)^{\hat{\hspace{0.3em}}}$ together with
the quotient bornology. Similarly, one defines {\it right $A$-modules} over
a bornological algebra $A$ and writes $\Mod (A^\op)$
(resp.~$\Hom_{A^\op} (M,N)$) for the category of right $A$-modules
(resp.~the set of right $A$-module morphisms from $M$ to $N$). Finally,
an object in the category $\Mod (\envA)$ will be called an
$A$-{\it bimodule}.

For any right $A$-module $M$  and any left $A$-module $N$ we denote by
$M \hatotimes_A N$ the \emph{$A$-balanced tensor product} that means
the cokernel of the bounded linear map
\begin{displaymath}
  M \hatotimes A \hatotimes N \rightarrow M \hatotimes N, \quad
  m \otimes a \otimes n \mapsto m \cdot a \otimes n - m \otimes a \cdot n.
\end{displaymath}

A bornological algebra $A$ is said to have an \emph{approximate identity},
if for every bounded subset $S \subset A$ there is a bounded sequence
$(u_{S,k})_{k\in \N}$ and an absolutely convex bounded $T_S \subset A$
such that the following properties hold true:
\begin{enumerate}[({AID}1)]
\item
  For every $a \in A_S$ one has $u_{S,k} \cdot a \in A_{T_S}$ and
  $a \cdot u_{S,k} \in A_{T_S}$.
\item
  For all $a\in S$, the sequences $u_{S,k} \cdot a$ and  $ a \cdot u_{S,k}$
  converge to $a$ in the Banach space $A_{T_S}$, and the convergence is
  uniform in $a$.
\item
  For bounded subsets $S_1,S_2 \subset A$ such that $S_1\subset S_2$ one has
  \[
    \mbox{ } \hspace{10mm}
    \| u_{S_2 , k} \cdot a - a \|_{T_{S_2}}  \leq
    \| u_{S_1 , k} \cdot a - a \|_{T_{S_2}} \quad \text{for all
    $a \in A_{T_{S_1}}$ and $k\in \N$.}
  \]
\end{enumerate}
In other words, an approximate identity
$\big( u_{S,k} \big)_{S\in \calB,k\in \N}$
is essentially a net in $A$ such that each of the nets
$\big( u_{S,k} \, a \big)$ and $\big( a \, u_{S,k} \big)$
converges to $a$.

A bornological algebra $A$ which possesses an approximate identity
and which, additionally, is projective both as a left and a right
$A$-module, is called \emph{quasi-unital}. Note that under the
assumption that $A$ has an approximate identity, projectivity of
$A$ is equivalent to the existence of a bounded left $A$-module
map $l: A \rightarrow \unA \hatotimes A$ and a bounded right
$A$-module map $r: A \rightarrow A \hatotimes \unA$ which are both
sections of the multiplication map (cf.~\cite{MeyEDCBM}).

Given a quasi-unital bornological algebra $A$, a  left $A$-module $M$
(resp.~a right $A$-module $N$) is called \emph{essential}, if the
canonical map
$A \hatotimes_A M \rightarrow M$ (resp.~$N \hatotimes_A A \rightarrow N$)
is an isomorphism. If the left $A$-module $M$ (resp.~the right $A$-module
$N$) has the property that the canonical map
$M \rightarrow \Hom_A(A,M)$ (resp.~$N \rightarrow \Hom_{A^\op}(A,N)$)
is an isomorphism, one calls $M$ (resp.~$N$) a {\it rough} module.
The category of essential left $A$-modules
(resp.~right $A$-modules) will be denoted by $\mathsf{Mod_e} (A)$
(resp.~by $\mathsf{Mod_e} (A^\op)$). Since $A$ is assumed to be
quasi-unital, one concludes that for every $A$-module $M$, the tensor
product $A \hatotimes_A M$ is an essential module.
\subsection{Resolutions and homology}
In this article we consider homology theories in the additive but in
general not abelian category of modules over a bornological algebra $A$.
This implies that we have to use methods from relative homological algebra.
Essentially this means that only so-called allowable chain complexes
and allowable projective resolutions are used to determine homologies and
cohomologies. To define the notion of allowability precisely
recall that a bounded epimorphism of left $A$-modules
$f: M \longrightarrow N$ or in other words a
short exact sequence of left $A$-modules and bounded maps
\begin{displaymath}
  0\longrightarrow K \longrightarrow M \stackrel{f}{\longrightarrow}
  N \longrightarrow 0
\end{displaymath}
is called \emph{linearly split}, if there exists a bounded linear
map $N \rightarrow M$ which is a section of $f$. A left $A$-module
$P$ is now called \emph{projective}, if the functor $\Hom_A (P,
-)$ is exact on linearly split short exact sequences in
$\mathsf{Mod} (A)$. Moreover, a chain complex $(C_\bullet,
\partial)$  of $A$-modules and bounded maps $\partial_k :C_k
\rightarrow C_{k-1}$ is called \emph{allowable}, if for every $k$
the image of $\partial_k $ is in $\mathsf{Mod} (A)$, i.e.~is a
complete bornological subspace of $C_{k-1}$, and if the bounded
epimorphism $\overline{\partial_k} :C_k \rightarrow \im
\partial_k$ induced by $\partial_k$ is linearly split. Likewise
one defines allowable cochain complexes. For homology theories in
categories of modules of bornological algebras the following
result now is crucial.
\begin{proposition}
 Let $A$ be a quasi-unital bornological algebra $A$. Then every free
 left $A$-module is projective. Moreover, the  category
 $\mathsf{Mod} (A)$ has enough projectives, that means for every
 left $A$-module $M$ there exists a projective
 left $A$-module $P$ together with a split epimorphism
 of $A$-modules $P \rightarrow M$. Hereby, $P$ can be chosen to
 be free. Finally, the functor
 \begin{displaymath}
   \mathsf{Mod} (A) \rightarrow \mathsf{Mod_e} (A), \quad M
   \mapsto A \hatotimes M
 \end{displaymath}
 preserves projective modules, and the category $\mathsf{Mod_e} (A)$
 of essential left $A$-modules has enough projectives as well.
\end{proposition}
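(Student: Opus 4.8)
The plan is to prove the four assertions in turn, the engine throughout being the free--forgetful adjunction in the category $\mathsf{Bor}$, with quasi-unitality only entering at the very end.

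\textbf{Free modules are projective.} First note that by axioms (MOD1)--(MOD2) every left $A$-module $M$ is canonically a \emph{unital} $\unA$-module, and that a bounded $A$-linear map $f$ between two such modules is automatically $\unA$-linear, since $f(ax+\lambda x)=af(x)+\lambda f(x)=(a+\lambda 1)f(x)$. Hence the ordinary free--forgetful adjunction over the unital algebra $\unA$ gives a natural isomorphism $\Hom_A(\unA\hatotimes V,M)\cong\Hom(V,M)$ for every complete bornological space $V$, where on the right $\Hom(V,M)$ is the space of bounded linear maps. If $0\to K\to M\to N\to 0$ is a linearly split short exact sequence in $\mathsf{Mod}(A)$, composing with a bounded linear section $N\to M$ shows $\Hom(V,M)\to\Hom(V,N)$ is onto, while left exactness of $\Hom(V,-)$ is automatic; thus $\Hom_A(\unA\hatotimes V,-)$ is exact on linearly split sequences and $\unA\hatotimes V$ is projective.

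\textbf{Enough (free) projectives in $\mathsf{Mod}(A)$.} Given $M\in\mathsf{Mod}(A)$, write $M_0$ for its underlying complete bornological vector space and set $P:=\unA\hatotimes M_0$. The action map $\mu\colon P\to M$, $a\otimes m\mapsto a\cdot m$, is a bounded $A$-module epimorphism because $1\otimes m\mapsto m$, and the assignment $m\mapsto 1\otimes m$ is itself a bounded linear section of $\mu$; hence $\mu$ is linearly split and $P$ is free, hence projective. This settles the first three assertions of the Proposition.

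\textbf{Induction preserves projectives and $\mathsf{Mod_e}(A)$ has enough projectives.} Here quasi-unitality is used in earnest. From the bounded left $A$-module section $l\colon A\to\unA\hatotimes A$ of the multiplication (equivalently, from the approximate identity (AID1)--(AID3)) one gets $A\hatotimes_A A\cong A$, and then, applying the right-exact functor $A\hatotimes_A(-)$ to $0\to A\to\unA\to\Bbbk\to 0$, also $A\hatotimes_A\unA\cong A$; so the essentialization functor carries the free module $\unA\hatotimes V$ to the \emph{free essential module} $A\hatotimes V$. Next, in the same spirit as the first step but now using the approximate identity to reconstruct a bounded linear map $V\to M$ from a bounded $A$-module map $A\hatotimes V\to M$, one proves $\Hom_A(A\hatotimes V,M)\cong\Hom(V,M)$ for every essential $M$, whence $A\hatotimes V$ is projective in $\mathsf{Mod_e}(A)$. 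Since any projective $P\in\mathsf{Mod}(A)$ is a retract of some $\unA\hatotimes V$, its essentialization is a retract of $A\hatotimes V$ and thus again projective, which proves the functor preserves projectives. Finally, for essential $M$ the bar-type complex $\cdots\to A\hatotimes A\hatotimes M\to A\hatotimes M\to M\to 0$ is linearly split (the contracting homotopy built from the approximate identity, using $M\cong A\hatotimes_A M$), so $A\hatotimes M\to M$ is a linearly split epimorphism out of the free, hence projective, essential module $A\hatotimes M$; therefore $\mathsf{Mod_e}(A)$ has enough projectives.

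\textbf{Expected main obstacle.} The first two steps are purely formal. The real content is in the third: turning the abstract data of quasi-unitality --- the module sections $l,r$ together with the convergence conditions (AID1)--(AID3) --- into the concrete bounded linear sections that underlie $A\hatotimes_A A\cong A$, the adjunction for $A\hatotimes V$ over essential modules, and the splitting of the bar complex. One should also pin down the precise meaning of the functor $M\mapsto A\hatotimes M$ (the $A$-action is on the first tensor factor, so that its image is automatically essential once $A\hatotimes_A A\cong A$). These are exactly the analytic points worked out by Meyer in \cite{MeyACC,MeyEDCBM}, to which the argument reduces.
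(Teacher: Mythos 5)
Your first two steps (free modules are projective via the adjunction over $\unA$, and $\unA\hatotimes M_0\to M$ with the linear splitting $m\mapsto 1\otimes m$ gives enough free projectives in $\mathsf{Mod}(A)$) are correct and standard; note that the paper itself offers no argument at all here beyond the reference to \cite[Sec.~4]{MeyEDCBM}, so there is no "paper route" to compare against. The genuine problem is in your third step: the isomorphism $\Hom_A(A\hatotimes V,M)\cong\Hom(V,M)$ that you claim for every \emph{essential} $M$ is false in general, because essential modules need not be rough — and the paper introduces the two notions precisely because they differ. Concretely, take $\grp$ to be the unit groupoid of the manifold $\R$, so that $A=\calC^\infty_{\text{\tiny\rm cpt}}(\R)$ with pointwise product is quasi-unital by Proposition~\ref{Prop:ConBorAlg}, and take $V=\Bbbk$, $M=A$ (which is essential). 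Your isomorphism would say that every bounded $A$-module map $A\to A$ has the form $a\mapsto a\cdot m_0$ for some $m_0\in A$; but the identity map (more generally, multiplication by any smooth function) is such a module map and is not of this form, since $A$ has no unit. The correct adjunction is $\Hom_A(A\hatotimes V,M)\cong\Hom\big(V,\Hom_A(A,M)\big)$, whose right-hand side involves the rough module $\Hom_A(A,M)$, not $M$; so the projectivity of the free essential modules $A\hatotimes V$ — the key input for both "the functor preserves projectives" and "enough projectives in $\mathsf{Mod_e}(A)$" — does not follow as you argue.

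The conclusion is nevertheless true, and the repair is short and uses quasi-unitality exactly where it should enter: let $\pi\colon M'\to M$ be a linearly split epimorphism with bounded linear section $s$, let $f\colon A\hatotimes V\to M$ be a bounded $A$-module map, and let $l\colon A\to\unA\hatotimes A$ be the bounded left $A$-module section of the multiplication $\mu$ provided by quasi-unitality. Then $g:=\mu_{M'}\circ\big(\id_{\unA}\hatotimes(s\circ f)\big)\circ\big(l\hatotimes\id_V\big)\colon A\hatotimes V\to M'$ is a bounded $A$-module map, and $\pi\circ g=f$ because $\pi\circ s=\id$, $f$ is automatically $\unA$-linear, and $\mu\circ l=\id_A$; equivalently, one can invoke the projectivity of $A$ as a left $A$-module, which is part of the definition of quasi-unital, together with $A\hatotimes V\cong A\hatotimes_A(\unA\hatotimes V)$. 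With this replacement your retract argument for preservation of projectives and your splitting of $A\hatotimes M\to M$ for essential $M$ go through in outline, modulo the analytic facts ($A\hatotimes_A A\cong A$, and the bounded linear splitting of the action map — note that since $l$ takes values in $\unA\hatotimes A$ rather than $A\hatotimes A$, your sketched section a priori lands in $\unA\hatotimes M$) which both you and the paper delegate to \cite[Sec.~4]{MeyEDCBM}.
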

\begin{proof}
  See  \cite[Sec.~4]{MeyEDCBM}).
\end{proof}
 The proposition implies that for every left $A$-module there exists
 an \emph{allowable projective resolution} of $M$, i.e.~an allowable
 acyclic complex $(P_\bullet,\partial)$ of projective left $A$-modules
 $P_k$, $k\geq 0$ together with a quasi-isomorphism
 $\varepsilon : P_\bullet \rightarrow M_\bullet$ in the category
 of left $A$-modules, where $M_\bullet$
 denotes the complex which is concentrated in degree $0$ and
 coincides there with the $A$-module $A$.
 These conditions are equivalent to the requirement
 that $\varepsilon $ is a split bounded $A$-linear
 surjection $\varepsilon :P_0 \rightarrow M$ which satisfies
 \begin{displaymath}
   \varepsilon \circ \partial_1 =0
 \end{displaymath}
 and that there exists an $A$-linear splitting $h:M \rightarrow P_0$
 and a family $(h_k)_{k\in \N}$ of bounded linear maps
 $h_k :P_k \rightarrow P_{k+1}$ such that
 \begin{displaymath}
   \partial_1 h_0 = \id_{P_0} - h \varepsilon \quad \text{and}  \quad
   \partial_{k+1} h_k - h_{k-1} \partial_k = \id_{P_k}\quad
   \text{for all $k \geq 1$}.
 \end{displaymath}
The proof of the following result is standard in (relative) homological
algebra.
\begin{theorem}[Comparison Theorem] {\rm (}cf.~\cite[Thm.~A.9]{MeyACC}{\rm })
  Assume that $M$ and $N$ are two left $A$-modules over a bornological algebra
  $A$. Let $P_\bullet \rightarrow M_\bullet$ and
  $Q_\bullet \rightarrow N_\bullet$ be allowable resolutions  of $M$
  resp.~$N$. If $P_\bullet$ is projective,
  then there exists for every morphism $f:M \rightarrow N$ of left $A$-modules
  a \emph{lifting} of $f$, i.e.~a chain map
  $F: P_\bullet  \rightarrow Q_\bullet$ in the category
  of left $A$-modules such that the diagram
  \begin{displaymath}
    \xymatrix{
      P_\bullet \ar[r]\ar[d]_{F}& M_\bullet \ar[d]^{f}\\
      Q_\bullet \ar[r]& N_\bullet
    }
  \end{displaymath}
  commutes.
  Any two such liftings of $f$ are homotopic. In particular, any two
  allowable projective resolutions of $M$ are homotopy equivalent.
\end{theorem}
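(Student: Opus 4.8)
The plan is to run the classical inductive construction of relative homological algebra, the only extra care being to keep everything bounded and to use the allowability hypotheses in the right places. Throughout I shall use the following consequence of the definitions: since $Q_\bullet \to N_\bullet$ is an \emph{allowable} resolution, each induced bounded $A$-linear epimorphism $\overline{\partial_{k+1}^Q}\colon Q_{k+1}\to \im\partial_{k+1}^Q$, as well as the augmentation $\varepsilon_Q\colon Q_0\to N$, is linearly split; hence, by the very definition of projectivity (exactness of $\Hom_A(P,-)$ on linearly split short exact sequences), any bounded $A$-linear map out of a projective module lifts, as a bounded $A$-module map, through any of these epimorphisms. Acyclicity of the augmented complex $\cdots\to Q_1\to Q_0\to N\to 0$ moreover identifies $\ker\varepsilon_Q=\im\partial_1^Q$ and $\ker\partial_k^Q=\im\partial_{k+1}^Q$ as complete bornological $A$-submodules.

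First I would construct the lifting $F=(F_k)_{k\ge 0}$ by induction. With the conventions $F_{-1}:=f$, $\partial_0:=\varepsilon$, I require $\partial_k^Q F_k = F_{k-1}\partial_k^P$ for all $k\ge 0$. For $k=0$: the bounded $A$-linear map $f\varepsilon_P\colon P_0\to N=\im\varepsilon_Q$ lifts, since $P_0$ is projective and $\varepsilon_Q$ is a linearly split epimorphism, to a bounded $A$-module map $F_0\colon P_0\to Q_0$ with $\varepsilon_Q F_0 = f\varepsilon_P$. For the inductive step, given $F_0,\dots,F_k$ with the stated relations, one computes $\partial_k^Q(F_k\partial_{k+1}^P)=F_{k-1}\partial_k^P\partial_{k+1}^P=0$, so $F_k\partial_{k+1}^P$ takes values in $\ker\partial_k^Q=\im\partial_{k+1}^Q$; projectivity of $P_{k+1}$ together with the linear splitting of $\overline{\partial_{k+1}^Q}$ then yields a bounded $A$-module map $F_{k+1}\colon P_{k+1}\to Q_{k+1}$ with $\partial_{k+1}^Q F_{k+1}=F_k\partial_{k+1}^P$. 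This produces the desired chain map over $f$.

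For the homotopy statement, given two liftings $F,F'$ I would set $G:=F-F'$, a chain map over $0$ (so $\varepsilon_Q G_0=0$), and build bounded $A$-module maps $s_k\colon P_k\to Q_{k+1}$, $s_{-1}:=0$, with $\partial_1^Q s_0=G_0$ and $\partial_{k+1}^Q s_k = G_k - s_{k-1}\partial_k^P$ for $k\ge 1$. Since $G_0$ has image in $\ker\varepsilon_Q=\im\partial_1^Q$, projectivity of $P_0$ gives $s_0$; assuming $s_0,\dots,s_{k-1}$ built, the identity $\partial_k^Q(G_k-s_{k-1}\partial_k^P)=G_{k-1}\partial_k^P-(G_{k-1}-s_{k-2}\partial_{k-1}^P)\partial_k^P=0$ shows $G_k-s_{k-1}\partial_k^P$ lands in $\im\partial_{k+1}^Q$, and projectivity of $P_k$ delivers $s_k$. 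Thus $F-F'$ is null-homotopic. The final assertion then follows formally: applying existence and uniqueness with $M=N$ and $f=\id$ gives chain maps $F\colon P_\bullet\to Q_\bullet$ and $F'\colon Q_\bullet\to P_\bullet$ over $\id$, and since $F'F$ and $\id_{P_\bullet}$ (resp. $FF'$ and $\id_{Q_\bullet}$) both lift $\id$ they are homotopic, so $P_\bullet$ and $Q_\bullet$ are homotopy equivalent over $A$.

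I do not anticipate a genuine obstacle here; the argument is a diagram chase, and this is essentially \cite[Thm.~A.9]{MeyACC}. The only point deserving a line of comment is bornological: one must know that at each stage the lift can be chosen bounded and $A$-linear, and that the kernels/images involved are complete bornological submodules so that the relevant quotient maps and their sections exist — but this is exactly what the notions of \emph{allowable} complex and \emph{projective} module recalled above are designed to guarantee.
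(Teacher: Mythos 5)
Your argument is correct and is precisely the standard relative-homological-algebra proof that the paper itself does not spell out but merely invokes by declaring the result standard and citing Meyer: induction on degree using projectivity against the linearly split epimorphisms onto images supplied by allowability, then the same induction applied to $F-F'$ to build the homotopy, and the usual formal deduction of homotopy equivalence of two allowable projective resolutions. The bornological caveats you flag (completeness of the images and of the split kernels, boundedness of the chosen lifts) are exactly the points where allowability and the definition of projectivity enter, and you use them correctly.
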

The comparison theorem allows the construction of derived functors in the
category of $A$-modules. In particular, the functors $\Ext$ and $\Tor$
can now be defined for $A$-modules as usual.

\subsection{Hochschild homology and Bar resolution}
\label{Sec:HHBR}
Given a bornological algebra $A$ and an $A$-bimodule $M$, the
Hochschild homology $H_\bullet (A,M)$ and cohomology $H^\bullet (A,M)$
are defined as derived functors in the category
$\Mod \big( \envA\big)$ of $A$-bimodules as follows:
\begin{equation}
  H_\bullet (A,M) := \Tor_\bullet^\envA (A,M) , \quad
  H^\bullet (A,M) := \Ext^\bullet_\envA  (A,M) .
\end{equation}
A particularly useful resolution of the $A$-bimodule $A$ is
given by the \emph{Bar complex} $(\Barcpl_\bullet (A), b')$
together with the multiplication map inducing the quasi-isomorphism
$\Barcpl_\bullet (A) \rightarrow  A$.
Hereby,
\begin{displaymath}
  \Barcpl_k ( A ) = A \hatotimes A^{\hatotimes k}  \hatotimes A ,
\end{displaymath}
  and $b'$ is the standard boundary map on the Bar complex:
\begin{displaymath}
\begin{split}
  b_0' (a_0 \otimes a_1) & = 0, \\
  b_k' (a_0 \otimes a_1 \ldots a_k\otimes a_{k+1}) & =
 \sum_{i=0}^k \, (-1)^i \, a_0 \otimes \ldots \otimes a_i \, a_{i+1}\otimes
 \ldots \otimes a_{k+1} .
\end{split}
\end{displaymath}
Obviously, if $A$ is quasi-unital, then the Bar complex of $A$ provides an
allowable projective resolution of $A$ in the category of $A$-bimodules.
In other words, this means that a quasi-unital bornological
algebra $A$ is \emph{H-unital} in the sense of Wodzicki
(see \cite{WodECHRAKT,LodCH}).
Thus, for quasi-unital $A$, the Hochschild homology and cohomology groups
$H_\bullet (A,M)$, $H^\bullet (A,M)$ are computed as the homology
resp.~cohomology of the Hochschild complexes
\begin{equation}
\label{EqHochCplx}
\begin{split}
 & (C_\bullet (A,M), b_*') \text{ with } C_\bullet (A,M):=
 \Barcpl_\bullet (A)\hatotimes_{\envA}M, \quad \text{and } \\
 & (C^\bullet (A,M),{b'}^*)  \text{ with } C^\bullet (A,M) :=
 \Hom_{\envA}(\Barcpl_\bullet (A),M) .
\end{split}
\end{equation}

For some applications, in particular to define the cup product on
the Hochschild cochain complex of a quasi-unital bornological
algebra which does not have a unit,  the left and right reduced
Bar complexes $\big( \lrBarcpl_\bullet (A),b'\big)$ and $\big(
\rrBarcpl_\bullet (A),b'\big)$ are quite useful. They carry the
same boundary as the Bar complex, and have components
\begin{displaymath}
  \lrBarcpl_k (A) := \unA \hatotimes A^{\hatotimes k} \hatotimes A \quad
  \text{and} \quad
  \rrBarcpl_k (A) := A \hatotimes A^{\hatotimes k} \hatotimes \unA .
\end{displaymath}
Obviously, under the assumption that $A$ is quasi-unital, the left and right
reduced Bar complexes are both allowable projective resolutions of $A$.
Moreover, the canonical embeddings
$\Barcpl_\bullet  (A) \hookrightarrow \lrBarcpl_\bullet  (A)$
and  $\Barcpl_\bullet  (A) \hookrightarrow \rrBarcpl_\bullet  (A)$
have the following quasi-inverses:
\begin{equation}
\label{Eq:QuInredBar}
\begin{split}
  r_k :&\:\lrBarcpl_k  (A)  \rightarrow \Barcpl_k  (A), \quad
  a_0 \otimes \ldots \otimes a_{k+1} \mapsto  a_0 r(a_1) \cdot \ldots \cdot
  r(a_{k+1}), \\
  l_k :&\: \rrBarcpl_k  (A)  \rightarrow \Barcpl_k  (A), \quad
  a_0 \otimes \ldots \otimes a_{k+1} \mapsto  l (a_0) \cdot \ldots \cdot
  l(a_k) a_{k+1} ,
\end{split}
\end{equation}
where $l: A \rightarrow A \hatotimes A$ resp.~$r: A \rightarrow A \hatotimes A$
is an $A$-left resp.~$A$-right linear section of the multiplication map on $A$.

Finally in this section we consider the \textit{reduced Hochschild chain}
and \textit{reduced Hochschild cochain complexes}.
These are defined by
\begin{equation}
\label{Eq:DefRedHoch}
\begin{split}
  C_k^\text{\tiny \rm red} (A,M) & \: :=
  \begin{cases}
    A \hatotimes_A M \hatotimes_A A , & \text{ if $k=0$}, \\
    M \hatotimes \big( \unA / \Bbbk \big)^{\hatotimes k}
    & \text{ if $k\geq 1$},
  \end{cases}
  \\
  C^k_\text{\tiny \rm red} (A, N) & \: :=
  \begin{cases}
    \Hom_\envA \big( A \hatotimes A , N \big), & \text{ if $k=0$}, \\
    \Hom_\Bbbk \big( \big( \unA / \Bbbk \big)^{\hatotimes k} , N \big) ,
    & \text{ if $k\geq 1$},
  \end{cases}
\end{split}
\end{equation}
and carry the same boundary resp.~coboundary maps as the unreduced complexes.
By construction the canonical maps
\[
  C_\bullet (A,M) \rightarrow
  C_\bullet^\text{\tiny \rm red} (A,M)
  \quad \text{and} \quad
  C^\bullet_\text{\tiny \rm red} (A,N) \rightarrow  C^\bullet (A,N)
\]
are then chain maps. If $M$ is an essential $A$-bimodule (resp.~$N$ a rough
$A$-bimodule), then the first (resp.~the second) of these chain maps is a
quasi-isomorphism. Since $A$ is assumed to be quasi-unital, the first chain map
is always a quasi-isomorphism for $M=A$. In many applications, and in particular
those appearing in this article, the second chain map is also a quasi-isomorphism
for $N=A$. In case $A$ is unital, both chain maps are always quasi-isomorphisms.
\subsection{The cup product on Hochschild cohomology}
\label{Sec:CPHC}
Under the general assumption from above that $A$ is a (possibly nonunital)
bornological algebra, we will now explain the construction of the cup
product
\begin{displaymath}
 \cup : H^\bullet (A,A) \times H^\bullet (A,A) \rightarrow H^\bullet (A,A).
\end{displaymath}
One way to define $\cup$ is via the Yoneda product on (bounded) extensions
\begin{displaymath}
  0 \rightarrow A \rightarrow E_1 \rightarrow \cdots \rightarrow E_k
  \rightarrow A \rightarrow 0
\end{displaymath}
and the interpretation of $\Ext^k_\envA (A,A)$ as the space of
equivalence classes of such extensions. Alternatively, and that is
the approach we will follow here, one can use the
quasi-isomorphisms from Eq.~(\ref{Eq:QuInredBar}) to directly
define a cup product $\cup : C^\bullet(A,A) \times C^\bullet (A,A)
\rightarrow C^\bullet (A,A)$ on the Hochschild cochain complex,
which on cohomology coincides with the Yoneda product. More
precisely, we define for $f \in C^k (A,A)$ and $g \in C^l (A,A)$
the product $f \cup g \in C^{k+l} (A,A)$ by
\begin{equation}
\label{Eq:DefCup}
  f \cup g (a_0 \otimes \cdots \otimes a_{k+l+1} ) :=
  f \big( l_k ( a_0 \otimes \cdots \otimes a_k \otimes 1 )\big)
  \, g \big( r_l (1 \otimes a_{k+1} \otimes \cdots \otimes
  a_{k+l+1}) \big),
\end{equation}
where $a_0 , \cdots, a_{k+l+1} \in A$.
It is straightforward to check that the thus defined map $\cup$ is a
chain map and associative up to homotopy.
The cup-product induced on the reduced Hochschild cochain complex
by the embedding
$C^\bullet_\text{\tiny \rm red} (A,A) \rightarrow  C^\bullet (A,A)$
is given by
\begin{equation}
\label{Eq:DefCupred}
  f \cup g (a_1 \otimes \cdots \otimes a_{k+l}) :=
  f (a_1 \otimes \cdots \otimes a_k) \, g (a_{k+1} \otimes \cdots \otimes
  a_{k+l})
\end{equation}
for $f \in C^k_\text{\tiny \rm red} (A,A)$, $g \in C^l_\text{\tiny \rm red} (A,A)$
and $a_1 , \cdots, a_{k+l} \in A$.
\subsection{Bornological structures on convolution algebras and their modules}
\label{Sec:bcalgebras}
Consider a proper \'etale Lie groupoid $\grp$ and let $\calA \rtimes \grp$
denote its convolution algebra (see Sec.~\ref{Sec:outline}).
A subset $S \subset \calA \rtimes \grp$ is said to be \emph{bounded},
if there is a compact subset in $K\subset \grp_1$ such that
$\supp a \subset K$ for every $a\in S$, and if for each differential operator
$D$ on $\grp_1$  one has
\begin{displaymath}
  \sup_{a \in S} \| Da \|_K < \infty .
\end{displaymath}
The bounded subset of $\calA \rtimes \grp$ form a bornology
which coincides both with the von Neumann and the precompact bornology defined
in Example \ref{Ex:BorEx}. In this article, we always assume that
$\calA \rtimes \grp$ carries this bornology. By an immediate argument one
checks that the convolution product is bounded and that the bornology
on $\calA \rtimes \grp$ is complete. Thus $\calA \rtimes \grp$ becomes a
bornological algebra. Let us check that it is quasi-unital.
To this end choose a sequence of smooth maps
$\varphi_k : \grp_0 \rightarrow [0,1]$ such that the support of each
$\varphi_k$
is compact and such that $\big( \varphi_k^2 \big)_{k\in \N}$ is a locally
finite partition of unity on $\grp_0$. Obviously, one can even achieve that
\begin{equation}
\label{Eq:SuppPartUnity}
  \overline{(\supp \varphi_k )^\circ} = \supp \varphi_k
\end{equation}
holds for every $k\in \N$; this is a property we will need later.
Now extend each $\varphi_k$ by zero to a
smooth function on $\grp_1$ and denote the resulting element of
$\calA \rtimes \grp$ again by  $\varphi_k$.
Then put $u_k := \sum_{l\leq k} \varphi_l * \varphi_k$
and check that
$\big( u_k \big)_{k\in \N}$ is an approximate identity. Moreover, the maps
\begin{displaymath}
\begin{split}
  l:&\; \calA \rtimes \grp \rightarrow \calA \rtimes \grp \, \hatotimes \,
  \calA \rtimes \grp, \: a \mapsto
  a * \varphi_k \otimes \varphi_k \quad \text{and} \\
  r:&\; \calA \rtimes \grp \rightarrow \calA \rtimes \grp \,\hatotimes \,
  \calA \rtimes \grp, \: a \mapsto \sum_{k\in \N}
  \varphi_k \otimes \varphi_k * a
\end{split}
\end{displaymath}
are both sections of the convolution product. This proves
\begin{proposition}
\label{Prop:ConBorAlg}
  The convolution algebra $\calA \rtimes \grp$ of a proper \'etale Lie groupoid
  $\grp$ together with the von Neumann bornology is a quasi-unital
  bornological algebra.
\end{proposition}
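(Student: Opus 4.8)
The plan is to assemble the ingredients set up in the paragraphs immediately preceding the statement into a verification of the three defining features of a quasi-unital bornological algebra. First I would check that the family of those subsets $S\subset\calA\rtimes\grp$ which are supported in a fixed compact $K\subset\grp_1$ and satisfy $\sup_{a\in S}\|Da\|_K<\infty$ for every differential operator $D$ on $\grp_1$ really does satisfy the axioms (BOR1)--(BOR4), and that this bornology coincides with the von Neumann and the precompact bornology of the $LF$-space $\calC^\infty_{\text{\tiny\rm cpt}}(\grp_1)$; the latter is a standard fact about spaces of compactly supported smooth functions, checked locally in \'etale charts. Completeness of the bornology then follows because $\calC^\infty_{\text{\tiny\rm cpt}}(\grp_1)$ is a strict inductive limit of the nuclear Fr\'echet spaces $\calC^\infty_K(\grp_1)$, so that every bounded absolutely convex subset is completant.

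Next I would show the convolution product is bounded. Given bounded $S_1,S_2$ supported in compact $K_1,K_2$, properness of $(s,t):\grp_1\to\grp_0\times\grp_0$ and continuity of $m$ ensure that all products $a_1*a_2$ with $a_i\in S_i$ are supported in the single compact set $m\big((K_1\times K_2)\cap(\grp_1\times_{\grp_0}\grp_1)\big)$. Since $s$ and $t$ are local homeomorphisms, on that compact set the sum defining $a_1*a_2$ is locally a finite sum of a uniformly bounded number of terms, so a Leibniz-rule estimate bounds all derivatives of $a_1*a_2$ in terms of the seminorm bounds on $S_1$ and $S_2$; hence $S_1*S_2$ is bounded, and by the universal property of $\hatotimes$ the product lifts to a bounded map $(\calA\rtimes\grp)^{\hatotimes 2}\to\calA\rtimes\grp$, making $\calA\rtimes\grp$ a bornological algebra.

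Then I would establish quasi-unitality. Choosing smooth $\varphi_k:\grp_0\to[0,1]$ with compact supports such that $(\varphi_k^2)_k$ is a locally finite partition of unity on $\grp_0$ with $\overline{(\supp\varphi_k)^\circ}=\supp\varphi_k$, and extending each $\varphi_k$ by zero to $\grp_1$, I would verify that $u_k:=\sum_{l\le k}\varphi_l*\varphi_k$ is an approximate identity: for a bounded $S$ supported in $K$ only finitely many $\varphi_l$ meet a neighbourhood of $s(K)\cup t(K)$, which supplies a controlling disk $T_S$ and makes $u_k\cdot a\to a$ and $a\cdot u_k\to a$ hold uniformly on $S$ in the Banach space $A_{T_S}$, while the monotonicity condition (AID3) is built into the nested definition. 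Finally I would check that the locally finite sums $l(a)=\sum_k(a*\varphi_k)\otimes\varphi_k$ and $r(a)=\sum_k\varphi_k\otimes(\varphi_k*a)$ define bounded left- resp.\ right-$A$-linear maps into $\calA\rtimes\grp\,\hatotimes\,\calA\rtimes\grp$ that are sections of the convolution product, using that $\sum_k\varphi_k*\varphi_k=\delta_u$ acts as a two-sided unit. By the characterization of quasi-unitality recalled in the appendix (following \cite{MeyEDCBM}), an approximate identity together with such sections gives projectivity of $\calA\rtimes\grp$ as a left and as a right module over itself, and the proposition follows.

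The main obstacle I expect is the bornological bookkeeping around the sections $l$ and $r$: one must check that the defining series actually converge in the completed bornological tensor product (not merely pointwise), that the resulting maps are bounded on bounded sets, and that composition with $m$ reproduces the identity, which hinges on the precise identity $\sum_k\varphi_k*\varphi_k=\delta_u$ on $\grp_0$ and the relations $a*\delta_u=\delta_u*a=a$. A secondary nuisance is verifying (AID1)--(AID3) for $u_k$ with the convergence uniform over bounded sets and the controlling disks chosen compatibly for nested bounded sets; both are routine but require care with the \'etale structure of $\grp$.
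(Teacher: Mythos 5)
Your proposal follows essentially the same route as the paper: the same von Neumann bornology on $\calC^\infty_{\text{\tiny\rm cpt}}(\grp_1)$, the same partition-of-unity functions $\varphi_k$ on $\grp_0$ giving the approximate identity $u_k$, and the same bounded left- and right-linear sections $l$ and $r$ of the convolution product, combined with Meyer's characterization of quasi-unitality. The only difference is that you spell out the boundedness of the convolution product and the completeness of the bornology, which the paper dismisses as an immediate argument, so your write-up is correct and, if anything, more detailed.
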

Next let us consider the case where $\grp_0$ carries a $\grp$-invariant
symplectic form $\omega$ and where a $\grp$-invariant local star product
$\star$ on $\grp_0$ has been chosen.
Under these assumptions consider the crossed product algebra
$\calA^\hbar \rtimes \grp$, where $\calA^\hbar $ denotes the sheaf
$\calC^\infty_{\grp_0}[[\hbar]]$ with product $\star$.
A subset $B \subset \calA^\hbar \rtimes \grp$ is said to be \emph{bounded},
if there is a compact subset in $K\subset \grp_1$ such that
$\supp a \subset K$ for every $a\in B$, and such that
for each differential operator on $\grp_1$ and $k\in \N$ one has
\begin{displaymath}
  \sup_{a \in B} \| Da \|_{k,K} < \infty .
\end{displaymath}
Hereby, $\| \cdot \|_{k,K}$ is the seminorm on $\calA^\hbar \rtimes \grp$
defined by
\begin{displaymath}
  \| a \|_{k,K} := \sup_{g\in K} |a_k (g) |, \quad a \in \calA^\hbar
  \rtimes \grp,
\end{displaymath}
where the $a_l \in \calC^\infty_\text{\tiny\rm cpt} ( \grp_1)$, $l\in \N$
are the unique coefficients in the formal power series expansion
$a = \sum_{l\in \N} a_l \, \hbar^l$.
The bounded subsets of $\calA^\hbar \rtimes \grp$ define a complete bornology
which we call the \emph{canonical bornology} on $\calA^\hbar \rtimes \grp$.
One immediately checks that the convolution product $\star_\text{\tiny\rm c}$
defined by Eq.~(\ref{Eq:DefConProd}) is bounded, hence
$\calA^\hbar \rtimes \grp$
is a bornological algebra. Obviously, the family $\big( u_k\big)_{k\in \N}$
from above forms an approximate unit also for $\calA^\hbar \rtimes \grp$.
By the assumption (\ref{Eq:SuppPartUnity}) it is clear that each of the
functions $\varphi_k$ has only zeros of infinite order.
Now check the following lemma by
using standard arguments from the theory of deformation quantization.
\begin{lemma}
  Let $\varphi : \grp_0 \rightarrow [0,1]$ be a smooth function which has
  only zeros of infinite order, and put $u = \varphi^2$.
  Then $u$ has a star product root, that means there exists
  an element $\Phi = \sum_{l\in \N} \Phi_l \hbar^l \in \calC^\infty[[\hbar]]$
  such that
  \begin{displaymath}
     \Phi \star \Phi = u, \quad \Phi_0 =\varphi, \quad \text{and} \quad
     \supp \Phi \subset   \supp \varphi .
  \end{displaymath}
\end{lemma}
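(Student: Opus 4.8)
The plan is to build the Taylor coefficients $\Phi_l$, $l\in\N$, of $\Phi=\sum_l\hbar^l\Phi_l$ by recursion on $l$, working order by order in $\hbar$. Write the invariant star product as $f\star g=\sum_{k\ge0}\hbar^kC_k(f,g)$ with $C_0(f,g)=fg$ the pointwise product, $C_1(f,g)=\tfrac12\{f,g\}$, and each $C_k$ a bidifferential operator. Comparing coefficients of $\hbar^0$ in $\Phi\star\Phi=u=\varphi^2$ forces $\Phi_0^2=\varphi^2$, and we set $\Phi_0:=\varphi$ (legitimate, since $\varphi$ is given smooth). The coefficient of $\hbar^1$ reads $2\varphi\,\Phi_1=-C_1(\varphi,\varphi)=-\tfrac12\{\varphi,\varphi\}=0$, so $\Phi_1:=0$. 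For $n\ge1$ in general the coefficient of $\hbar^n$ is the equation
\[
  2\varphi\,\Phi_n=-R_n,\qquad
  R_n:=\sum_{\substack{i+j+k=n\\ i,j\le n-1}}C_k(\Phi_i,\Phi_j),
\]
whose right-hand side depends only on the already constructed $\Phi_0,\dots,\Phi_{n-1}$.

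The inductive hypothesis I would carry is that $\Phi_0,\dots,\Phi_{n-1}$ are smooth on $\grp_0$, supported in $\supp\varphi$, and vanish to infinite order on $Z:=\varphi^{-1}(0)$; this holds for $\Phi_0=\varphi$ (the infinite-order vanishing being exactly the hypothesis on $\varphi$) and for $\Phi_1=0$. Since the $C_k$ are bidifferential and both ``supported in $\supp\varphi$'' and ``flat on $Z$'' are preserved by differentiation and multiplication, $R_n$ is again smooth, supported in $\supp\varphi$, and flat on $Z$. The key claim is that $R_n$ moreover lies in the ideal $\varphi\,\calC^\infty(\grp_0)$. Granting it, $\Phi_n:=-\tfrac1{2\varphi}R_n$ is a well-defined smooth function, automatically supported in $\supp\varphi$, and a short argument (again using stability of flatness under the operations producing $R_n$) shows $\Phi_n$ is flat on $Z$, which closes the induction. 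Then $\Phi=\sum_l\hbar^l\Phi_l\in\calC^\infty[[\hbar]]$ satisfies $\Phi_0=\varphi$, $\supp\Phi\subseteq\supp\varphi$, and $\Phi\star\Phi=u$ by construction.

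The hard part will be the divisibility $R_n\in\varphi\,\calC^\infty(\grp_0)$: flatness of $R_n$ on $Z$ does \emph{not} by itself suffice (a function flat on $Z$ need not be divisible by $\varphi$), so one must exploit that $R_n$ is produced from $\varphi$ by finitely many bidifferential operators and pointwise products. This is a local matter, so I would pass to an orbifold chart and use, say, the Moyal expression for $\star$ in Darboux coordinates. Two ingredients combine. First, cancellations forced by the antisymmetry of the Poisson tensor: $C_1(\varphi,\varphi)=0$ is the simplest instance, and similar vanishings of the would-be $\varphi$-free leading contractions occur in $C_k(\Phi_i,\Phi_j)$. Second, one uses that, because all zeros of $\varphi$ are of infinite order, every non-negative root $\varphi^{1/2^N}$ is again smooth and flat on $Z$; writing $\varphi=(\varphi^{1/2^N})^{2^N}$ for $N$ large and expanding the $C_k(\Phi_i,\Phi_j)$ by the Leibniz rule in terms of this root, every surviving term picks up a manifest factor of $\varphi$, so that $R_n/\varphi$ is smooth. (Already $C_2(\varphi,\varphi)$, a quadratic contraction of the second derivatives of $\varphi$, requires this; after substituting such a root one reads off directly that it lies in $\varphi\,\calC^\infty(\grp_0)$.) The remaining steps are routine: compatibility of the whole construction with the $\hbar$-adic filtration, and the observation that on $\{\varphi\ne0\}$ everything is automatic, since there $u=\varphi^2$ is $\star$-invertible with positive leading symbol and possesses an evident $\star$-square root with leading term $\varphi$ — so the genuine content of the lemma is precisely the smooth extension of the $\Phi_l$ across $Z$, which is where the hypothesis on the zeros of $\varphi$ enters.
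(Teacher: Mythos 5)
Your reduction to the order-by-order recursion $2\varphi\,\Phi_n=-R_n$ is the natural one (the paper itself offers no written proof, only an appeal to ``standard arguments''), and you correctly identify the genuine content: the divisibility $R_n\in\varphi\,\calC^\infty(\grp_0)$, for which flatness of $R_n$ on $Z=\varphi^{-1}(0)$ is indeed not enough. The problem is that the one argument you offer for this step rests on a false assertion. It is not true that a smooth non-negative function all of whose zeros are of infinite order admits smooth roots $\varphi^{1/2^N}$. For instance, take $\varphi(x)=e^{-2/x}\cos^2(1/x)+e^{-200/x}$ for $x>0$ and $\varphi(x)=0$ for $x\le 0$: this is smooth, non-negative, and every zero (the half-line $x\le 0$) is of infinite order, yet $\sqrt{\varphi}$ is not even $C^2$ near $0$ — at the points $x_n\to 0^+$ where $\cos(1/x_n)=0$ one has $\varphi\approx A(x-x_n)^2+B$ with $A\sim e^{-2/x_n}x_n^{-4}$ and $B\sim e^{-200/x_n}$, so the second derivative of $\sqrt{\varphi}$ at $x_n$ is of size $A/\sqrt{B}\sim e^{98/x_n}x_n^{-4}\to\infty$. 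A fortiori $\varphi^{1/2^N}$ is not smooth, so the device of writing $\varphi=(\varphi^{1/2^N})^{2^N}$ and distributing derivatives by Leibniz to extract a manifest factor of $\varphi$ from each term of $C_k(\Phi_i,\Phi_j)$ is unavailable, and with it your only justification of the key step collapses. (The example also shows the danger in general: divisibility by $\varphi$ in $\calC^\infty$ is a delicate statement about the specific algebraic structure of $R_n$ relative to $\varphi$, not a consequence of flatness plus ``built from $\varphi$ by differential operators''.)

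A secondary, smaller point: you dispose of the order-one equation by $C_1(\varphi,\varphi)=\tfrac12\{\varphi,\varphi\}=0$, but for a general $\grp$-invariant star product $C_1$ is only antisymmetric up to a symmetric Hochschild coboundary; writing $C_1(f,g)=\tfrac12\{f,g\}+fb(g)-b(fg)+b(f)g$ one gets $C_1(\varphi,\varphi)=2\varphi b(\varphi)-b(\varphi^2)$, which already contains terms like $(\partial\varphi)^2$ whose divisibility by $\varphi$ is exactly the issue above — so the problem appears at order one, not only at order two. To close the gap you would need either an honest proof of the divisibility $R_n\in\varphi\,\calC^\infty(\grp_0)$ exploiting the precise inductive structure of the $\Phi_i$ (keeping track of how many factors of $\varphi$ and its derivatives each term carries), or a different construction altogether — e.g.\ choosing the cut-off functions in the quasi-unitality argument with extra structure (so that smooth roots, or explicit $\star$-square roots, are available by construction) rather than proving the lemma for an arbitrary $\varphi$ with infinite-order zeros. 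As it stands the proposal does not prove the lemma.
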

Using this result choose
$\Phi_k \in \calA^\hbar \rtimes \grp$ with support in $\grp_0$ such that
$\Phi_k \star_\text{\tiny\rm c} \Phi_k = \varphi_k^2$ and
$\Phi_k -\varphi_k \in \hbar  \calA^\hbar \rtimes \grp$. The maps
\begin{displaymath}
\begin{split}
  l:&\; \calA^\hbar \rtimes \grp \rightarrow \calA^\hbar \rtimes \grp \,
  \hatotimes \, \calA^\hbar \rtimes \grp, \: a \mapsto \sum_{k\in \N}
  a \star_\text{\tiny\rm c} \Phi_k \otimes \Phi_k \quad \text{and} \\
  r:&\; \calA^\hbar \rtimes \grp \rightarrow \calA^\hbar \rtimes \grp \,
  \hatotimes \, \calA^\hbar \rtimes \grp, \: a \mapsto \sum_{k\in \N}
  \Phi_k \otimes \Phi_k \star_\text{\tiny\rm c} a
\end{split}
\end{displaymath}
then are both sections of the convolution product. Hence we obtain
\begin{proposition}
\label{Prop:DefConBorAlg}
  The crossed product algebra $\calA^\hbar \rtimes \grp$ associated to
  an invariant local deformation quantization on the space of objects
  of a proper \'etale Lie groupoid $\grp$ with an invariant symplectic form
  is a quasi-unital bornological algebra.
\end{proposition}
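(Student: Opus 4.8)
The plan is to verify the two defining properties of a quasi-unital bornological algebra from Appendix~\ref{Sec:bcalgebras} for $\calA^\hbar\rtimes\grp$ with its canonical bornology: the existence of an approximate identity, and projectivity as a left and as a right module over itself. That $\star_\text{\tiny\rm c}$ is bounded and the canonical bornology complete, so that $\calA^\hbar\rtimes\grp$ is a bornological algebra to begin with, is checked exactly as in the undeformed case of Proposition~\ref{Prop:ConBorAlg}, now also controlling the seminorms $\|\cdot\|_{k,K}$ coming from the $\hbar$-expansion; this is routine. By~\cite{MeyEDCBM}, once an approximate identity is present, projectivity on both sides amounts to producing a bounded left $A$-module section $l$ and a bounded right $A$-module section $r$ of the multiplication map $\calA^\hbar\rtimes\grp\,\hatotimes\,\calA^\hbar\rtimes\grp\to\calA^\hbar\rtimes\grp$. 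So the task reduces to exhibiting an approximate identity together with such $l$ and $r$.

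For the approximate identity I would reuse the net $u_k:=\sum_{\ell\leq k}\varphi_\ell\star_\text{\tiny\rm c}\varphi_k$ of the undeformed case, built from a locally finite partition of unity $(\varphi_k^2)_{k\in\N}$ of $\grp_0$ with the flatness property~\eqref{Eq:SuppPartUnity}, the $\varphi_k$ being extended by zero to $\grp_1$. Since $\star_\text{\tiny\rm c}$ agrees with the convolution product modulo $\hbar$ and the $\varphi_k$ sit on the units, axioms (AID1)--(AID3) follow by the same estimates as for $\calA\rtimes\grp$, carried out order by order in $\hbar$.

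The substantive point is the construction of $l$ and $r$. The naive guess $a\mapsto\sum_k a\star_\text{\tiny\rm c}\varphi_k\otimes\varphi_k$ fails, since $\sum_k\varphi_k\star_\text{\tiny\rm c}\varphi_k$ is not an identity for $\star_\text{\tiny\rm c}$. The fix is to replace each $\varphi_k$ by a $\star$-square root of $\varphi_k^2$: by the Lemma above there is $\Phi_k\in\calA^\hbar\rtimes\grp$, with $\supp\Phi_k\subset\supp\varphi_k$, satisfying $\Phi_k\star_\text{\tiny\rm c}\Phi_k=\varphi_k^2$ and $\Phi_k-\varphi_k\in\hbar\,\calA^\hbar\rtimes\grp$, so that $\sum_k\Phi_k\star_\text{\tiny\rm c}\Phi_k=\sum_k\varphi_k^2$ is locally equal to the identity. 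Then $l(a):=\sum_k a\star_\text{\tiny\rm c}\Phi_k\otimes\Phi_k$ and $r(a):=\sum_k\Phi_k\otimes\Phi_k\star_\text{\tiny\rm c}a$ satisfy $m\circ l=\id=m\circ r$ by associativity, are manifestly left- resp.\ right-$A$-linear, and are bounded: on any bounded $S$ only finitely many summands are nonzero, by local finiteness of the partition of unity together with properness of $\grp$, so the sums converge in the completed tensor product and map bounded sets to bounded sets. This yields quasi-unitality.

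Hence the only non-formal ingredient is the Lemma on the existence of the $\star$-square root, and I expect its proof to be the main point. I would establish it by the standard deformation-quantization recursion: writing $\star=\sum_j\hbar^jB_j$ with $B_j$ bidifferential and $\Phi=\sum_n\hbar^n\Phi_n$, the equation $\Phi\star\Phi=\varphi^2$ forces $\Phi_0=\varphi$ and, in order $n\geq1$, an equation of the form $2\varphi\,\Phi_n=R_n$, where $R_n$ is a fixed polynomial expression in $\varphi,\Phi_1,\dots,\Phi_{n-1}$ and the coefficients of $\star$. Because $R_n$ is produced by applying local (bidifferential) operators to functions supported in $\supp\varphi$ and vanishing to infinite order on $\{\varphi=0\}$ --- the flatness~\eqref{Eq:SuppPartUnity} being precisely what guarantees $\varphi$ has only infinite-order zeros --- $R_n$ has the same vanishing, and the smooth division theorem produces a smooth $\Phi_n$ with $\supp\Phi_n\subset\supp\varphi$. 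Tracking supports throughout gives $\supp\Phi\subset\supp\varphi$ and completes the Lemma, hence the Proposition.
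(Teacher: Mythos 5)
Your main line of argument is exactly the paper's: equip $\calA^\hbar\rtimes\grp$ with the canonical bornology and check boundedness of the deformed product as in Proposition~\ref{Prop:ConBorAlg}, keep the approximate identity built from the partition of unity $(\varphi_k^2)$, and obtain the left and right sections $l,r$ of the multiplication map from deformed square roots $\Phi_k$ with $\Phi_k\star_{\text{\tiny\rm c}}\Phi_k=\varphi_k^2$, $\Phi_k-\varphi_k\in\hbar\,\calA^\hbar\rtimes\grp$ and $\supp\Phi_k\subset\supp\varphi_k$; this is precisely the reduction the paper makes, with the existence of the $\star$-square root isolated as the key Lemma (which the paper states and attributes to ``standard arguments from deformation quantization'').

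The one place where you go beyond the paper — your sketch of the Lemma — has a genuine gap at its crucial step. From the recursion $2\varphi\,\Phi_n=R_n$ you conclude smoothness of $\Phi_n$ by observing that $R_n$ vanishes to infinite order on $\{\varphi=0\}$ and then invoking a ``smooth division theorem''. No such theorem holds in this generality: flat vanishing of the numerator on the zero set of the divisor does not give a smooth quotient, as the one-variable example $e^{-1/(2x^2)}/e^{-1/x^2}=e^{1/(2x^2)}$ shows, so infinite-order vanishing of $R_n$ alone proves nothing. Moreover, the equation $2\varphi\Phi_n=R_n$ only determines $\Phi_n$ on $\{\varphi\neq 0\}$; to get $\supp\Phi_n\subset\supp\varphi$ one must in addition show that the quotient, defined where $\varphi\neq0$, extends smoothly by zero across $\partial\supp\varphi$, i.e.\ that it is itself flat there. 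A correct argument has to exploit the specific structure of $R_n$ — each of its summands is a bidifferential expression containing at least two factors which are derivatives of $\varphi$ or of the previously constructed $\Phi_p$ (so that, heuristically, the numerator carries more ``flat factors'' than the power of $\varphi$ one divides by) — and this requires actual estimates near the flat zeros, or else an altogether different construction of the square root; it is not a formal consequence of flatness. So either supply such an argument for the Lemma, or, as the paper does, cite it as a known fact; as written, the division step is the weak link of your proposal.
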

\subsection{Morita equivalence for bornological algebras}
\label{Sec:MEBA}
Assume that $A$ and $B$ are two bornological algebras. Recall that by an
\emph{$A$-$B$-bimodule} one understands an element of the category
$\Mod \big( \unA \hatotimes (B^\text{\tiny  \rm u})^\op \big)$.
Under the condition that the bornological algebras $A$ and $B$ are
both quasi-unital,
one calls $A$ and $B$ \emph{Morita equivalent}, if there exist bimodules
$P\in \Mod \big( \unA \hatotimes (B^\text{\tiny  \rm u})^\op \big)$
and $Q\in \Mod \big( B^\text{\tiny  \rm u} \hatotimes (\unA)^\op \big)$
such that the following axioms hold true:
\begin{enumerate}[(MOR1)]
\item
  $P$ is essential both as an $A$-left module and as a $B$-right module.
\item
   $Q$ is essential both as a $B$-left module and as an $A$-right module.
\item
   There exist bounded bimodule isomorphisms
   \[
     u: P\hatotimes_B Q \rightarrow A \quad \text{and} \quad
     v: Q \hatotimes_A P \rightarrow B .
   \]
\item
   $P$ is projective as a $B$-right module, and $Q$ is projective as
   an $A$-right module.
\end{enumerate}
We sometimes say in this situation that $(A,B,P,Q,u,v)$
is a \emph{Morita context}. The following result follows easily from the
definition of a Morita context.
\begin{proposition}
  Let $(A,B,P,Q,u,v)$ be a Morita context. Then the functors
  \begin{displaymath}
  \begin{split}
    \Mod (A)&\rightarrow \Mod (B), \quad M \mapsto Q \hatotimes_A M \\
    \Mod (B)&\rightarrow \Mod (A), \quad N \mapsto P \hatotimes_B N
  \end{split}
  \end{displaymath}
  are both exact and quasi-inverse to each other.
  In particular this means that $\Mod (A)$ and $\Mod (B)$ are
  equivalent categories.
\end{proposition}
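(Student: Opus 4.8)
The plan is to read the equivalence off the two bimodule isomorphisms $u$ and $v$, after checking that the balanced tensor products involved behave well. Put $F:=Q\hatotimes_A-\colon\Mod(A)\to\Mod(B)$ and $G:=P\hatotimes_B-\colon\Mod(B)\to\Mod(A)$; since $Q$ is a $B$-$A$-bimodule and $P$ an $A$-$B$-bimodule, these are well defined bounded additive functors. The technical ingredient one needs is the associativity isomorphism
\begin{displaymath}
  (P\hatotimes_BQ)\hatotimes_AM\;\cong\;P\hatotimes_B(Q\hatotimes_AM),
\end{displaymath}
natural in the left $A$-module $M$, for the completed balanced tensor product in $\mathsf{Bor}$; this is standard (cf.~\cite{MeyACC,MeyEDCBM}). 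Granting it, one obtains for every $M$ a natural isomorphism of left $A$-modules
\begin{displaymath}
  G(F(M))\;\cong\;(P\hatotimes_BQ)\hatotimes_AM\;\xrightarrow{\;u\hatotimes_A\id_M\;}\;A\hatotimes_AM,
\end{displaymath}
and symmetrically $F(G(N))\cong B\hatotimes_BN$ via $v$; these maps are left $A$- resp.~left $B$-linear because $u$ and $v$ are bimodule homomorphisms.

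Next I would verify that $F$ and $G$ take values in essential modules. Using that $Q$ is essential as a left $B$-module and $P$ as a left $A$-module (axioms (MOR1)--(MOR2)), together with associativity,
\begin{displaymath}
  B\hatotimes_B\bigl(Q\hatotimes_AM\bigr)\;\cong\;\bigl(B\hatotimes_BQ\bigr)\hatotimes_AM\;\cong\;Q\hatotimes_AM,
\end{displaymath}
so $F(M)$ is essential, and symmetrically $G(N)$ is essential. Since the structure map $A\hatotimes_AM\to M$ is an isomorphism on essential modules, the displays above give natural isomorphisms $G\circ F\cong\id$ and $F\circ G\cong\id$. Hence $F$ and $G$ are quasi-inverse to one another and implement the asserted equivalence of categories (the composites being naturally isomorphic to the essentialization functor $M\mapsto A\hatotimes_AM$, which is the identity on the modules relevant here, cf.~Section~\ref{Sec:HHBR}).

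For exactness --- which, as throughout, means exactness on linearly split (allowable) short exact sequences --- I would reduce $Q\hatotimes_A-$ to the free case. Tensoring a linearly split short exact sequence of left $A$-modules with the bornological space underlying $Q$ over $\Bbbk$ stays (linearly split) exact, since the linear splitting survives; as $Q\hatotimes_A-$ is a cokernel functor it is thus right exact, and only left exactness is at issue. For a free right $A$-module $V\hatotimes\unA$ one has, again by associativity, $(V\hatotimes\unA)\hatotimes_AM\cong V\hatotimes(\unA\hatotimes_AM)\cong V\hatotimes M$, using that $M$ is by definition a unital $\unA$-module so that $\unA\hatotimes_AM\cong M$; hence tensoring with a free right $A$-module is just $V\hatotimes-$, which is exact on linearly split sequences. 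By axiom (MOR4), $Q$ is projective as a right $A$-module, hence a retract of such a free module by the ``enough projectives'' proposition, so $Q\hatotimes_A-$ is a retract of an exact functor and is itself exact; the identical argument with $P$ over $B$ gives exactness of $P\hatotimes_B-$.

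The part I expect to need the most care is the bornological bookkeeping rather than any single step: ensuring that associativity of $\hatotimes$, the identification $\unA\hatotimes_AM\cong M$, and the fact that $F$ and $G$ land in essential modules all pass verbatim to the non-unital, completed-bornological setting, and keeping ``exact'' consistently read as ``exact on allowable sequences'', so that the reduction to free modules --- and not a naive kernel/cokernel argument --- is what one actually uses.
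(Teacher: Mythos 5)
The paper offers no proof of this proposition at all --- it is introduced with ``the following result follows easily from the definition of a Morita context'' --- so there is no argument of the authors to compare with step by step; what you have written is the standard Morita argument that the paper leaves implicit. Its main ingredients are sound: the associativity isomorphism for the completed balanced tensor product (reasonable to quote from Meyer), the use of $u$ and $v$ to identify the composites, the verification that $F$ and $G$ take values in essential modules via (MOR1)--(MOR2), and the exactness argument, where ``exact'' is correctly read relative to linearly split (allowable) sequences, (MOR4) plus the enough-projectives proposition exhibits $Q$ as an $A$-module retract of a free right module $V\hatotimes\unA$, the free case reduces to $V\hatotimes -$, and a natural retract of a functor preserving split-exact sequences again preserves them (transport the contracting homotopy).

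The one genuine gap is the passage from ``the composites are the essentialization functors'' to ``$G\circ F\cong\id$ on $\Mod(A)$''. Your own computation gives $G(F(M))\cong A\hatotimes_A M$ naturally in $M$, and the canonical map $A\hatotimes_A M\to M$ is an isomorphism only for essential $M$; it fails in general. For instance, $M=\Bbbk$ with $A$ acting by zero (i.e.\ $\unA$ acting through the augmentation) is a legitimate object of $\Mod(A)$, yet $Q\hatotimes_A M=0$ because $Q$ is essential as a right $A$-module, so $G(F(M))=0\neq M$. Hence what your argument actually establishes --- and all that the Morita axioms can establish --- is an equivalence $\mathsf{Mod_e}(A)\simeq\mathsf{Mod_e}(B)$ between the categories of essential modules (you do check that $F$ and $G$ land there), together with the natural identification of $G\circ F$ with $A\hatotimes_A-$ on all of $\Mod(A)$. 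The phrase ``the identity on the modules relevant here'' does not discharge this for the categories named in the statement; the imprecision is inherited from the paper's own formulation, but your write-up should state explicitly that the quasi-inverse property holds on essential modules, which is also all that the Morita-invariance arguments later in the appendix use.
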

\begin{example}
\label{Ex:OpWeEq}
  Let $\varphi : \hgrp \rightarrow \grp$ be a weak equivalence of proper
  \'etale Lie groupoids. By \cite[Cor.~3.2]{MrcFBAHSM} it follows that the
  convolution algebras
  $A := \calA \rtimes \grp$ and $B:=\calA \rtimes \hgrp$ are Morita
  equivalent. A Morita context is given by the bimodules
  $P =\calC^\infty_\text{\tiny \rm cpt} (\langle \varphi \rangle )$ and
  $Q= \calC^\infty_\text{\tiny \rm cpt} ({\langle \varphi \rangle}^{\!-})$,
  where $\langle \varphi \rangle := \grp_1 \times_{(s,\varphi)} \hgrp_0$
  and
  ${\langle \varphi \rangle}^{\!-} := \hgrp_0\times_{(\varphi,t)}\grp_1$.
  Let us provide the details for the case, where $\varphi$ is even
  an open embedding. Then,
  $\langle \varphi \rangle $ is the open subset
  $s^{-1} (\varphi (\hgrp_0)) \subset \grp_1$, and
  ${\langle \varphi \rangle}^{\!-} = t^{-1} ( \varphi ( \hgrp_0))
  \subset\grp_1$.
  Moreover, the $A$-$B$-bimodule structure on $P$ is given by
  \begin{displaymath}
     a * p * b (g) = \sum_{g_1 \, g_2  \, \varphi( h) = g \atop
     g_1 \in \grp_1 , g_2 \in \langle\varphi\rangle , h \in \hgrp_1 }
     a (g_1) \, p (g_2) \, b (h) ,
  \end{displaymath}
  where $a \in \calC^\infty_\text{\tiny \rm cpt} (\grp_1 ) $,
  $b \in \calC^\infty_\text{\tiny \rm cpt} (\hgrp_1 ) $
  and $p \in \calC^\infty_\text{\tiny \rm cpt}(s^{-1}(\varphi (\hgrp_1))$.
  The bimodule structure for $Q$ is defined analogously.
  \begin{theorem}
  \label{Thm:WeEqMor}
  Under the assumption that the weak equivalence
  $\varphi : \hgrp \hookrightarrow \grp$ is an open embedding,
  the following holds true:
  \begin{enumerate}[\rm (1)]
  \item
  \label{ItMor12}
    The bimodules
    $P = \calC^\infty_\text{\tiny \rm cpt} (s^{-1}(\varphi (\hgrp_0))$ and
    $Q= \calC^\infty_\text{\tiny \rm cpt} (t^{-1}(\varphi (\hgrp_0))$
    satisfy axioms (MOR1) and (MOR2).
  \item
  \label{ItProj}
    $P$ resp.~$Q$ is projective both as an $A$- as a $B$-module.
  \item
    The maps
  \begin{displaymath}
  \begin{split}
    u : \: & \calC^\infty_\text{\tiny \rm cpt}(s^{-1}(\varphi (\hgrp_0))
    \hatotimes_{\calC^\infty_\text{\tiny \rm cpt} (\hgrp_1)}
    \calC^\infty_\text{\tiny \rm cpt}(t^{-1}(\varphi (\hgrp_0))
    \rightarrow \calC^\infty_\text{\tiny \rm cpt} (\grp_1 ) , \:
    a \otimes \tilde a \mapsto a * \tilde a ,
    \\
    v : \: &\calC^\infty_\text{\tiny \rm cpt}(t^{-1}(\varphi (\hgrp_0))
    \hatotimes_{\calC^\infty_\text{\tiny \rm cpt} (\grp_1)}
    \calC^\infty_\text{\tiny \rm cpt}(s^{-1}(\varphi (\hgrp_0))
    \rightarrow \calC^\infty_\text{\tiny \rm cpt} (\hgrp_1 ) , \:
    \tilde a \otimes  a \mapsto \varphi^* (\tilde a * a ) .
  \end{split}
  \end{displaymath}
  are bounded bimodule isomorphisms.
  \end{enumerate}
  This means in particular that the tuple
  $(\calA\rtimes \grp, \calA\rtimes \hgrp, P,Q, u,v)$ is a Morita context
  between bornological algebras.
  \end{theorem}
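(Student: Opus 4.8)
The plan is to establish the three claims by direct computation with the convolution structures, exploiting the fact that for an open embedding one may realize $P$ and $Q$ concretely as spaces of compactly supported smooth functions on the \emph{open} subsets $\langle\varphi\rangle=s^{-1}(\varphi(\hgrp_0))$ and ${\langle\varphi\rangle}^{\!-}=t^{-1}(\varphi(\hgrp_0))$ of $\grp_1$, with bimodule actions given by the truncated convolution $a*p*b\,(g)=\sum_{g_1g_2\varphi(h)=g}a(g_1)\,p(g_2)\,b(h)$. Because $\varphi$ is a weak equivalence and an open embedding, full faithfulness gives an identification $\hgrp\cong\grp_{|\varphi(\hgrp_0)}$, while essential surjectivity says that $\varphi(\hgrp_0)$ meets every orbit of $\grp$; these two facts are what the abstract assertion rests on. Throughout I would fix, exactly as in the proof of Proposition~\ref{Prop:ConBorAlg}, a locally finite partition of unity $(\varphi_k^2)_{k\in\N}$ on $\hgrp_0$ by compactly supported smooth functions together with the associated approximate identity $(u_k)_{k\in\N}$ of $B=\calA\rtimes\hgrp$, and push the $\varphi_k$ forward along $\varphi$ to obtain functions $\Phi_k=\varphi_*(\varphi_k)$, supported in $\langle\varphi\rangle\cap{\langle\varphi\rangle}^{\!-}$ near the unit space, which we view as elements of $P$, of $Q$, or (via $\varphi$) of $B$ as the context requires. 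These will be the source of all the sections and approximate units below. Boundedness of every map that appears is, as in Proposition~\ref{Prop:ConBorAlg}, a routine seminorm estimate relying on supports staying inside a fixed compact set, so I would assert it without dwelling on it.

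For axioms (MOR1) and (MOR2) I would verify essentiality directly. For $P$ as a left $A$-module, the approximate identity $(w_k)$ of $A=\calA\rtimes\grp$ satisfies $w_k*p\to p$ in the bornology of $P$, since all supports are controlled by the fixed compact cover; the standard argument for algebras with approximate units (the same mechanism behind H-unitality in Section~\ref{Sec:HHBR}) then shows that the canonical map $A\hatotimes_A P\to P$ is a bornological isomorphism. For $P$ as a right $B$-module one argues identically with $(u_k)$, using $\hgrp\cong\grp_{|\varphi(\hgrp_0)}$ so that truncated convolution by $u_k$ eventually acts as the identity near $\supp p$, which gives $P\hatotimes_B B\cong P$. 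The two statements for $Q$ follow by the symmetric argument with $s$ and $t$ interchanged.

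For projectivity I would invoke the criterion recalled after the definition of quasi-unitality in the appendix: over a quasi-unital bornological algebra, a module carrying an approximate-unit action is projective as soon as the module action map admits a bounded module-linear section. For $P$ as a right $B$-module such a section $P\to P\hatotimes B^{\text{\tiny\rm u}}$ is given by $p\mapsto\sum_{k\in\N}(p*\Phi_k)\otimes\Phi_k$, which is well defined and bounded by local finiteness of the partition of unity and is the exact analogue of the maps $l,r$ in Proposition~\ref{Prop:ConBorAlg}; the symmetric formula handles $P$ as a left $A$-module and $Q$ on either side. One must also check that the short exact sequences occurring here are linearly split, but bounded linear splittings of the relevant restriction maps between spaces of compactly supported smooth functions exist just as in Proposition~\ref{Prop:ConBorAlg}.

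The heart of the argument is the last claim, that $u$ and $v$ are bimodule isomorphisms. Compatibility with the bimodule structures is immediate from associativity of the convolution formula above, so the real content is bijectivity as a map of bornological vector spaces. Surjectivity of $u$ is where essential surjectivity of the weak equivalence enters: every arrow $g\in\grp_1$ factors through an object of $\varphi(\hgrp_0)$ lying on the orbit of $t(g)$, and a partition-of-unity argument upgrades this pointwise factorization into a decomposition of an arbitrary $c\in\calC^\infty_\text{\tiny \rm cpt}(\grp_1)$ as a finite sum $\sum_i p_i*q_i$ with $p_i\in P$ and $q_i\in Q$. For injectivity and boundedness of the inverse I would exhibit an explicit quasi-inverse built from the $\Phi_k$, of the shape $c\mapsto\sum_k(c*\Phi_k)\otimes\Phi_k$, and check that it is inverse to $u$ modulo the balancing relations defining $\hatotimes_B$; the analysis for $v$ is the same with $s$ and $t$ exchanged. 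I expect the genuine obstacle to be exactly this step: one must keep careful track of the factorizations of arrows through the saturated open set $\varphi(\hgrp_0)$, and---more delicately---verify that all of these constructions are compatible with the \emph{completed} balanced tensor product rather than the algebraic one, so that $u$ and $v$ are isomorphisms in $\mathsf{Bor}$ and not merely algebraic bijections. This is precisely the point at which the argument of \cite[Cor.~3.2]{MrcFBAHSM} is made fully explicit in the present open-embedding case.
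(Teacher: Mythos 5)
Your overall strategy (partition-of-unity sections to get essentiality and projectivity, then explicit bounded sections of $\hat u$ and $\hat v$, plus the comparison with the algebraic balanced tensor product) is the same as the paper's, and the parts concerning (MOR1), (MOR2), projectivity, and the map $v$ are essentially correct: the paper's section of $\hat v$ is exactly of your shape, $b\mapsto\sum_k\varphi_*b*\psi_k\otimes\varphi_*\psi_k$. The genuine gap is in your treatment of $u$. The quasi-inverse you propose, $c\mapsto\sum_k(c*\Phi_k)\otimes\Phi_k$ with $\Phi_k=\varphi_*(\varphi_k)$ supported on units in $\varphi(\hgrp_0)$, is not a section of $u$ on all of $\calC^\infty_\text{\tiny\rm cpt}(\grp_1)$: since $\sum_k\Phi_k*\Phi_k$ is the function equal to $1$ on the units over $\varphi(\hgrp_0)$ and $0$ elsewhere, one gets $\sum_k(c*\Phi_k)*\Phi_k=c\cdot 1_{s^{-1}(\varphi(\hgrp_0))}$, which equals $c$ only when $c$ is already supported in $\langle\varphi\rangle$. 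In other words, your formula never uses essential surjectivity, and essential surjectivity is exactly what is needed here: a general arrow of $\grp$ has neither source nor target over $\varphi(\hgrp_0)$, so its contribution must first be transported into the bibundles. The paper does this by decomposing $\grp_1$ into the pieces $\grp_{\alpha,\beta}=s^{-1}(\grp_\alpha)\cap t^{-1}(\grp_\beta)$ indexed by connected components of $\grp_0$ and using the open embeddings $\sigma_{\alpha,\beta}:\grp_{\alpha,\beta}\hookrightarrow\grp_{\alpha,\alpha_0}$ and $\tau_{\alpha,\beta}:\grp_{\alpha,\beta}\hookrightarrow\grp_{\alpha_0,\beta}$ (whose existence encodes essential surjectivity, $\alpha_0$ being the component image of $\hgrp_0$); the bounded section of $\hat u$ is then $a\mapsto\sum_k(\sigma_{\alpha,\beta})_*(a)*\psi^{(1)}_{\beta,k}\otimes\psi^{(2)}_{\beta,k}$ for $a$ supported in $\grp_{\alpha,\beta}$, not a naive cut-off of $a$ itself. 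Your separate surjectivity sketch gestures at the right factorization idea, but without this transport mechanism the explicit inverse you intend to use simply fails.

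A second, smaller point: proving that $u$ and $v$ are isomorphisms of \emph{completed} balanced tensor products by checking inverses ``modulo the balancing relations'' is delicate, because elements of $P\hatotimes_B Q$ are not finite sums of elementary tensors. The paper sidesteps this by a cleaner mechanism you should adopt: the bounded linear sections $\hat\mu,\hat\nu$ have image in the algebraic balanced tensor products $P\otimes_B Q$ and $Q\otimes_A P$, and Mr\v{c}un's theorem gives that $u$ and $v$ restrict to algebraic bimodule isomorphisms there; combining the two yields $P\otimes_B Q=P\hatotimes_B Q$ and $Q\otimes_A P=Q\hatotimes_A P$ and hence that $u$ and $v$ are bornological isomorphisms, with no direct injectivity argument on the completion needed.
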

  \begin{proof}
    Consider the family $(\varphi_k)_{k\in \N}$ of elements of
    $A = \calA \rtimes \grp$ from above. Then the map
    \begin {displaymath}
    \begin{split}
      l_P : & \: P \rightarrow A \hatotimes P, \: p \mapsto
      \sum_k p* \varphi_k \otimes  \varphi_k \quad \text{resp.~}\\
      r_Q : & \: Q \rightarrow Q \hatotimes A, \: q \mapsto
      \sum_k \varphi_k \otimes  \varphi_k * q
    \end{split}
    \end{displaymath}
    is a section of the left (resp.~right) $A$-action on $P$ (resp.~$Q$).
    Since $A$ is quasi-unital, this implies that $P$ (resp.~$Q$) is essential
    and projective as a left (resp.~right) $A$-module.
    Likewise, one proves the existence of a section $l_Q$ (resp.~$r_P$) of
    the left (resp.~right) $B$-action on $Q$ (resp.~$P$).
    Hence, $Q$ (resp.~$P$) is essential and projective as a left (resp.~right)
    $B$-module. Thus, we have proved (\ref{ItMor12}) and (\ref{ItProj}).

    Next we show that there exists a bounded section of the map
    \begin{displaymath}
      \hat{v} :\calC^\infty_\text{\tiny \rm cpt}(t^{-1}(\varphi (\hgrp_0))
      \hatotimes
      \calC^\infty_\text{\tiny \rm cpt}(s^{-1}(\varphi (\hgrp_0))
      \rightarrow \calC^\infty_\text{\tiny \rm cpt} (\hgrp_1 ) , \:
      \tilde a \otimes  a \mapsto \varphi^* (\tilde a * a ) .
    \end{displaymath}
    To this end choose a family $(\psi_k)_{k\in \N}$ in $B =\calA\rtimes\hgrp$
    which has support in $\hgrp_0$, and such that $(\psi_k^2)_{k\in \N}$
    is a locally finite partition of unity on $\hgrp_0$. For each
    element $b\in B$ define elements $\varphi_* b$ in $P$ and $Q$ by extension
    by zero. Then the map
    \begin{displaymath}
    \begin{split}
      \hat\nu : & \:\calC^\infty_\text{\tiny \rm cpt} (\hgrp_1 ) \rightarrow
      \calC^\infty_\text{\tiny \rm cpt}(t^{-1}(\varphi (\hgrp_0))
      \hatotimes
      \calC^\infty_\text{\tiny \rm cpt}(s^{-1}(\varphi (\hgrp_0)), \:
      b \mapsto \sum_{k\in \N} \varphi_*b * \psi_k \otimes
      \varphi_* \psi_k
    \end{split}
    \end{displaymath}
    is a bounded section of $\hat{v}$ and a morphism of left $B$-modules.
    Hence $\nu := \pi \hat\nu $, where
    $\pi :Q\hatotimes P\rightarrow Q\hatotimes_A P$
    denotes the canonical projection, is a bounded section of $v$.
    Note that by construction the image of $\nu$ lies in the algebraic
    tensor product $Q \otimes_A P$, and that the image of $\nu$ is  a
    complete bornological subspace of $Q \otimes_A P$
    which has to be isomorphic to $B$.
    Since by \cite[Thm.~4]{MrcFBAHSM} the restriction to the algebraic tensor
    product  $v_{|Q \otimes_AP} :Q \otimes_AP \rightarrow B$ is an (algebraic)
    isomorphism of $B$-bimodules, one then concludes that
    $Q \otimes_A P = Q \hatotimes_A P $ and that $v$ is a (bornological)
    isomorphism of $B$-bimodules.

    The proof that $u$ is a (bornological) isomorphism of $A$-bimodules is
    more complicated. We show this claim under the additional assumption that
    $\hgrp_0$ is connected. The general case is slightly more involved,
    but can be proved along the same lines. Denote by $\grp_\alpha$ the
    connected components of $\grp_0$, and by $\grp_{\alpha_0}$ the image
    $\varphi (\hgrp_0)$. Let
    $\grp_{\alpha,\beta}=s^{-1}(\grp_\alpha)\cap t^{-1}(\grp_\beta)$. Then
    $\grp_1$ is the disjoint
    union of the $\grp_{\alpha,\beta}$. Since $\varphi$ is a weak equivalence,
    one derives the following:
    \begin{enumerate}[(i)]
    \item
     The image $\varphi (\hgrp_1)$ coincides with $\grp_{\alpha_0,\alpha_0}$.
    \item
     The bibundle $\langle \varphi \rangle = s^{-1}(\varphi(\hgrp_0))$
     coincides with the disjoint union of the components
     $\grp_{\alpha_0, \alpha}$, and ${\langle \varphi \rangle}^{\! -} =
     t^{-1}(\varphi(\hgrp_0))$ with the disjoint
     union of the components $\grp_{\alpha, \alpha_0}$.
    \item
     There exist unique open embeddings
     $\sigma_{\alpha,\beta}:\grp_{\alpha,\beta}\hookrightarrow
     \grp_{\alpha,\alpha_0}$
     and
     $\tau_{\alpha,\beta}:\grp_{\alpha,\beta}\hookrightarrow
     \grp_{\alpha_0,\beta}$
     such that $ s \circ \sigma_{\alpha,\beta} = s_{|\grp_{\alpha,\beta}}$
     resp.~$ t \circ \tau_{\alpha,\beta} = t_{|\grp_{\alpha,\beta}}$.
    \end{enumerate}
    Next choose for every $\beta$ functions
    $\psi_{\beta,k}\in\calC^\infty_\text{\tiny \rm cpt}
    (\grp_{\beta,\beta}\cap\grp_0 )$
    such that each family $(\psi_{\beta,k}^2)_{k\in \N}$ is a locally finite
    partition of unity. Extend $\psi_{\beta,k}$ by $0$ to a smooth function
    with compact support in $\grp_{\beta,\beta}$. Define
    $\psi_{\beta,k}^{(1)} \in \calC^\infty_\text{\tiny \rm cpt} (\hgrp_1)$ and
    $\psi_{\beta,k}^{(2)} \in \calC^\infty_\text{\tiny \rm cpt} (t^{-1}
    \varphi (\hgrp_0))$ by
    \begin{displaymath}
      \psi_{\beta,k}^{(1)} =
      \begin{cases}
        \varphi^* \big( (\tau_{\alpha_0,\beta}\sigma_{\beta,\beta})_*
        (\psi_{\beta,k})\big)
        & \text{over $\hgrp_0$} \\
        0 & \text{else}.
      \end{cases}
    \end{displaymath}
    and
    \begin{displaymath}
      \psi_{\beta,k}^{(2)} =
      \begin{cases}
        (\tau_{\beta,\beta})_* (\psi_{\beta,k})
        & \text{over $\grp_{\alpha_0,\beta}$} \\
        0 & \text{else}.
      \end{cases}
    \end{displaymath}
    Now we have the means to construct a bounded section of the map
    \begin{displaymath}
      \hat{u} :\calC^\infty_\text{\tiny \rm cpt}(s^{-1}(\varphi (\hgrp_0))
      \hatotimes
      \calC^\infty_\text{\tiny \rm cpt}(t^{-1}(\varphi (\hgrp_0))
      \rightarrow \calC^\infty_\text{\tiny \rm cpt} (\grp_1 ) , \:
      a \otimes  \tilde a \mapsto a * \tilde a  .
    \end{displaymath}
    Put
    \begin{displaymath}
    \begin{split}
      \hat\mu : \: &  \calC^\infty_\text{\tiny \rm cpt} (\grp_1 ) \rightarrow
      \calC^\infty_\text{\tiny \rm cpt}(s^{-1}(\varphi (\hgrp_0))
      \hatotimes
      \calC^\infty_\text{\tiny \rm cpt}(t^{-1}(\varphi (\hgrp_0)), \\
      & \calC^\infty_\text{\tiny \rm cpt} (\grp_{\alpha,\beta}) \ni a \mapsto
      \sum_{k \in \N}
      (\sigma_{\alpha,\beta})_* (a) * \psi_{\beta,k}^{(1)}
      \otimes \psi_{\beta,k}^{(2)} .
    \end{split}
    \end{displaymath}
    and check that $\hat\mu$ is a bounded section of $\hat{u}$ indeed.
    One proceeds now exactly as for $v$ to show that
    $P\otimes_B Q= P\hatotimes_BQ$
    and that $u$ is a bornological isomorphism of $A$-bimodules.
  \end{proof}
\end{example}
\noindent
Hochschild (co)homology of bornological algebras and their bimodules is
invariant under a Morita context as the following result shows.
\begin{theorem}[cf.~{\cite[Thm.~1.2.7]{LodCH}}]
Assume that $A$ and $B$ are quasi-unital
bornological algebras and assume that there is a
Morita  context $(A, B, P,Q, u,v)$ with the additional property that
\begin{equation}
\label{Eq:Homup}
  q u( p\otimes q') = v(q \otimes p) q' \text{ and }
  p v( q\otimes p') = u(p \otimes q) p'
  \text{ for all $p,p' \in P$ and $q,q' \in Q$.}
\end{equation}
Let $M$ be an essential $A$-bimodule. Then  there are natural isomorphisms
\begin{equation}
\label{Eq:MorInvHochschild}
  H_\bullet (A,M) \cong H_\bullet (B, Q\hatotimes_A M\hatotimes_A P)
  \quad \text{and} \quad
  H^\bullet (A,M) \cong H^\bullet (B, Q\hatotimes_A M\hatotimes_A P)
\end{equation}
\end{theorem}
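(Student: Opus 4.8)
The plan is to reduce the statement to the level of enveloping algebras, following the classical argument of \cite[Thm.~1.2.7]{LodCH} but carrying out every tensor product, section, and resolution in the bornological sense. Recall that, by the definitions in Section~\ref{Sec:HHBR}, $H_\bullet(A,M)=\Tor_\bullet^{\envA}(A,M)$ and $H^\bullet(A,M)=\Ext^\bullet_{\envA}(A,M)$, with $A$ viewed as an $\envA$-module through the multiplication; the analogous formulas hold for $B$, where $Q\hatotimes_A M\hatotimes_A P$ is an essential $B$-bimodule because $P$, $Q$, and $M$ are. It therefore suffices to exhibit a Morita context between $\envA$ and $B^{\mathrm{e}}$ under which the diagonal bimodule $A$ corresponds to the diagonal bimodule $B$ and $M$ corresponds to $Q\hatotimes_A M\hatotimes_A P$.

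First I would upgrade the given context to one between the enveloping algebras. Put $\hat P:=P\hatotimes Q$, regarded as an $\envA$-$B^{\mathrm{e}}$-bimodule via $(a_1\otimes a_2)(p\otimes q)(b_1\otimes b_2):=(a_1\,p\,b_1)\otimes(b_2\,q\,a_2)$, and symmetrically $\hat Q:=Q\hatotimes P$ as a $B^{\mathrm{e}}$-$\envA$-bimodule. The structure isomorphisms $\hat u:\hat P\hatotimes_{B^{\mathrm{e}}}\hat Q\to\envA$ and $\hat v:\hat Q\hatotimes_{\envA}\hat P\to B^{\mathrm{e}}$ are those induced by $u$ and $v$; the compatibility condition \eqref{Eq:Homup} --- the bornological form of the usual associativity axiom of a Morita context --- is precisely what makes these maps well defined on the balanced tensor products and turns them into bimodule homomorphisms. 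That $(\envA,B^{\mathrm{e}},\hat P,\hat Q,\hat u,\hat v)$ satisfies (MOR1)--(MOR4) then follows from the corresponding properties of $(A,B,P,Q,u,v)$ together with quasi-unitality, by the same kind of bookkeeping with $\hatotimes$ and with the left/right sections $l,r$ that was used in the proof of Theorem~\ref{Thm:WeEqMor}; in particular one again has to check that the natural bounded sections have images spanning complete bornological subspaces, which are then forced to be isomorphic to $\envA$ resp.\ $B^{\mathrm{e}}$.

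Granting this, the Proposition on Morita contexts of Section~\ref{Sec:MEBA} supplies an exact equivalence $F:=\hat Q\hatotimes_{\envA}(-):\Mod(\envA)\to\Mod(B^{\mathrm{e}})$ with quasi-inverse $\hat P\hatotimes_{B^{\mathrm{e}}}(-)$, together with a parallel equivalence for right modules / the opposite algebras. Being an additive equivalence, $F$ sends free modules to free modules, projectives to projectives, and linearly split short exact sequences to linearly split ones; hence it carries allowable projective resolutions to allowable projective resolutions and, intertwining $\Hom_{\envA}$ with $\Hom_{B^{\mathrm{e}}}$ and (via the right-module version) $(-)\hatotimes_{\envA}(-)$ with $(-)\hatotimes_{B^{\mathrm{e}}}(-)$, it preserves $\Ext_{\envA}$ and $\Tor^{\envA}$. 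It remains to evaluate $F$ on the two relevant modules: $F(A)=\hat Q\hatotimes_{\envA}A\cong Q\hatotimes_A A\hatotimes_A P\cong Q\hatotimes_A P\cong B$ as $B^{\mathrm{e}}$-modules, the middle step using that $P$ and $Q$ are essential $A$-modules and the last being the isomorphism $v$; and, using that $M$ is essential, $F(M)=(Q\hatotimes P)\hatotimes_{\envA}M\cong Q\hatotimes_A M\hatotimes_A P$ as $B^{\mathrm{e}}$-modules. Substituting these into $\Ext^\bullet_{\envA}(A,M)\cong\Ext^\bullet_{B^{\mathrm{e}}}(F(A),F(M))$ and $\Tor_\bullet^{\envA}(A,M)\cong\Tor_\bullet^{B^{\mathrm{e}}}(F(A),F(M))$ yields the two asserted isomorphisms, and naturality is immediate from the construction.

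The step I expect to be the main obstacle is the one flagged above: verifying that $(\envA,B^{\mathrm{e}},\hat P,\hat Q,\hat u,\hat v)$ is a genuine Morita context of \emph{bornological} algebras, i.e.\ that $\hat u$ and $\hat v$ are well-defined bounded bimodule isomorphisms. As in Theorem~\ref{Thm:WeEqMor}, the purely algebraic content is classical; the real work is in the bornology, where one must produce explicit bounded sections assembled from approximate identities and from the sections $l,r$ of the convolution products, check that their images span complete subspaces, and only then transfer the algebraic isomorphisms to the completed tensor products. The hypothesis \eqref{Eq:Homup}, which is automatic in all the cases to which the theorem is applied here, is exactly what renders these sections $\envA$- resp.\ $B^{\mathrm{e}}$-linear.
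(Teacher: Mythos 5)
Your route --- upgrading $(A,B,P,Q,u,v)$ to a Morita context between the enveloping algebras and then transporting $\Ext$ and $\Tor$ through the resulting category equivalence --- is genuinely different from the paper's proof, but as written it has a concrete gap at exactly the step you flag as the ``main obstacle''. In this paper $\envA:=\unA\hatotimes(\unA)^\op$ is built from the \emph{unitalizations}, whereas the map $\hat u$ induced by $u$ on $(P\hatotimes Q)\hatotimes_{B^{\text{\rm\tiny e}}}(Q\hatotimes P)$ can only ever hit $A\hatotimes A^\op$, which is a proper subalgebra of $\envA$ whenever $A$ is non-unital (and the algebras of interest here, $\calA\rtimes\grp$ etc., are non-unital). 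So the tuple $(\envA,B^{\text{\rm\tiny e}},\hat P,\hat Q,\hat u,\hat v)$ cannot satisfy (MOR3) as stated; to salvage the argument you would have to work instead with $A\hatotimes A^\op$ and $B\hatotimes B^\op$ and then prove separately that $\Tor^{\envA}_\bullet(A,M)$ and $\Ext^\bullet_{\envA}(A,M)$ for essential $M$ can be computed over these smaller quasi-unital algebras --- an extra comparison that your proposal does not address. A second, subtler gap is the claim that the equivalence carries allowable projective resolutions to allowable ones: allowability means existence of \emph{bounded linear} (not module) splittings, and such splittings do not pass through the balanced tensor functor $\hat Q\hatotimes_{\envA}(-)$ by functoriality, so this preservation statement needs its own proof in the relative homological framework. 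Finally, condition \eqref{Eq:Homup} is not what makes $\hat u,\hat v$ well defined on balanced products (that is already guaranteed by $u,v$ being defined on $\hatotimes_B$ resp.\ $\hatotimes_A$); in the actual proof it is used to verify that certain maps are chain maps.

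For comparison, the paper sidesteps both issues by staying at the level of the Hochschild complexes themselves: it chooses approximate identities and bounded left/right module sections $\hat\mu:A\rightarrow P\hatotimes Q$, $\hat\nu:B\rightarrow Q\hatotimes P$, writes down explicit bounded chain maps $\varrho_\bullet:M\hatotimes A^{\hatotimes\bullet}\rightarrow(Q\hatotimes_A M\hatotimes_A P)\hatotimes B^{\hatotimes\bullet}$ and $\theta_\bullet$ in the opposite direction, and exhibits explicit bounded simplicial homotopies between $\theta\circ\varrho$, $\varrho\circ\theta$ and the identity; here \eqref{Eq:Homup} enters to show $\varrho,\theta$ are chain maps, and independence of the choices gives naturality. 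This explicit construction is also what the paper needs later (Theorem \ref{Thm:ConvAlgMorMor}) to identify the Morita isomorphism with the concrete maps $\varphi_*,\varphi^*$, something a purely categorical argument would not deliver directly. If you want to pursue your approach, the two points above --- the unitalization mismatch in $\envA$ and the preservation of allowability --- are the places where real work is required.
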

\begin{proof}
We only prove the homology case. The cohomology case is proven similarly.
To prove the claim first choose approximate identities
$(u_{S,k})_{S\in \calB,k\in\N}$ for $A$ and
$(v_{T,l})_{T\in \calD,l\in\N}$ for $B$, where $\calB$ and $\calD$ are the
bornologies of $A$ and $B$ respectively. Since $P$ and $Q$ are essential
modules over $A$ and $B$, there exists a bounded section
$\hat\mu : A \rightarrow P\hatotimes Q $
(resp.~$\hat\nu : B \rightarrow Q\hatotimes P $)
of the composition of $u:P\hatotimes_B Q \rightarrow A$ with the canonical
projection $P\hatotimes Q \rightarrow P\hatotimes_B Q $
(resp.~of $V : Q\hatotimes_A P \rightarrow B$ with
$Q\hatotimes P \rightarrow Q\hatotimes_A P$).
The section $\hat\mu$ (resp.~$\hat\nu$) can even be chosen to be a morphism
of left $A$-modules (resp.~of left $B$-modules).
For every bounded $S\subset A$ and $k\in \N$
(resp.~bounded $T\subset B$ and $l\in \N$) let
\begin{displaymath}
  \sum_{i\in \N} p_{S,k,i} \otimes q_{S,k,i} := \hat\mu (u_{S,k})
  \text{ and }
  \sum_{j\in \N} q_{T,l,j}' \otimes p_{T,l,j}' := \hat\nu (v_{T,l}) .
\end{displaymath}
Then define for every $n \in \N$ a bounded map
$\varrho_n : M \hatotimes A^{\hatotimes n} \rightarrow
(Q \hatotimes_A M \hatotimes_A P) \hatotimes B^{\hatotimes k}$ by
\begin{displaymath}
\begin{split}
 \varrho_n & \, ( m \otimes a_1 \otimes \ldots \otimes a_n ) = \\
 & \lim_{ (S,k) \in \calB \times \N} \sum_{i_0,i_1,\cdots,i_n}
 (q_{S,k,i_0} \otimes m \otimes p_{S,k,i_1}) \otimes
 v (q_{S,k,i_1} \otimes a_1 p_{S,k,i_2}) \otimes\ldots \\
 &\mbox{ } \hspace{50mm} \ldots \otimes
 v (q_{S,k,i_n} \otimes a_n p_{S,k,i_0}) .
\end{split}
\end{displaymath}
Note that this map is well-defined since $(u_{S,k})$ is an approximate
identity, $\hat\mu$ is a bounded morphism of left $A$-modules, and since
$M$ is essential. Analogously we define maps
$\theta_n : (Q \hatotimes_A M \hatotimes_A P) \hatotimes B^{\hatotimes k}
\rightarrow M \hatotimes A^{\hatotimes n} $ by
\begin{displaymath}
\begin{split}
 \theta_n & \, ( q \otimes m \otimes p \otimes b_1 \otimes \ldots \otimes b_n )
 = \\
 & \lim_{(T,l) \in \calD \times \N} \sum_{j_0,j_1,\cdots,j_n}
 u(p_{T,l,j_0}' \otimes q ) \, m \, u( p \otimes q_{T,l,j_1}') \otimes
 u (p_{T,l,j_1}' \otimes b_1 q_{T,l,j_2}') \otimes \ldots\\
 &\mbox{ } \hspace{50mm} \ldots \otimes
 u (p_{T,l,j_n}' \otimes b_n q_{T,l,j_0}') .
\end{split}
\end{displaymath}
Again, convergence is guaranteed by the fact that $A$ and $B$ are quasi-unital
and that $M$ is essential.
By assumption (\ref{Eq:Homup}) on $u$ and $v$ the maps $\varrho$ and $\theta$
are both chain maps.
The composites $\theta\circ \psi$ and $\varrho \circ \theta$ are homotopic to
the identity. A simplicial homotopy between   $\theta \circ \varrho$ and the
identity is given by
\begin{displaymath}
\begin{split}
  h_i & \, (m\otimes a_1 \otimes \ldots \otimes a_n) = \\
  & \lim_{(S,k) \in \calB \times \N \atop (T,l) \in \calD \times \N}
  \sum_{i_0,i_1,\cdots,i_i \atop j_0 , j_1 , \ldots , j_i}
  m \, u (p_{S,k,i_0} \otimes q_{T,l,j_0}')\otimes
  u (p_{T,l,j_0}' \otimes q_{S,k,i_0})\, a_1 \,
  u(p_{S,k,i_1}\otimes q_{T,l,j_1}') \otimes \\
  & \mbox{ } \hspace{3mm} \ldots \otimes
  u (p_{T,l,j_{i-2}}' \otimes
  q_{S,k,i_{i-2}})\, a_{i-1} \, u(p_{S,k,i_{i-1}}\otimes q_{T,l,j_{i-1}}')
  \otimes  \\
  & \mbox{ } \hspace{3mm}
  \otimes  u( p_{T,l,j_{i-1}}' \otimes q_{S,k,i_{i-1}}) \otimes
  a_i \otimes a_{i+1} \otimes \ldots \otimes a_n .
\end{split}
\end{displaymath}
By a straightforward though somewhat lengthy argument (cf.~\cite[Chap.~5]{HanHHMEEG})
one checks that the thus defined $h_i$ are well-defined and form a bounded
simplicial homotopy indeed.
Similarly, one constructs a bounded homotopy between $\psi\circ \theta$
and the identity.

Since $[\varrho] = [\theta]^{-1}$, where $[\varrho]$ and
$[\theta]$ denote the induced maps on the Hochschild homology of
$\calA \rtimes \grp$ resp.~$\calA \rtimes \hgrp$, and since
$\theta$ neither depends on the particular choice of an
approximate identity on  $\calA \rtimes \grp$ nor on the choice of
the elements $p_{S,k,i}$ and $q_{S,k,i}$, one concludes  that
$[\varrho]$ is independent of the particular choice of these data.
Likewise on shows that $[\theta]$ does not depend on the choice of
an approximate identity on $\calA \rtimes \hgrp$ and of the
elements $p_{T,l,j}'$ and $q_{T,l,j}'$. One concludes from this
that $[\theta]$ and $[\varrho]$ are natural isomorphisms. This
finishes the proof of the claim.
\end{proof}

\begin{remark}
If one represents the Hochschild cohomology groups $H^n (A,A)$ as equivalence
classes of bounded extensions
\begin{equation}
\label{Eq:BoundExt}
  0 \rightarrow A \rightarrow E_1 \rightarrow \cdots \rightarrow E_k
  \rightarrow A \rightarrow 0 ,
\end{equation}
the isomorphism $H^\bullet (A,A) \rightarrow H^\bullet (B,B)$ between the
Hochschild
cohomologies of two Morita equivalent bounded algebras is given by tensoring
(\ref{Eq:BoundExt}) with $P$ and $Q$, i.e.~by mapping it to the bounded extension
\begin{displaymath}
  0 \rightarrow B \rightarrow Q \hatotimes_A E_1 \hatotimes_A P \rightarrow \cdots
  \rightarrow Q \hatotimes_A E_k \hatotimes_A  P\rightarrow B \rightarrow 0 ,
\end{displaymath}

\end{remark}

Let us now apply the preceding theorem to the situation of Example
\ref{Ex:OpWeEq}. Then we can prove the following result.
\begin{theorem}
\label{Thm:ConvAlgMorMor}
Assume that $\varphi: \hgrp \hookrightarrow \grp$ is a weak equivalence
of proper \'etale Lie groupoids which also is an open embedding,
and let $(\calA \rtimes \grp,\calA \rtimes \hgrp , P,Q,u,v)$
denote the Morita context from Theorem \ref{Thm:WeEqMor}.
The resulting natural isomorphisms in Hochschild homology and cohomology
are implemented by the following natural chain maps
\begin{displaymath}
\begin{split}
  \varphi_*: \: & C_\bullet (\calA \rtimes \hgrp ,\calA \rtimes \hgrp)
  \rightarrow C_\bullet (\calA \rtimes \grp ,\calA \rtimes \grp), \\
  & a_0\otimes a_1 \otimes \ldots \otimes a_n \mapsto
  \varphi_* (a_0) \otimes\varphi_* (a_1) \otimes \ldots \otimes
  \varphi_* (a_n)
\end{split}
\end{displaymath}
and
\begin{displaymath}
\begin{split}
  \varphi^*: \: & C^\bullet (\calA \rtimes \grp ,\calA \rtimes \grp)
  \rightarrow C^\bullet (\calA \rtimes \hgrp ,\calA \rtimes \hgrp), \\
  & F \mapsto \Big( (\calA \rtimes \hgrp)^{\hatotimes n}
  \ni a_1 \otimes \ldots \otimes a_n \mapsto
  F\big(
  \varphi_* (a_1) \otimes \ldots \otimes \varphi_* (a_n)\big)
  \circ \varphi \Big) .
\end{split}
\end{displaymath}
\end{theorem}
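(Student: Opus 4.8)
The plan is to identify the chain maps $\varphi_*$ and $\varphi^*$, on the level of (co)homology, with the Morita isomorphisms \eqref{Eq:MorInvHochschild}, by specializing the explicit chain-level construction from the proof of that invariance theorem to the concrete Morita context of Theorem~\ref{Thm:WeEqMor}. First I would recall from the Outline (Section~\ref{Sec:outline}) that, $\varphi$ being an open embedding which is a weak equivalence, $\varphi_*\colon\calA\rtimes\hgrp\to\calA\rtimes\grp$ is a bounded algebra homomorphism which identifies $\calA\rtimes\hgrp$ with the subalgebra $\calC^\infty_\text{\tiny\rm cpt}(\grp_{\alpha_0,\alpha_0})$ of functions supported on the diagonal components $\grp_{\alpha_0,\alpha_0}=s^{-1}(\varphi\hgrp_0)\cap t^{-1}(\varphi\hgrp_0)$ (notation as in the proof of Theorem~\ref{Thm:WeEqMor}); in particular the induced map $\varphi_*$ on Hochschild chains and the map $\varphi^*=(-)\circ\varphi$ on Hochschild cochains are bounded chain maps, the latter being automatically multiplicative for the reduced cup product since $\varphi_*$ is a homomorphism. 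Since we only need the maps induced on (co)homology, and since the reduced complexes compute the same (co)homology by Theorem~O and Appendix~\ref{Sec:HHBR}, we may work throughout with the (reduced) Bar resolutions underlying the proof of the invariance theorem.

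For the identification itself I would invoke the key observation made at the end of the proof of that invariance theorem: the maps $[\varrho]$ and $[\theta]=[\varrho]^{-1}$ induced in Hochschild homology, and their cohomological analogues, do \emph{not} depend on the chosen approximate identities nor on the sections $\hat\mu\colon A\to P\hatotimes Q$, $\hat\nu\colon B\to Q\hatotimes P$ used to define them. The idea is to make all of these choices compatible with $\varphi$: take for the approximate identity of $\calA\rtimes\hgrp$ the one built from the partition of unity $(\psi_{\beta,k})$ on $\hgrp_0$ occurring in the proof of Theorem~\ref{Thm:WeEqMor}, and take for $\hat\mu,\hat\nu$ precisely the sections constructed there, whose tensor components are the functions $\varphi_*\psi_{\beta,k}$, $\psi^{(1)}_{\beta,k}$ and $\psi^{(2)}_{\beta,k}$. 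With these choices, and using $u(p\otimes\tilde a)=p\star\tilde a$ together with the identification $v\colon Q\hatotimes_A P\xrightarrow{\ \sim\ }\calA\rtimes\hgrp$, $q\otimes p\mapsto\varphi^*(q\star p)$ from Theorem~\ref{Thm:WeEqMor}, the nested sums in the formula for $\theta_n$ telescope against the approximate identity: each inner factor reduces to $\varphi_*(a_i)$ and the outer one to $\varphi_*$ of the bimodule coefficient, so that the chain map realizing the Morita isomorphism $H_\bullet(\calA\rtimes\hgrp,\calA\rtimes\hgrp)\to H_\bullet(\calA\rtimes\grp,\calA\rtimes\grp)$ becomes literally $a_0\otimes\cdots\otimes a_n\mapsto\varphi_*a_0\otimes\cdots\otimes\varphi_*a_n$. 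The cohomological computation is dual and produces $F\mapsto F(\varphi_*(-)\otimes\cdots\otimes\varphi_*(-))\circ\varphi$. By the independence of choices, $\varphi_*$ and $\varphi^*$ then implement the isomorphisms \eqref{Eq:MorInvHochschild}.

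The main obstacle will be the bookkeeping in this collapsing computation: one must carry the precise $\calA\rtimes\grp$–$\calA\rtimes\hgrp$ bimodule structures on $P=\calC^\infty_\text{\tiny\rm cpt}(s^{-1}(\varphi\hgrp_0))$ and $Q=\calC^\infty_\text{\tiny\rm cpt}(t^{-1}(\varphi\hgrp_0))$ through the nested sums defining $\varrho_n$ and $\theta_n$, verify that the limits over the approximate-identity net converge — which is exactly where quasi-unitality of both convolution algebras and essentiality of $P,Q$ enter, as in the proof of the invariance theorem — and check that the telescoping genuinely leaves only $\varphi_*$ with no residual terms. A secondary, purely technical point is to confirm boundedness at each intermediate stage, which is routine given Theorem~O. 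Once this is done, the fact that $\varphi_*$ and $\varphi^*$ are chain maps and the commutativity of the diagram in Theorem~III are formal consequences.
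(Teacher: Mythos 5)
Your overall route is the same as the paper's: use the fact that the chain maps $\varrho$ and $\theta$ realizing the Morita isomorphism do not depend, on (co)homology, on the chosen approximate identities or on the sections $\hat\mu,\hat\nu$, and then pick data adapted to $\varphi$ for which $\theta$ becomes literally $\varphi_*$ applied slotwise. The gap is in the telescoping step, exactly the point you flagged but proposed to settle with the ``naive'' data from Theorem \ref{Thm:WeEqMor}. With the approximate identity $v_l=\sum_{k\le l}\psi_k^2$ and the section $\hat\nu(b)=\sum_k\varphi_*b*\psi_k\otimes\varphi_*\psi_k$, the formula for $\theta_n$ is (schematically)
\begin{displaymath}
  \theta_n(a_0\otimes\cdots\otimes a_n)=\lim_l\,\sum_{j_0,\ldots,j_n}
  u\big(\varphi_*\psi_{j_0}\otimes a_0\,q'_{l,j_1}\big)\otimes
  u\big(\varphi_*\psi_{j_1}\otimes a_1\,q'_{l,j_2}\big)\otimes\cdots\otimes
  u\big(\varphi_*\psi_{j_n}\otimes a_n\,q'_{l,j_0}\big),
\end{displaymath}
in which every summation index $j_i$ occurs in \emph{two adjacent} tensor factors. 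Such a sum cannot be collapsed factorwise in the completed bornological tensor product: $\sum_k x*\varphi_*\psi_k\otimes\varphi_*\psi_k*y$ is not $x\otimes y$ (that identity only holds after multiplying the two legs together). So with these choices $\theta$ is a genuine Morita chain map but is \emph{not} literally $\varphi_*^{\otimes(n+1)}$, and the independence-of-choices clause, which speaks only about the induced map on homology, does not by itself upgrade ``some chain-level representative'' to the specific map $\varphi_*$; that would be circular.

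The paper closes exactly this hole by a further renormalization that your plan is missing: for each $m$ it replaces the partition of unity by $\psi_{[m],0}=\sqrt{\sum_{l\le m}\psi_l^2}$, $\psi_{[m],k}=\psi_{k+m}$, so that the zeroth element $u_{[m],0}$ of the resulting approximate identity is identically $1$ on a compact set $K_{[m]}$ exhausting $\hgrp_0$, and it chooses elements $p'_{[m],0,j}$, $q'_{[m],0,j}$ with connected supports which are identically $1$ on $\varphi(K_{[m],j})$ and vanish on the other components. For chains $a_0\otimes\cdots\otimes a_n$ supported over a fixed compact $K$ and $m\ge m_K$, the defining net is then already constant and each convolution factor literally reproduces $\varphi_*(a_i)$, so $\theta_{[m]}$ agrees with $\varphi_*$ on that subcomplex; since all $[\theta_{[m]}]$ coincide and $\calA\rtimes\hgrp$ is the inductive limit of the subspaces $\calC^\infty_\text{\tiny\rm cpt}(s^{-1}(K)\cap t^{-1}(K))$, the theorem follows. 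To repair your argument you need to introduce a compactum-dependent choice of this kind (or otherwise exhibit an explicit chain homotopy between your $\theta$ and $\varphi_*$); as written, the claimed telescoping with a single global choice of partition of unity fails at the chain level.
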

\begin{proof}
We only show the claim in the homology case. The claim for
Hochschild cohomology can be proved by a dual argument.

As above let $A=\calA \rtimes \grp$ and $B=\calA \rtimes \hgrp $.
Choose a locally finite family
$(\psi_k)_{k\in \N}\in \calC^\infty_\text{\tiny \rm cpt}(\hgrp_1)$
of smooth functions $ \psi_k : \hgrp_1 \rightarrow [0,1]$ with compact
and connected support on $\hgrp_0$ such that $(\psi_k^2)$ is a partition
of unity on $\hgrp_0$. Put for $m \in \N$:
\begin{equation}
\label{Eq:shunita}
  \psi_{[m],k}:=
  \begin{cases}
    \sqrt{\sum_{l=0}^m \, \psi_l^2 } & \text{for $k=0$}, \\
    \psi_{k+m}  & \text{for $k\geq 1$} .
  \end{cases}
\end{equation}
and
\begin{equation}
\label{Eq:shunitb}
  u_{[m],k} = \sum_{l=0}^k \big( \psi_{[m],l} \big)^2 .
\end{equation}
Then $(u_{[m],k})_{k\in \N}$ is an approximate unit in $\calA \rtimes \hgrp$
for every $m\in \N$. Denote by $K_{[m]}$ the compact set
$\overline{\big(u_{[m],0}^{-1} (1)\big)^\circ}$, i.e.~the closure of the set
of all points $x\in \hgrp_0$ on a neighborhood of which $u_{[m],0}$
(and thus $\psi_{[m],0}$ also) has value $1$.
Then for every compact $K\subset \hgrp_0$ there exists an $m_K\in\N$ such that
$K \subset K_{[m]}^\circ$ for all $m\geq m_K$. Hence one has
\begin{equation}
   {\psi_{[m],0}}_{|K} =  {u_{[m],0}}_{|K} = 1 \quad
   \text{for all $m\geq m_K$}.
\end{equation}
Let $K_{[m]} = \bigcup_{i=0}^{i_m} K_{[m],i}$ be the decomposition of
$K_{[m]}$ in connected components, i.e.~each $K_{[m],i}$ is compact
and connected and
$K_{[m],i} \neq K_{[m],i'}$ for $i\neq i'$.
By the proof of Theorem \ref{Thm:WeEqMor} one can construct for every
fixed $m\in\N$ a section
\[
  \hat\nu_{[m]} : \calC^\infty_\text{\tiny \rm cpt} (\hgrp_1 )
  \rightarrow \calC^\infty_\text{\tiny \rm cpt}
  (t^{-1} (\varphi (\hgrp_0))
  \hatotimes \calC^\infty_\text{\tiny \rm cpt} (s^{-1}
  (\varphi (\hgrp_0))
\]
and elements
$p_{[m],0,j}' \in \calC^\infty_\text{\tiny \rm cpt} (s^{-1} (\varphi (\hgrp_0))$,
$q_{[m],0,j}' \in \calC^\infty_\text{\tiny \rm cpt} (t^{-1} (\varphi (\hgrp_0))$,
$j= 0,\ldots,j_m$
such that every $p_{[m],0,j}'$ and every $q_{[m],0,j}'$ has connected support
in $\varphi (\hgrp_0)$, such that
\begin{equation}
  \hat\nu_{[m]} (u_{[m],0})  = \sum_{j=0}^{j_m} q_{[m],0,j}' \otimes p_{[m],0,j}'
\end{equation}
and finally such  that
\begin{equation}
\begin{split}
  \big( p_{[m],0,j}' \big)_{| \varphi ( K_{[m],j'} )}  & =
  \begin{cases}
    1 & \text{if $j= j'$}, \\
    0 & \text{else},
  \end{cases}
  \\
  \big( q_{[m],0,i}' \big)_{| \varphi ( K_{[m],j'} )}  & =
  \begin{cases}
    1 & \text{if $j= j'$}, \\
    0 & \text{else}.
  \end{cases}
\end{split}
\end{equation}
Put $ p_{[m],0,j}'=0$ and $ q_{[m],0,j}'=0$ for $j > j_m$ and let
\begin{equation}
  \sum_{j\in\N} q_{[m],k,j}' \otimes p_{[m],k,j}' :=
  \hat\nu_{[m]} (u_{[m],k})  \quad
  \text{for $k\geq 1$}.
\end{equation}
For every fixed $m\in \N$ we now have the data to construct the quasi-isomorphism
$\theta_{[m]} : C_\bullet (\calA \rtimes \hgrp , \calA \rtimes \hgrp ) \rightarrow
 C_\bullet (\calA \rtimes \grp , \calA \rtimes \grp )$
as in the proof of the preceding theorem. Note that the induced
maps  $[\theta_{[m]}]$ between the  Hochschild homology groups all
coincide.

Now let $K\subset \hgrp_0$ be compact and consider
\[
  a_0, a_1 , \ldots , a_n \in \calC^\infty_\text{\tiny \rm cpt}
 (s^{-1} (K ) \cap t^{-1} (K)) \subset \calA \rtimes \hgrp .
\]
Then one  for $m \geq m_k$  by construction:
\begin{displaymath}
\begin{split}
  \theta_{[m]} \,& (a_0\otimes a_1 \otimes \ldots \otimes a_n )  = \\
  & = \sum_{j_0, \ldots ,j_n} \,
  u( p_{[m],0,j_0 }' \otimes a_0 q_{[m],0,j_1}') \otimes
  u (p_{[m],0,j_1}' \otimes a_1 q_{[m],0,j_2}') \otimes \ldots\\
  &\mbox{ } \hspace{50mm} \ldots \otimes
  u (p_{[m],0,j_n}' \otimes a_n q_{[m],0,j_0}')   \\
  & = \varphi_* (a_0) \otimes\varphi_* (a_1) \otimes \ldots \otimes
  \varphi_* (a_n) .
\end{split}
\end{displaymath}
Since $\calA \rtimes \hgrp$ is the inductive limit of the subspaces
$\calC^\infty_\text{\tiny \rm cpt} (s^{-1} (K ) \cap t^{-1} (K))$ the claim follows.
\end{proof}


\bibliographystyle{alpha}

\end{document}